\tikzset{->-/.style={decoration={  markings,  mark=at position .75 with {\arrow{latex}}},postaction={decorate}}}
\tikzset{-<-/.style={decoration={  markings,  mark=at position .75 with {\arrow{latex reversed}}},postaction={decorate}}}
\def\r{\rightarrow}
\def\ot{\leftarrow}
\def\Id{\id}
\let\cal\mathcal
\def\cA{{\mathtt A}}
\def\cF{{\cal F}}
\def\cI{{\cal I}}
\def\cL{{\cal L}}
\def\cO{{\cal O}}
\def\cP{{\cal P}}
\def\eps{{\epsilon}}
\let\blb\mathbb
\def\H{{\blb M}}
\def\UU{{\blb U}}
\def\QQ{{\blb Q}}
\def \PP{{\blb P}}
\def \VV{{\blb V}}
\newcommand{\koppa}{\raisebox{-2.5pt}{\begin{tikzpicture}\begin{scope}[xscale=.35,yscale=.27]\draw[thick] (0,0)--(-.25,.33)--(-.25,.66)--(0,1)--(.25,.66)--(.25,.33)--(0,0); \draw[thick] (0,.33)--(0,-.16);\end{scope}\end{tikzpicture}}}
\newcommand{\weg}[1]{}
\newcommand{\se}[1]{\begin{equation*}\begin{split}#1\end{split}\end{equation*}}
\newcommand{\C}{\mathbb{C}}
\newcommand{\N}{\mathbb{N}}
\newcommand{\Z}{\mathbb{Z}}
\newcommand{\R}{\mathbb{R}}
\newcommand{\genmu}{\bbmu}
\newcommand{\vtx}[1]{*+[o][F-]{\scriptscriptstyle #1}}
\newcommand{\<}{\langle}
\renewcommand{\>}{\rangle}
\newcommand{\Ext}{\mathsf{Ext}}
\newcommand{\supp}{\mathrm{supp}}
\newcommand{\wfuk}{\mathrm{wfuk}}
\newcommand{\sm}[1]{\left(\begin{smallmatrix}#1\end{smallmatrix}\right)}
\newcommand{\Cone}{\mathrm{Cone}}
\newcommand{\erbij}[1]{}
\newcommand{\NP}{\mathrm{NP}}
\newcommand{\MP}{\mathrm{MP}}
\newcommand{\PT}{\mathrm{PT}}
\newcommand{\PL}{\mathrm{PL}}
\newcommand{\slicing}{\Sigma}
\newcommand{\LH}{\mathrm{LH}}
\newcommand{\SG}{\mathrm{SG}}
\newcommand{\Semi}{\mathrm{Semi}}
\newcommand{\Mst}{\mathcal{M}}
\newtheorem{lemma}{Lemma}[section]
\newtheorem{proposition}[lemma]{Proposition}
\newtheorem{theorem}[lemma]{Theorem}
\newtheorem{corollary}[lemma]{Corollary}
\theoremstyle{definition}
\newtheorem{example}[lemma]{Example}
\newtheorem{definition}[lemma]{Definition}
\newtheorem{remark}[lemma]{Remark}
\theoremstyle{remark}
\newcommand{\rep}{\ensuremath{\mathsf{rep}}}
\newcommand{\trep}{\ensuremath{\mathsf{trep}}}
\newcommand{\Trop}{\ensuremath{\mathsf{Trop}}}
\newcommand{\Am}{\ensuremath{\mathsf{Am}}}
\newcommand{\Proj}{\ensuremath{\mathsf{Proj}}}
\newcommand{\Mat}{\mathsf{Mat}}
\newcommand{\Hom}{\mathtt{Hom}}
\newcommand{\GL}{\ensuremath{\mathsf{GL}}}
\newcommand{\id}{\mathbf{1}}
\newcommand{\PM}{\cP}
\renewcommand{\H}{\mathtt{H}\,}
\newcommand{\Tw}{\mathtt{Tw}\,}
\newcommand{\Sing}{\mathtt{Sing}\,}
\def\genmu{{\mu}}
\def\Der{{\mathtt {D}}}
\def\MF{{\mathtt {MF}}}
\def\cD{{\mathtt D}}
\def\mf{{\mathtt {mf}}}
\newcommand{\Gtl}{\mathtt{Gtl}}
\newcommand{\Jac}{{\mathtt{J}}}
\newcommand{\wJac}{{\widehat{\mathtt{J}}}}
\newcommand{\RQ}{{\koppa}}
\def\qpol{\mathrm{Q}}
\newcommand{\Mod}{\ensuremath{\mathtt{Mod}}}
\newcommand{\twist}[1]{\mathop{\reflectbox{$#1$}}}
\newcommand{\polq}{\twist{\qpol}}
\renewcommand{\C}{\mathbb{C}}
\newcommand{\kk}{\mathbbm{k}}
\newcommand{\surf}[1]{\bar S({#1})}
\newcommand{\psurf}[1]{\dot{S}({#1})}
\newcommand{\gsurf}[2]{{\vec{S}(#2,#1)}}
\newcommand{\rib}[1]{\mathring{S}({#1})}
\newcommand{\tube}[1]{\dot{S}({#1})}
\newcommand{\tub}[1]{\dot{S}({#1})}
\newcommand{\Msg}{\mathcal{M}^{sing}}
\newcommand{\Mla}{\mathcal{M}^{mf}}
\newcommand{\retr}{\mathtt{retr}}
\title{Strebel Differentials and stable Matrix Factorizations}
\author{Raf Bocklandt}
\address{Raf Bocklandt\\
Korteweg de Vries institute\\
University of Amsterdam (UvA)\\
Science Park 904\\ 
1098 XH Amsterdam\\ 
The Netherlands
}
\email{raf.bocklandt@gmail.com}
\begin{document}
\begin{abstract}
We study the connection between quadratic Strebel differentials on punctured surfaces and 
the construction of moduli spaces of matrix factorizations for dimer models 
using GIT-quotients.
We show that for each consistent dimer model and each nondegenerate stability condition $\theta$ we can find
a Strebel differential for which the horizontal trajectories correspond to the $\theta$-stable 
matrix factorizations and the vertical trajectories correspond to the arrows of the dimer quiver.
We give explicit expressions for the $\theta$-stable matrix factorizations that can be deduced
from these horizontal trajectories. 

Following ideas by Pascaleff and Sybilla \cite{pascaleff2016topological} we show that each nondegenerate stability condition gives rise to a sheaf of curved algebras coming from consistent 
dimer models. The corresponding categories of matrix factorizations can be glued together to
form the category of matrix factorizations of the original dimer.
\end{abstract}

\maketitle
\section{Introduction}

For any punctured Riemann surface one can define an invariant, its wrapped Fukaya category (Abouzaid et al. \cite{abouzaid2013homological}) or 
topological Fukaya category (Haiden-Katzarkov-Kontsevich in \cite{haiden2014flat} or Dyckerhoff-Kapranov \cite{dyckerhoff2013triangulated}). 
As its second name suggests, this invariant only depends on the topology of the surface and intuitively it describes the intersection theory of curves on the surface.

The topological Fukaya category carries a rich structure, it is an $A_\infty$-category,
but unlike many other $A_\infty$-categories its description is fairly straightforward and
one can describe its objects and morphisms in a natural way. This makes it an ideal toy model
to study phenomena related to the concept of Mirror symmetry.

Mirror symmetry for punctured surfaces establishes an equivalence between the topological
Fukaya category (the A-side) and certain categories of matrix factorizations (the B-side). 
These categories of matrix factorizations can be defined over commutative spaces like in Abouzaid et al. \cite{abouzaid2013homological} and Pascaleff and Sibilla \cite{pascaleff2016topological} or 
over noncommutative space like in \cite{bocklandt2016noncommutative}.

In this paper we will study mirror symmetry for punctured surfaces from the point of view of constructing moduli spaces of objects. 
Inspired by work of Bridgeland and Smith \cite{bridgeland2015quadratic} and Haiden et al. \cite{haiden2014flat}, we will explain how to construct moduli space of objects in the topological Fukaya category using Strebel differentials and 
relate this to the classical GIT-construction of moduli spaces of representations of noncommutative algebras. 

The stepping stone between the two sides of our story is the theory of dimer models.
They give a combinatorial description of both the Fukaya category and the category of matrix factorizations. This description can then be used to construct moduli spaces and relate these moduli spaces to tropical geometry.  

The structure of the paper is as follows. We start with an review on quivers and $A_\infty$-structures in section \ref{sectionquivers}. Then we introduce dimer models in section \ref{sectionquivers} and explain how they can be used to describe mirror
symmetry for punctured surfaces in section \ref{sectionmirror}. In section \ref{sectionbands} we show how these dimer models can be used to describe objects in the 
Fukaya category and the category of matrix factorizations. Section \ref{sectionspider} and \ref{sectiontropical} are used to review some concepts that will be 
important in the story: ribbon graphs and spider graphs and tropical and toric geometry.
We will use these concepts to describe moduli spaces in both the A and B-side in section \ref{sectionmoduli} and relate these moduli spaces to the gluing construction of Pascaleff and Sibilla \cite{pascaleff2016topological} in section \ref{sectionglue}. 
We illustrate the theory with an example in section \ref{sectionexample}.

\section{Quivers and $A_\infty$-structures}\label{sectionquivers}

\subsection{Quivers}
A quiver consists of a set of vertices $Q_0$ and a set of arrow $Q_1$ together with two maps $h,t:Q_1 \to Q_0$ that assign
to each arrow its head and tail. A nontrivial path is a sequence of arrows $a_0\dots a_k$ such that $t(a_{i})=h(a_{i+1})$ (so arrows point to the left: $\stackrel{a_0}{\ot}\dots \stackrel{a_k}{\ot}$). A path is cyclic if $h(a_0)=t(a_k)$ and a cyclic path up to cyclic shifts is called a cycle. 
A trivial path is a vertex. The path algebra $\C Q$ is the complex vector space spanned by all paths. The product of two paths is their concatenation if possible and
zero otherwise. The vertices are orthogonal idempotents for the product and they generate a commutative subalgebra $\kk \cong \C^{\oplus Q_0}$.
A path algebra with relations is the quotient of a path algebra by a two-sided ideal that sits in the ideal spanned by all paths of length at least $2$. Such an
algebra can be considered as an algebra over $\kk$.

\subsection{$A_\infty$-algebras}
An $A_\infty$-algebra is a $\Z$-graded $\kk$-bimodule $A$ with a 
set of products $\mu_i : A^{\otimes_\kk i} \to A$ of degree $2-i$ subject to certain generalized associativity laws. For the explicit expressions of these laws and more background about $A_\infty$-algebras we refer to \cite{keller1999introduction,keller2006infinity,kontsevich606241notes}. 
If $\mu_i=0$ for $i\ne 2$ then $A$ is an ordinary graded algebra for the product $\mu_2$ and if $\mu_i=0$ for $i\ne 1,2$, $A$ is a dg-algebra for the product $\mu_2$ and 
the differental $\mu_1$. Note that the definition allows for a map $\mu_0: \kk \to A$. If that map is nonzero we say that the $A_\infty$-algebra is curved and the element $\mu_0(1) \in A$ is its curvature.
A curved $A_\infty$-algebra with $\mu_i=0$ for ${i\ne 0,2}$ is called a Landau-Ginzburg model, it consists of an ordinary graded algebra 
and a degree $2$ element $\ell=\mu_0(1)$, which is central.

If $\mu_0=0$ then the $A_\infty$-algebra is called flat. In this case $\mu_1^2=0$ so $(A, d=\mu_1)$ is a complex and we will denote its homology by $HA$.
If $A$ is flat and $\mu_1=0$ then we say that $A$ is minimal.

Let $A$ be a path algebra with relations and assume it has a $\Z$-grading such that the arrows are homogeneous. If $\mu$ is an $A_\infty$-structure on $A$, 
we say that $\mu$ is compatible with the ordinary algebra structure on $A$ if $\mu_1=0$, $\mu_2$ is the ordinary product, and $\mu_{k}(x_1,\dots,x_k)$ is zero
if $k\ge 3$ and one of the entries is a vertex. 

There is a notion of an $A_\infty$-morphism between two $A_\infty$-algebras $A$ and $B$. This is a set of maps $\cF_i: A^{\otimes_\kk i}\to B$ with additional constraints \cite{keller1999introduction,keller2006infinity,kontsevich606241notes}.
If $A$ and $B$ are both flat then $\cF_1$ will induce a morphism of complexes and we say that $\cF$ is a quasi-isomorphism if $\cF_1:A\to B$ is a quasi-isomorphism. 
\begin{theorem}[Minimal model theorem \cite{kadeishvili1980homology}]
If $A$ is a flat $A_\infty$-algebra then there is an $A_\infty$-structure on $HA$ and a $A_\infty$-quasi-isomorphism $\cF:A \to HA$.
\end{theorem}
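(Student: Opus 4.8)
The plan is to construct the minimal model via the homological perturbation lemma, which is the standard route to Kadeishvili's theorem. First I would choose, for the complex $(A,\mu_1)$, a splitting into homology plus contractible pieces: pick a $\kk$-linear contraction data consisting of projection $\pi : A \to HA$, inclusion $\iota : HA \to A$ (both chain maps, with $\pi\iota = \id_{HA}$), and a homotopy $h : A \to A$ of degree $-1$ satisfying $\id_A - \iota\pi = \mu_1 h + h \mu_1$. Since we are over a field such a decomposition always exists; one can moreover arrange the standard side conditions $h^2 = 0$, $h\iota = 0$, $\pi h = 0$ by the usual replacement tricks, which will make the resulting formulas cleaner and is worth recording.

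The core step is then to define the transferred products $\mu_k^{HA} : (HA)^{\otimes_\kk k} \to HA$ and the morphism components $\cF_k : A^{\otimes_\kk k} \to HA$ by the tree sum formulas: $\mu_k^{HA}$ is a sum over planar binary rooted trees with $k$ leaves, where each leaf carries $\iota$, each internal edge carries $h$, each internal vertex carries $\mu_2$ (only $\mu_2$ appears because $A$ is an ordinary-looking input after $\mu_1$ is split off — more precisely one uses all the $\mu_i$ of $A$, but since here the ambient $A_\infty$-algebra need only be flat, the general tree formula with arbitrary-arity vertices labelled by $\mu_i$ is the thing to write down), and the root carries $\pi$. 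Similarly $\cF_k$ is the analogous sum of trees with the root $\pi$ replaced by $h$ (and $\cF_1 = \iota$). These are finite sums in each arity, so everything is well-defined over $\kk$.

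Next I would verify the two families of identities. First, that $(\mu_k^{HA})$ satisfies the $A_\infty$-relations on $HA$: this is a diagrammatic computation where one groups trees according to their lowest internal edge and uses the side conditions together with the $A_\infty$-relations for $\mu$ on $A$; the telescoping of $\mu_1 h + h\mu_1 = \id - \iota\pi$ is what makes the $\mu_1$-terms collapse. Second, that $(\cF_k)$ is an $A_\infty$-morphism from $(A,\mu)$ to $(HA,\mu^{HA})$, by an entirely parallel tree manipulation. Minimality of $HA$ ($\mu_1^{HA} = \pi \mu_1 \iota = 0$) is immediate since $\iota$ lands in cycles and $\pi$ kills boundaries, and $\cF_1 = \iota$ is by construction a quasi-isomorphism — indeed the identity on homology — so $\cF$ is an $A_\infty$-quasi-isomorphism in the sense defined above.

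The main obstacle is purely bookkeeping: checking that the tree sums satisfy the relations requires a careful sign analysis (Koszul signs from the grading, plus the $2-i$ degree shifts of the $\mu_i$) and a careful argument that the trees organize into cancelling pairs. There is no conceptual difficulty and no infinite process to control, so rather than reproduce the full combinatorial verification I would cite \cite{keller1999introduction,keller2006infinity,kontsevich606241notes} for the detailed homological-perturbation proof and content myself with exhibiting the contraction data and the tree formulas, which is all that the later sections of the paper will actually need.
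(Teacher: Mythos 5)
The paper states this theorem with a citation to Kadeishvili and does not give its own proof, so there is no ``paper's argument'' to compare against; your task is essentially to supply a proof of a cited classical fact, which you do. Your sketch via the homological perturbation lemma with tree-sum formulas is the standard modern proof and is correct in substance: contraction data with side conditions exists over the field $\kk$, the transferred products $\mu_k^{HA}$ are given by the finite sum over rooted planar trees with internal vertices of arity $\ge 2$ labelled by $\mu_i$, internal edges by $h$, leaves by $\iota$, root by $\pi$, and minimality is immediate because $\iota$ lands in cycles and $\pi$ factors through cycles modulo boundaries. You also correct your own momentary slip (``binary trees with $\mu_2$'') to the right general tree formula, which is necessary here since $A$ is only assumed flat, not dg.

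One small point of bookkeeping: as you set it up, with $\cF_1 = \iota$, your morphism runs $HA \to A$, whereas the statement asks for $\cF : A \to HA$, so $\cF_1$ should be $\pi$. This is not a real gap --- quasi-isomorphisms of $A_\infty$-algebras are invertible up to homotopy, so either direction yields the other --- but if you want the literal statement you should either write down the tree formulas for the projection-side morphism (root $\pi$, leaves unmarked, homotopies $h$ on all internal edges including those attached to leaves) or explicitly invoke invertibility of the quasi-isomorphism $\iota_\infty$ you construct. Otherwise the argument is sound and deferring the sign and cancellation bookkeeping to the cited sources is reasonable for a result this classical.
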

The $A_\infty$-algebra $(HA, \mu)$ is called the minimal model of $A$. 

\subsection{Twisted completion}
Analogously to $A_\infty$-algebras we can also define $A_\infty$-categories in such a way that an $A_\infty$-algebra $A$ can be viewed as an $A_\infty$-category with one object for each vertex if we're working with a path algebra with relations.
From an $A_\infty$-algebra $A=\C Q/\cI$ we can define the $A_\infty$-category of twisted objects: $\Tw A$. A \emph{twisted object} \cite{keller1999introduction} is a pair $(M,\delta)$, 
where $M\in \N[\Z^{Q_0}]$ is a formal sum of vertices shifted by elements in $\Z$.
We will write such a sum as $v_1[i_1] \oplus \dots \oplus v_k[i_k]$ where the $v_j$ are vertices and the $i_j$ shifts.
The map $\delta$ is a $k\times k$-matrix with entries $\delta_{st}\in v_{i_s}Av_{i_t}$ 
of degree $i_s-i_t+1$ and subject to the Maurer-Cartan equation: i.e.
\[
 \sum_{n=0}^{\infty}(-1)^{\frac {n(n-1)}2}\mu_n(\delta,\dots,\delta)=0,
\]
where we extended $\genmu_n$ to matrices in the standard way. Note that for this to make sense this infinite sum has to be finite, which can be achieved if $\delta$ is upper triangular
or if all products $\mu_i$ are zero for $i\gg 0$. Therefore if we are not in the latter case we restrict to objects for which $\delta$ is upper triangular.

The homomorphism space between two such objects $(M,\delta)$ and $(M',\delta')$ is given by
\[
\Hom((M,\delta),(M',\delta')) := \bigoplus_{r,s} v'_sA v_r [i_s-i_r]
\]
which we equip with an $\cA_{\infty}$-structure as follows:
\[
 \mu(f_1,\dots,f_n) := \sum_{t=0}^{\infty}\sum_{i_0+\dots+i_n=t} \pm \genmu(\underbrace{\delta,\dots,\delta}_{i_0},f_1,\underbrace{\delta,\dots,\delta}_{i_1},\dots,f_n,\underbrace{\delta,\dots,\delta}_{i_n}).
\]
The $\pm$-sign is calculated by multiplying with a factor $(-1)^{n+t-k}$ for each $\delta$ in the expression on position $k$.
The $\cA_\infty$-category of twisted objects and their homomorphism spaces is denoted by $\Tw A$. Note that the Maurer-Cartan equation implies
that $\mu_0$ is zero, so $\Tw A$ is flat. 

\begin{remark}
If $A$ is an ordinary $\C$-algebra concentrated in degree $0$ 
we can view $\Tw A$ as the dg-category of complexes of finitely generated free $A$-modules.
\end{remark}

\subsection{The derived category}
Because $\Tw A$ is flat, we can construct a category $\Der A$, with the same objects but 
its hom-spaces are the degree zero part of the $\mu_1$-homology of the hom-spaces in $\Tw A$ and the ordinary product is the induced product on the homology. 
We will call this category the derived category of $A$. 
Unlike $\Tw A$, which is an $A_\infty$-category, the derived category is an ordinary category and
it is even triangulated \cite{keller2006infinity}. 

In the light of the minimal model theorem the derived category is minimal model of $\Tw A$ and therefore it is equiped with an additional $A_\infty$-structure $(\Der A,\mu)$. 
Quasi-isomorphic $A_\infty$-algebras will have quasi-equivalent twisted completions and 
therefore also equivalent derived categories.

\begin{remark}
In case that $A$ is an ordinary algebra concentrated in degree zero and every
finitely generated module has a resolution of finitely generated free $A$-modules,
$\Der A$ is equivalent with the bounded derived category of $A$-modules $\mathtt{D}^b\Mod A^{op}$. 
\end{remark}

\begin{remark}
If $A$ is not $\Z$-graded but $\Z/2\Z$-graded or $G$-graded for some other abelian group with a map $\Z\to G$, 
we can adjust our definitions to $G$-graded $A_\infty$-algebras and twisted objects with $G$-shifts. 
The corresponding categories will be denoted by $\Tw_{G}A$ and $\Der_{G}A$. 
\end{remark}

\section{Dimers}\label{sectiondimers}

\subsection{Definition}
A dimer quiver $\qpol$ is a quiver that is embedded in a compact orientable surface, such that the complement of $\qpol$ is a union of polygons bounded by oriented 
cycles of the quiver. These cycles form a set $\qpol_2$ that is split in two $\qpol_2^+\cup \qpol_2^-$ according to their orientation on the surface (anticlockwise vs. clockwise). 
Every arrow is contained in precisely one cycle in $\qpol_2^+$ and one in $\qpol_2^-$. 
Examples of dimers can be found in \ref{lotsofmirrors}.

We denote the compact surface in which $\qpol$ is embedded by $\surf \qpol$. Its Euler characteristic equals 
$\chi(\qpol) = \#\qpol_0 -\#\qpol_1 +\#\qpol_2$. If we remove the vertices from the surface we get a punctured surface, 
which we denote by $\psurf{\qpol}:=   \surf \qpol \setminus \qpol_0$. If we cut out open disks around each vertex we get a surface with boundary which we denote by $\rib{\qpol}$.

\subsection{Perfect matchings and zigzag paths}
A perfect matching of $\qpol$ is a set of arrows $\PM \subset \qpol_1$ such that every cycle in $\qpol_2$ contains exactly one arrow of $\PM$.
Not every dimer admits a perfect matching: a necessary but not sufficient condition is that $\#\qpol_2^+=\#\qpol_2^-$.
Given a perfect matching $\PM$ we can define a degree function $\deg_\PM$ on $\C \qpol$ such that an arrow has degree one if it sits in $\PM$ and zero otherwise.
In this way all cycles in $\qpol_2$ have degree $1$.

The zig ray of an arrow $a \in \qpol_1$ is the infinite path $\dots a_2a_1a_0$ such that $a_{i+1}a_{i}$ is a subpath of a positive cycle if $i$ is even and of a 
negative cycle if $i$ is odd. The zag ray is defined similarly: $a_{i+1}a_{i}$ is a subpath of a positive cycle if $i$ is odd and of a 
negative cycle if $i$ is even. If $\qpol$ is finite the zig and zag rays become cyclic and we call them zigzag cycles.
Every arrow $a$ is contained in two zigzag cycles, one coming from its zig ray and one from its zag ray.

\subsection{Dimer Duality}
Given a dimer $\qpol$ we can construct a new dimer $\polq$. 
Geometricaly we can construct $\polq$ from $\qpol$ by cutting open $|\qpol|$ along the arrows. Then we flip over the polygons that come from negative cycles to their mirror images
and reverse the orientations of the sides (so that there are still oriented clockwise). Finally we glue all polygons together along sides that come from the same arrows. 
In this way we get a new quiver embedded in a new surface with a possibly different topology of the first dimer. 
The new dimer is also called the \emph{untwisted dimer}\cite{feng2008dimer}, the \emph{mirror dimer}\cite{bocklandt2016noncommutative} or the \emph{specular dual}\cite{hanany2012brane}.
\begin{center}
\begin{tikzpicture}
\begin{scope}
\draw (.5,2) node{$\qpol$};
\draw [-latex,shorten >=5pt] (0,0.3) to node [rectangle,draw,fill=white,sloped,inner sep=1pt] {{\tiny x}} (1,0.3);
\draw [-latex,shorten >=5pt] (1,0.3) to node [rectangle,draw,fill=white,sloped,inner sep=1pt] {{\tiny y}} (1,1.3);
\draw [-latex,shorten >=5pt] (1,1.3) to node [rectangle,draw,fill=white,sloped,inner sep=1pt] {{\tiny z}} (0,0.3);
\draw [-latex,shorten >=5pt] (0,0.3) to node [rectangle,draw,fill=white,sloped,inner sep=1pt] {{\tiny y}} (0,1.3);
\draw [-latex,shorten >=5pt] (0,1.3) to node [rectangle,draw,fill=white,sloped,inner sep=1pt] {{\tiny x}} (1,1.3);
\draw (0,0.3) node[circle,draw,fill=white,minimum size=10pt,inner sep=1pt] {{\tiny1}};
\draw (0,1.3) node[circle,draw,fill=white,minimum size=10pt,inner sep=1pt] {{\tiny1}};
\draw (1,1.3) node[circle,draw,fill=white,minimum size=10pt,inner sep=1pt] {{\tiny1}};
\draw (1,0.3) node[circle,draw,fill=white,minimum size=10pt,inner sep=1pt] {{\tiny1}};
\end{scope}

\begin{scope}[xshift=2cm]
\draw (-.5,2) node{cut};
\draw [-latex] (0,0) to node [rectangle,draw,fill=white,sloped,inner sep=1pt] {{\tiny x}} (1,0);
\draw [-latex] (1,0) to node [rectangle,draw,fill=white,sloped,inner sep=1pt] {{\tiny y}} (1,1);
\draw [-latex] (1,1) to node [rectangle,draw,fill=white,sloped,inner sep=1pt] {{\tiny z}} (0,0);

\draw [-latex] (1,1.3) to node [rectangle,draw,fill=white,sloped,inner sep=1pt] {{\tiny z}} (0,0.3);
\draw [-latex] (0,0.3) to node [rectangle,draw,fill=white,sloped,inner sep=1pt] {{\tiny y}} (0,1.3);
\draw [-latex] (0,1.3) to node [rectangle,draw,fill=white,sloped,inner sep=1pt] {{\tiny x}} (1,1.3);
\end{scope}

\begin{scope}[xshift=4cm]
\draw (-.5,2) node{flip};
\draw [-latex] (0,0) to node [rectangle,draw,fill=white,sloped,inner sep=1pt] {{\tiny x}} (1,0);
\draw [-latex] (1,0) to node [rectangle,draw,fill=white,sloped,inner sep=1pt] {{\tiny y}} (1,1);
\draw [-latex] (1,1) to node [rectangle,draw,fill=white,sloped,inner sep=1pt] {{\tiny z}} (0,0);

\draw [-latex] (1,1.3) to node [rectangle,draw,fill=white,sloped,inner sep=1pt] {{\tiny z}} (0,0.3);
\draw [-latex] (0,0.3) to node [rectangle,draw,fill=white,sloped,inner sep=1pt] {{\tiny x}} (0,1.3);
\draw [-latex] (0,1.3) to node [rectangle,draw,fill=white,sloped,inner sep=1pt] {{\tiny y}} (1,1.3);
\end{scope}

\begin{scope}[xshift=6cm]
\draw (-.5,2) node{glue};
\draw (.5,2) node{$\polq$};
\draw [-latex,shorten >=5pt] (0,0.3) to node [rectangle,draw,fill=white,sloped,inner sep=1pt] {{\tiny x}} (1,0.3);
\draw [-latex,shorten >=5pt] (1,0.3) to node [rectangle,draw,fill=white,sloped,inner sep=1pt] {{\tiny y}} (1,1.3);
\draw [-latex,shorten >=5pt] (1,1.3) to node [rectangle,draw,fill=white,sloped,inner sep=1pt] {{\tiny z}} (0,0.3);
\draw [-latex,shorten >=5pt] (0,0.3) to node [rectangle,draw,fill=white,sloped,inner sep=1pt] {{\tiny x}} (0,1.3);
\draw [-latex,shorten >=5pt] (0,1.3) to node [rectangle,draw,fill=white,sloped,inner sep=1pt] {{\tiny y}} (1,1.3);
\draw (0,0.3) node[circle,draw,fill=white,minimum size=10pt,inner sep=1pt] {{\tiny1}};
\draw (0,1.3) node[circle,draw,fill=white,minimum size=10pt,inner sep=1pt] {{\tiny3}};
\draw (1,1.3) node[circle,draw,fill=white,minimum size=10pt,inner sep=1pt] {{\tiny2}};
\draw (1,0.3) node[circle,draw,fill=white,minimum size=10pt,inner sep=1pt] {{\tiny3}};
\end{scope}

\end{tikzpicture}
\end{center}

From the geometric construction it is easy to see that it forms an involution on the set of dimers. An interesting observation
is that this involution induces a bijection between the set of perfect matchings of $\qpol$ and $\polq$. 
Moreover the vertices of $\polq$ are in one to one correspondence to the zigzag cycles in $\qpol$ and vice versa.

\subsection{Examples}\label{lotsofmirrors}
We illustrate this with some extra examples from \cite{bocklandt2016noncommutative}:
\begin{center}
\resizebox{10cm}{!}{
\begin{tabular}{ccccc}
$\qpol$&$\vcenter{
\xymatrix@C=.75cm@R=.75cm{
\vtx{1}\ar[r]_{a}&\vtx{2}\ar[d]|z&\vtx{1}\ar[l]^b\\
\vtx{3}\ar[u]_c\ar[d]^d&\vtx{4}\ar[l]|w\ar[r]|y&\vtx{3}\ar[u]^c\ar[d]_d\\
\vtx{1}\ar[r]^{a}&\vtx{2}\ar[u]|x&\vtx{1}\ar[l]_b
}}$
&
$\vcenter{
\xymatrix@C=.4cm@R=.75cm{
\vtx{1}\ar[rrr]_a\ar[dr]|{u_1}&&&\vtx{1}\ar[ld]|z\\
&\vtx{3}\ar[r]|y\ar[ld]|x&\vtx{2}\ar[ull]|{u_2}\ar[dr]|{v_2}&\\
\vtx{1}\ar[rrr]^a\ar[uu]_b&&&\vtx{1}\ar[ull]|{v_1}\ar[uu]^b
}}$
&
$\vcenter{
\xymatrix@C=.75cm@R=.75cm{
\vtx{1}\ar[r]_{a}\ar[d]^{b}&\vtx{1}\ar[r]_b&\vtx{1}\ar[d]_c\\
\vtx{1}\ar[d]^a&&\vtx{1}\ar[d]_d\\	
\vtx{1}\ar[r]^{d}&\vtx{1}\ar[r]^c&\vtx{1}\ar[uull]|x
}}$
&
$\vcenter{
\xymatrix@C=.4cm@R=.75cm{
\vtx{1}\ar[dd]\ar[rrr]&&&\vtx{2}\ar[dll]\ar@{.>}[dl]\\
&\vtx{5}\ar[ul]\ar[drr]&\vtx{6}\ar@{.>}[ull]\ar@{.>}[dr]&\\
\vtx{4}\ar[ur]\ar@{.>}[urr]&&&\vtx{3}\ar[uu]\ar[lll]
}}$
\vspace{.5cm}
\\
$\polq$&
$\vcenter{
\xymatrix@C=.75cm@R=.75cm{
\vtx{1}\ar[r]_{a}&\vtx{2}\ar[d]|c&\vtx{1}\ar[l]^y\\
\vtx{3}\ar[u]_z\ar[d]^d&\vtx{4}\ar[l]|w\ar[r]|b&\vtx{3}\ar[u]^z\ar[d]_d\\
\vtx{1}\ar[r]^{a}&\vtx{2}\ar[u]|x&\vtx{1}\ar[l]_y
}}$
&
$\vcenter{
\xymatrix@C=.4cm@R=.75cm{
\vtx{1}\ar[rrr]_z\ar[dr]|{u_1}&&&\vtx{1}\ar[ld]|a\\
&\vtx{3}\ar[r]|y\ar[ld]|b&\vtx{2}\ar[ull]|{u_2}\ar[dr]|{v_1}&\\
\vtx{1}\ar[rrr]^z\ar[uu]_x&&&\vtx{1}\ar[ull]|{v_2}\ar[uu]^x
}}$
&
$\vcenter{
\xymatrix@C=.75cm@R=.75cm{
\vtx{1}\ar[r]_{a}&\vtx{2}\ar[r]_b&\vtx{1}\ar[ld]|c\\
&\vtx{3}\ar[ld]|d&\\	
\vtx{1}\ar[uu]_x\ar[r]^{a}&\vtx{2}\ar[r]^b&\vtx{1}\ar[uu]^x
}}$
&
$\vcenter{
\xymatrix@C=.75cm@R=.75cm{
\vtx{1}\ar[r]&\vtx{2}\ar[r]\ar[ld]&\vtx{1}\ar[ld]\\
\vtx{3}\ar[r]\ar[u]&\vtx{4}\ar[r]\ar[u]\ar[ld]&\vtx{3}\ar[u]\ar[ld]\\
\vtx{1}\ar[r]\ar[u]&\vtx{2}\ar[r]\ar[u]&\vtx{1}\ar[u]
}}$
\end{tabular}}
\end{center}
On the top row the first two quivers are embedded in a torus, the third in a surface with genus $2$ and the fourth in a sphere.
The mirrors on the bottom row are all embedded in a torus.
Note that the first $2$ are isomorphic to their mirrors, but in a nontrivial way.

\subsection{Consistency}
A dimer is called \emph{zigzag consistent} if in the universal cover the zig and the zag ray of an arrow $a$ do not have arrows in common apart from $a$. In the example above the first and third from the $\qpol$-row are consistent.
The second is not consistent because the zig and zag ray from $x$ both contain $z$.
In the $\polq$-row the first and fourth are consistent. 

\begin{remark}
Many different versions of consistency can be found in the literature \cite{broomhead2012dimer, gulotta2008properly, ishii2010note, bocklandt2016dimer,mozgovoy2009crepant,davison2011consistency}. 
For dimers on the torus these are equivalent to the notion of zigzag consistency. For more information about these equivalences we refer to \cite{bocklandt2016dimer,ishii2010note}. 
\end{remark}

Zigzag consistent dimers have nice properties, especially when they are embedded in a torus. 
Suppose that $\surf{\qpol}$ is torus and fix two cyclic paths $p_X,p_Y$ in $\qpol$ that span the homology of the torus. We can assign to each 
perfect matching a lattice point $$(\deg_\PM p_X,\deg_\PM p_Y) \in \Z^2.$$ 
These lattice points span a convex polygon which is called the \emph{matching polygon} $\MP(\qpol)$.
Note that it depends on the choice of $p_X,p_Y$ but different choices will result in polygons that are equal up to an affine integral transformation.

\begin{theorem}\label{matchingconsistent}
If $\qpol$ is a zigzag consistent dimer on a torus then
\begin{enumerate}
 \item On each lattice point of the matching polygon there is at least one perfect matching.
 \item On each corner of the matching polygon there is exactly one perfect matching.
 \item There is a one-to-one correspondence between the zigzag cycles and the outward pointing normal vectors of the zigzag polygon.
\end{enumerate}
\end{theorem}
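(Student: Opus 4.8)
The plan is to read all three statements off two combinatorial mechanisms that zigzag consistency makes available, both recorded in $H_1(\surf{\qpol},\Z)\cong\Z^2$ via the chosen basis $p_X,p_Y$. I identify a perfect matching $\PM\subset\qpol_1$ with a perfect matching of the dual bipartite graph $\Gamma$ (one white vertex per cycle in $\qpol_2^+$, one black vertex per cycle in $\qpol_2^-$, one edge per arrow of $\qpol$), and write $m_\PM=(\deg_\PM p_X,\deg_\PM p_Y)$. \emph{Mechanism (a), differences of matchings.} Fix a perfect matching $\PM_0$ (one exists, \cite{broomhead2012dimer,ishii2010note}). For any $\PM$, the symmetric difference $\PM\triangle\PM_0\subseteq\Gamma$ is a disjoint union of simple closed curves on $\surf{\qpol}$, each carrying a canonical orientation from the alternation of $\PM$- and $\PM_0$-edges. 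The first task is to show, from zigzag consistency, that the essential ones are not merely parallel (automatic on a torus) but coherently oriented, so that they represent a single primitive class $w\in\Z^2$; reading $\deg_\PM-\deg_{\PM_0}$ on $p_X,p_Y$ as intersection numbers with these curves then gives $m_\PM-m_{\PM_0}=k\,w$ with $k\ge 0$ the number of these curves. Since the curves are disjoint, switching $\PM_0$ to $\PM$ along any $j\le k$ of them again produces a perfect matching, with lattice point $m_{\PM_0}+j\,w$; hence every lattice point on a segment between two realized points that is parallel to such a difference class is itself realized.

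\emph{Mechanism (b), zigzag cycles.} By the definition of consistency each zigzag cycle $Z$ lifts to an embedded bi-infinite path in the universal cover, so it has a primitive class $[Z]\in\Z^2$; let $u_Z$ be a primitive vector orthogonal to $[Z]$. Along one period of $Z$, which alternates arrows of $\qpol_2^+$-cycles and $\qpol_2^-$-cycles, $\PM$ selects one arrow from each cycle met, and counting those selected arrows that lie on a fixed side of $Z$ defines an integer $f_Z(\PM)$ with $0\le f_Z(\PM)\le|Z|$ and $f_Z(\PM)-f_Z(\PM')=\langle u_Z,m_\PM-m_{\PM'}\rangle$. Thus $\langle u_Z,\cdot\rangle$ is bounded on $\MP(\qpol)$, its maximizers are exactly the matchings with $f_Z$ maximal, and by (a) two such maximizers differ only by $\PM$-differences parallel to $Z$, so their lattice points fill a whole edge of $\MP(\qpol)$ with direction $[Z]$, which is nonempty. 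For statement (3) it then remains to see that the $[Z]$ account for every edge: consistency forces any two zigzag cycles with non-parallel classes to cross coherently in the universal cover, putting the primitive classes $[Z]$ in convex position, and $\sum_Z[Z]=0$ (each arrow lies on one zig and one zag and each positive cycle bounds a face); ordered by slope the $[Z]$ are therefore exactly the primitive edge vectors of a convex lattice polygon, which one checks, via the edge lengths produced by $f_Z$, to be $\MP(\qpol)$ up to lattice isomorphism (this is the zigzag polygon). An edge of lattice length $k$ thus accounts for exactly $k$ zigzag cycles, all with the same primitive class, and the claimed bijection with outward normals follows.

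For statement (2): if $\PM\ne\PM'$ both lie on a corner $m$ of $\MP(\qpol)$ then $m_\PM-m_{\PM'}=0$, so by (a) every curve of $\PM\triangle\PM'$ is null-homotopic and bounds a disk; using that at $m$ the zigzag cycles running along the two edges of $\MP(\qpol)$ meeting at $m$ have their $f_Z$ simultaneously maximal, I would show the arrows forced into and out of a matching by these two maximality conditions already pin it down, so no such disk — hence no such $\PM'$ — can exist and the corner matching is unique. For statement (1): every corner is realized by (2), every edge is realized by (3) through (a) and (b), and the switching step of (a) moves between realized points along difference directions. The step I expect to be the genuine obstacle is pushing this to \emph{every} interior lattice point — the saturation of the matching polygon — since switching only moves along difference directions and these need not obviously reach the interior. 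The clean way through is to invoke the torus-equivalent reformulations of zigzag consistency referred to in the remark after the definition — geometric consistency in the sense of Gulotta \cite{gulotta2008properly}, or minimality of $\Gamma$ (see \cite{bocklandt2016dimer,ishii2010note}) — under which saturation of the set of perfect-matching classes is available; granting it, statements (2) and (3) follow from mechanisms (a) and (b) with only the bookkeeping indicated above.
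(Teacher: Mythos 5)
The paper does not give its own proof: the remark after Theorem~\ref{genusmirror} records both this theorem and Theorem~\ref{genusmirror} as standard and defers to \cite{ishii2009dimer,mozgovoy2010noncommutative,bocklandt2016dimer}. So there is no in-paper argument to compare against, and the question is whether your sketch would close.

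The step that actually matters is the coherent-orientation assertion at the heart of mechanism (a): that for a zigzag-consistent dimer, all essential curves of $\PM\triangle\PM'$, with their canonical alternating orientations, carry one and the same primitive class. You give no argument for this, and I do not see why it should hold for arbitrary pairs: these curves are the contours of a height function whose slope is $m_\PM-m_{\PM'}$, and a function of prescribed slope is free to oscillate in the transverse direction, producing essential contours in both orientations whose classes cancel; zigzag consistency is a condition on zigzag rays and does not obviously prohibit this. The damage is concrete. Your proof of statement~(2) opens ``by (a) every curve of $\PM\triangle\PM'$ is null-homotopic,'' and when $m_\PM=m_{\PM'}$ that is precisely the case where a $+w$ curve paired with a $-w$ curve is the threat, so the conclusion does not follow without the coherence you have not established. (Interestingly, the later use of (a) --- reaching intermediate lattice points of a segment by partial flips --- does \emph{not} need coherence: the subset sums of the signs $\epsilon_i\in\{\pm1\}$ fill an interval of integers containing $\{0,\dots,k\}$, so that step survives anyway.) The remaining pieces you leave open yourself --- saturating interior lattice points in (1), and the pinning of the corner matching by simultaneous $f_Z$-maximality in (2) --- and propose to import from the consistency-equivalence literature, which mirrors the paper's own move and is fair; but the coherent-orientation step as written is a substantive error rather than an acknowledged omission, and the proof of (2) needs either a proof of it or a route through the extremal-zigzag constraints that bypasses the disk argument entirely.
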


This theorem can also be translated to some properties of the mirror dimer
\begin{theorem}\label{genusmirror}
If $\qpol$ is a zigzag consistent dimer on a torus then
\begin{enumerate}
\item The genus of $\surf{\polq}$ is equal to the internal lattice points of the matching polygon.
\item The number of punctures $\psurf{\polq}$ is equal to the number of bondary lattice points of the matching polygon.
\end{enumerate}
\end{theorem}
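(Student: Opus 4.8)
The plan is to deduce both statements from one Euler--characteristic computation, together with the combinatorial input of Theorem~\ref{matchingconsistent} and Pick's theorem. First I would record what the cut--flip--glue construction of $\polq$ does on the level of cells. Cutting $\surf{\qpol}$ along all arrows disconnects it into one polygon for each cycle of $\qpol_2$; flipping the negative polygons changes orientations but not incidences; and regluing identifies, for each arrow, the two sides on which it appears (one from a positive and one from a negative cycle). Hence $\#\polq_1=\#\qpol_1$ and $\#\polq_2=\#\qpol_2$, while only the vertex pattern is genuinely rearranged, and by the dimer duality recalled above $\#\polq_0$ equals the number $z$ of zigzag cycles of $\qpol$. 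Since $\surf{\qpol}$ is a torus, $\chi(\surf{\qpol})=\#\qpol_0-\#\qpol_1+\#\qpol_2=0$, so
\[
\chi(\surf{\polq})=\#\polq_0-\#\polq_1+\#\polq_2=z-\#\qpol_1+\#\qpol_2=z-\#\qpol_0 .
\]
As the punctures of $\psurf{\polq}$ are exactly the removed vertices of $\polq$, this already gives that their number is $z$, and, writing $g$ for the genus of $\surf{\polq}$, that $g=1+\tfrac12(\#\qpol_0-z)$ (the quantity $z-\#\qpol_0$ being automatically even).

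Next I would identify $z$ with lattice data of $\MP(\qpol)$. By Theorem~\ref{matchingconsistent}(3) the zigzag cycles are in bijection with the outward normal vectors of the zigzag polygon, and the zigzag polygon is lattice-equivalent to the matching polygon (its boundary being traced, in cyclic angular order, by the zigzag homology classes, each primitive by consistency). Since for any closed lattice polygon the number of primitive boundary segments equals the number of boundary lattice points, we get $z=B$, the number of boundary lattice points of $\MP(\qpol)$. This is statement (2).

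For statement (1) I would feed in the standard fact that, for a zigzag consistent torus dimer, the number of quiver vertices equals the normalized area of the matching polygon, $\#\qpol_0=2\,\mathrm{Area}(\MP(\qpol))=:2A$ (equivalently, $\qpol$ presents an NCCR of the associated toric threefold and $\#\qpol_0$ is the Euler number of a crepant resolution). Combining $z=B$ and $\#\qpol_0=2A$ with the formula $g=1+\tfrac12(\#\qpol_0-z)$ and with Pick's theorem $A=I+\tfrac{B}{2}-1$ gives $g=1+A-\tfrac{B}{2}=I$, the number of interior lattice points of $\MP(\qpol)$, which is statement (1).

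The Euler-characteristic bookkeeping is routine; the main obstacle is the two imported identities. If one prefers not to cite them, $z=B$ can be obtained directly by showing that the multiset of zigzag homology classes, sorted by angle, traces out a rotated copy of $\partial\MP(\qpol)$ --- using that consistency forces each zigzag class to be primitive and that the number of zigzag cycles with a given direction $v$ equals the lattice length of the edge of $\MP(\qpol)$ with outer normal $v$ --- while $\#\qpol_0=2A$ can be extracted from a perfect-matching count fibered over the lattice points of $\MP(\qpol)$ that refines Theorem~\ref{matchingconsistent}(1)--(2). I expect establishing the edge-multiplicity statement for the zigzags to be the most delicate point.
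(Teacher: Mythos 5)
The paper does not give its own proof of Theorem~\ref{genusmirror}; the remark immediately following it refers to the literature, so there is no in-paper argument to compare against. Your Euler-characteristic derivation is a valid route: the cut--flip--glue construction preserves edge and face counts with $\#\polq_0=z$, giving $\chi(\surf{\polq})=z-\#\qpol_0$ since $\chi(\surf{\qpol})=0$, and combining $z=B$, $\#\qpol_0=2\,\mathrm{Area}(\MP(\qpol))$, and Pick's theorem gives both items.

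Two points deserve care. First, Theorem~\ref{matchingconsistent}(3) as literally written is ambiguous: a ``one-to-one correspondence between the zigzag cycles and the outward pointing normal vectors'' could be read as one zigzag per edge (yielding $z=$ number of edges) or one per primitive boundary segment (yielding $z=B$). You need the latter, and you correctly flag the required edge-multiplicity statement --- that the number of zigzags with a given primitive homology class equals the lattice length of the corresponding edge --- as the delicate part. Second, the identity $\#\qpol_0=2\,\mathrm{Area}(\MP(\qpol))$ is not established anywhere in this paper and does not follow from Theorem~\ref{matchingconsistent}, which only controls perfect-matching counts at lattice points, not the vertex count of $\qpol$. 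You acknowledge importing it, but it is a substantial fact for which you should give a precise citation; once you lean on it, your argument rests on the same kind of external input as the paper's own treatment.
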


\begin{remark}
These two theorems are known by experts and appear in many different disguises in the literature e.g \cite{ishii2009dimer, mozgovoy2010noncommutative}. For a proof of these we refer to \cite{bocklandt2016dimer}.
\end{remark}

\subsection{Example}
The suspended pinchpoint \cite{franco2006brane}[section 4.1] is an example of a dimer model on the torus. Below we show the dimer $\qpol$ and its mirror $\polq$ (see also \cite{bocklandt2016dimer}). 

\begin{center}
\begin{tikzpicture}
\begin{scope}[scale=.75]
\draw (1.5,3.5) node{$\qpol$};
\draw [-latex,shorten >=5pt] (0,0) to node [rectangle,draw,fill=white,sloped,inner sep=1pt] {{\tiny x}} (3,0);
\draw [-latex,shorten >=5pt] (3,0) to node [rectangle,draw,fill=white,sloped,inner sep=1pt] {{\tiny u}} (3,1);
\draw [-latex,shorten >=5pt] (3,1) -- (0,0);
\draw [-latex,shorten >=5pt] (0,0) to node [rectangle,draw,fill=white,sloped,inner sep=1pt] {{\tiny u}} (0,1);
\draw [-latex,shorten >=5pt] (0,1) -- (3,2);
\draw [-latex,shorten >=5pt] (3,2) to node [rectangle,draw,fill=white,sloped,inner sep=1pt] {{\tiny w}} (3,3);
\draw [-latex,shorten >=5pt] (3,2) to node [rectangle,draw,fill=white,sloped,inner sep=1pt] {{\tiny v}} (3,1);
\draw [-latex,shorten >=5pt] (0,2) to node [rectangle,draw,fill=white,sloped,inner sep=1pt] {{\tiny v}} (0,1);
\draw [-latex,shorten >=5pt] (0,3) to node [rectangle,draw,fill=white,sloped,inner sep=1pt] {{\tiny x}} (3,3);
\draw [-latex,shorten >=5pt] (3,3) -- (0,2);
\draw [-latex,shorten >=5pt] (0,2) to node [rectangle,draw,fill=white,sloped,inner sep=1pt] {{\tiny w}} (0,3);
\draw (0,0) node[circle,draw,fill=white,minimum size=10pt,inner sep=1pt] {{\tiny1}};
\draw (0,1) node[circle,draw,fill=white,minimum size=10pt,inner sep=1pt] {{\tiny2}};
\draw (0,2) node[circle,draw,fill=white,minimum size=10pt,inner sep=1pt] {{\tiny3}};
\draw (0,3) node[circle,draw,fill=white,minimum size=10pt,inner sep=1pt] {{\tiny1}};
\draw (3,0) node[circle,draw,fill=white,minimum size=10pt,inner sep=1pt] {{\tiny1}};
\draw (3,1) node[circle,draw,fill=white,minimum size=10pt,inner sep=1pt] {{\tiny2}};
\draw (3,2) node[circle,draw,fill=white,minimum size=10pt,inner sep=1pt] {{\tiny3}};
\draw (3,3) node[circle,draw,fill=white,minimum size=10pt,inner sep=1pt] {{\tiny1}};
\end{scope}

\begin{scope}[xshift=6cm]
\begin{scope}[scale=.75]
\draw (1.5,3.5) node{$\polq$};
\draw [-latex,shorten >=5pt] (0,.5) to node [rectangle,draw,fill=white,sloped,inner sep=1pt] {{\tiny x}} (3,-.5);
\draw [-latex,shorten >=5pt] (3,-.5) to node [rectangle,draw,fill=white,sloped,inner sep=1pt] {{\tiny u}} (3,.5);
\draw [-latex,shorten >=5pt] (3,.5) -- (0,.5);
\draw [-latex,shorten >=5pt] (0,.5) to node [rectangle,draw,fill=white,sloped,inner sep=1pt] {{\tiny v}} (0,1.5);
\draw [-latex,shorten >=5pt] (0,2.5) to node [rectangle,draw,fill=white,sloped,inner sep=1pt] {{\tiny v}} (0,1.5);
\draw [-latex,shorten >=5pt] (3,1.5) to node [rectangle,draw,fill=white,sloped,inner sep=1pt] {{\tiny u}} (3,.5);
\draw [-latex,shorten >=5pt] (3,1.5) to node [rectangle,draw,fill=white,sloped,inner sep=1pt] {{\tiny w}} (3,2.5);
\draw [-latex,shorten >=5pt] (0,1.5) -- (3,1.5);
\draw [-latex,shorten >=5pt] (3,2.5) -- (0,2.5);
\draw [-latex,shorten >=5pt] (0,2.5) to node [rectangle,draw,fill=white,sloped,inner sep=1pt] {{\tiny x}} (3,3.5);
\draw [-latex,shorten >=5pt] (3,3.5) to node [rectangle,draw,fill=white,sloped,inner sep=1pt] {{\tiny w}} (3,2.5);
\draw (0,.5) node[circle,draw,fill=white,minimum size=10pt,inner sep=1pt] {{\tiny d}};
\draw (0,2.5) node[circle,draw,fill=white,minimum size=10pt,inner sep=1pt] {{\tiny d}};
\draw (0,1.5) node[circle,draw,fill=white,minimum size=10pt,inner sep=1pt] {{\tiny e}};
\draw (3,.5) node[circle,draw,fill=white,minimum size=10pt,inner sep=1pt] {{\tiny c}};
\draw (3,2.5) node[circle,draw,fill=white,minimum size=10pt,inner sep=1pt] {{\tiny b}};
\draw (3,1.5) node[circle,draw,fill=white,minimum size=10pt,inner sep=1pt] {{\tiny a}};
\draw (3,-.5) node[circle,draw,fill=white,minimum size=10pt,inner sep=1pt] {{\tiny a}};
\draw (3,3.5) node[circle,draw,fill=white,minimum size=10pt,inner sep=1pt] {{\tiny a}};
\end{scope}
\end{scope}
\end{tikzpicture}
\end{center}

There are five zigzag paths in $\qpol$ with homology classes normal to the five line segments on the boundary. 
There are six perfect matchings, one for each corner and two on the lattice point that is not a corner.
\begin{center}
\begin{tikzpicture}
\draw[thick](0,0) -- (0,2) -- (1,0) -- (1,-1) -- (0,0); 
\draw (0,0) node{$\bullet$};
\draw (0,2) node{$\bullet$};
\draw (0,1) node{$\bullet$};
\draw (1,0) node{$\bullet$};
\draw (1,-1) node{$\bullet$};
\draw[-latex] (0,.5)--(-.5,.5);
\draw[-latex] (0,1.5)--(-.5,1.5);
\draw[-latex] (.5,1)--(1.5,1.5);
\draw[-latex] (1,-.5)--(1.5,-.5);
\draw[-latex] (.5,-.5)--(0,-1);
\begin{scope}[xshift=-1.5cm]
\begin{scope}[scale=.25]
\draw [dotted] (0,0) -- (3,0) -- (3,3) -- (0,3);
\draw [dotted] (0,0) -- (3,1) -- (3,2) -- (0,1)--(0,2)--(3,3);
\draw [thick,-latex] (3,0) -- (3,1)--(0,0);
\end{scope}
\end{scope}
\begin{scope}[xshift=-1.5cm,yshift=1cm]
\begin{scope}[scale=.25]
\draw [dotted] (0,0) -- (3,0) -- (3,3) -- (0,3);
\draw [dotted] (0,0) -- (3,1) -- (3,2) -- (0,1)--(0,2)--(3,3);
\draw [thick,-latex] (3,2) -- (3,3)--(0,2);
\end{scope}
\end{scope}
\begin{scope}[xshift=1.75cm,yshift=-1cm]
\begin{scope}[scale=.25]
\draw [dotted] (0,0) -- (3,0) -- (3,3) -- (0,3);
\draw [dotted] (0,0) -- (3,1) -- (3,2) -- (0,1)--(0,2)--(3,3);
\draw [thick,-latex] (0,1) -- (3,2)--(3,1);
\end{scope}
\end{scope}
\begin{scope}[xshift=-1.75cm,yshift=-1.5cm]
\begin{scope}[scale=.25]
\draw [dotted] (0,0) -- (3,0) -- (3,3) -- (0,3);
\draw [dotted] (0,0) -- (3,1) -- (3,2) -- (0,1)--(0,2)--(3,3);
\draw [dotted] (3,0) -- (6,0) -- (6,3) -- (3,3);
\draw [dotted] (3,0) -- (6,1) -- (6,2) -- (3,1)--(3,2)--(6,3);
\draw [thick,-latex] (6,3) -- (3,2)--(3,1)--(0,0)--(3,0);
\end{scope}
\end{scope}
\begin{scope}[xshift=1.75cm,yshift=1cm]
\begin{scope}[scale=.25]
\draw [dotted] (0,0) -- (3,0) -- (3,3) -- (0,3);
\draw [dotted] (0,0) -- (3,1) -- (3,2) -- (0,1)--(0,2)--(3,3);
\draw [dotted] (3,0) -- (6,0) -- (6,3) -- (3,3);
\draw [dotted] (3,0) -- (6,1) -- (6,2) -- (3,1)--(3,2)--(6,3);
\draw [thick,-latex] (0,0) -- (3,0)--(3,1)--(6,2)--(6,3);
\end{scope}
\end{scope}

\begin{scope}[xshift=6cm]
\draw[thick](0,0) -- (0,2) -- (1,0) -- (1,-1) -- (0,0); 
\draw (0,0) node{$\bullet$};
\draw (0,2) node{$\bullet$};
\draw (0,1) node{$\bullet$};
\draw (1,0) node{$\bullet$};
\draw (1,-1) node{$\bullet$};

\begin{scope}[xshift=-1cm,yshift=1.5 cm]
\begin{scope}[scale=.25]
\draw [dotted] (0,0) -- (3,0) -- (3,3) -- (0,3);
\draw [dotted] (0,0) -- (3,1) -- (3,2) -- (0,1)--(0,2)--(3,3);
\draw [thick] (0,0) -- (0,1);
\draw [thick] (3,0) -- (3,1);
\draw [thick] (0,2) -- (0,3);
\draw [thick] (3,2) -- (3,3);
\end{scope}
\end{scope}

\begin{scope}[xshift=-2cm,yshift=.5 cm]
\begin{scope}[scale=.25]
\draw [dotted] (0,0) -- (3,0) -- (3,3) -- (0,3);
\draw [dotted] (0,0) -- (3,1) -- (3,2) -- (0,1)--(0,2)--(3,3);
\draw [thick] (0,0) -- (3,1);
\draw [thick] (0,2) -- (0,3);
\draw [thick] (3,2) -- (3,3);
\end{scope}
\end{scope}

\begin{scope}[xshift=-1cm,yshift=.5 cm]
\begin{scope}[scale=.25]
\draw [dotted] (0,0) -- (3,0) -- (3,3) -- (0,3);
\draw [dotted] (0,0) -- (3,1) -- (3,2) -- (0,1)--(0,2)--(3,3);
\draw [thick] (0,0) -- (0,1);
\draw [thick] (3,0) -- (3,1);
\draw [thick] (0,2) -- (3,3);
\end{scope}
\end{scope}

\begin{scope}[xshift=-1cm,yshift=-.5cm]
\begin{scope}[scale=.25]
\draw [dotted] (0,0) -- (3,0) -- (3,3) -- (0,3);
\draw [dotted] (0,0) -- (3,1) -- (3,2) -- (0,1)--(0,2)--(3,3);
\draw [thick] (0,0) -- (3,1);
\draw [thick] (0,2) -- (3,3);
\end{scope}
\end{scope}

\begin{scope}[xshift=1.5cm,yshift=-1.5cm]
\begin{scope}[scale=.25]
\draw [dotted] (0,0) -- (3,0) -- (3,3) -- (0,3);
\draw [dotted] (0,0) -- (3,1) -- (3,2) -- (0,1)--(0,2)--(3,3);
\draw [thick] (0,1) -- (0,2);
\draw [thick] (3,1) -- (3,2);
\draw [thick] (0,0) -- (3,0);
\draw [thick] (0,3) -- (3,3);
\end{scope}
\end{scope}

\begin{scope}[xshift=1.5cm,yshift=-.5cm]
\begin{scope}[scale=.25]
\draw [dotted] (0,0) -- (3,0) -- (3,3) -- (0,3);
\draw [dotted] (0,0) -- (3,1) -- (3,2) -- (0,1)--(0,2)--(3,3);
\draw [thick] (0,1) -- (3,2);
\draw [thick] (0,0) -- (3,0);
\draw [thick] (0,3) -- (3,3);
\end{scope}
\end{scope}
\end{scope}
\end{tikzpicture}
\end{center}
As there are no internal lattice points and 5 lattice points on the boundary, the mirror dimer describes a sphere with 5 punctures.
This can also be seen by gluing the dimer on the right together.

\section{Mirror Symmetry for dimers}\label{sectionmirror}
\subsection{The A-model}

For a dimer $\qpol$ we define a second quiver $\RQ$: its vertices are the midpoints of the arrows of $\qpol$ and its arrows are angle arcs inside
the polygons that connect these arrows in a clockwise fashion. 

From this new quiver we can define an $A_\infty$-algebra $\Gtl(\qpol)$.
As an ordinary algebra $A =\Gtl(\qpol)$ is the path algebra $\C \RQ$ modulo the relations of the form $\alpha\beta$ where $\alpha$ and $\beta$ are two consecutive
angle arcs in a cycle $c \in \qpol_2$. 
\begin{example}\label{torus1}
Below is an example for a torus with one marked point.
\begin{center}
\begin{tikzpicture}
\begin{scope}[scale=1.5]
\draw (.5,1.6) node{$\qpol$};
\draw [-latex] (0,0.3) to node [fill=white,sloped,inner sep=1pt] {{\tiny 1}} (1,0.3);
\draw [-latex] (1,0.3) to node [fill=white,sloped,inner sep=1pt] {{\tiny 2}} (1,1.3);
\draw [-latex] (1,1.3) to node [fill=white,sloped,inner sep=1pt] {{\tiny 3}} (0,0.3);
\draw [-latex] (0,0.3) to node [fill=white,sloped,inner sep=1pt] {{\tiny 2}} (0,1.3);
\draw [-latex] (0,1.3) to node [fill=white,sloped,inner sep=1pt] {{\tiny 1}} (1,1.3);
\draw (0,0.3) node {{$\bullet$}};
\draw (0,1.3) node {{$\bullet$}};
\draw (1,1.3) node {{$\bullet$}};
\draw (1,0.3) node {{$\bullet$}};
\draw (.7,0.6) node {{$c_1$}};
\draw (.3,1) node {{$c_2$}};
\end{scope}

\begin{scope}[xshift=3cm]
\begin{scope}[scale=1.5]
\draw [-latex,shorten >=5pt] (.5,0.8) -- (0,0.8);
\draw [-latex,shorten >=5pt] (.5,0.8) -- (1,0.8);
\draw [-latex,shorten >=5pt] (.5,.3)--(.5,0.8);
\draw [-latex,shorten >=5pt] (.5,1.3)--(.5,0.8);
\draw [-latex,shorten >=5pt] (0,0.8) -- (.5,1.3);
\draw [-latex,shorten >=5pt] (1,0.8) -- (.5,0.3);
\draw [dotted] (0,0.3) -- (1,0.3);
\draw [dotted] (1,0.3) -- (1,1.3);
\draw [dotted] (1,1.3) --  (0,0.3);
\draw [dotted] (0,0.3) -- (0,1.3);
\draw [dotted] (0,1.3) -- (1,1.3);
\draw (1,0.8) node [circle,draw,fill=white,sloped,inner sep=1pt] {{\tiny 2}};
\draw (0,0.8) node [circle,draw,fill=white,sloped,inner sep=1pt] {{\tiny 2}};
\draw (.5,0.8) node [circle,draw,fill=white,sloped,inner sep=1pt] {{\tiny 3}};
\draw (0.5,0.3) node [circle,draw,fill=white,sloped,inner sep=1pt] {{\tiny 1}};
\draw (0.5,1.3) node [circle,draw,fill=white,sloped,inner sep=1pt] {{\tiny 1}};
\end{scope}
\end{scope}

\begin{scope}[xshift=6.5cm]
\draw (.5,2.4) node{$\RQ$};
\draw (.5,1.2) node{$\xymatrix@R=.75cm@C=.66cm{&\vtx{1}\ar@2[rd]^{\alpha_1,\alpha_2}&\\\vtx{2}\ar@2[ur]^{\beta_1,\beta_2}&&\vtx{3}\ar@2[ll]^{\gamma_1,\gamma_2}}$};
\end{scope}
\end{tikzpicture}
\[
 \Gtl(\qpol) \cong \frac{\C\RQ}{\<\alpha_i\beta_i, \beta_i\gamma_i, \gamma_i\alpha_i| i=1,2\>}
\]
\end{center}
\end{example}

The algebra $\Gtl(\qpol)$ comes with a natural $A_\infty$-structure \cite{bocklandt2016noncommutative}.
The higher products $\genmu:A^{\otimes k}\to A$ are defined inductively: let $\rho_1,\dots,\rho_k$ be any sequence of paths 
and $\beta_1\dots \beta_l$ a cycle of angle arrows that goes around a cycle of $\qpol_2$ with $h(\beta_1)=t(\rho_i)$. 
We set 
\[
\text{[IR] : }\genmu(\rho_1,\dots, \rho_i\beta_{1},\beta_{2},\dots, \beta_{l-1},\beta_{l}\rho_{i+1},\dots, \rho_k) := \pm \genmu(\rho_1,\dots,\rho_k).
\]
For the sign convention we refer to \cite{bocklandt2016noncommutative}. Pictorially this gives rise to the following diagram:
\[
\genmu\left(
\vcenter{\xymatrix@=.3cm{&\ar[dr]&\\
\ar[ur]&&\ar[lldd]_(.8){\rho_{i}\beta_1}\\
&&\\
\dots&&\dots\ar[lluu]_(.2){\beta_l\rho_{i+1}}
}}
\right)=\pm 
\genmu\left(
\vcenter{\xymatrix@=.3cm{
&\ar[ld]_{\rho_{i}}&\\
\dots&&\dots\ar[lu]_{\rho_{i+1}}
}}
\right).
\]
We set $\genmu(\sigma_1,\dots, \sigma_k)=0$ if $k>2$ and we cannot perform any reduction of the form above. For 
$k=2$ we use the ordinary product. $A$ has a natural $\Z_2$-grading by assigning degree $1$ to each angle arrow in $\RQ$. 

\begin{lemma}\label{muhomotopy}
Let $\rho_1,\dots,\rho_k$ be a composable sequence of nonzero angle paths in $\RQ$ with $k>2$ such that
$\rho_i\rho_{i+1}=0$ and $\rho_1\dots\rho_k$ is a contractible cycle on $\psurf{\qpol}$.
For each angle $\beta$ with $\rho_k\beta\ne 0$ in $\Gtl(\qpol)$ we have
\[
 \mu_k(\rho_1,\dots,\rho_k\beta) =(-1)^{|\beta|} \beta
\]
and for each angle $\beta$ with $\beta\rho_1\ne 0$  in $\Gtl(\qpol)$ we have
\[
 \mu_k(\beta\rho_1,\dots,\rho_k) =(-1)^{|\beta|} \beta
\]
All other $\mu_k$ with $k>2$ are zero. 
\end{lemma}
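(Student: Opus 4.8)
The plan is to unwind the recursive rule [IR] that defines the products $\mu_k$. Since the $A_\infty$-structure on $\Gtl(\qpol)$ is well defined — any two chains of [IR]-reductions applied to $\mu_k(\sigma_1,\dots,\sigma_k)$ produce the same element of $A$, by \cite{bocklandt2016noncommutative} — it is enough to exhibit one chain of reductions carrying $\mu_k(\rho_1,\dots,\rho_k\beta)$ down to a two-fold product $\mu_2(\cdot,\cdot)$, and then to keep track of the overall sign. It helps to picture the data on $\psurf{\qpol}$: the ordered tuple $\rho_1,\dots,\rho_{k-1},\rho_k\beta$ is a path which, because $\rho_1\cdots\rho_k$ is a contractible cycle and $\rho_k\beta\ne 0$, bounds an immersed disk $D$ decorated with the tail $\beta$; the $k-1$ spots where $\rho_i\rho_{i+1}=0$ are the corners of $D$, each lying in a definite dimer cycle, while one [IR]-reduction is exactly the operation of folding into $D$ a dimer cycle whose entire cyclic word of angle arcs is traced out by a block of consecutive entries of the tuple.

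I would then induct on the number of angle arcs occurring in $\rho_1\cdots\rho_k$ (equivalently, on how many dimer cycles make up $D$). In the base case $D$ is a single dimer cycle $c$ of length $\ell$; then $\rho_1\cdots\rho_k$ must be the cyclic word of the $\ell$ angle arcs of $c$, so $k=\ell$, every $\rho_i$ is a single arc, and $\beta$ is the unique outgoing angle arc at $t(\rho_k)$ that does not continue inside $c$. Applying [IR] to $c$ with the two flanking paths taken to be the trivial path $e:=e_{h(\rho_1)}$ and the length-one path $\beta$ rewrites $\mu_\ell(\rho_1,\dots,\rho_{\ell-1},\rho_\ell\beta)$ as $\pm\mu_2(e,\beta)=\pm\beta$, and matching against the sign convention of \cite{bocklandt2016noncommutative} turns the $\pm$ into $(-1)^{|\beta|}$.

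For the inductive step, when $D$ is made of more than one dimer cycle one must locate an ``ear'': a dimer cycle $c$ meeting $D$ along a single connected stretch of $\partial D$, equivalently a maximal block $v_p,\dots,v_q$ of consecutive corners all lying in $c$ whose adjacent angle arcs in the tuple exhaust the cyclic word of $c$. Such a $c$ exists by a disk-topology argument: the adjacency pattern of the dimer cycles out of which $D$ is assembled forms a tree, and any leaf of that tree is an ear. Folding $c$ in by [IR] replaces $\mu_k(\rho_1,\dots,\rho_k\beta)$ by $\pm\mu_{k'}(\rho'_1,\dots,\rho'_{k'}\beta)$ with $k'=k-(\ell_c-2)$; one then checks, routinely, that the new tuple again consists of nonzero composable angle paths with vanishing consecutive products, that $\rho'_1\cdots\rho'_{k'}$ is a contractible cycle bounding $D\setminus c$, and that $\rho'_{k'}\beta\ne0$, so the induction hypothesis applies and yields $\pm\beta$. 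The identity for $\mu_k(\beta\rho_1,\dots,\rho_k)$ comes out of the same argument, folding ears off the left end of the tuple instead.

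The vanishing of all remaining $\mu_k$ with $k>2$ is the converse, obtained by running the reductions backwards: a nonzero value of $\mu_k(\sigma_1,\dots,\sigma_k)$ forces the presence of some [IR]-pattern, folding it in strictly lowers the arc count, and the only way the process can terminate at a nonzero $\mu_2(\tau_1,\tau_2)=\tau_1\tau_2$ is for the $\sigma_i$ to have been all along the boundary entries of an immersed disk with one corner opened up, i.e.\ of one of the two stated forms (the one apparent exception — the medial cycle of a single polygon, whose product lands in $\kk$ — is just the usual compatibility with the ordinary algebra structure). I expect the genuinely delicate point to be the existence of the ear: making the disk-topology argument precise for an arbitrary, possibly non-embedded, immersed disk built out of corner regions of dimer polygons, together with the bookkeeping that the folded tuple still meets every hypothesis. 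Once a reduction chain is fixed, the sign $(-1)^{|\beta|}$ falls out of a routine induction with the sign rule of \cite{bocklandt2016noncommutative}.
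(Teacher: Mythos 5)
The paper's own proof is a one-line citation to Lemma~10.9 of \cite{bocklandt2016noncommutative}, so you are necessarily taking a different route by trying to re-derive the statement directly from the recursive rule [IR]. Your overall picture — induct on the number of dimer polygons that the [IR]-reductions will consume, peeling off one polygon at a time — is the natural way such a proof would go, and your base case computation ($D$ a single polygon, one [IR]-fold with the flanking paths $e$ and $\beta$ landing on $\pm\mu_2(e,\beta)=\pm\beta$) is correct.

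However, what you have written is not yet a proof, because the heart of the inductive step — the existence of an ``ear'', i.e.\ a dimer cycle $c$ all of whose angle arcs occur as a consecutive block in the reduced sequence so that [IR] can actually fire — is exactly the nontrivial content of the lemma, and you only gesture at it. The disk-topology heuristic you offer (``the adjacency pattern of the dimer cycles out of which $D$ is assembled forms a tree, and any leaf is an ear'') does not obviously apply here. The region bounded by $\rho_1\cdots\rho_k$ is not a priori a union of whole dimer polygons: at each junction $\rho_i\rho_{i+1}=0$ the curve makes a corner inside a polygon, and which side of that corner lies in $D$ is an orientation question you have not addressed. Even when $D$ is a union of whole polygons, the dual adjacency graph of a tiled disk need not be a tree (several polygons can wind around a shared vertex of $\qpol$), so ``leaf $=$ ear'' needs a genuine argument; and an ear in the graph-theoretic sense still has to meet $\partial D$ in the precise pattern that [IR] requires, which is a further constraint you do not verify. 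You flag this yourself as ``the genuinely delicate point'', but flagging a gap is not the same as closing it: without it the induction has no engine. The converse direction (``all other $\mu_k$ vanish'') is also asserted rather than proved — ``running the reductions backwards'' needs to rule out that a nonzero reduction chain could terminate in a configuration not of the two stated forms, and the parenthetical remark about the medial cycle of a single polygon is exactly the kind of boundary case that deserves a careful check rather than a wave at compatibility with $\mu_2$. Finally, the sign $(-1)^{|\beta|}$ is not ``routine'': each [IR]-fold carries its own sign depending on position and parity, and you give no bookkeeping that these accumulate to $(-1)^{|\beta|}$ independently of the chosen reduction order.

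In short: your strategy is reasonable and is essentially what a self-contained proof would have to do, but the lemma's actual content — the combinatorial claim that a contractible cycle with all consecutive products zero always presents a foldable polygon, and that iterating this terminates at $\pm\beta$ with the advertised sign — is assumed rather than established. I would either make the ear argument precise (working in the universal cover, showing that $D$ is a union of whole polygons once orientations are sorted out, and proving a shelling/ear lemma adapted to the [IR] pattern), or do what the paper does and invoke the already-proved Lemma~10.9 of \cite{bocklandt2016noncommutative}.
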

\begin{proof}
This follows easily from Lemma 10.9 in \cite{bocklandt2016noncommutative}. This expression can also be found in Haiden et al. \cite{haiden2014flat} where it is part of the
definition of the $A_\infty$-structure.
\end{proof}
\begin{corollary}\label{subzero}
If $\mu(\rho_1,\dots,\rho_k)$ is nonzero then $\mu(\rho_i,\dots,\rho_j)$ is zero for allow
proper subsequences $\rho_i,\dots,\rho_j$.
\end{corollary}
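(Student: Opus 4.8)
The plan is to derive this directly from the structure of Lemma \ref{muhomotopy}, which gives an essentially complete description of the nonvanishing higher products. Suppose $\mu_k(\rho_1,\dots,\rho_k)$ is nonzero with $k>2$. By Lemma \ref{muhomotopy} this forces a very rigid shape on the sequence: after stripping off an initial angle $\beta$ from $\rho_1$ (or a terminal angle from $\rho_k$), the remaining middle portion $\rho_1',\rho_2,\dots,\rho_{k-1},\rho_k$ (say) must be a composable sequence of nonzero angle paths with $\rho_i\rho_{i+1}=0$ for the relevant consecutive indices and whose total composite is a contractible cycle on $\psurf\qpol$; and the output is (up to sign) that single stripped-off angle $\beta$. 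First I would record this normal form precisely: a nonzero $\mu_k$, $k>2$, occurs only when the arguments, read in order, trace out a full ``contractible loop of angle arcs plus one extra angle hanging off one end'', with each internal pair of consecutive arguments multiplying to zero.

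Now take a proper subsequence $\rho_i,\dots,\rho_j$ with $(i,j)\ne(1,k)$. If $j-i\le 1$ there is nothing to prove ($\mu_1=0$ on angle paths since $A$ is minimal, and for length $2$ we'd be claiming $\rho_i\rho_{i+1}=0$, which is exactly one of the hypotheses of the statement — here I'd note the hypothesis of the Corollary is that we are inside a sequence of the type appearing in Lemma \ref{muhomotopy}, so all consecutive products vanish). So assume $j-i\ge 2$, i.e. the subsequence itself has length $>2$. For $\mu_{j-i+1}(\rho_i,\dots,\rho_j)$ to be nonzero, by Lemma \ref{muhomotopy} again the composite $\rho_i\cdots\rho_j$ would have to be a contractible cycle on $\psurf\qpol$ (after possibly stripping an angle), in particular it would have to be a \emph{closed} path: $h(\rho_i)=t(\rho_j)$. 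The key point is that this cannot happen for a proper sub-segment of the original loop.

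The main obstacle — and the heart of the argument — is to show that a proper connected sub-arc of the contractible loop $\rho_1\cdots\rho_k$ (of angle arcs) is never itself a closed contractible loop. I would argue as follows: the full composite, after stripping the extra angle $\beta$, goes around the boundary of a single polygon region of $\RQ$ on the surface, i.e. it bounds a disc; it is in particular a \emph{simple} contractible loop. A proper sub-arc has distinct endpoints lying on this loop, so it is an honest path (not closed) — hence no $\mu$ applies and the product is zero by the last sentence of Lemma \ref{muhomotopy}. The one case needing a little care is when the subsequence wraps around to include the ``extra angle'' end, e.g. $\rho_i,\dots,\rho_k$ with $i>1$: then I'd use that stripping $\beta$ off $\rho_k$ (or off $\rho_1$) still leaves a composite $\rho_i\cdots\rho_{k-1}\rho_k'$ which is a proper sub-arc of the disc boundary, again with distinct endpoints, so still not closed. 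Thus in every case the subsequence composite fails to close up, Lemma \ref{muhomotopy} gives no nonzero value, and $\mu(\rho_i,\dots,\rho_j)=0$. I expect the only genuinely delicate point to be pinning down ``the composite bounds a disc, hence is simple, hence proper sub-arcs are non-closed'' cleanly from the inductive $[\mathrm{IR}]$ definition of $\mu$ together with Lemma \ref{muhomotopy}; everything else is bookkeeping.
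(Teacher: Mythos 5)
Your proposal and the paper's proof both take Lemma~\ref{muhomotopy} as the starting point, but after that the routes diverge. The paper argues algebraically: if $\mu(\rho_i,\dots,\rho_j)$ were nonzero it would equal a single angle $\beta'$ (sitting at one end of $\rho_i$ or $\rho_j$), and the paper then claims this $\beta'$ composes nontrivially with the neighbouring entry $\rho_{i-1}$ or $\rho_{j+1}$; since nonvanishing of the original product forces all consecutive products $\rho_m\rho_{m+1}$ to vanish (again by Lemma~\ref{muhomotopy}), this is the contradiction. You instead argue geometrically, reducing everything to the claim that the contractible loop underlying the full sequence is \emph{simple}, so that a proper connected sub-arc can never close up. These are genuinely different arguments; yours buys a cleaner conceptual picture, while the paper's is more tightly tied to the inductive $[\mathrm{IR}]$ reduction.

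There are two issues with your write-up. First, the claim that the composite ``goes around the boundary of a single polygon region of $\RQ$'' is simply false: the region bounded by the contractible cycle in Lemma~\ref{muhomotopy} is in general a union of several faces (that is exactly why several $[\mathrm{IR}]$ reductions are needed), and you then silently replace this with the much weaker ``bounds a disc, hence simple,'' which does not follow --- a contractible loop on a surface need not be embedded, and simplicity of these particular boundary loops is precisely the nontrivial geometric fact you correctly flag as ``the only genuinely delicate point.'' Your proof is therefore incomplete at exactly the place where all the work lives; this is also where the paper's own terse proof is doing its (implicit) work, via the claim that $\beta'$ composes nontrivially with a neighbour. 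Second, a smaller slip: for a nonzero subsequence product via pattern (a) one strips $\beta'$ off $\rho_j$ to get $\tilde\rho_j$, and the closed cycle is $\rho_i\cdots\rho_{j-1}\tilde\rho_j$; the closure condition is therefore $h(\rho_i)=t(\tilde\rho_j)$, an \emph{interior} vertex of $\rho_j$, not $h(\rho_i)=t(\rho_j)$ as you write. Your final sentence about sub-arcs having ``distinct endpoints'' would still work once corrected (both points are visited by the big loop, at different positions along it), so this is a local fix, but it should be made before the simplicity argument is invoked.
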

\begin{proof}
This from the induction hypotesis: if the product 
$\mu(\rho_i,\dots,\rho_j)$ were nonzero
we can reduce $\rho_i,\dots,\rho_j$ to a single angle. 
After reducing the original product, that single angle composes with $\rho_{i-1}$ or $\rho_{j+1}$ to something nonzero.
Which contradicts the fact that the original is nonzero. 
\end{proof}

If $a$ is an arrow in $\qpol$, we can see $a$ as an oriented 1-dimensional submanifold in the punctured surface $\psurf \qpol$ and if we chose a symplectic structure
on $\psurf \qpol$, we can see $a$ as an embedded Lagrangian submanifold. Such a submanifold can be seen as an object in the wrapped Fukaya category of this surface.
For a precise definition of this category we refer to  \cite{abouzaid2013homological}, but its main objects are immersions $\gamma: I \to \psurf \qpol$ where $I=(0,1)$ or $\R/\Z$. These objects are 
called open or closed Lagrangians depending on $I$. For the open Lagrangians, we also demand that the limit points are punctures of the surface.

\begin{proposition}\cite{bocklandt2016noncommutative}
The category $\Der_{\Z_2} \Gtl(\qpol)$ is equivalent to the derived version of the wrapped Fukaya category of the punctured surface  $\psurf \qpol$ in the sense of Abouzaid et Al. \cite{abouzaid2013homological}.
Under this isomorphism $a[0] \in \Der_{\Z_2} \Gtl(\qpol)$ corresponds to the embedded Lagrangian submanifold represented by $a$.
\end{proposition}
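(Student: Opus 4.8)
The plan is to realise $\Gtl(\qpol)$ as the endomorphism $A_\infty$-algebra of a generating object of the wrapped Fukaya category and then to pass to derived (twisted) completions; this is carried out in detail in \cite{bocklandt2016noncommutative}. First I would view each arrow $a\in\qpol_1$ as an arc in $\psurf\qpol$ whose two ends limit to the punctures $\qpol_0$. Since the complement of $\qpol$ in $\surf\qpol$ is a disjoint union of polygons, the punctured surface $\psurf\qpol=\surf\qpol\setminus\qpol_0$ deformation retracts onto the graph $\qpol$; hence the arcs $\{a\mid a\in\qpol_1\}$ form a full arc system and, by the generation theorems for wrapped Fukaya categories of punctured surfaces (Abouzaid et al. \cite{abouzaid2013homological}, Haiden--Katzarkov--Kontsevich \cite{haiden2014flat}), the object $L=\bigoplus_{a\in\qpol_1}a$ split-generates the derived wrapped Fukaya category $\WFuk(\psurf\qpol)$.

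Next I would identify the morphism spaces. A basis of wrapped morphisms from $a$ to $b$ is given by homotopy classes of boundary paths that run from an end of $a$, around a sequence of punctures, to $b$ inside a ribbon neighbourhood of $\qpol$; these are precisely the angle paths of $\RQ$ from the midpoint of $a$ to the midpoint of $b$, so $\Hom_{\WFuk}(a,b)\cong b\,\Gtl(\qpol)\,a$ as graded vector spaces. The two sides along which a path may leave a vertex of $\qpol$ account for the two families of angle arcs, and the composition of two angle arcs that are consecutive around a polygonal face of $\qpol$ is a path which can be swept across that face through the puncture in its interior, hence vanishes in the wrapped category --- this is exactly the defining relation $\alpha\beta$ of $\Gtl(\qpol)$. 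The grading that the Fukaya category induces on these paths is their winding number mod $2$, which is why the equivalence is with the $\Z_2$-graded derived category $\Der_{\Z_2}$: a general punctured surface has no preferred grading structure.

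Finally I would match the $A_\infty$-operations. On a surface the product $\mu_k$ counts immersed $(k+1)$-gons with sides on $L$; in the combinatorial model of \cite{haiden2014flat,bocklandt2016noncommutative} such a polygon is exactly a contractible cycle of angle arcs, and collapsing it reads off the single residual angle --- precisely the operation described in Lemma \ref{muhomotopy}, with Corollary \ref{subzero} expressing that a polygon contains no smaller polygonal subregion. Hence $\Gtl(\qpol)$, equipped with the $A_\infty$-structure of Lemma \ref{muhomotopy}, is $A_\infty$-quasi-isomorphic to $\End_{\WFuk}(L)$, the agreement of the structure constants and signs being the content of Lemma 10.9 of \cite{bocklandt2016noncommutative}. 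Since $L$ generates and both $\Tw$ and $\Der$ are invariant under $A_\infty$-quasi-isomorphism, this yields the equivalence $\Der_{\Z_2}\Gtl(\qpol)\simeq\WFuk(\psurf\qpol)$, under which $a[0]$ corresponds to the arc $a$ by construction. I expect the last step to be the main obstacle: reconciling the analytic Floer-theoretic $\mu_k$ with the combinatorial ones requires either invoking the existing surface models and checking they agree with $\Gtl(\qpol)$ on the nose, or a direct count of holomorphic polygons, and in either route the sign bookkeeping is the delicate point; the generation statement and the identification of the $\Hom$-spaces are comparatively routine ribbon-graph combinatorics.
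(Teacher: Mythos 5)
Your sketch follows essentially the same route as the reference \cite{bocklandt2016noncommutative} to which the paper delegates this proposition (the paper itself gives no proof, only the citation): exhibit the arcs $\{a\}_{a\in\qpol_1}$ as a full arc system that split-generates, identify wrapped morphisms with angle paths in $\RQ$ modulo the relations $\alpha\beta$ through faces, and match the $\mu_k$ by counting immersed polygons against the combinatorial reduction rule [IR]/Lemma \ref{muhomotopy}. The only imprecision worth noting is the claim that $\psurf\qpol$ deformation retracts onto $\qpol$; since the vertices $\qpol_0$ \emph{are} the punctures and are removed, the retraction is actually onto a ribbon graph dual to the arc system (equivalently the spine of $\RQ$), not onto $\qpol$ itself — this is the standard statement that makes the arcs a full system and does not affect the rest of the argument.
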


A \emph{graded surface} $\vec S=(S,V)$ consists of an oriented surface $S$ equiped with a vector field $V$. For a graded surface we can define the 
notion of a graded Lagrangian submanifold $\cL=(\gamma,\theta)$. 
This is an immersed curve $\gamma: I \to S$ together with a map $\theta: I \to \R$ which specifies the angle between $V_{\gamma(t)}$ and $\frac{d\gamma}{dt}(t)$.
Note that while every open Lagrangian can be given a grading, this is not true for every closed Lagrangian. 
To shift a graded Lagrangian we reverse its orientation and add $\pi$ to the map $\theta$: $\cL[1] = (\gamma(1-t), \theta+\pi)$. 

For a graded surface with boundary we can define its topological Fukaya category. This category is $\Z$-graded instead of $\Z_2$-graded. For its general construction we refer to \cite{haiden2014flat}.
Although the definition above uses a metric to determine the angles, the actuall Fukaya category does not do not depend on the choice of the metric 
because the actual angles are not important only how many multiples of $\pi$ they differ. 

Given a perfect matching on $\qpol$ we can put a vector field on $\psurf \qpol$ in the following way: we fill each polygon with integral curves
that start at the head of the arrow in the perfect matching and run to its tail, as illustrated in the picture. 
\begin{center}
\begin{tikzpicture}
\begin{scope}[scale=1.5]
\draw [draw=blue,-latex,shorten >=5pt] (0,0) to[out=5, in=265] (1,1) ;
\draw [draw=blue,-latex,shorten >=5pt] (0,0) to[out=85, in=185] (1,1) ;
\draw [draw=blue,-latex,shorten >=5pt] (0,0) to[out=15, in=255] (1,1) ;
\draw [draw=blue,-latex,shorten >=5pt] (0,0) to[out=30, in=240] (1,1) ;
\draw [draw=blue,-latex,shorten >=5pt] (0,0) to[out=60, in=210] (1,1) ;
\draw [draw=blue,-latex,shorten >=5pt] (0,0) to[out=75, in=195] (1,1) ;
\draw (.5,1.5) node{$\qpol,~\PM=\{z\}$};
\draw [-latex,shorten >=5pt] (0,0) to node [rectangle,draw,fill=white,sloped,inner sep=1pt] {{\tiny x}} (1,0);
\draw [-latex,shorten >=5pt] (1,0) to node [rectangle,draw,fill=white,sloped,inner sep=1pt] {{\tiny y}} (1,1);
\draw [-latex,shorten >=5pt] (1,1) to node [rectangle,draw,fill=white,sloped,inner sep=1pt] {{\tiny z}} (0,0);
\draw [-latex,shorten >=5pt] (0,0) to node [rectangle,draw,fill=white,sloped,inner sep=1pt] {{\tiny y}} (0,1);
\draw [-latex,shorten >=5pt] (0,1) to node [rectangle,draw,fill=white,sloped,inner sep=1pt] {{\tiny x}} (1,1);
\draw (0,0) node[circle,draw,fill=white,minimum size=10pt,inner sep=1pt] {{}};
\draw (0,1) node[circle,draw,fill=white,minimum size=10pt,inner sep=1pt] {{}};
\draw (1,1) node[circle,draw,fill=white,minimum size=10pt,inner sep=1pt] {{}};
\draw (1,0) node[circle,draw,fill=white,minimum size=10pt,inner sep=1pt] {{}};
\end{scope}
\end{tikzpicture} 
\end{center}
In this way we get a graded surface $\gsurf{\PM}{\qpol}$. Each arrow not in the perfect matching corresponds to a curve parallel to the vector field, so we can give it a grading by putting $\theta=0$. Each arrow in the perfect matching runs opposite to
the vector field so we can grade it with $\theta=\pi$.

We can also use the perfect matching to put a $\Z$-grading on $\Gtl \qpol$: we put the degree of an angle arrow $-1$ if it arrives in an arrow of the perfect matching and $+1$ otherwise. In this way the degree of a cycle of angle arrows that goes around a face in $\qpol_2$ is $k-2$, where $k$ is the
length of the cycle. In the induction step the order of the multiplication also goes down by $k-2$, so the $A_\infty$-structure is compatible with this grading.
We denote this graded algebra by $\Gtl_\PM(\qpol)$.

\begin{proposition}
The category $\Der_{\Z} \Gtl_\PM(\qpol)$ is equivalent to the derived version of the topological Fukaya category of the graded surface  $\gsurf{\PM}{\qpol}$ in the sense of 
Haiden et Al. \cite{haiden2014flat}. Under this isomorphism $a[0] \in \Der_{\Z} \Gtl_\PM(\qpol)$ corresponds to the embedded Lagrangian submanifold represented by $a$ 
graded by $\theta=0$ if $a\not \in \PM$ and $\theta=\pi$ if $a\in \PM$.
\end{proposition}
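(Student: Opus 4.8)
The statement is a $\Z$-graded refinement of the already-cited $\Z_2$-graded equivalence $\Der_{\Z_2}\Gtl(\qpol)\simeq\text{wrapped Fukaya}$, so the natural strategy is to reduce to that result rather than reprove it from scratch. Concretely, I would first observe that forgetting the grading turns $\Gtl_\PM(\qpol)$ back into the $\Z_2$-graded algebra $\Gtl(\qpol)$: the degree-$\pm1$ assignment on angle arrows reduces mod $2$ to the degree $1$ assignment, and the relations and the inductive higher products $\genmu_k$ are unchanged as maps — only their internal bookkeeping of degrees is refined. So there is a forgetful functor $\Der_\Z\Gtl_\PM(\qpol)\to\Der_{\Z_2}\Gtl(\qpol)$, and it suffices to check that the $\Z$-lift is the ``correct'' one, i.e.\ that it matches the $\Z$-grading on the topological Fukaya side under the established $\Z_2$-equivalence.

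**Key steps.** (1) Recall from Haiden--Katzarkov--Kontsevich the construction of the topological Fukaya category of $\gsurf{\PM}{\qpol}$: objects are graded Lagrangians $(\gamma,\theta)$, and the $\Z$-grading of a morphism at an intersection point is read off from the winding of $\theta$ relative to the framing vector field $V$. (2) Check on generators: the arrow $a\in\qpol_1$, viewed as a Lagrangian, is parallel to $V$ when $a\notin\PM$ (so $\theta=0$) and antiparallel when $a\in\PM$ (so $\theta=\pi$); this is exactly the grading dictated by the vector field construction filling each polygon with integral curves from the head to the tail of its $\PM$-arrow. (3) Check on morphisms: an angle arc $\alpha\in\RQ_1$ corresponds to a small morphism between two consecutive arrows inside a polygon, and its $\Z$-degree is $+1$ or $-1$ according to whether $\theta$ jumps up or down as one crosses it — which happens precisely when $\alpha$ arrives in a $\PM$-arrow ($\theta$ drops, degree $-1$) or not ($\theta$ rises, degree $+1$), matching the definition of $\Gtl_\PM(\qpol)$. (4) Check compatibility of the $A_\infty$-products with the $\Z$-grading: a cycle of $k$ angle arcs around a face has degree $k-2$ (one $\PM$-arrow is hit, contributing $-1$, and $k-1$ others contribute $+1$), and by Lemma~\ref{muhomotopy} the only nontrivial higher product $\mu_k$ with $k>2$ absorbs exactly such a contractible cycle and lands in degree $2-k$ relative to the inputs, so the degrees balance — this is the ``the $A_\infty$-structure is compatible with this grading'' remark in the text, now used to guarantee that the $\Z$-lift is a genuine $\Z$-graded $A_\infty$-algebra and that the equivalence is degree-preserving. (5) Since twisted completion, passage to homology, and the derived category all commute with regrading (cf.\ the remark on $G$-graded $A_\infty$-algebras), the $\Z_2$-equivalence lifts to the claimed $\Z$-equivalence, sending $a[0]\mapsto$ the Lagrangian $a$ with $\theta=0$ or $\pi$ as stated.

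**Main obstacle.** The routine parts are steps (1)--(4): once the two $\Z$-gradings are matched on arrows and angle arcs, everything propagates formally. The genuinely delicate point is step (5), or rather the implicit claim underneath it: one must be sure that Haiden et al.'s $\Z$-grading on the topological Fukaya category of $\gsurf{\PM}{\qpol}$ is pinned down by its values on the generating Lagrangians $a[0]$ together with compatibility with the products — i.e.\ that there is no room for a different $\Z$-lift of the same $\Z_2$-category. This comes down to the fact that $H^1$ of the (punctured, boundary) surface with its framing controls the possible gradings and that the arrows $a$ generate, so the grading is determined; but spelling this out requires care about which Lagrangians admit gradings (the text already warns that not every closed Lagrangian does) and about the conventions relating $\theta$, shifts $[1]$, and the sign of the angle-arc degrees. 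I would handle this by citing the uniqueness/functoriality of the grading in \cite{haiden2014flat} and verifying the normalization on the single example (Example~\ref{torus1}) to fix all signs, then invoking the $\Z_2$-result of \cite{bocklandt2016noncommutative} to conclude.
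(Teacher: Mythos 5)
The paper does not in fact supply a proof for this proposition: it is stated immediately after the discussion of graded surfaces, the vector-field construction from a perfect matching, and the compatibility of the $\Z$-grading with the $A_\infty$-products, and then the text moves on to the B-model subsection. So there is no ``paper's own proof'' against which to compare your proposal; I can only assess it on its merits.

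Your plan is sound and captures exactly the verifications the surrounding text of the paper is setting up (the grading of the arrows via $\theta=0$ or $\pi$, the $\pm1$ degrees on angle arcs, the face cycle having degree $k-2$, Lemma~\ref{muhomotopy} producing degree $2-k$ products), and those are indeed the substantive checks. Two remarks. First, your reduction route implicitly bridges the wrapped Fukaya category of \cite{abouzaid2013homological} (which is what the cited $\Z_2$-proposition compares to) with the $\Z_2$-reduction of the topological Fukaya category of \cite{haiden2014flat}; that comparison is true and standard for punctured surfaces but should be flagged rather than silently absorbed. A somewhat cleaner alternative, which bypasses the wrapped Fukaya category entirely, is to observe that the arrows of $\qpol$ give a full arc system on $\gsurf{\PM}{\qpol}$ and that the HKK $A_\infty$-structure on this arc system, with the gradings induced by the line field, is literally $\Gtl_\PM(\qpol)$; the proof of Lemma~\ref{muhomotopy} already notes that the higher-product formula ``is part of the definition'' in \cite{haiden2014flat}, so this direct comparison is essentially built in, and then one invokes HKK's generation result. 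Second, your ``main obstacle'' is phrased as a uniqueness question about $\Z$-lifts of the grading, but that is slightly beside the point: you are not trying to classify all $\Z$-gradings, only to check that the one specific quasi-isomorphism underlying the $\Z_2$-equivalence is homogeneous for the two specified $\Z$-gradings. That is a finite, local check on generators and $A_\infty$-operations, which your steps (2)--(4) already perform; no appeal to uniqueness of gradings (controlled by $H^1$ of the surface, etc.) is required.
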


\subsection{The B-model}

Given a dimer quiver $\qpol$ we can construct its \emph{Jacobi algebra} $\Jac(\qpol)$. This is the path algebra of the quiver $\qpol$ with relations 
\[
 \Jac(\qpol) := \C \qpol/\<r_a^+-r_a^- | a \in \qpol_1\>
\]
where $r_a^\pm a$ is the unique cycle in $\qpol_2^\pm$ containing $a$. 
These relations state that going back around a cycle to the left of an arrow is the same as going back along the right cycle. 

This algebra has a central element, which is the sum of cycles in $\qpol_2$, one starting in each vertex. 
\[
\ell = \sum_{v \in \qpol_2} c_v \text{ with $h(c_v)=v$ and $c_v \in \qpol_2$} 
\]
Note that the relations in $\Jac(\qpol)$ ensure that this element is central and it is independent of the choices of the $c_v$.

\begin{example}\label{torus1jac}
If we apply this definition to $\qpol$ in example \ref{torus1}, we get $\Jac(\qpol)=\C[X,Y,Z]$ and $\ell=XYZ$.
\end{example}

Under certain conditions, this algebra has very nice properties:
\begin{theorem}\label{consprop}
If $\qpol$ zigzag consistent on a torus then
\begin{enumerate}
 \item $\Jac(\qpol)$ is finitely generated over its center, which is a 3d Gorenstein ring.
 \item $\Jac(\qpol)$ is a 3-Calabi-Yau algebra, which means that it has global dimension 3 and $\Hom^\bullet_{\Jac^e}(\Jac,\Jac^e)\cong \Jac[3]$.
 \item $\Jac(\qpol)$ embeds in $\wJac =\Jac(\qpol)\otimes_{\C[\ell]}\C[\ell,\ell^{-1}]\cong  \Mat_n(\C[X^{\pm 1},Y^{\pm 1},Z^{\pm 1}])$.
\end{enumerate}
\end{theorem}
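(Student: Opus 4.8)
The plan is to recognise $\Jac(\qpol)$ as the Jacobi algebra of a quiver with superpotential and then run the standard homological and toric arguments; all three statements are, in various guises, due to Broomhead \cite{broomhead2012dimer}, Mozgovoy \cite{mozgovoy2009crepant}, Davison \cite{davison2011consistency} and \cite{bocklandt2016dimer}, and I would follow those proofs. Write $W=\sum_{c\in\qpol_2^+}c-\sum_{c\in\qpol_2^-}c\in\C\qpol/[\C\qpol,\C\qpol]$; then $r^+_a-r^-_a=\partial_a W$, so $A:=\Jac(\qpol)$ is the superpotential algebra of $(\qpol,W)$. I would prove (2) first, deduce (1), and then establish (3) by a localisation computation.

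For (2), attach to $W$ its tautological bimodule complex
\[
0\to A\otimes_\kk A\xrightarrow{\,d_3\,}A\otimes_\kk\C\qpol_1\otimes_\kk A\xrightarrow{\,d_2\,}A\otimes_\kk\C\qpol_1\otimes_\kk A\xrightarrow{\,d_1\,}A\otimes_\kk A\xrightarrow{\,m\,}A\to0,
\]
where $d_1(1\otimes a\otimes1)=a\otimes1-1\otimes a$, the map $d_2$ is assembled from the relations $\partial_a W$, and $d_3$ from the second derivatives of $W$ (so the two middle terms are spanned by the arrows and by the relations respectively). By construction $\Hom_{A^e}(-,A^e)$ sends this complex to itself, reversed and shifted by $3$; hence the whole of (2) --- homological smoothness plus $\Hom^\bullet_{\Jac^e}(\Jac,\Jac^e)\cong\Jac[3]$ --- is equivalent to the single assertion that the complex is a resolution of $A$, i.e. that it is exact. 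Exactness at the $A$- and $d_1$-spots is formal; the real content is exactness at the two middle terms, and that is where zigzag consistency enters. I would prove it as in Broomhead/Davison: cut the complex by vertex idempotents to reduce to the local complexes of graded vector spaces $e_vAe_w$, the grading coming from a perfect matching; use Theorem \ref{matchingconsistent} to see that the alternating sum of their graded dimensions vanishes; then upgrade this numerical identity to genuine exactness by a diagram chase whose geometric input is precisely that in the universal cover the zig and zag rays of an arrow meet only in that arrow. This diagram chase is the main obstacle --- everything else in the proof is either formal or combinatorial bookkeeping.

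Given (2), part (1) follows. The resolution shows $A$ has global dimension $3$. That $A$ is module-finite over its centre $Z$ is immediate from the dimer combinatorics: folding any arrow into the face cycle through it rewrites a path as $\ell$ times a shorter path, so $A$ is generated as a $Z$-module by the finitely many paths of length $<\#\qpol_1$; Artin--Tate then makes $Z$ a finitely generated Noetherian $\C$-algebra, and $Z$ has Krull dimension $3$ since $A$ is prime of Gelfand--Kirillov dimension $3$. For Gorenstein-ness I would identify $Z$ with the semigroup ring $\C[\sigma^\vee\cap\Z^3]$ of the cone $\sigma\subset\R^3$ over $\MP(\qpol)\times\{1\}$ --- the standard toric description of the centre of a consistent dimer algebra, and exactly where the perfect matchings reappear as toric data --- which is Gorenstein because all ray generators $(v_i,1)$ of $\sigma$ lie on the lattice hyperplane $\langle(0,0,1),-\rangle=1$; alternatively this follows formally from (2), $A$ being a maximal Cohen--Macaulay, self-dual $Z$-module.

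For (3), note first that $\ell$ is a central nonzerodivisor: its left annihilator is a two-sided ideal killing the nonzero ideal $A\ell A$, hence zero because $A$ is prime, so $\Jac(\qpol)\hookrightarrow\wJac=:B$. In $B$ every arrow is invertible, since the face cycle through an arrow equals $\ell$ times a vertex idempotent; fixing a base vertex $v_0$ and paths $p_v\in e_{v_0}\C\qpol e_v$, the invertible images of the $p_v$ form a complete set of $n\times n$ matrix units ($n=\#\qpol_0$), so $B\cong\Mat_n(e_{v_0}Be_{v_0})$. By the toric picture of $Z$ from part (1), $Z[\ell^{-1}]$ is the coordinate ring of the big torus $(\C^*)^3\cong\C[X^{\pm1},Y^{\pm1},Z^{\pm1}]$ (with $\ell$ a Laurent monomial), and since the generic representation dimension of $A$ is $n$ --- part of the consistency package --- $\Jac(\qpol)[\ell^{-1}]$ has rank $n^2$ over its centre, forcing the corner ring $e_{v_0}Be_{v_0}$ to be exactly $Z[\ell^{-1}]\cong\C[X^{\pm1},Y^{\pm1},Z^{\pm1}]$. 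Composing the isomorphisms gives $\wJac\cong\Mat_n(\C[X^{\pm1},Y^{\pm1},Z^{\pm1}])$, and injectivity of $\Jac(\qpol)\to\wJac$ was noted above.
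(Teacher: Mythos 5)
The paper itself does not prove Theorem~\ref{consprop}: the remark immediately following it (Remark~\ref{historyconsistent}) explicitly defers to the literature --- Mozgovoy--Reineke, Davison, Broomhead, Ishii--Ueda, Bocklandt --- for proofs under the various equivalent consistency hypotheses. Your sketch follows exactly that body of work, and your overall architecture is the right one: recognise $\Jac(\qpol)$ as a superpotential algebra, show the self-dual bimodule complex is exact to get CY$_3$ (the hard step being exactness at the two middle terms, which is where consistency is used), read off module-finiteness and the Gorenstein centre from the toric picture, and obtain $\wJac\cong\Mat_n(\C[X^{\pm1},Y^{\pm1},Z^{\pm1}])$ by inverting $\ell$, observing all arrows become invertible, and identifying the corner ring with $Z[\ell^{-1}]$. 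Your part (3) argument is clean and correct, and the ``rays of $\sigma$ all at height one'' criterion you quote for Gorenstein-ness is the standard one.

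There is, however, a genuine local gap in your justification of module-finiteness in part~(1). You claim that ``folding any arrow into the face cycle through it rewrites a path as $\ell$ times a shorter path, so $A$ is generated as a $Z$-module by the finitely many paths of length $<\#\qpol_1$.'' This mechanism does not do what you want: the Jacobi relation $r_a^+=r_a^-$ reroutes a path around a face but does not shorten it, and the extraction of a factor of $\ell$ is only possible when the path literally contains a full face cycle, which a long path need not do (e.g.\ a path winding repeatedly around the torus in a fixed direction with $\PM$-degree zero). The correct argument is the one underlying Lemma~\ref{existspath}: in a consistent dimer two paths between fixed vertices with the same homotopy class and equal $\PM$-degree coincide, so $e_vAe_w$ is encoded by a subset of $\Z^3$ (homotopy class and degree), and this subset is a translate of $\sigma^\vee\cap\Z^3$-stable monoid module, hence finitely generated over $Z=\C[\sigma^\vee\cap\Z^3]$ by Gordan's lemma. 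The generating paths of $e_vAe_w$ have bounded lattice data but not bounded length, and the relevant central elements are not only $\ell$ but the full family of central elements supplied by Lemma~\ref{existspath}(2), one for each homology class. With that replacement (or simply by citing Broomhead directly, as you do elsewhere) your outline of part (1) becomes correct.
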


\begin{remark}\label{historyconsistent}
This result is a summary of many results in the literature. Proofs of these statements for different equivalent notions of consistency can be found in
\cite{mozgovoy2010noncommutative,davison2011consistency,broomhead2012dimer,ishii2010note,bocklandt2012consistency}.
\end{remark}

Given an arrow $a \in \qpol_1$ we set $a^{-1}=r_a^+\ell^{-1}$. This new element satisfies $aa^{-1}=h(a)$ and $a^{-1}a=t(a)$ and we call
it the inverse of the arrow. We also define the inverse of a path by $(a_1\dots a_k)^{-1}=a_k^{-1}\dots a_{1}^{-1}$. 
The algebra $\wJac$ is generated by all arrows of $\qpol$ and their inverses.
Paths consisting of arrows and inverses of arrows are called \emph{weak paths} and $\wJac$ is called the \emph{weak Jacobi algebra}.
If $\Jac$ is graded by a perfect matching $\PM$ we can extend 
this grading to $\wJac$ by putting $\deg_\PM p^{-1}= -\deg_\PM p$.

We will need a few lemmas about the difference between weak and real paths and the existence
of real paths.
\begin{lemma}[Broomhead]\label{zeroonallpm}\cite{broomhead2012dimer}
Let $\qpol$ be a consistent dimer on a torus.
A weak path $p$ in $\wJac$ sits in $\Jac$ if and only if $\deg_{\PM} p\ge 0$ for all perfect matchings.
\end{lemma}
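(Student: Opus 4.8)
The plan is to prove both implications of Lemma~\ref{zeroonallpm} directly from the structure of $\wJac$ provided by Theorem~\ref{consprop}. The ``only if'' direction is essentially definitional: if $p$ is a path in $\Jac(\qpol)$ (not merely a weak path), then $p$ is a nonnegative $\C$-linear combination of actual paths of the quiver, and for any perfect matching $\PM$ the degree function $\deg_\PM$ is nonnegative on every arrow, hence on every path, hence $\deg_\PM p \geq 0$. The only subtlety is that $p$, viewed as an element of $\Jac$, might be representable by different paths modulo the Jacobi relations $r_a^+ - r_a^-$; but these relations are homogeneous for $\deg_\PM$ (each cycle $r_a^\pm a$ has $\deg_\PM = 1$ since $\PM$ meets each cycle in $\qpol_2$ exactly once), so $\deg_\PM p$ is well-defined and the argument goes through.

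For the ``if'' direction, suppose $p$ is a weak path with $\deg_\PM p \geq 0$ for all perfect matchings $\PM$; I want to show $p \in \Jac$. The strategy is to rewrite $p$ using the inverse relations $a^{-1} = r_a^+ \ell^{-1}$ to push all inverse arrows into positive powers of $\ell^{-1}$, obtaining an expression $p = q\, \ell^{-m}$ where $q$ is an honest path in $\Jac$ and $m \geq 0$ is the number of inverse arrows used (more precisely $m$ can be taken to be $-\deg_\PM p$ evaluated against... no: $m$ depends only on $p$, but $\deg_\PM q = \deg_\PM p + m$ for every $\PM$ since $\deg_\PM \ell = 1$). So the claim reduces to: \emph{if $q$ is a path with $\deg_\PM q \geq m$ for all $\PM$, then $q$ is divisible by $\ell^m$ inside $\Jac$}, i.e. $q = q' \ell^m$ for some path $q'$. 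The case $m=1$ is the crucial one, and then one iterates.

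So the heart of the matter is: a path $q$ with $\deg_\PM q \geq 1$ for every perfect matching must factor through $\ell$. The natural approach is induction on the length of $q$, or on $m$. For $m=1$: the condition says that every perfect matching hits $q$ at least once; equivalently, there is no perfect matching avoiding all arrows of $q$. One then wants to use this to locate a complete cycle $c_v \in \qpol_2$ inside $q$ (possibly after applying Jacobi relations to reshuffle $q$ to an equivalent path), so that $q = q_1 c_v q_2$ and hence $q = \ell \cdot (q_1 q_2)$ using centrality of $\ell$ — here Theorem~\ref{consprop}(3) tells us $\wJac \cong \Mat_n(\C[X^{\pm1},Y^{\pm1},Z^{\pm1}])$, which gives a concrete model in which ``$q$ is divisible by $\ell$'' can be checked: $q$ lands in $\ell \cdot \Mat_n(\C[X^{\pm1},Y^{\pm1},Z^{\pm1}]) \cap \Jac$. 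The combinatorial input identifying when a path can be ``folded'' to expose a face-cycle is exactly the kind of statement proved for consistent dimers; I would cite or adapt the relevant combinatorics from \cite{broomhead2012dimer} or \cite{bocklandt2016dimer}.

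The main obstacle I anticipate is precisely this folding step: showing that a path meeting every perfect matching can, modulo Jacobi relations, be brought into a form that visibly contains a face cycle $c_v$. This is where consistency is genuinely used — without it the correspondence between perfect matchings and the geometry (Theorem~\ref{matchingconsistent}) breaks down. One clean way to organize it: consider the ``height function'' or the lift to the universal cover, track the boundary path of the region swept out by $q$, and use that a non-divisible path would admit a perfect matching on its complement by a Hall-type / marriage argument on the bipartite structure $\qpol_2^+ \sqcup \qpol_2^-$. I would expect the cleanest proof to reduce everything to the matrix model $\Mat_n(\C[X^{\pm1},Y^{\pm1},Z^{\pm1}])$ and a counting argument comparing $\deg_\PM$ across the (finitely many) perfect matchings sitting on lattice points of $\MP(\qpol)$, invoking Theorem~\ref{matchingconsistent}(1) to guarantee enough perfect matchings exist to force the divisibility.
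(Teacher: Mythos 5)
The paper does not prove this lemma: it is stated as Broomhead's result and cited to \cite{broomhead2012dimer} without an argument, so there is no in-paper proof against which to compare your attempt.

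Your reduction is structurally sound. The only-if direction is immediate once one notes that $\deg_\PM$ is a well-defined $\Z$-grading on $\wJac$ (the Jacobi relations are $\deg_\PM$-homogeneous since each face cycle has degree $1$) and an honest path has $\deg_\PM\ge 0$ for every $\PM$ because each arrow does. For the if direction, rewriting $p=q\,\ell^{-m}$ via $a^{-1}=r_a^+\ell^{-1}$, using centrality of $\ell$ to collect the $\ell^{-1}$ factors, and observing $\deg_\PM q=\deg_\PM p+m$ (since $\deg_\PM\ell=1$), correctly reduces the lemma to the divisibility claim that an honest path $q$ with $\deg_\PM q\ge m$ for every $\PM$ factors as $q=q'\ell^m$; iterating the $m=1$ case then finishes.

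The gap is that you do not prove the $m=1$ divisibility claim, and that claim \emph{is} the lemma. You correctly identify it as the crux and the place where zigzag consistency enters, but you then indicate you would ``cite or adapt'' the combinatorics, gesturing at the matrix model of Theorem~\ref{consprop}(3), a Hall-type matching argument on $\qpol_2^+\sqcup\qpol_2^-$, and Theorem~\ref{matchingconsistent}. None of these is developed far enough to be checked, and a counting argument over boundary matchings does not by itself produce a face cycle inside some F-term representative of $q$: the actual argument in Broomhead's paper goes through the lift to the universal cover and zig-zag flows (the height-function / R-charge machinery), and nothing of that apparatus appears in your sketch. As written you have traded the lemma for an unproved claim of comparable depth, which is not materially different from citing the lemma outright, as the paper does. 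If you intend to supply a proof rather than a reduction, the $m=1$ step is the part that must be carried out in full.
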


\begin{lemma}[Broomhead]\label{existspath}\cite{broomhead2012dimer}
Let $\qpol$ be a consistent dimer on a torus.
\begin{enumerate}
 \item For each pair of vertices $v,w$ and each homotopy class, there is a path $v\stackrel{p}{\ot}w$ that represents the homotopy class and a perfect matching such that $\deg_{\PM} p= 0$.
 \item For each homology class there is a central element $z$ and a perfect matching $\deg_{\PM} z= 0$ such that $zv$ is a path with that homology class.
 \item
 Two paths represent the same element in $\Jac$ or $\wJac$ if they have the same homotopy class and
 the same degree for at least one perfect matching.
\end{enumerate}
\end{lemma}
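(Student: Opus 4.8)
The plan is to reduce all three statements to the structure theorem, Theorem~\ref{consprop}(3), which gives an embedding $\Jac(\qpol)\hookrightarrow\wJac(\qpol)\cong\Mat_n(R)$ with $R=\C[X^{\pm1},Y^{\pm1},Z^{\pm1}]$ the localised centre, together with Broomhead's Lemma~\ref{zeroonallpm}. First fix vertices $v,w$. Because the vertex idempotents are the standard matrix idempotents, $e_v\wJac e_w$ is a free rank one $R$-module, hence has a $\C$-basis consisting of monomials $\chi^m$, $m\in\Z^3$. The space $e_v\wJac e_w$ is graded by the homotopy class of a path (a torsor over $H_1(\surf\qpol)\cong\Z^2$, since $\surf\qpol$ is a torus) and by each $\deg_\PM$; both gradings are preserved by the defining relations $r_a^+-r_a^-$, since $r_a^+a$ and $r_a^-a$ are contractible cycles of $\qpol_2$, hence of equal homotopy class and of $\deg_\PM$-degree $1$. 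On exponents of monomials these gradings are group homomorphisms $\mathrm{hom}\colon\Z^3\to\Z^2$ and $\deg_\PM\colon\Z^3\to\Z$; the first is surjective (the cyclic paths $p_X,p_Y$ fixed before Theorem~\ref{matchingconsistent} generate $\Z^2$), so $\ker(\mathrm{hom})=\Z u$ with $u$ primitive. The central element $\ell$ is homogeneous of homotopy class $0$ (it is a sum of contractible cycles $c_v$) and of $\deg_\PM$-degree $1$ (each $c_v$ contains exactly one arrow of $\PM$); being of homotopy class $0$ it is supported on $\ker(\mathrm{hom})=\Z u$, and the requirement $\deg_\PM\equiv 1$ there leaves a single monomial, so $\ell=\lambda\,\chi^{\underline{e}(\ell)}$ with $\underline{e}(\ell)$ a generator of $\Z u$ and $\deg_\PM(\underline{e}(\ell))=1$. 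Consequently
\[
 \Phi=(\mathrm{hom},\deg_\PM)\colon\ \Z^3\longrightarrow\Z^2\oplus\Z
\]
is injective (its kernel lies in $\Z u$, on which $\deg_\PM$ is injective). Hence each graded piece of $e_v\wJac e_w$ is at most one-dimensional, so any single path, or weak path, $p$ from $w$ to $v$, being nonzero and homogeneous, equals a nonzero scalar times a unique monomial $\chi^{\underline{e}(p)}$, and $\underline{e}(p)$ is already pinned down by the homotopy class of $p$ together with $\deg_\PM p$ for any single $\PM$.

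\emph{Statement (3)} is then immediate: if $p,q$ have the same homotopy class and $\deg_\PM p=\deg_\PM q$ for some $\PM$, then $\underline{e}(p)=\underline{e}(q)$, so $p$ and $q$ are supported on the same monomial of $e_v\wJac e_w$; since the classes of honest paths form a $\C$-basis of $\Jac$ (its relations merely identify paths with paths), distinct honest path classes cannot be supported on the same monomial, so $p=q$ in $\Jac$, and a fortiori in $\wJac$. For two weak paths one multiplies both by a high power of $\ell$ to reduce to the honest case.

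\emph{Statements (2) and (1)} I would obtain from a single ``divide by $\ell$'' step. The centre of $\Jac$ is a toric ring $\C[\sigma^\vee\cap\Z^3]$ with $\sigma^\vee$ a pointed three-dimensional cone, and $\underline{e}(\ell)$ lies in the interior of $\sigma^\vee$; this is exactly the content of Theorem~\ref{consprop}(3), that inverting $\ell$ makes $\Spec$ of the centre a torus. Since $\ker(\mathrm{hom})=\Z\,\underline{e}(\ell)$ passes through the interior of $\sigma^\vee$, every fibre of $\mathrm{hom}$ contains lattice points of $\sigma^\vee$; thus for each $h\in\Z^2$ there is a central monomial $z_0$ with $z_0v$ an honest cyclic path of homotopy class $h$. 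If $\deg_\PM z_0=0$ for some $\PM$, we are done. Otherwise $\deg_\PM z_0\ge 1$ for all $\PM$, and $\ell^{-1}z_0$ is still central, still has $\deg_\PM\ge 0$ for all $\PM$ (hence lies in $\Jac$ by Lemma~\ref{zeroonallpm}), still represents a cyclic path of homotopy class $h$, and has strictly smaller $\sum_\PM\deg_\PM$; iterating terminates since $\sum_\PM\deg_\PM\ge 0$, which proves~(2). For (1): $e_v\wJac e_w\cong R\ne 0$ yields a nonzero weak path $w\to v$, and multiplying it by $\ell^k$ for $k\gg 0$ makes all $\deg_\PM\ge 0$, so by Lemma~\ref{zeroonallpm} it becomes an honest path $p_0\colon w\to v$. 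Given a homotopy class $h$, multiply $p_0$ by a central monomial whose homotopy class is $h$ minus the homotopy class of $p_0$ (available by the previous paragraph) to get an honest path $w\to v$ of homotopy class $h$, then apply the division by $\ell$ until $\deg_\PM=0$ for some $\PM$.

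The substantive step is the reduction, not the combinatorics that follows it: one has to establish that every path is supported on a single monomial of $\wJac$ and that $\Phi$ is injective, both of which rest on Theorem~\ref{consprop}(3), i.e.\ on the toric description of the centre and on $\underline{e}(\ell)$ being a primitive interior lattice point of $\sigma^\vee$. After that, (1)--(3) are formal; the only real care is in tracking scalars when moving between honest and weak paths, which I would sidestep by always clearing denominators with a power of $\ell$ (a nonzerodivisor in $\Jac$) and arguing inside $\Jac$, where the honest path classes are a genuine basis.
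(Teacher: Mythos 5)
The paper does not prove this lemma; it is cited from Broomhead \cite{broomhead2012dimer}, and the paragraph immediately \emph{following} the lemma uses it to construct the explicit identification $\wJac\cong\Mat_n(\C[X^{\pm},Y^{\pm},Z^{\pm}])$ (sending paths to monomials). Your argument reverses this logical order: you take the abstract isomorphism of Theorem~\ref{consprop}(3) as a black box and recover the monomial bookkeeping from it. As a formal deduction from Theorem~\ref{consprop}(3) and Lemma~\ref{zeroonallpm} the argument is essentially sound — the injectivity of $\Phi=(\mathrm{hom},\deg_\PM)$, the uniqueness of the supporting monomial, and the divide-by-$\ell$ termination all check out — and it is a genuinely different route from Broomhead's combinatorial proof, which establishes the lemma directly from perfect-matching degree counting and zigzag paths without invoking the global ring structure. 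What your route buys is brevity and conceptual clarity (everything is packaged into lattice geometry); what it costs is that the hard content is shifted entirely onto Theorem~\ref{consprop}(3), and in the literature that theorem is typically proved \emph{via} the lemma, so as an independent proof this is potentially circular.

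Two points deserve tightening. First, the assertion that ``the centre of $\Jac$ is a toric ring $\C[\sigma^\vee\cap\Z^3]$ with $\underline e(\ell)$ interior \ldots is exactly the content of Theorem~\ref{consprop}(3)'' overclaims: that theorem gives you the localisation $Z(\Jac)_\ell\cong\C[\Z^3]$, not normality of $Z(\Jac)$. Fortunately the detour through the toric cone is unnecessary — Lemma~\ref{zeroonallpm} already tells you exactly which monomials $\chi^m$ lie in $Z(\Jac)$ (those with $\deg_\PM(m)\ge 0$ for all $\PM$), and adding a high multiple of $\underline e(\ell)$ to any $m_0$ lands you in that set without ever mentioning $\sigma^\vee$. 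Second, the claim that each $\Phi$-graded piece of $e_v\wJac e_w$ is at most one-dimensional relies on weak paths mapping to scalar multiples of single monomials under the abstract isomorphism; this is true because weak paths are units in $e_v\wJac e_v\cong\C[\Z^3]$, and units of a domain graded by a torsion-free group are homogeneous, hence of the form $c\chi^m$. You should say this explicitly, since it is the linchpin that turns the abstract isomorphism into the group homomorphism $\Phi\colon\Z^3\to\Z^2\oplus\Z$, and without it your step ``the requirement $\deg_\PM\equiv 1$ there leaves a single monomial'' is not yet justified (you need $\deg_\PM|_{\ker\mathrm{hom}}\ne 0$ first, which you can get either from $\ell\ne 0$ or from this unit-homogeneity fact applied to $\ell e_v$).
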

The last lemma implies that the algebra $\wJac$ can be identified with $\Mat_n(\C[X^{\pm 1},Y^{\pm 1},Z^{\pm}])$. To do this
fix a vertex $v$ and a perfect matching $\PM$. Choose paths $p_{vw}:v \ot w$ for all other vertices $w$ (and assume $p_{vv}=v$).
The path $p$ is mapped to $X^iY^jZ^kE_{h(p)t(p)}$ where $E_{h(p)t(p)}$ is an elementary matrix, 
$(i,j)$ is the homology class of $p_{vh(a)}pp_{vt(a)}^{-1}$
and $k = \deg_\PM p_{vh(a)}+\deg_\PM p- \deg_\PM p_{vt(a)}^{-1}$.

We can use $\ell$ to turn $\Jac(\qpol)$ into a $\Z_2$-graded Landau-Ginzburg model $\Jac(\qpol,\ell)$ by adding a zeroth multiplication with $\mu_0(1)=\ell$.
We assume that $\Jac(\qpol)$ is $\Z_2$-graded and concentrated in degree $0$. 
The objects in $\Tw \Jac(\qpol,\ell)$ are also known as matrix factorizatons.
A matrix factorization of $\ell$ consists of a pair $(P,d)$ where $P$ is a ($\Z_2$ or $\Z$)-graded finitely generated projective $\Jac(\qpol)$-module and
$d$ is a degree $1$ map such that $d^2=\ell$. Every finitely generated projective $\Jac(\qpol)$-module can be seen as a direct sum of shifts of projective modules
coming from the vertices: $P= v_1\Jac[i_1]\oplus\dots \oplus v_k\Jac[i_k]$ and $d$ corresponds to a matrix with entries of degree $1$. The Maurer-Cartan equation
in this case becomes $\ell - d^2=0$, so $(v_1[i_1]\oplus \dots \oplus v_k[i_k],d)$ is a twisted object.
We will often write the matrix factorization in a diagram
\[
\xymatrix{P_0 \ar@<.5ex>[r]^{d_{10}}&P_1 \ar@<.5ex>[l]^{d_{01}}}
\]
where $P_0$ and $P_1$ are the even and odd part of the matrix projective module.

Every arrow $a \in \qpol_1$ gives rise to a matrix factorization
\[
 M_a := \left( h(a)[1] \oplus t(a), \sm{0&a\\r_a^+&0}\right) =
\xymatrix{t(a)\Jac \ar@<.5ex>[r]^{a}&h(a)\Jac \ar@<.5ex>[l]^{r_a^+}}
\]
If $a$ and $b$ are consecutive arrows in a positive cycle $abu \in \qpol_2^+$ there is a morphism of matrix factorizations $\widehat{ab}$ 
that corresponds to a commutative diagram
\[
\xymatrix{
&t(a)\Jac \ar@<.5ex>[r]^{a}\ar[d]^{-\Id}&h(a)\Jac \ar@<.5ex>[r]^{bu}\ar[d]^{u=\ell(ab)^{-1}}&\\
\ar@<.5ex>[r]^{-b}&h(b)\Jac \ar@<.5ex>[r]^{-ua}&t(b)\Jac 
}
\]
(The minus signs in the bottom row come from the shift $M_b[1]$).
These morphisms generate the endomorphism algebra of $\oplus_{a \in \qpol_1}M_a$ in $\H\Tw_{\Z_2} \Jac(\qpol,\ell)$ and we denote
this algebra by $\mf(\qpol)$.

\begin{theorem}\cite{bocklandt2016noncommutative}
If $\qpol$ is zigzag consistent then $\mf(\qpol)$ and $\Gtl(\polq)$ are $A_\infty$-isomorphic as $A_\infty$-algebras, so
\[
 \Der_{\Z_2} \mf(\qpol) \cong \Der_{\Z_2} \Gtl(\polq).
\]
\end{theorem}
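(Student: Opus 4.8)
The plan is to exhibit an explicit $A_\infty$-quasi-isomorphism whose linear term is the evident dictionary between the two algebras; this is essentially the comparison carried out in \cite{bocklandt2016noncommutative}. First I would fix the bijection on objects. By construction of the specular dual the arrows of $\polq$ are canonically identified with the arrows of $\qpol$, so the vertices of the gentle quiver $\RQ$ attached to $\polq$ (the midpoints of the arrows of $\polq$) are in bijection with $\qpol_1$, hence with the generating matrix factorizations $\{M_a\mid a\in\qpol_1\}$. The angle arcs of $\RQ$ run between consecutive arrows inside the polygons of $\polq$, and those polygons correspond, through the cut--flip--glue construction, to the cycles in $\qpol_2^+\cup\qpol_2^-$; I would match each such angle arc with the morphism $\widehat{ab}$ of matrix factorizations when $a,b$ are consecutive in a positive cycle, and with the analogous morphism attached to the negative cycle otherwise. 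This defines an algebra homomorphism $\Gtl(\polq)\to\mf(\qpol)$ sending an angle arc to the corresponding $\widehat{ab}$ and a composable angle path to the composite morphism.

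Second I would verify that this map is a well-defined isomorphism of graded algebras. The defining relations of $\Gtl(\polq)$ are products of two consecutive angle arcs around a face of $\polq_2$; under the dictionary these become composites $\widehat{bc}[1]\circ\widehat{ab}$ of morphisms of matrix factorizations, and a direct chain-level computation in $\Tw_{\Z_2}\Jac(\qpol,\ell)$ shows each such composite is null-homotopic, so the map descends to $\mf(\qpol)$. To check bijectivity on homomorphism spaces I would restrict to the torus case, where Theorem \ref{consprop} gives $\wJac\cong\Mat_n(\C[X^{\pm1},Y^{\pm1},Z^{\pm1}])$, and use Broomhead's Lemmas \ref{zeroonallpm} and \ref{existspath} to compute the endomorphism algebra of $\bigoplus_a M_a$ in $\H\Tw_{\Z_2}\Jac(\qpol,\ell)$ explicitly; the resulting basis is indexed by the nonzero angle paths in $\RQ$, i.e.\ a basis of $\Gtl(\polq)$, and the $\Z_2$-degree matches the parity of the shift. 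The relevant combinatorics is organized by the correspondences of Theorems \ref{matchingconsistent} and \ref{genusmirror} between the zigzag cycles of $\qpol$, the vertices of $\polq$, and the punctures of $\psurf{\polq}$.

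The real content is upgrading this to an $A_\infty$-isomorphism. By the minimal model theorem the endomorphism algebra $\mf(\qpol)$ of $\bigoplus_a M_a$ inherits an $A_\infty$-structure from the dg-endomorphism algebra in $\Tw_{\Z_2}\Jac(\qpol,\ell)$ via homological perturbation, once a homotopy retraction onto cohomology is chosen; since the $M_a$ and the morphisms $\widehat{ab}$ are supported locally on the faces and arrows of $\qpol$, this retraction can be chosen face-by-face. I would then run an induction on $k$ parallel to Lemma \ref{muhomotopy} to show that the only nonzero higher products on $\mf(\qpol)$ are those that read a composable chain of angle arcs bounding a disc in $\psurf{\polq}$ onto the single angle which closes it, with sign $(-1)^{|\beta|}$. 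Matching these with the products of $\Gtl(\polq)$, which are given precisely by the $\polq$-version of Lemma \ref{muhomotopy} (equivalently by the Haiden--Katzarkov--Kontsevich formula of \cite{haiden2014flat}), produces the desired $A_\infty$-isomorphism --- strict if the retraction is chosen well, and otherwise with explicitly computable higher components $\cF_i$ that only absorb Koszul signs. The main obstacle is exactly this last step: organizing the perturbation sum on the matrix-factorization side so that the topological cancellation --- a higher product vanishes unless the corresponding angle-path loop bounds a disc in $\psurf{\polq}$ --- becomes manifest, while keeping the sign conventions of \cite{bocklandt2016noncommutative} consistent throughout. Granting the $A_\infty$-isomorphism, the equivalence $\Der_{\Z_2}\mf(\qpol)\cong\Der_{\Z_2}\Gtl(\polq)$ follows from the remark that quasi-isomorphic $A_\infty$-algebras have equivalent derived categories.
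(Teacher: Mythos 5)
The paper states this theorem only as a citation to \cite{bocklandt2016noncommutative} and gives no in-paper proof, so there is nothing local to compare against; your sketch must be judged as a reconstruction of the argument in that reference. Your reconstruction has the right overall shape: identify the arrows of $\qpol$ with the vertices of $\RQ$ for $\polq$ and hence with the summands $M_a$; match angle arcs to the morphisms $\widehat{ab}$; check the gentle relations are killed; show the linear map is an isomorphism of graded algebras by computing $\H\Tw_{\Z_2}\Jac(\qpol,\ell)$ explicitly; and transport the $A_\infty$-structure by homological perturbation, showing the higher products match Lemma~\ref{muhomotopy}.

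Two things to tighten. First, invoking Theorems~\ref{matchingconsistent} and \ref{genusmirror} to control the hom-space computation is misdirected: those concern the matching polygon and the genus of $\psurf{\polq}$. The correct technical input is Lemma~\ref{existspath} and the embedding $\Jac(\qpol)\hookrightarrow\wJac\cong\Mat_n(\C[X^{\pm1},Y^{\pm1},Z^{\pm1}])$ of Theorem~\ref{consprop}; this is where zigzag consistency actually enters, giving you a clean basis of the degree-zero endomorphism algebra indexed by angle paths. Second, and more importantly, the step you correctly identify as the real obstacle --- showing that after perturbation the only surviving higher products on $\mf(\qpol)$ are those dual to an angle-path loop bounding a disc in $\psurf{\polq}$ --- is asserted rather than argued: you do not produce the contracting homotopy, nor the cycle representatives of the $\widehat{ab}$, nor verify the sign cancellations, and those are precisely where the content of the cited theorem lives. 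You also gloss the morphisms attached to the negative cycles of $\qpol_2$; the paper only defines $\widehat{ab}$ for positive cycles, and the negative-face analogue (which enters as $\widehat{p_{i-1}p_i}^{\pm1}$ in the band construction) needs to be spelled out before the dictionary is well-defined on all of $\RQ_1$. As a plan your sketch is sound; as a proof it stops short of the computation that actually establishes the $A_\infty$-isomorphism.
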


If a perfect matching $\PM$ is specified we can make $\Jac(\qpol)$ $\Z$-graded by giving all arrows in $\PM$ degree $2$ and all 
other arrows degree $0$. In this way $\ell$ has degree $2$. Again we have a matrix factorization $M_a$ for every arrow in $\qpol_1$ and we let
$\mf_\PM(\qpol)$ be the graded endomorphism algebra of $\oplus_{a \in \qpol_1}M_a$ in the category $\Tw_{\Z} \Jac(\qpol,\ell)$.
\begin{theorem}\cite{bocklandt2016noncommutative}
If $\qpol$ is zigzag consistent and $\PM$ a perfect matching then $\mf_\PM(\qpol)$ and $\Gtl_\PM(\polq)$ are quasi-isomorphic as $A_\infty$-algebras, so
\[
 \Der_{\Z} \mf_\PM(\qpol) \cong \Der_{\Z} \Gtl_\PM(\polq).
\]
\end{theorem}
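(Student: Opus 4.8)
The plan is to \emph{upgrade} the $\Z_2$-graded statement of the previous theorem to the $\Z$-graded setting, carrying along the extra grading that the perfect matching supplies on both sides. Note first an asymmetry in the two definitions: $\mf(\qpol)$ was the endomorphism algebra of $\bigoplus_a M_a$ taken inside $\H\Tw_{\Z_2}\Jac(\qpol,\ell)$, hence already minimal, whereas $\mf_\PM(\qpol)$ is the honest (and generally non-minimal) dg endomorphism algebra of $\bigoplus_a M_a$ inside $\Tw_{\Z}\Jac(\qpol,\ell)$. So one cannot hope for an $A_\infty$-\emph{isomorphism}; instead I would produce a $\Z$-graded $A_\infty$-isomorphism $\Gtl_\PM(\polq)\xrightarrow{\sim}\H\mf_\PM(\qpol)$ and then compose it with the quasi-isomorphism $\H\mf_\PM(\qpol)\to\mf_\PM(\qpol)$ provided by the minimal model theorem \cite{kadeishvili1980homology}.

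First I would fix the two gradings. The matching $\PM$ makes $\Jac(\qpol)$ into a $\Z$-graded algebra with $\ell$ of degree $2$; this induces a $\Z$-grading on the dg-category $\Tw_{\Z}\Jac(\qpol,\ell)$, hence on $\mf_\PM(\qpol)$ and on its cohomology $\H\mf_\PM(\qpol)$, and forgetting $\Z$ down to $\Z_2$ returns exactly $\mf(\qpol)$ with its original $\Z_2$-grading. On the mirror side, the bijection between perfect matchings of $\qpol$ and of $\polq$ coming from the cut--flip--glue description of dimer duality turns $\PM$ into a matching $\PM^{\vee}$ of $\polq$, and $\Gtl_\PM(\polq)$ is by definition $\Gtl(\polq)$ equipped with the $\Z$-grading attached to $\PM^{\vee}$ (an angle arrow gets degree $-1$ if it lands in an arrow of $\PM^{\vee}$ and $+1$ otherwise), which again refines the $\Z_2$-grading. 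So the point to establish is that the $A_\infty$-isomorphism $\Phi\colon\Gtl(\polq)\to\mf(\qpol)$ of \cite{bocklandt2016noncommutative} is homogeneous of degree $0$ for these two $\Z$-gradings. Its target is a dg-algebra of $\Z$-graded objects and its source carries an $A_\infty$-structure compatible with the grading (noted in the text), so it suffices to check that the linear part $\cF_1$ — which sends the angle arrows of the angle quiver of $\polq$ to scalar multiples of the morphisms of matrix factorizations $\widehat{ab}$ — is degree-preserving on these generators; the higher components $\cF_{\ge 2}$, assembled from the same local reductions of Lemma \ref{muhomotopy}, are then homogeneous of degree $0$ automatically.

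\textbf{The crux}, and where the real work lies, is this generator computation. From the defining commutative square of $\widehat{ab}$ one reads off $\deg_\PM\widehat{ab}$ in terms of the shifts of $M_a$ and $M_b$ inside $\Tw_{\Z}$ and the $\PM$-degrees of the entries $-\Id$, $u=\ell(ab)^{-1}$, and so on, where $\deg_\PM a=2$ if $a\in\PM$ and $0$ otherwise, and where the $\PM$-degree of the weak paths $r_a^{+}\ell^{-1}$ is controlled by Lemmas \ref{zeroonallpm} and \ref{existspath}. On the other side the corresponding angle arrow has degree $-1$ or $+1$ according to whether the arrow of $\polq$ into which it points lies in $\PM^{\vee}$. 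One then has to chase the cut--flip--glue construction to identify, in terms of the data $(a,b,\PM)$, which arrows of $\polq$ the matching $\PM^{\vee}$ selects, and verify that the two answers agree. Since $\PM$ meets every cycle of $\qpol_2$ exactly once, only a handful of local configurations occur around $a$, $b$ and their positive cycle, so this is a finite — if fiddly — case check, involving careful bookkeeping across the duality and the matrix-factorization shift conventions, but no new conceptual ingredient.

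Granting this, $\Phi$ is a $\Z$-graded $A_\infty$-isomorphism $\Gtl_\PM(\polq)\cong\H\mf_\PM(\qpol)$; composing with the Kadeishvili quasi-isomorphism gives a $\Z$-graded $A_\infty$-quasi-isomorphism $\Gtl_\PM(\polq)\simeq\mf_\PM(\qpol)$. The displayed equivalence $\Der_{\Z}\mf_\PM(\qpol)\cong\Der_{\Z}\Gtl_\PM(\polq)$ then follows because quasi-isomorphic $A_\infty$-algebras have quasi-equivalent twisted completions, hence equivalent derived categories.
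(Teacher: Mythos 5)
The paper does not prove this theorem --- it is imported with a citation to \cite{bocklandt2016noncommutative} --- so there is no in-text argument to compare against. What I can do is assess your plan on its own merits.

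Your outline is the natural one and is consistent with the surrounding machinery: treat the theorem as a graded refinement of the preceding $\Z_2$-graded $A_\infty$-isomorphism $\Phi\colon\Gtl(\polq)\to\mf(\qpol)$, transport $\PM$ to a perfect matching on $\polq$ via the duality, check $\Phi$ is homogeneous, and finish with Kadeishvili. You also correctly noticed the asymmetry in the paper's definitions --- $\mf(\qpol)$ is taken in $\H\Tw_{\Z_2}$, $\mf_\PM(\qpol)$ in $\Tw_{\Z}$ --- which neatly explains why one theorem says ``isomorphic'' and the other only ``quasi-isomorphic.'' That is a careful reading. Two small remarks that would tighten the setup: first, the perfect-matching bijection induced by dimer duality is literally the identity on underlying arrow sets (cut--flip--glue preserves which arrows bound each face, only cyclic orders change), so $\PM^{\vee}=\PM$ and the bookkeeping simplifies; second, you should be a bit more careful to say that the Kadeishvili structure on $\H\mf_\PM(\qpol)$ is chosen compatibly with the one already living on $\mf(\qpol)$, which is fine but should be said.

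Where the plan is genuinely incomplete is exactly where you flag it, and in one place slightly beyond what you flag. The degree check on $\cF_1$ is described but not carried out: you would need to fix the $\Z$-shifts in $M_a=\bigl(h(a)[?]\oplus t(a),\sm{0&a\\r_a^+&0}\bigr)$ so that the differential has degree $1$ (the shift depends on whether $a\in\PM$), then compute $\deg_\PM\widehat{ab}$ from the entries $-\Id$ and $u=\ell(ab)^{-1}$ together with those shifts, and match against the $\pm 1$ degree of the corresponding angle arrow. That is a short finite case analysis but it is the actual content, and without it the claim is unverified. More seriously, the assertion that once $\cF_1$ is degree-preserving ``the higher components $\cF_{\ge 2}\dots$ are then homogeneous of degree $0$ automatically'' is not right as stated and is not automatic in general: an $A_\infty$-morphism must have $\cF_k$ of degree $1-k$, and this is an extra condition, not a consequence of $\cF_1$ being degree $0$. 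What saves you here is the specific combinatorial form of $\Phi$ (built from the same polygon-contraction data as $\mu_k$), together with the observation in the paper that the reduction in Lemma \ref{muhomotopy} shifts degree by exactly $-(k-2)$; one has to run the analogous degree count through each $\cF_k$. That argument is plausible and almost surely works, but it needs to be written, not declared automatic.
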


\section{Strings and Bands}\label{sectionbands}

In \cite{haiden2014flat} Haiden et al. described all indecomposable objects in the topological Fukaya category of a graded surface.
They showed that these objects come in two types: strings and bands. The strings correspond to immersed open curves in the surface that run between two punctures, while 
the bands correspond to closed curves. The bands also come equiped with a local system, given by a Jordan matrix that measures the transport around the curve.
In this section we will give a combinatorial description of these objects using the dimer formalism and use this description 
to construct the corresponding matrix factorizations for the $B$-model. We will focus on the band objects, a similar construction can be done for the string objects.
Note that in the $\Z_2$-graded case there are more objects possible and a classification is not known, although there are results in the completed case \cite{burban2010maximal}.

In what follows we assume that $\qpol$ is a consistent dimer on a torus and $\polq$ is its mirror dimer, which is embedded in a surface $S=\psurf{\polq}$.
The dimer  $\qpol$ will be used for the B-model and $\polq$ for the A-model.
\subsection{Definition}
A \emph{garland band} is a sequence $g =(g_0, g_1, \dots, g_{2k-1})$ where $k\ge 1$, $g_{2i} \in \polq_1=\qpol_1$ and $g_{2i+1} \in \polq_2=\qpol_2$.
We also demand that
\begin{itemize}
 \item[B1] the arrows are contained in the cycles around them: $g_{2i} \in g_{2i\pm 1}$,
 \item[B2] consecutive arrows and cycles are different: $g_{i} \ne g_{i+2}$,
\end{itemize}
The indices in the definition are considered in $\Z/{2k\Z}$ and bands are considered upto cyclic shifts by an even offset.
A band is called \emph{primitive} if it there is no nontrivial cyclic permutation that maps $g$ to itself, so $g$ is not the power of a smaller band.

Note that this definition does not depend on whether you are working in $\qpol$ or its mirror, so there is a one to one correspondence between
garland bands in $\qpol$ and its dual $\polq$. 

\subsection{Bands in the A-model}
Given a garland band, we can draw a closed curve on the surface $\psurf{\polq}$ by connecting the centers of the arrows $g_{2i+2}$ and $g_{2i}$ by a line
through the face $g_{2i+1}$. Vice versa if we have a closed curve on $\psurf \polq$ we can isotope it such that it never enters and leaves a face
via the same arrow. After this isotopy we can write down a list of all the arrows and faces this curve meets and this gives us a garland band.
This band is depends uniquely on the isotopy class of the curve in $\psurf \polq$ because the universal cover of $\psurf \polq$ is a tree of faces
glued together along the arrows.

Given a garland $g$ there is a unique path $p_0\dots p_u \in \C \polq$ starting with $p_0=g_0$ and running along all the face cycles.
It is the concatenation of subpaths $p_{i_j}\dots p_{i_{j+1}-1}$ of $g_{2j+1}$ such that $p_{i_j}=g_{2j}$ and $t(p_{i_{j+1}-1}) = h(g_{2j+2})$. 
We will call this path the snake path of $g$. It has the property that two consecutive arrows of a path are either contained in a positive or a negative cycle of $\polq$.
The arrows of $p_i$ for which $p_{i-1}p_{i}$ and $p_{i}p_{i+1}$ are subpaths of different cycles correspond to the arrows in the garland.  
\[
\xymatrix@=.4cm{
\ar@{.>}@/^/[rr]&&\vtx{}\ar[dd]|{g_0}&&\vtx{}\ar@/_/[ll]\ar@{.>}@/^/[rr]&&\vtx{}\ar[dd]|{g_4}&&\vtx{}\ar@/_/[ll]\ar@{.>}@/^/[rr]&&\dots\\
g:&&&g_1&&g_3&&g_5&&&\\
p:&&\vtx{}\ar@/^/[ll]\ar@{.>}@/_/[rr]&&\vtx{}\ar[uu]|{g_2}&&\vtx{}\ar@/^/[ll]\ar@{.>}@/_/[rr]&&\vtx{}\ar[uu]|{g_6}&&\dots \ar@/^/[ll]
}
\]
From the snake path $p=p_0\dots p_r$ and a sequence of invertible square matrices $(\alpha_i)_{0\dots u}$ we can construct a twisted complex
\[
 \left(P =\bigoplus_i p_i[u_i],  \delta= \sum_i  (\alpha_i\otimes\widehat{p_{i-1}p_{i}})^{\pm 1}\right)
\]
where $\widehat{p_{i-1}p_{i}}$ is the angle between the two arrows inside the face of which $p_{i-1}p_{i}$ is a subpath. The exponent of $\widehat{p_{i-1}p_{i}}$ is $\pm 1$ depending on whether this face is in $\polq_2^{\pm}$. It is there to make sure that $\widehat{p_{i-1}p_{i}}^{\pm 1} \in \Gtl(\polq)$, because
in the negative cycles the angle goes in the opposite direction.

The shifts $u_i$ are uniquely determined by $u_0=0$ and the demand that $\deg \delta=1$. If we work with $\Z_2$-gradings all the $u_i$ are zero, 
but if we specified a perfect matching the degrees can only be satisfied when $\sum_i \pm \deg \widehat{p_{i-1}p_{i}}=0$. 
This is precisely when the curve of the garland is graded for the graded surface $\gsurf{\PM}{\polq}$.

The pair $(P,\delta)$ is indeed a twisted complex: if we look at the Maurer-Cartan equation all higher products are zero because 
to apply the reduction step all angles in one face have to be present but this is not possible because the curve of a garland enters and leaves a face by a different arrow. 
Furthermore $\delta^2=0$ because if $\widehat{p_{i-1}p_{i}}^{\pm 1}$ and $\widehat{p_{i}p_{i+1}}^{\pm 1}$ concatenate they must sit in the same cycle and then their product is zero
in $\Gtl(\polq)$.

By applying base changes to the matrices we see that the twisted object only depends on the conjugacy class of the product $\alpha = \prod_i \alpha_i^{\pm 1}$.
We denote the twisted object by $B(g,\alpha)$.

\begin{lemma}
If $\gamma$ is a graded curve in $\gsurf{\PM}{\polq}$ and the corresponding garland is $g$, then $B(g,\alpha)$ corresponds
to the object in the topological Fukaya category associated to (an appropriate shift of) $\gamma$ and $\alpha$ by Haiden et al.  
\end{lemma}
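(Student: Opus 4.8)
The plan is to move the statement across the equivalence $\Der_\Z \Gtl_\PM(\polq)\simeq(\text{derived topological Fukaya category of }\gsurf{\PM}{\polq})$ of the preceding proposition, under which the graded Lagrangian arc carried by an arrow $a\in\polq_1$ is the object $a[0]$ (graded by $\theta=0$ or $\pi$ as specified there). The claim then becomes purely algebraic: one must identify the twisted object $B(g,\alpha)$, up to a single overall shift, with the twisted object that Haiden et al.\ attach to the graded curve $\gamma$ and the local system $\alpha$.

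First I would recall the shape of the band objects of Haiden et al.\ from \cite{haiden2014flat}. The algebra $\Gtl_\PM(\polq)=\C\RQ/(\text{relations})$ is gentle --- each vertex of $\RQ$ has two incoming and two outgoing angle arrows, and in each of the two directions exactly one pair completes to a relation --- and under the equivalence above the indecomposables that are not string objects are the classical band modules: to a primitive cyclic reduced walk $b$ in $\RQ$ avoiding the relations, together with the conjugacy class of an invertible matrix, one attaches a twisted object with one summand per letter of $b$, differential assembled from the angle arrows traversed (inverted along the negative faces of $\polq$) and twisted by the matrix, with summand shifts forced by $\deg\delta=1$ up to one global shift recording the grading $\theta$ of the curve. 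The band object of Haiden et al.\ attached to $(\gamma,\alpha)$ is the band module of the walk read off from the intersections of $\gamma$ with the arc system $\{a\mid a\in\polq_1\}$ once $\gamma$ is in minimal position.

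Next I would run the correspondence between garlands and curves from this section in reverse. The curve $\gamma_g$ drawn from $g$ crosses exactly the arcs $g_0,g_2,\dots$, and inside each face $g_{2j+1}$ it turns the corners at the vertices that separate $g_{2j}$ from $g_{2j+2}$ along that face cycle; these corners are precisely the angle arrows $\widehat{p_{i-1}p_i}$ recorded by the snake path $p_0\cdots p_u$, so the snake path together with its angle arrows carries the same combinatorial data as the walk of Haiden et al.\ for $\gamma_g$. The one discrepancy is that $B(g,\alpha)$ is the \emph{un-reduced} presentation of this band module: whenever two consecutive garland arrows are non-adjacent in their common face, the snake path inserts the intermediate arrows of that face, and along the resulting segment of $(P,\delta)$ each product $\widehat{p_{i-1}p_i}^{\pm1}\widehat{p_ip_{i+1}}^{\pm1}$ is a product of consecutive corners of one face, hence $0$ in $\Gtl_\PM(\polq)$ (this is exactly why $\delta^2=0$ there). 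The reconciliation is a homotopy reduction: each maximal intra-face zigzag of summands is contracted using the higher products $\mu_k$ of Lemma \ref{muhomotopy} --- going once around a face, these feed an incoming angle arrow directly into the outgoing one --- together with Corollary \ref{subzero}; after carrying out all such contractions one is left with the band module of the reduced walk, which is the object of Haiden et al. Finally the leftover global shift is fixed by matching the normalization $u_0=0$ with the value of $\theta$ of $\gamma$ at the base arc $p_0$, the hypothesis that $\gamma$ is graded being exactly the closing-up condition $\sum_i\pm\deg\widehat{p_{i-1}p_i}=0$ that makes the shifts $u_i$ consistent around the loop; the local system agrees since, as already noted, $B(g,\alpha)$ depends only on the conjugacy class of $\alpha=\prod_i\alpha_i^{\pm1}$, which is the monodromy of the Haiden et al.\ local system along $\gamma$.

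The main obstacle is the reduction step: performing the $A_\infty$ homotopy transfer that collapses every intra-face zigzag of the snake path onto the minimal band module with all signs under control, and checking that it lands exactly on the normalization of Haiden et al.\ rather than on an unintended shift or an extra twist of the local system. The degenerate faces --- cycles of length one or two in $\polq$, where "non-adjacent" and "minimal position" need extra care --- must be inspected separately, and one also has to confirm that the sign rule $(-1)^{n+t-k}$ used here for twisted objects matches the sign conventions of \cite{haiden2014flat} after all the identifications.
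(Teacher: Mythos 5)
The paper's proof of this lemma is a single citation to the construction in \cite{haiden2014flat}, so your proposal is taking a much more elaborate route. The framework you set up --- invoke the equivalence of the preceding proposition, recall that $\Gtl_\PM(\polq)$ is gentle, translate $\gamma$ to a cyclic walk in $\RQ$ --- is consistent with what the paper has in mind. But the central step of your argument contains a misconception that undermines it.

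You claim that $B(g,\alpha)$ is an \emph{un-reduced} presentation and that the band object of Haiden et al.\ has one summand per intersection of $\gamma$ with the arc system $\{a : a\in\polq_1\}$, so that intra-face stretches of the snake path must be contracted away. This cannot be right. If the Haiden--Katzarkov--Kontsevich twisted object had only the summands $g_0, g_2,\dots$, its differential component inside a face $g_{2j+1}$ would have to be the ``boundary path'' joining $g_{2j}$ to $g_{2j+2}$ along that face. When $g_{2j}$ and $g_{2j+2}$ are not adjacent in $g_{2j+1}$, that boundary path is a composite of consecutive angle arrows of a single face, which is exactly what the relations in $\Gtl_\PM(\polq)$ kill. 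The differential would then be zero and the putative band object would decompose as a direct sum of arcs, which is absurd for a primitive band with a $1$-dimensional local system. The extra snake-path summands are therefore not redundant: they are precisely what allows each differential component to be a single nonvanishing angle arrow. In other words, the combinatorial walk that Haiden et al.\ assign to a graded closed curve already records \emph{all} the corners (angular sectors) the curve sweeps past inside each polygon, not merely the arcs it crosses transversally; this walk is what the snake path encodes, and $B(g,\alpha)$ is HKK's twisted object directly, with no contraction needed.

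Even setting this aside, the reconciliation you propose does not go through. Lemma \ref{muhomotopy} computes higher products $\mu_k$ only for sequences of angle paths that traverse a \emph{contractible cycle}, i.e.\ go once around an entire face; it says nothing about a partial arc of a face, which is what your ``maximal intra-face zigzag'' is. Nor does the Shortening Lemma apply, since the angle-arrow differential components are never isomorphisms. So there is no mechanism in the paper (or in the homotopy transfer machinery you invoke) that would collapse those summands. In short, the ``discrepancy'' you try to resolve does not exist, and the tool you use to resolve it would not work if it did; the paper's proof --- that the lemma is read off directly from the explicit form of the band objects in \cite{haiden2014flat} --- is the correct and complete justification.
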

\begin{proof}
This follows from the construction in \cite{haiden2014flat}.
\end{proof}

\subsection{Bands in the B-model}
Now we want to construct the corresponding matrix factorizations of $\Jac(\qpol,\ell)$. The idea is to make a cone over all the matrix factorizations
coming from arrows in the snake path and then simplifying this matrix factorization using a shortening lemma.

\begin{lemma}[Shortening lemma]
Let $(P,d)$ be a matrix factorization and $P_a, P_b$ two graded summands of $P$ such that $d_{ba}:=\Id_b d \Id_a= \phi$ is an isomorphism.
The pair
\[
 (P^{red}, d^{red}) := \left(\frac{P}{P_a\oplus P_b},~ d^{red}_{ij} = d_{ij} - d_{ia}\phi^{-1}d_{bj}\right)
\]
is a matrix factorization quasi-isomorphic to $(P,d)$
\end{lemma}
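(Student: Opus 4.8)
The plan is to verify the three things that need checking: that $(P^{red}, d^{red})$ is still a $\Z_2$- (or $\Z$-) graded projective module with a degree $1$ endomorphism, that $(d^{red})^2 = \ell$, and that the obvious inclusion/projection between $(P,d)$ and $(P^{red}, d^{red})$ is a quasi-isomorphism. First I would fix the decomposition $P = P_a \oplus P_b \oplus P'$, where $P' = P/(P_a\oplus P_b)$ is identified with the complementary summand, and write $d$ in block form with respect to this decomposition. Because $\phi = d_{ba}$ is an isomorphism of degree $1$, the summands $P_a$ and $P_b$ have shifts differing by $1$, so $P'$ inherits the grading and $d^{red}$, whose $(i,j)$ block is $d_{ij} - d_{ia}\phi^{-1}d_{bj}$, is again a degree $1$ map (each correction term is a composite of three degree $1$ maps passing through $\phi^{-1}$, which has degree $-1$).

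The key computation is $(d^{red})^2 = \ell\cdot\Id_{P'}$. This is the standard "Gaussian elimination"/change-of-basis argument for matrix factorizations. Concretely, I would exhibit an explicit automorphism $g$ of $P$ (unipotent, built from $\phi^{-1}$ in the relevant off-diagonal blocks) such that conjugating $d$ by $g$ puts it in the form
\[
 g^{-1} d g = \begin{pmatrix} 0 & \phi^{-1}\ell & 0 \\ \phi & 0 & 0 \\ 0 & 0 & d^{red}\end{pmatrix}
\]
with respect to $P_b \oplus P_a \oplus P'$; here the top-left $2\times 2$ block is a trivial (contractible) matrix factorization of $\ell$, namely $P_a \xrightarrow{\phi} P_b \xrightarrow{\phi^{-1}\ell} P_a$, whose square is manifestly $\ell$. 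Since $d^2 = \ell$ and conjugation preserves this, reading off the bottom-right block gives $(d^{red})^2 = \ell$. Spelling out the block entries of $g^{-1}dg$ and matching the $(P',P')$ entry with $d_{ij} - d_{ia}\phi^{-1}d_{bj}$ is the one routine calculation I would actually carry out, using that $d^2 = \ell$ kills the cross terms.

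Finally, the quasi-isomorphism. Once the conjugation above is in hand, $(P,d)$ is literally isomorphic (as a matrix factorization) to the direct sum of the contractible factorization $T = (P_a\oplus P_b, \sm{0 & \phi^{-1}\ell\\ \phi & 0})$ and $(P^{red}, d^{red})$. A contractible matrix factorization is zero in the homotopy category $\H\Tw_{\Z_2}\Jac(\qpol,\ell)$ (it has a contracting homotopy given by $\sm{0 & \phi^{-1} \\ 0 & 0}$), so the projection $(P,d) \to (P^{red}, d^{red})$, which is a morphism of matrix factorizations by construction of $d^{red}$, is a quasi-isomorphism; equivalently the inclusion $(P^{red},d^{red}) \hookrightarrow (P,d)$ is one. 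I expect the main obstacle to be purely bookkeeping: keeping the grading shifts and the signs coming from the shift $P_b$ versus $P_a$ consistent throughout the block computation, and making sure the correction term $d_{ia}\phi^{-1}d_{bj}$ comes out with the sign dictated by the conjugation rather than its negative. There is no conceptual difficulty once the change of basis $g$ is written down explicitly.
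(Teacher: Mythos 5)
Your proposal is correct but follows a genuinely different route from the paper. The paper's proof is a direct verification: it expands $(d^{red})^2_{ij}$ by brute force, cancels terms using $d^2_{uv}=\ell\delta_{uv}$ and $d_{uu}=0$, and then simply writes down the pair of mutually inverse chain maps $\psi,\psi^{-1}$ without deriving them. Your approach instead packages everything into a single change of basis: conjugate $d$ by an automorphism $g$ of the underlying graded module so that it splits as the direct sum of the trivial factorization on $P_a\oplus P_b$ and $d^{red}$ on $P'$. This is the standard Gaussian-elimination / perturbation-lemma picture for twisted complexes, and it has the virtue of proving both claims at once: $(d^{red})^2=\ell$ is read off the bottom-right block of $(g^{-1}dg)^2 = \ell$, and the quasi-isomorphism is immediate from contractibility of the trivial summand. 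The paper's version is more elementary (no need to invert $g$ or keep track of the splitting); yours is more conceptual and explains \emph{where} the formulas for $\psi$ and $\psi^{-1}$ come from.

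Two small slips worth flagging, though neither is a gap in the argument. First, the claimed matrix $\begin{pmatrix} 0 & \phi^{-1}\ell & 0 \\ \phi & 0 & 0 \\ 0 & 0 & d^{red}\end{pmatrix}$ relative to $P_b\oplus P_a\oplus P'$ has the entries $\phi$ and $\phi^{-1}\ell$ in the wrong slots: $\phi$ is the $P_a\to P_b$ component, hence belongs in position $(b,a)$ (top-right), and $\phi^{-1}\ell$ is the $P_b\to P_a$ component, position $(a,b)$ (bottom-left). Your parenthetical description $P_a\xrightarrow{\phi}P_b\xrightarrow{\phi^{-1}\ell}P_a$ is consistent with the corrected indexing, so this is purely a transcription issue, and the same remark applies to the contracting homotopy (it should be $\iota_a\phi^{-1}\pi_b$, whichever matrix slot that is in your ordering). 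Second, a single upper-triangular unipotent $g=\Id - \iota_a\phi^{-1}d_{b*}\pi_*$ kills the $(b,*)$ block but leaves the $(a,*)$ and $(*,a)$ blocks alive; you need either a second (lower-triangular) unipotent factor clearing $d_{*a}$ or, cleanest, to observe that the retraction $\pi:=\psi\psi^{-1}$ (with $\psi,\psi^{-1}$ as in the paper) is an idempotent chain map, so $P=\ker\pi\oplus\image\pi$ splits $(P,d)$ as a direct sum of matrix factorizations with $\image\pi\cong(P^{red},d^{red})$ and $\ker\pi$ the contractible trivial one. This amounts to carrying out the ``routine calculation'' you defer, and it does come out as you predict.
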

\begin{proof}
We need to check that for $i,j\not \in \{a,b\}$
\se{
(d^{red})^2_{ij} 
&=\sum_{k\ne a,b} (d_{ik} - d_{ia}\phi^{-1}d_{bk})(d_{kj} - d_{ka}\phi^{-1}d_{bj})\\
&=\sum_{k\ne a,b} (d_{ik}d_{kj} - d_{ia}\phi^{-1}d_{bk}d_{kj} - d_{ik}d_{ka}\phi^{-1}d_{bj} +  d_{ia}\phi^{-1}d_{bk}d_{ka}\phi^{-1}d_{bj})\\
&=(\sum_{k\ne a,b} d_{ik}d_{kj}) - d_{ia}\phi^{-1}(d^2_{bj}-d_{ba}d_{aj}-d_{bb}d_{bj})  \\
&\phantom{=}- (d^2_{ia}-d_{ia}d_{aa}-d_{ib}d_{ba})\phi^{-1}d_{bj}+ d_{ia}\phi^{-1}(d^2_{ab}-d_{ba}d_{aa}-d_{bb}d_{ba})\phi^{-1}d_{bj})\\
&= (\sum_{k\ne a,b} d_{ik}d_{kj}) + d_{ia}d_{aj} + d_{ib}d_{bj} + 0 = d^2_{ij}.
}
In the calculation we used the fact that $d^2_{uv}=0$ if $u\ne v$ and $d_{uv}=0$ if $u=v$.

The quasi-isomorphism $\psi: P_{red}\to P$ is given by
\[
 \psi_{ij} = \begin{cases}
     \delta_{ij}\Id_i &\text{if }i \ne a,b\\
     -\phi^{-1}d_{bj} &\text{if }i = a\\
     0 &\text{if }i = b
             \end{cases}
\text{ and }
 \psi^{-1}_{ij} = \begin{cases}
     \delta_{ij}\Id_i &\text{if }j \ne a,b\\
     0 &\text{if }j = a\\
     -d_{ia}\phi^{-1} &\text{if }j = b
     \end{cases}
\]
A straightforward calculation shows that these maps are indeed quasi-inverses.
\end{proof}
The shortening lemma has some easy consequences.
\begin{lemma}\label{subcycle}
If $a_1\dots a_k$ is a positive cycle in $\qpol_2^+$ then
\[
 M_{a_1\dots a_l} := \left( h(a_1)[1] \oplus t(a), \sm{0&a_1\dots a_l\\a_{l+1}\dots a_k&0}\right) =
\xymatrix{t(a_l)\Jac \ar@<.5ex>[r]^{a_1\dots a_l}&h(a)\Jac \ar@<.5ex>[l]^{a_{l+1}\dots a_k}}
\]
is equivalent to the repeated cone of the morphisms $M_{a_1}\stackrel{\widehat{a_1a_2}}{\to}M_{a_2}\dots M_{a_l}$. 
\end{lemma}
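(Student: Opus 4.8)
The plan is to prove the statement by induction on $l$ using the Shortening lemma, unwinding the iterated cone one morphism at a time. For $l=1$ there is nothing to prove: $M_{a_1\dots a_1} = M_{a_1}$ by definition, and the empty cone is just $M_{a_1}$. For the inductive step, suppose the iterated cone of $M_{a_1}\stackrel{\widehat{a_1a_2}}{\to}\cdots\stackrel{\widehat{a_{l-2}a_{l-1}}}{\to}M_{a_{l-1}}$ is equivalent to $M_{a_1\dots a_{l-1}}$, i.e.\ to the two-term complex $\xymatrix{t(a_{l-1})\Jac \ar@<.5ex>[r]^{a_1\dots a_{l-1}}&h(a_1)\Jac \ar@<.5ex>[l]^{a_l\dots a_k}}$. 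I then take the cone of the morphism $M_{a_1\dots a_{l-1}}\stackrel{\widehat{a_{l-1}a_l}}{\longrightarrow}M_{a_l}$.

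The key computation is to write out this cone explicitly. Recalling that $\widehat{a_{l-1}a_l}$ is the morphism of matrix factorizations given (up to sign) by $-\Id$ on the $t(a_{l-1})\Jac$ summand and by multiplication by $u = \ell(a_{l-1}a_l)^{-1} = a_{l+1}\dots a_k\, a_1\dots a_{l-2}$ on the $h(a_1)\Jac$ summand (following the picture defining $\widehat{ab}$ for consecutive arrows $a,b$ in a positive cycle, with $a_{l-1}, a_l$ playing the roles of $a,b$ in the cycle $a_{l-1}a_l\cdot(a_{l+1}\dots a_k a_1\dots a_{l-2})$), the cone is a four-term matrix factorization with summands $t(a_{l-1})\Jac[1]$, $h(a_1)\Jac[1]$ from $M_{a_1\dots a_{l-1}}[1]$ and $t(a_l)\Jac[1]$, $h(a_1)\Jac$ from $M_{a_l}$. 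The differential contains among its entries the isomorphism $-\Id\colon t(a_{l-1})\Jac[1]\to h(a_1)\Jac[1]$ (coming from the $\widehat{a_{l-1}a_l}$ component, since $h(a_{l-1})=t(a_1)=\ldots$; more precisely the relevant entry identifies the summand of $M_{a_1\dots a_{l-1}}[1]$ with one summand of $M_{a_l}[1]$ and is $\pm\Id$). I then apply the Shortening lemma to cancel this pair of summands. What remains is a two-term complex on $t(a_l)\Jac[1]\oplus h(a_1)\Jac$, and I just need to read off the two entries of the reduced differential via the formula $d^{red}_{ij}=d_{ij}-d_{ia}\phi^{-1}d_{bj}$; they come out to be $a_1\dots a_l$ in one direction (the old $a_1\dots a_{l-1}$ corrected by the $u$-multiplication times the $a_l$ from $M_{a_l}$, using $a_1\dots a_{l-1}\cdot(\text{nothing})$ versus the correction term $u\cdot a_l$-type product — the point is the relation $r^+$ in $\Jac$ makes the two expressions for a subpath of the cycle agree) and $a_{l+1}\dots a_k$ in the other, matching $M_{a_1\dots a_l}$ on the nose.

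The main obstacle I expect is purely bookkeeping: getting the signs and the shift conventions right so that the cancelled morphism really is $\pm\Id$ rather than some other invertible matrix, and so that the surviving entries are literally $a_1\dots a_l$ and $a_{l+1}\dots a_k$ rather than these paths multiplied by some power of $\ell$ or some unit. This requires carefully tracking which summand of $M_b[1]$ the minus signs land on (as flagged in the displayed diagram for $\widehat{ab}$), and invoking the defining relation $r_a^+ = r_a^-$ of $\Jac(\qpol)$ together with centrality of $\ell$ to rewrite products of subpaths of the cycle $a_1\dots a_k$. Once the $l-1$ case is in the normal form of a two-term complex with entries being honest subpaths of the cycle, though, the inductive step is a single application of the Shortening lemma, so the induction closes cleanly. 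As a sanity check, the case $l=k$ should recover $M_{a_1\dots a_k}$, which has differential entries $a_1\dots a_k = \ell$ and the empty path $= h(a_1)$; this is consistent since the iterated cone of all $k$ arrows is well known to be (equivalent to) the contractible / trivial matrix factorization, matching $d^2 = \ell$ with one entry equal to $\ell$ and the other the identity.
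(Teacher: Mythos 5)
Your approach — induction on $l$ via the Shortening Lemma, cancelling the $\pm\Id$ component of the next $\widehat{\cdot\,\cdot}$ morphism at each step — is exactly the paper's, which establishes the base case $l=2$ by one application of the Shortening Lemma and then explicitly transports $\widehat{bc}$ through the quasi-isomorphism $\Cone(\widehat{ab})\cong M_{ab}$ to close the induction. One slip in your bookkeeping, precisely at the spot you flag as the danger zone: after the reduction, the non-identity component of the transported morphism $M_{a_1\dots a_{l-1}}\to M_{a_l}$ acting on the $h(a_1)\Jac$ summand must be $\ell(a_1\dots a_l)^{-1}=a_{l+1}\dots a_k$, not $\ell(a_{l-1}a_l)^{-1}$; the latter is a path out of $h(a_{l-1})=t(a_{l-2})$ rather than out of $h(a_1)$, so it is not even an element of the correct $\Hom$-space. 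With the corrected $u$ the reduced differential entries come out to $a_1\dots a_l$ and $a_{l+1}\dots a_k$ as you claim, matching the paper.
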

\begin{proof}
For $l=2$ we just apply the shortening lemma:
\[
\vcenter{ 
 \xymatrix@C=2cm{\vtx{}\ar@/_/[r]|{\ell a^{-1}}&\vtx{}\ar@/_/[l]|{a}\\
 \vtx{}\ar@/_/[r]|{b}\ar@{<.}[u]|{\ell (ab)^{-1}}&\vtx{}\ar@/_/[l]|{\ell b^{-1}}\ar@{<.}[u]|{-1}\\
}}
\hspace{.5cm}\to\hspace{.5cm}
\vcenter{ 
 \xymatrix@C=2cm{\vtx{}\ar@/_/[r]|{\ell a^{-1}b^{-1}}&\vtx{}\ar@/_/[l]|{ab}}}.
\]
Under the quasi-isomorphism $\Cone(\widehat{ab})\cong M_{ab}$ the morphism
$\widehat{bc}: M_c \to \Cone(\widehat{ab})$ becomes
\[
\vcenter{ 
 \xymatrix@C=2cm{\vtx{}\ar@/_/[r]|{\ell (ab)^{-1}}&\vtx{}\ar@/_/[l]|{ab}\\
 \vtx{}\ar@/_/[r]|{c}\ar@{<.}[u]|{\ell (abc)^{-1}}&\vtx{}\ar@/_/[l]|{\ell c^{-1}}\ar@{<.}[u]|{-1}
}}
\]
and hence the proof can be finished by induction.
\end{proof}

If we apply a similar reasoning to more complicated cones, this leads to the following construction:
consider a band $g =(g_0,g_1,\dots,g_{2k})$ in $\qpol$ and assume that $g_1$ is a cycle in $\qpol_2^+$. 
This gives a picture that looks like this:
\[
\xymatrix@=.4cm{
\dots\ar@/^/[rr]&&\vtx{0}\ar[dd]|{g_0}&&\vtx{3}\ar@/_/[ll]\ar@/^/[rr]&&\vtx{4}\ar[dd]|{g_4}&&\vtx{7}\ar@/_/[ll]\ar@/^/[rr]&&\dots\\
&&&g_1&&g_3&&g_5&&&\\
\dots&&\vtx{-\!\!2}\ar@/^/[ll]\ar@/_/[rr]&&\vtx{1}\ar[uu]|{g_2}&&\vtx{2}\ar@/^/[ll]\ar@/_/[rr]&&\vtx{5}\ar[uu]|{g_6}&&\dots\ar@/^/[ll]
}
\]
Let $P$ be the direct sum of all the numbered vertices, where the even/odd numbered have even/odd degree.
The map $d:P\to P$ consists of $2$ parts $d=d_a+d_b$, which can be written as a signed sum of paths. 
The first part $d_a$ is the sum of the subpaths of 
$g_i$ that connect the vertices $i$ and $i-1$ (in both directions and only for odd $i$). 
The second part is the sum of all the upper arcs minus the sum off all lower arcs in the picture.
It is easy to check that both $(P,d)$ and $(P,d_a)$ are matrix factorizations.
\[
\xymatrix@=.4cm{
\dots\ar@/^/[ddrr]|{d_a}\ar@/^/[rr]|{d_b}&&\vtx{0}\ar@/_/[ddrr]|{d_a}&&\vtx{3}\ar@/^/[ddrr]|{d_a}\ar@/_/[ll]|{d_b}\ar@/^/[rr]|{d_b}&&\vtx{4}\ar@/_/[ddrr]|{d_a}&&\vtx{7}\ar@/_/[ll]|{d_b}\ar@/^/[rr]|{d_b}\ar@/^/[ddrr]|{d_a}&&\dots\\
&&&&&&&&&&\\
\dots&&\vtx{-\!\!\!2}\ar@/^/[uull]|{d_a}\ar@/^/[ll]|{-d_b}\ar@/_/[rr]|{-d_b}&&\vtx{1}\ar@/_/[uull]|{d_a}&&\vtx{2}\ar@/^/[uull]|{d_a}\ar@/^/[ll]|{-d_b}\ar@/_/[rr]|{-d_b}&&\vtx{5}\ar@/_/[uull]|{d_a}&&\dots\ar@/^/[ll]|{-d_b}\ar@/^/[uull]|{d_a}
}
\]

More precisely, $(P,d_a)$ can be seen as a direct sum of $k$ matrix factorzations, one for
each cycle in the band. This matrix factorization can be split as a direct sum of two subfactorizations, the one from the positive cycles and the one from the negative cycles.
The map $d_b$ can be interpreted as a morphism between those two and 
$M(g,1):=(P,d_a+d_b)$ is the cone of this map. 

\begin{proposition}
Under the equivalence $\Der_{\Z_2} \mf(\qpol)\cong \Der_{\Z_2} \Gtl(\polq)$
the matrix factorization $M(g,1)$ corresponds to the object $B(g,1)$.
\end{proposition}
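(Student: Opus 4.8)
The plan is to realise the equivalence $\Der_{\Z_2}\mf(\qpol)\cong\Der_{\Z_2}\Gtl(\polq)$ through the explicit $A_\infty$-isomorphism $\Phi\colon\mf(\qpol)\xrightarrow{\ \sim\ }\Gtl(\polq)$ of \cite{bocklandt2016noncommutative} and to chase $M(g,1)$ through it. On generators this isomorphism sends the matrix factorization $M_a$ to the vertex of $\Gtl(\polq)$ that is the midpoint of the arrow $a\in\qpol_1=\polq_1$, and the generating morphism $\widehat{ab}$ of $\mf(\qpol)$ to (an invertible scalar multiple of) the angle morphism $\widehat{ab}$ of $\Gtl(\polq)$. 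So it suffices to (i) rewrite $M(g,1)$, up to quasi-isomorphism, as a twisted object over $\mf(\qpol)$ whose summands and connecting morphisms are exactly those of the twisted object $B(g,1)$, and then (ii) observe that $\Phi$ carries it onto $B(g,1)$.

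For step (i) I would use the construction of $M(g,1)$ itself. By definition $(P,d_a)$ is the direct sum, over the faces $g_{2j+1}$ occurring in $g$, of the matrix factorizations $M_{q_j}$, where $q_j\subset g_{2j+1}$ is the sub-path of that cycle traversed by the snake path $p=p_0\cdots p_r$ of $g$. Lemma \ref{subcycle} then presents each $M_{q_j}$ as the iterated cone of $M_{g_{2j}}\xrightarrow{\widehat{\cdot\cdot}}M_{\bullet}\to\cdots$ along the arrows of $q_j$, with connecting morphisms $\widehat{p_{i-1}p_i}$ for consecutive arrows $p_{i-1},p_i$ inside $q_j$; taking the cone of the remaining morphism $d_b$ --- which at the joint between $q_j$ and $q_{j+1}$ is precisely the angle morphism relating the last arrow of $q_j$ to $g_{2j+2}$ --- then exhibits $M(g,1)$ as the twisted object $\bigl(\bigoplus_i M_{p_i}[u_i],\ \tilde\delta=\sum_i\widehat{p_{i-1}p_i}^{\,\pm1}\bigr)$ over $\mf(\qpol)$, with the same shifts and the same $\pm1$ exponents as in the definition of $B(g,1)$. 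As on the A-side one then checks that $\tilde\delta$ satisfies the Maurer--Cartan equation with no higher corrections: a sequence of consecutive snake angle morphisms never closes up a single face, so all $\mu_k$ with $k>2$ vanish on it by Lemma \ref{muhomotopy}, and the quadratic part of $\tilde\delta^2$ vanishes because two such morphisms compose only inside a common face, where their product is already zero.

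For step (ii) I would apply the induced $A_\infty$-functor $\Tw\mf(\qpol)\to\Tw\Gtl(\polq)$ to this twisted object. Since $\Phi_1(M_{p_i})$ is the vertex $p_i$, since $\Phi_1$ takes each $\widehat{p_{i-1}p_i}$ to a scalar multiple of the angle morphism $\widehat{p_{i-1}p_i}$, and since the higher components $\Phi_n$ ($n\ge2$) vanish on tuples of consecutive snake angle morphisms by the same mechanism (they do not encircle a face), the image is $\bigl(\bigoplus_i p_i[u_i],\ \Phi_1(\tilde\delta)\bigr)=\bigl(\bigoplus_i p_i[u_i],\ \sum_i\widehat{p_{i-1}p_i}^{\,\pm1}\bigr)$, which is the defining formula for $B(g,1)$. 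The scalars produced by $\Phi_1$ can be absorbed by rescaling the (here trivial) matrices $\alpha_i$, which does not change the conjugacy class of their product, so the image is indeed $B(g,1)$.

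I expect the main obstacle to be the sign, shift and scalar bookkeeping in steps (i) and (ii), and especially matching the $\pm1$ exponents on the angle morphisms together with the dimer-duality flip of $\qpol_2^{\pm}$ against $\polq_2^{\pm}$, so that $\Phi_1(\tilde\delta)$ equals the differential of $B(g,1)$ literally and not merely up to a rescaling or a twist; this is where one must invoke the explicit form of $\Phi$ from \cite{bocklandt2016noncommutative} and of the $A_\infty$-structure of \cite{haiden2014flat}, rather than just the existence of the equivalence. A secondary point to make precise is that the face-by-face application of Lemma \ref{subcycle}, followed by the single global cone along $d_b$, reconstructs $M(g,1)$ on the nose --- with each joint contributing exactly one angle morphism and nothing more --- which is the content hidden behind the phrase ``similar reasoning to more complicated cones'' in the text.
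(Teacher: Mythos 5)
Your proposal is correct and follows essentially the same argument as the paper: both rely on Lemma~\ref{subcycle} to collapse the face-by-face cones into $(P,d_a)$, then add $d_b$ as the final cone, and both invoke the fact that the $A_\infty$-isomorphism $\mf(\qpol)\cong\Gtl(\polq)$ identifies $M_a$ with the vertex $a$ and $\widehat{ab}$ with the angle morphism, so that iterated cones of vertices along angles translate into iterated cones of the $M_a$'s along the $\widehat{ab}$'s. The only cosmetic difference is direction: you push $M(g,1)$ forward through $\Phi$, while the paper pulls $B(g,1)$ back and checks that performing the analogous cone construction in $\mf(\qpol)$ yields $M(g,1)$; your final paragraph correctly flags the sign/scalar/shift bookkeeping and the vanishing of higher $\Phi_n$-corrections as the points that the paper's brief ``the equivalence commutes with cones'' leaves implicit.
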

\begin{proof}
By construction the equivalence commutes with taking cones. Therefore, 
to find an object equivalent to $B(g,1)$,
we need to perform the cone construction on the matrix factorizations of the arrows in 
the snake path $p=p_1\dots p_k$ in the mirror dimer according to the morphisms $\widehat{p_ip_{i+1}}$.

The arrows of the snake path $p$ in the mirror dimer do not form a path in $\qpol$ but 
something like this
\[
\xymatrix@=.2cm{
\dots\ar@/^/[rr]&&\vtx{}\ar[dd]|{g_0}&&\vtx{}\ar@/_/[ll]\ar@/^/[rr]&&\vtx{}\ar[dd]|{g_4}&&\vtx{}\ar@/_/[ll]\ar@/^/[rr]&&\dots\\
&&&g_1&&g_3&&g_5&&&\\
\dots&&\vtx{}&&\vtx{}\ar[uu]|{g_2}&&\vtx{}&&\vtx{}\ar[uu]|{g_6}&&\dots
}
\]
At the arrows $p_j=g_i$ we have two morphisms leaving or arriving. At all other arrows
there is a morphism arriving and one leaving. To perform the cone operation we proceed in two steps. 
First we do all the cones over morphisms $\widehat{p_ip_{i+1}}$ for which $p_i$ is not one of the $g_j$
or $\widehat{p_ip_{i+1}}$ is contained in a positive cycle.

Repeated application of lemma \ref{subcycle} this shows that the cone after the first step is gives $(P,d_a)$.
The remainder consist of morphisms between consecutive summands of $(P,d_a)$ which are of the form
\[
\xymatrix@=.4cm{
\vtx{}\ar@/^/[ddrr]|{vu}\ar@{.>}@/^/[rr]|{u}&&\vtx{}\ar@/_/[ddrr]|{wv}&&\\
&&&&\\
&&\vtx{}\ar@/^/[uull]^{\ell (vu)^{-1}}\ar@{.>}@/_/[rr]|{-w}&&\vtx{}\ar@/_/[uull]_{\ell (wv)^{-1}}
}.
\]
These are precisely the components of $d_b$.
\end{proof}

\begin{remark}
If we have an invertible $n\times n$-matrix $\alpha$ we can also construct a matrix factorization for $B(g,\alpha)$:
\[
 M(g,\alpha) := (P^{\oplus n}, d_a^{\oplus n} + d_b^\alpha)
\]
where $d_b^\alpha$ is equal to $d_b^{\oplus n}$ except for the two paths that go from vertex $-1$ and $-2$ to vertex $0$ and $1$. Those paths are
tensored by $\alpha$ to ensure that the parallel transport around the cycle equals $\alpha$.
\end{remark}

\section{Intermezzo I: Spider graphs and Ribbon graphs}\label{sectionspider}

Spider graphs and ribbon graphs are two combinatorial objects that occur as limit situations of surfaces.

A \emph{graph with legs} $\Lambda$ is a generalization of a graph where we allow edges that are only connected on one side to a node and
the other side goes to infinity. These edges are called the legs or external edges, while the other edges are called the internal edges.
We split the set of edges as $\Lambda_1 = \Lambda_1^{int}\cup \Lambda_1^{ext}$. 

A graph with legs is called weighted
if there is a map $w: \Lambda_1^{int} \to \R_{>0}$. These weights can be thought of as the lengths of the edges. The legs have infinite length, so they don't need
weights. By gluing together line segments of the appropriate length, we can construct a metric space $|\Lambda|$ that is a realization of the graph.

\subsection{Ribbon graphs}
A graph with legs $\Gamma$ becomes a \emph{ribbon graph} \cite{mulase1998ribbon} if we assign to each node a cyclic order on the incoming edges. 
From a ribbon graph we can construct a surface with boundary by substituting each node of valency $n$ by an $n$-gon and
each edge by a strip. We glue the ends of the strips to the polygon of the node in the specified cyclic order. The result is an oriented surface with boundary $\rib \Gamma$ 
and there is a retraction $\pi: \rib \Gamma \to |\Gamma|$, defined up to homotopy.

If $\Gamma$ is a connected ribbon graph without legs we say that it is of type $\Sigma_{g,n}$ if $\rib \Gamma$ is a surface
with genus $g$ and $n$ boundary components.

Vice versa if $\Gamma$ is any graph embedded in an orientable surface, we can give it the structure of a ribbon graph by
using the anticlockwise cyclic order on the surface in each node. The space $\rib \Gamma$ will be homeomorphic to a neighborhood of the graph in the surface.

\erbij{
If $e$ is an internal edge of the ribbon graph $\Gamma$ that is not a loop, we can define its contraction $C_e\Gamma$ as the ribbon graph where $e$ is removed and 
the two nodes of $e$ are identified. The cyclic order in the new node is obtained by gluing
together the linear orders we get in both nodes after deleting $e$ (e.g $x<e<y<$ and $u<e<v<$ gives $y<x<v<u<$). }

Given a dimer quiver we can construct a ribbon graph in the following way. As a graph it is dual to the quiver, so its nodes correspond to the cycles in $\qpol_2$
and its edges correspond to the arrows. We draw this graph on $\psurf{\qpol}$ by putting a node in each face and connecting two node by an edge perpendicular to 
the corresponding arrow. Note that this graph is by partite because the $\qpol_2 =\qpol_2^+\cup \qpol_2^-$. We can also go in the opposite
directing and construct a dimer quiver from a bipartite ribbon graph without legs
\footnote{This is the way how dimer models are often introduced in the literature}.

\subsection{Spider graphs}

A graph with legs becomes a \emph{spider graph} if we assign to each node $v$ a genus $g_v$, which is a nonnegative integer. 
The total genus of a spider graph is the sum of the genera of the nodes plus the genus of the graph itself
\[
 g(\Lambda) = \sum_{v \in \Lambda_0} g_v + \# \Lambda_1^{int} - \# \Lambda_0 + 1.  
\]

From a spider graph we can construct a surface by substituting each node of genus $g$ and valency $n$ by a surface
with genus $g$ and $n$ boundary components, substituting each edge by a cylinder and gluing these surfaces and cylinders together
along boundary components. The resulting surface has the same genus as the spider graph 
and has a hole for each leg of the spider graph. It is called the tubular surface and it comes with a projection map $\pi: \tube{\Lambda}\to \Lambda$
which is its boundary components correspond defined up to homotopy. We say that $\Lambda$ is of type $\Sigma_{g,n}$ if $\tube{\Lambda}$ is a surface with genus $g$ and $n$ holes.

We can also go in the opposite direction. Let $\dot S$ be a surface of genus $g$ and $n$ holes. A slicing $\slicing$ is a set of homotopically different nonintersecting curves on the surface which
includes the $n$ simple curves that go around the holes. The spider graph $\Lambda_\slicing$ has an edge for each curve and a node for each connected component
of the complement of the curves, except the $n$ components that wrap around the holes. The genus of the node is the genus of the corresponding connected component.
It is clear from this construction that $\tube{\Lambda_\slicing}$ is isomorphic to $\dot S$.

\erbij{If $e$ is an internal edge of the spider graph $\Lambda$, we can define its contraction $C_e\Lambda$ as the spider graph where $e$ is removed and 
the two nodes of $e$ are identified. The genus of this new node is the sum of the genera of the two original nodes if they are different, or
one more then the genus of the original node if $e$ was a loop.
}

\section{Intermezzo II: Tropical and toric geometry}\label{sectiontropical}

In this intermezzo we review some well-known facts from tropical geometry and toric geometry and tie this to the notion of spider graphs.
For more information on the former we refer to \cite{mikhalkin2004amoebas}, while for the latter we refer to \cite{fulton1993introduction}.

\subsection{Tropical curves}
A tropical polynomial in two variables is an expression of the form $f = \min_{i\in I}\{a_iX+b_iY+c_i\}$, where $a_i,b_i,c_i \in \Z$. 
The tropical curve $\Trop(f)$ is the set of all point $(X,Y) \in \R^2$ for which the map $f:\R^2 \to f(X,Y)$ is not smooth, or
equivalently the points where the minimum is reached by at least two of the linear functions. In general this set is a union of line segments
and half lines.

To each tropical polynomial we can assign its Newton polygon $\NP(f)\subset \R^2$. This is the convex hull of the points $(a_i,b_i)$.
For each point $(x,y)$ in the tropical curve we define its support as the convex hull of all $(a_i,b_i)$ for which $f(x,y)= a_ix+b_iy+c_i$.
If $(x,y)$ lies on an edge then $\supp(x,y)$ will be a line segment and if $(x,y)$ is a node then $\supp(x,y)$ will be a subpolygon of $\cP(f)$.
The set of all supports will form a subdivision of the Newton polygon $\cF(f)=\{\supp(x,y) | (x,y) \in \Trop(f)\}$. 

From a tropical polynomial, we construct a spider graph $\Lambda(f)$ by 
letting the line segments of $\Trop(f)$ be the internal edges and the half lines be the legs. 
We turn an edge into a $k$-fold edge if there are $k-1$ lattice points in the interior of the support in $\cF(f)$.
The nodes of the spider graph are the endpoints of the line segments. 
We set the genus of a node equal to the number of interior points in the support of the node.

We can also assign weights to all the internal edges that correspond to the affine lengths of
the line segments of the tropical curve. I.e. if $e$ is an edge between the points
$(x_1,y_1)$ and $(x_2,y_2)$ then its affine length is equal to $W_e :=\min_{n,m\in \Z} |(x_1-x_2)m+(y_1-y_2)n|$. 
In other words the vector that connects the two points  is $W_e$ times an elementary affine
line segment. Note that this makes sense because the vector always has a rational direction
because the coefficients $a_i,b_i$ of the tropical polynomial are integers.

A tropical curve is called smooth if the subdivision $\cF(f)$ is as fine as possible: it consists of elementary triangles. For the spider graph
this means that all nodes of the curve are trivalent and have genus zero. Just as in complex geometry a generic tropical curve is smooth.

\subsection{Amoebae}
Given a tropical polynomial we can consider a one parameter family of ordinary polynomials $f_t := \sum_i \alpha_i t^{c_i}X^{a_i}Y^{b_i}$
where the $\alpha_i \in \C$ are some constants. For generic $\alpha_i$ and $t$ this polynomial defines a smooth Riemann surface $$S_t=\{(x,y) \in (\C^*)^2 | f_t(x,y)=0\}.$$
We can look at the image of the map $\Am_t : S_t \to \R^2: (x,y) \mapsto \frac 1{\log |t|}(\log |x|, \log |x|)$. 
This image is called the amoeba of $f_t$ because it looks like a blob with holes and tentacles. If we let $t$ go to infinity the interior of the amoeba becomes thinner and thinner until we end up with a graph. The main theorem in tropical geometry states that $\lim_{t \to \infty }\Am_t = \Trop f$ \cite{mikhalkin2004amoebas}.

If we fix a point $p$ on the tropical curve and a small $\eps>0$ we can look at the preimage of $\Am_t \cap B(p,\eps)$ for $t>\!\!>0$. If $p$ is on the inside
of a $k$-fold edge of the tropical curve, this preimage will be a disjoint union of $k$ cylinders.
If $p$ is a node with valency $v$ and genus $g$ the preimage will be a surface of genus $g$ with $v$ punctures. Therefore $S_t$ can be seen
as the tubular surface of $\Lambda(f)$.

\begin{lemma}\label{genustropical}
Let $f$ be a tropical polynomial and $\Lambda(f)$ be its spider graph 
\begin{enumerate}
 \item The genus of $\Lambda(f)$ equals the number of internal lattice points of the Newton polygon $\NP(f)$.
 \item The number of legs of $\Lambda(f)$ equals the number of boundary lattice points of the Newton polygon $\NP(f)$.
\end{enumerate}
\end{lemma}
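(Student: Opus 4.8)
The plan is to exploit the standard duality between the tropical curve $\Trop(f)$ and the subdivision $\cF(f)$ of the Newton polygon, and then reduce both statements to an elementary lattice-point count combined with Euler's formula for a subdivision of a disk. I assume $\NP(f)$ is two-dimensional; the lower-dimensional cases are degenerate and can be treated directly.

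First I would record the combinatorial dictionary (see \cite{mikhalkin2004amoebas}): the vertices of $\Trop(f)$ correspond bijectively to the two-cells of $\cF(f)$, while the edges of $\Trop(f)$ correspond bijectively to the one-cells of $\cF(f)$, the bounded edges matching the interior one-cells and the unbounded rays matching the one-cells contained in $\partial\NP(f)$. Under this dictionary an edge (resp.\ ray) of $\Trop(f)$ dual to a one-cell $e$ becomes a $k(e)$-fold edge (resp.\ leg) of $\Lambda(f)$, where $k(e)$ denotes the affine length of $e$, so that $k(e)-1$ is the number of lattice points in the relative interior of $e$; and the genus $g_v$ of a node $v$ of $\Lambda(f)$ equals the number of interior lattice points of the two-cell of $\cF(f)$ dual to $v$. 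Statement (2) then follows at once: each one-cell $e$ of $\cF(f)$ contained in $\partial\NP(f)$ produces $k(e)$ legs of $\Lambda(f)$, these one-cells partition $\partial\NP(f)$, and the sum of their affine lengths is exactly the number of boundary lattice points of $\NP(f)$.

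For (1), let $i$ denote the number of interior lattice points of $\NP(f)$. These $i$ points fall into three classes: interior vertices of $\cF(f)$, points in the relative interior of an interior one-cell, and points in the interior of a two-cell. Writing $V$ and $E$ for the numbers of interior vertices and interior one-cells of $\cF(f)$, $m$ for the number of two-cells, and $i_j$ for the number of interior lattice points of the $j$-th two-cell, this partition gives
\[
  i \;=\; V + \sum_e (k(e)-1) + \sum_j i_j \;=\; V - E + \sum_e k(e) + \sum_j i_j,
\]
where $e$ runs over the interior one-cells of $\cF(f)$. By the dictionary, $\sum_e k(e) = \# \Lambda_1^{int}$, $m = \# \Lambda_0$ and $\sum_j i_j = \sum_v g_v$, so $g(\Lambda(f)) = \sum_j i_j + \sum_e k(e) - m + 1$ and therefore $i - g(\Lambda(f)) = V - E + m - 1$. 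This vanishes by the Euler relation $V - E + m = 1$ for the subdivision of the disk $\NP(f)$: the subdivision has Euler characteristic $1$, and the boundary circle carries equally many vertices and one-cells, which cancel out. Hence $g(\Lambda(f)) = i$, proving (1).

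The conceptual core is the dictionary of the second paragraph — matching the combinatorially defined multiplicities and genera of $\Lambda(f)$ with the cell structure of $\cF(f)$ — which is standard but must be set up carefully; the most error-prone point is then the lattice-point bookkeeping, where one should check that every vertex of $\cF(f)$ is itself a lattice point (the vertices of the regular subdivision determined by the integer data $(a_i,b_i,c_i)$ lie among the exponent points $(a_i,b_i)$), that ``affine length minus one'' genuinely counts the relative-interior lattice points of a one-cell, and that the boundary one-cells tile $\partial\NP(f)$ without overlap. A less elementary alternative would be to invoke the amoeba degeneration of the previous subsection: identifying $\tube{\Lambda(f)}$ with a smooth curve $S_t \subset (\C^*)^2$ of Newton polygon $\NP(f)$ and quoting the classical formula for the genus and the number of ends of such a curve; I would nevertheless keep the combinatorial proof as the main argument since it requires no genericity.
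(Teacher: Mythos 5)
Your argument is correct, and it takes a genuinely different route from the paper's. The paper's proof is the one-liner you mention as an alternative at the very end: it identifies $\tube{\Lambda(f)}$ with a generic curve $S_t\subset(\C^*)^2$ with Newton polygon $\NP(f)$ (via the amoeba degeneration recalled just before the lemma) and quotes the classical fact that such a smooth curve has genus equal to the number of interior lattice points and a number of ends equal to the number of boundary lattice points. Your proof instead stays entirely inside the combinatorial definitions of $\Lambda(f)$ given in the paper (multiplicities from relative-interior lattice points of the $1$-cells of $\cF(f)$, node genera from interior lattice points of the $2$-cells, and the formula $g(\Lambda)=\sum_v g_v+\#\Lambda_1^{\mathrm{int}}-\#\Lambda_0+1$), partitions the interior lattice points of $\NP(f)$ according to the open cell of $\cF(f)$ they lie in, and closes the count with the Euler relation $V-E+m=1$ for the subdivided disk after cancelling the boundary contribution $V_\partial=E_\partial$. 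Two small points that you flag yourself and that do need to be said out loud are both fine: the $0$-cells of a regular subdivision induced by integer data $(a_i,b_i,c_i)$ are among the points $(a_i,b_i)$, hence lattice points; and the boundary $1$-cells tile $\partial\NP(f)$, so their affine lengths sum to the number of boundary lattice points. The trade-off is clear: the paper's proof is shorter and ties the lemma directly into the SYZ/amoeba narrative it is about to use, while yours is elementary, requires no genericity of the coefficients $\alpha_i$, and makes the combinatorial definition of the spider graph's genus do the work, which is arguably closer in spirit to how the lemma is actually applied (matching $\Lambda(f_W)$ against Theorem~\ref{genusmirror}).
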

\begin{proof}
It is well known that if $S\subset \C^*\times \C^*$  is a smooth curve defined by a polynomial with Newton polygon $\NP$, then the genus
of $S$ is the number of internal lattice points and number of punctures in $S$ is the number of boundary lattice points.
\end{proof}

We end this section with an example. Below we give the Newton polygon, the tropical curve and
the spider graph for the tropical polynomial
\begin{center}
$f = \max \{2x,x+y, -x+y, -x+1, -y, 2x-y, y\}$\\
$f_t = X^2+XY+tX^{-1}Y +tX^{-1}+Y^{-1}+X^2Y^{-1}+Y$\\
~\\
\resizebox{!}{2cm}{\begin{tikzpicture} 
\draw (1,1) -- (2,0); 
\draw (0,1) -- (1,1); 
\draw (-1,0) -- (-1,1); 
\draw (0,-1) -- (-1,0); 
\draw (2,-1) -- (0,-1); 
\draw (2,0) -- (2,-1); 
\draw (-1,1) -- (0,1); 
\draw (0,-1) -- (0,1); 
\draw (2,0) node[circle,draw,fill=black,minimum size=5pt,inner sep=1pt] {};
\draw (1,1) node[circle,draw,fill=black,minimum size=5pt,inner sep=1pt] {};
\draw (-1,1) node[circle,draw,fill=black,minimum size=5pt,inner sep=1pt] {};
\draw (-1,0) node[circle,draw,fill=black,minimum size=5pt,inner sep=1pt] {};
\draw (0,-1) node[circle,draw,fill=black,minimum size=5pt,inner sep=1pt] {};
\draw (2,-1) node[circle,draw,fill=black,minimum size=5pt,inner sep=1pt] {};
\draw (0,1) node[circle,draw,fill=black,minimum size=5pt,inner sep=1pt] {};
\draw (0,0) node[circle,draw,fill=white,minimum size=5pt,inner sep=1pt] {};
\draw (1,0) node[circle,draw,fill=white,minimum size=5pt,inner sep=1pt] {};
\draw (1,-1) node[circle,draw,fill=white,minimum size=5pt,inner sep=1pt] {};
\end{tikzpicture}}
\hspace{.5cm}\resizebox{!}{2cm}{\begin{tikzpicture} 
\draw (0,0) -- (1,1); 
\draw (0,0) -- (0,1); 
\draw (-2,0) -- (-3,0); 
\draw (-2,0) -- (-3,-1); 
\draw (0,0) --(0,-2); 
\draw (0,0) -- (1,0); 
\draw (-2,0) -- (-2,1); 
\draw (0,0) --(-2,0); 
\end{tikzpicture}}
\hspace{.5cm}\resizebox{!}{2cm}{\begin{tikzpicture} 
\draw (0,0) -- (1,1); 
\draw (0,0) -- (0,1); 
\draw (-2,0) -- (-3,0); 
\draw (-2,0) -- (-3,-1); 
\draw (0,0) .. controls (1/30,0) and (1/10,0) .. (1/10,-2); 
\draw (0,0) .. controls (-1/30,0) and (-1/10,0) .. (-1/10,-2); 
\draw (0,0) -- (1,0); 
\draw (-2,0) -- (-2,1); 
\draw (0,0) .. controls (-1,1/10) .. (-2,0); 
\draw (0,0) .. controls (-1,-1/10) .. (-2,0); 
\draw (0,0) node[circle,draw,fill=white,minimum size=10pt,inner sep=1pt] {{\tiny1}};
\draw (-2,0) node[circle,draw,fill=white,minimum size=10pt,inner sep=1pt] {{\tiny 0}};
\end{tikzpicture}}\\
\end{center}

\subsection{Toric varieties}

From a tropical curve $f = \min_{i \in I}\{a_iX+b_iY+c_i\}$ we can also construct a $3$-dimensional toric variety $\VV_f$. The fan of this variety can be constructed by taking
a cone over the Newton polygon $\NP(f)$ and look at the subdivision of this cone induced
by the subdivision $\cF(f)$.

Instead of using the fan we can also construct $\VV_f$ as follows.
First we define the 3d polytopes $\PT_{f,r} := \{ (x,y,z) \in \R^3| a_ix+b_iy+z \ge -r c_i\}$. These polytopes are noncompact because we can make $z$ arbitrarily large.
If we don't specify $r$ we assume it is equal to $1$: $\PT_f := \PT_{f,1}$.

We can partition $\PT_{f,r}$ into subsets of points for which the same
inequalities $a_ix+b_iy +z\ge -r c_i$ are equalities. These subsets are called the faces. 
For each face $\sigma$ we define the support $I_{\sigma}$ as the set of indices which give equalities.
\[
 I_{\sigma} := \{ i \in I | \forall (x,y,z) \in \sigma: a_ix+b_iy +z= -r c_i\}.
\]
For all $r>0$ 
$\PT_{f,r}$ has the same number of faces because $\PT_{f,r}=r \PT_{f,1}$. 
On the other hand $\PL_{f,0}=\lim_{r \to 0}\PT_{f,r}$ so $\PL_{f,0}$ is a cone and its faces correspond to the noncompact faces of $\PT_{f,1}$ because
all other's are shrunken to a point.

Because $\PT_f$ is a $3$-dimensional polytope there are $2,1$ and $0$-dimensional faces. The $1$-skeleton of the polytope is the union of all $1$ and $0$-dimensional faces.
The polytope projects onto the $X,Y$-plane and the image of the $1$-skeleton of the polytope $\PT_{f}$ is precisely the tropical curve $\Trop_f$.

Now consider the graded ring
\[
R_f := \bigoplus_{r=0}^\infty \C\{X^uY^vZ^w| (u,v,w) \in \PT_{f,r} \cap \Z^3\}.
\]
The toric variety of $f$ is defined as 
\[
 \VV_f := \Proj R_f.
\]
It is a projective variety over the ring $(R_f)_0 = \C[X^uY^vZ^w| a_iu+b_iv+w\ge 0 ]$. 
The $\Z^3$ grading on $R_f$ coming from the exponents of $X,Y,Z$ gives rise to a $\C^{*3}$-action on $\VV_f$ and this turns $\VV_f$ into a toric variety.

The geometry of the polytope $\PT_f$ and the variety $\VV_f$ are closely related.
Every $k$-dimensional face $\sigma$ will correspond to a torus orbit $\cO_\sigma \subset \VV_f$ which
consists of the points for which the functions $X^uY^vZ^w$ are nonzero if and only if $(u,v,w) \in \sigma$.
Moreover, if we restrict the action to the real torus $U_1^3 \subset \C^{*3}$, the quotient $\VV_f/U_1^3$ can be identified with the polytope 
$\PT_{f}$ such that each face of the polytope is the image of the corresponding orbit. The identification $\VV_f/U_1^3\to \PT_{f}$ is however
not unique: it depends on a moment map and hence the choice of a symplectic structure on $\VV_f$.

If we look at the tropical curve $\Trop f$, each node $p$ will correspond to a torus fixed point in $\VV_f$. All torus orbits that have this
fixed point in its closure form an open subset $\UU_p \subset \VV_f$. This subset is also a toric variety corresponding to the tropical polynomial containing
only the terms in the support of $p$: $f_p = \min_{i \in I_p}\{a_iX+b_iY+c_i\}$. The tropical curve corresponding to $f_p$ is obtained by looking only at a neighborhood of $p$ and extending all edges to infinity. Finally, $\VV_f$ is a smooth variety if and only if $\Trop(f)$ is a smooth tropical curve.

\begin{remark}
We can also define tropical curves for which the coefficients $c_i$ are in $\QQ$ instead of $\Z$.
The curve $S_t$ will still be uniquely defined for $t \in \R_{\>0}$ and we can define the limit
of the Amoeba-map for these $t$ and observe that it is equal to the tropical curve.

On the other hand in the construction of toric variety, it is easy to see that 
if $f = \min_{i \in I}\{a_iX+b_iY+c_i\}$  and  $f' = \min_{i \in I}\{a_iX+b_iY+kc_i\}$
where $k\in \N$ then $\Proj R_f$ and $\Proj R_{f'}$ are related by a veronese map and
hence isomorphic varieties, so it also makes sense to define $\VV_f$ for tropical polynomials
with rational $c$-coefficients.
\end{remark}

\section{Moduli spaces}\label{sectionmoduli}

One main direction of research in mirror symmetry is the idea that two mirror manifolds $M,M^\vee$ should be connected by an SYZ-fibration.
Loosely speaking this means that there is a diagram
\[
 \xymatrix{M\ar[dr]^\pi&&M'\ar[dl]_{\pi'}\\&B&}
\]
such that $B$ is an $n$-dimensional space and the generic fibers of the projection maps $\pi,\pi'$ are $n$-dimensional Lagrangian tori.
The space $B$ is called the base and in general it is not smooth but in many cases it has the structure of a tropical variety.

\erbij{
On the symplectic side this base can be seen as a moduli space that classifies nonintersecting Lagrangian submanifolds, while on the complex side
it could be seen as a quotient space that classifies orbits of points under some $U_1^n$-action.
}

In our setting the base should be a tropical curve and on the A-side the fibers are circles. On the B-side 
the mirror is a noncommutative Landau-Ginzburg model $(\Jac,\ell)$ and the points of $M'$ should be interpreted as representations of the algebra $\Jac$. These representations give rise to matrix factorizations, so we get a moduli space of matrix factorizations. 
In this section we will work out both sides in detail.

\subsection{The A-model: quadratic differentials}

Let $\bar S$ be a compact smooth Riemann surface of genus $g$ and $m_1,\dots, m_n \in \bar S$ an ordered set of $n$ distinct marked points.
Removing the marked points we get a punctured Riemann surface $\dot S = \bar S \setminus \{m_1,\dots, m_n\}$. We will assume that $\dot S$ has at least 3 punctures, to ensure 
that $\dot S$ is a Riemann surface with negative curvature.

A quadratic differential \cite{strebel1984quadratic} on $\dot S$ is a holomorphic section of the square of the canonical bundle: $\Phi \in \Gamma(K_{\bar S}^{\otimes 2}, \dot S)$. In local coordinates we can write $\Phi= f(z)dz^2$ where $f(z)$ is only allowed to have poles at the marked points.
If $\Phi$ is nonzero in a point $p \in \dot S$ we can find a holomorphic coordinate 
such that around $p$ $\Phi= dw^2$ with $w(p)=0$.
In all other points $q \in \bar S$ we can write $\Phi=aw^k dw^2$ where $k$ is the order of the zero/pole of $f$ at $q$ and a is a constant.
If $k\ne -2$ then we can choose $a=1$ but if $k=-2$ we cannot get rid of the coeficient $a$ by base change. The coefficient $a$ is called the residue.

A tangent vector $v \in T_p \dot S$ is called horizontal if $\Phi(v)\ge 0$. Note that in general
$\Phi(v)$ is a complex number, so in a generic point $p$ this condition singles out a one-dimensional real subspace 
in $T_p \dot S$. This means that $\Phi$ defines a foliation on $\dot S$, which we call the horizontal foliation.
Generically, a leave of the foliation can either be a closed curve or a curve that connects two punctures. There are also special leaves: the zeros of $\Phi$ and
the leaves that have a zero of $\Phi$ in their limit. 

In the neighborhood of a regular point $p$ the horizontal leaves are all parallel: they look like the horizontal lines in the $w$-complex plane.
If $p$ is a zero of order $k>0$ the picture of the trajectories has a symmetry of order $k+2$ because a base change $w = \zeta w'$ with $\zeta^{k+2}=1$ will leave 
the differential invariant. The local picture of horizontal leaves looks like this:

\begin{center}
\begin{tabular}{cccc}
\begin{tikzpicture}[scale=.5]
    \draw (0,0) coordinate (a_1) node {$\bullet$}  -- (0,0);
    \draw (0,-1.5) -- (0,1.5);
 \foreach \i in {1,...,3}
{
    \draw (.35*\i,-1.5) -- (.35*\i,1.5);
    \draw (-.35*\i,-1.5)-- (-.35*\i,1.5);

    }
\end{tikzpicture}
&
\begin{tikzpicture}[scale=.35]
    \draw (0,-.5) coordinate (a_1) node {$\bullet$}  -- (0,1.5);
    \draw (0,-.5) -- (-1.73,-1.5);
    \draw (0,-.5) -- (1.73,-1.5);
 \foreach \i in {1,...,3}
{
\draw (-1.73-.2*\i,-1.5+.3*\i) to[out=30,in=270] (-.3*\i,1.5);
\draw (1.73+.2*\i,-1.5+.3*\i) to[out=150,in=270] (.3*\i,1.5);
\draw (-1.73+.2*\i,-1.5-.3*\i) to[out=30,in=150]  (1.73-.2*\i,-1.5-.3*\i);
}
\end{tikzpicture}
&
\begin{tikzpicture}[scale=.5]
    \draw (0,0) coordinate (a_1) node {$\bullet$}  -- (0,0);
 \foreach \i in {1,...,12}
{
    \draw (0,0) -- (30*\i:1.5);
    }
\end{tikzpicture}
&
\begin{tikzpicture}[scale=.5]
    \draw (0,0) coordinate (a_1) node {$\bullet$}  -- (0,0);
 \foreach \i in {1,...,5}
{
    \draw (.3*\i,0) arc (0:360:.3*\i);
    }
\end{tikzpicture}
\\
$k=0$&$k=1$&$k=-2, a>0$&$k=-2, a<0$
\end{tabular}
\end{center}
In a zero of order $k$ there are $k+2$ special leaves and all other leaves bend away from the zero. 
If we look at poles, the most interesting case is $\Phi = a \frac{dw^2}{w^2}$.
If $a$ is a positive real number the leaves will be rays emanating from the pole. If $a$ is negative the leaves will be circles
around the poles and if $a$ is complex they will be spirals around the poles. Poles of other orders will always have leaves that emanate from the pole.

The special leaves form a graph $\SG(\Phi)$ on the surface $S$, which has internal edges corresponding to trajectories between
two zeros and external edges corresponding to trajectories between a zero and a pole.
The complement of the graph consists of connected components that contain parallel trajectories. In each component all trajectories are isotopic and we have
two possibilities: the leaves are open curves or the leaves are circles. We call these components strips and cylinders.

Each strip can be assigned a width. To do this we define lengths for any curve $\gamma$
\[
 \ell(\gamma) =\int_0^1 \sqrt{|\Phi(\frac {d\gamma}{dt})|} dt ~~~(*)
\]
The width of the strip is the minimal length of a curve that crosses it. The cylinders can be given a width and a circumference, which is the length of a leaf.
Note that cylinders can have an infinite width, if they wrap around one of the punctures. This is not possible for the strips. 

The leaf space of the foliation $\LH(\Phi)$ can be seen as a weighted graph with legs. Its nodes correspond to the connected components of $\SG(\Phi)$ and 
the strips and cylinders are the edges. The weigths are the widths of the strips and cylinders. 
We will now look at two special cases: when there are only strips and when there are only cylinders.
The former is called the ribbon case, while the latter is called the spider case.
\begin{itemize}
 \item 
In the ribbon case the leaves in each strip correspond to an isotopy class of a curve that connects two punctures.
If we draw one of the leaves in each strip and we get a graph on the surface $\bar S$ that splits the surface into polygons.

The leaf space can be identified with the dual graph of this split and hence it has the structure of a ribbon graph. We can assign weights
to this ribbon graph by assigning the widths of a strips to its edge. Note that there are no external edges.
\item
In the spider case each cylinder corresponds to an isotopy class of closed curves. The leaf space has a structure of a spider graph, where
the number of legs equals the number of punctures. To each node we can also assign a genus, which is the genus of a small neighborhood of 
its the connected component in $\SG(\Phi) \subset S$. Finally we can also give weights to the edges which correspond to the widths of the cylinders.
\end{itemize}

If we are in the spider case the differential is called a Jenkins-Strebel differential and if the corresponding spider graph has just one node then
the differential is called a Strebel differential. In the latter case there is just one cylinder for every puncture.
Note that a Jenkins-Strebel differential can only have poles of order $2$ with a negative residue.
A classical result by Strebel \cite{strebel1984quadratic} tells us that if $S$ is a Riemann surface with punctures $p_1,\dots,p_n$ and we specify 
residues $a_1,\dots,a_n<0$ then there is a unique Strebel differential on $S$ with these residues.

This result can be generalized as follows.
\begin{theorem}[Liu]\cite{liu2008jenkins}
Let $\Lambda$ be a weighted spider graph with $n$ legs and fix an $n$-tuple $(a_i) \in \R_{>0}$.

For each complex structure on $\tube \Lambda$ there is a unique Jenkins-Strebel differential $\Phi$ such that
$\LH(\Phi)$ and $\Lambda$ are isomorphic as weighted spider graphs and the maps
projection maps $\pi: \tube \Lambda \to \LH(\Phi)$ and $\pi: \tube \Lambda \to \Lambda$ are homotopic.
\end{theorem}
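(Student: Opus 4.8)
The plan is to recast the statement as a gluing problem assembled from the classical single-node Strebel theorem, and then to identify the moduli of such gluings with $\mathrm{Teich}(\tube\Lambda)$ by a covering-space argument, so that existence and uniqueness fall out simultaneously.

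\emph{Reformulation.} A representative of the homotopy class of $\pi\colon\tube\Lambda\to\Lambda$ turns each internal edge $e$ into an essential simple closed curve $\gamma_e\subset\tube\Lambda$ and each leg into a loop around one of the $n$ punctures; write $\slicing$ for the resulting system of disjoint, pairwise non-homotopic curves, so that $\tube\Lambda\setminus\slicing$ is a disjoint union of surfaces $\dot S_v$, one per node, of genus $g_v$ with $\mathrm{val}(v)$ ends, and let $\overline S_v$ be $\dot S_v$ with its internal ends capped by punctures. Unwinding the definitions of $\LH$ and $\SG$, a $\Phi$ satisfying the conclusion is exactly a holomorphic quadratic differential on $\tube\Lambda$ that is Jenkins-Strebel with cylinder core system isotopic to $\slicing$, with the cylinder on each internal $\gamma_e$ of height $w(e)$, and with each leg-cylinder semi-infinite around a double pole whose residue is the prescribed $a_i$ in a fixed normalization relating residues to circumferences.

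\emph{The gluing map.} With the weights and residues fixed, such a differential can be built from: a complex structure on each $\overline S_v$; a positive circumference $c_e$ for each internal edge; and a real twist for each internal edge. Indeed, by the Strebel theorem stated above a complex structure on $\overline S_v$ together with the circumferences $(c_e)_{e\ni v}$ (taking $c_e=a_i$ for a leg) determines a unique Strebel differential on $\overline S_v$; truncating its semi-infinite internal cylinders and regluing the pieces along the $\gamma_e$ by a flat isometry --- which, a truncated flat cylinder being conformally an annulus, is pinned down by $c_e$, the requirement that the two truncation heights sum to $w(e)$, and the twist --- yields a Jenkins-Strebel differential on a surface canonically identified with $\tube\Lambda$ and carrying the prescribed data. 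This gives a map
\[
F\colon\ \Big(\textstyle\prod_{v}\mathrm{Teich}(\overline S_v)\Big)\times(\R_{>0})^{\Lambda_1^{int}}\times\R^{\Lambda_1^{int}}\ \longrightarrow\ \mathrm{Teich}(\tube\Lambda),
\]
and a short dimension count (using $\sum_v\mathrm{val}(v)=2\#\Lambda_1^{int}+n$ together with the genus formula for $\Lambda$) shows domain and target both have real dimension $6g-6+2n$. Since a differential as in the reformulation on the given Riemann surface is the same thing as a point of the fibre of $F$ over that surface, the theorem is equivalent to $F$ being bijective.

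\emph{$F$ is a homeomorphism.} The domain is connected and $\mathrm{Teich}(\tube\Lambda)$ is connected and simply connected, so it suffices to show $F$ is a proper local homeomorphism: it is then a covering map onto a clopen, hence full, image, and a covering of a simply connected base with connected total space is a homeomorphism. Properness is the degeneration analysis: if a sequence leaves every compact set of the domain then some $\overline S_v$ degenerates, or some $c_e\to 0$ or $\infty$, or some twist $\to\infty$, and in each case the extremal length in $\tube\Lambda$ of some curve of $\slicing$ or of a curve inside a piece tends to $0$ or $\infty$, so the image leaves every compact set of $\mathrm{Teich}(\tube\Lambda)$. The local-homeomorphism step is where the analysis sits and is, I expect, the main obstacle: one must check that $F$ is real-analytic (Strebel differentials vary real-analytically with moduli and residues, the flat gluing being algebraic in these data) and that $dF$ is everywhere invertible, i.e.\ that no nonzero infinitesimal change of the gluing data is absorbed by an isotopy of $\tube\Lambda$. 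This is precisely Liu's extension of the Hubbard-Masur deformation argument to the punctured, prescribed-residue case, and I would carry it out by the usual period/Beltrami computation, taking care that the prescribed residues $a_i$ control the conformal geometry near the punctures so that no deformation or area escapes there in the compactness arguments.
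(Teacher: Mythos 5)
The paper does not prove this theorem: it is stated as \textbf{[Liu]} and cited directly from \cite{liu2008jenkins}, so there is no in-paper proof to compare your argument against. Your proposal is an outline of the standard ``Fenchel--Nielsen-style'' proof strategy for Strebel-type existence and uniqueness (decompose along the prescribed curve system, apply the single-node Strebel theorem on each piece, reglue via circumferences and twists, and show the resulting map to Teichm\"uller space is a proper local homeomorphism). The reformulation is correct, the gluing construction is sound once one notices that only the total truncation height $w(e)$ matters, and the dimension count $\sum_v(6g_v-6+2\,\mathrm{val}(v))+2\#\Lambda_1^{int}=6g-6+2n$ checks out against the genus formula for spider graphs. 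The covering-space reduction is also valid.

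That said, as you yourself flag, the proposal does not prove the theorem: the crucial step --- that $dF$ is everywhere invertible, so that $F$ is a local homeomorphism --- is asserted to be ``Liu's extension of the Hubbard--Masur deformation argument'' and deferred to ``the usual period/Beltrami computation.'' That is not a step one can wave at; it is the substance of the theorem and the entirety of Liu's contribution over Strebel's classical one-node case. Two further points need attention before even the outline is airtight. First, the single-node Strebel theorem requires $2-2g_v-\mathrm{val}(v)<0$, and the spider graphs allowed in this paper include genus-$0$ nodes of valence $\le 2$ (e.g.\ a bigon component of $\SG(\Phi)$); your argument silently assumes every $\overline S_v$ is hyperbolic and a uniqueness statement with prescribed circumferences, which fails on a twice-punctured sphere unless the two circumferences agree. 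Second, the properness claim ``some piece degenerates $\Rightarrow$ some extremal length in $\tube\Lambda$ escapes'' needs a quantitative comparison between extremal length measured in $\overline S_v$ and in the glued surface; because the glued surface changes as parameters vary, this comparison is not automatic, and in particular one has to rule out compensating degenerations (e.g.\ $c_e\to\infty$ on one edge while a neighboring piece pinches). These are the kinds of estimates the cited reference supplies.
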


This theorem implies that we can use Jenkins-Strebel differentials with a fixed weighted spider graph to paramatrize the space of
complex structures on $S$, i.e. Teichmuller space.

\begin{theorem}\label{doublegraph}
Let $\Gamma$ be a weighted ribbon graph of type $\Sigma_{g,n}$, let $\Lambda$ be a weighted spider graph of the same type, and 
choose an identification $S:=\tube \Lambda = \tub \Gamma$.

There exists a unique Jenkins-Strebel differential $\Phi$ on $S$ such that 
\begin{enumerate}
 \item  $\Lambda \cong \LH(\Phi)$ and $S \to \LH(\Phi)$ is homotopic to $\tube \Lambda \to \Lambda$,
 \item  $\Gamma \cong \LH(-\Phi)$ and $S \to \LH(-\Phi)$ is homotopic to $\rib \Gamma \to \Gamma$. 
\end{enumerate}
\end{theorem}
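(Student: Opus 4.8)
The plan is to rephrase both conditions in terms of measured foliations and then feed them into Liu's theorem together with a rigidity statement for quadratic differentials with prescribed horizontal \emph{and} vertical foliation; an explicit flat model then pins down the remaining data. First note that $S:=\tube\Lambda=\tub\Gamma$ must be read as a \emph{topological} surface of type $\Sigma_{g,n}$, with the complex structure being part of what is constructed: a fixed complex structure would leave only the $n$ residues as free parameters, far too few to realize an arbitrary weighted ribbon graph $\Gamma$. Now a weighted spider graph $\Lambda$ is the same data as a measured foliation $\cF_\Lambda$ of $S$ all of whose leaves are closed curves (only cylinders), the cylinders $C_e$ ($e\in\Lambda_1$) lying in the isotopy classes recorded by $S=\tube\Lambda$; dually a weighted ribbon graph $\Gamma$ is a measured foliation $\cF_\Gamma$ all of whose leaves are arcs running to the punctures (only strips), the strips $D_f$ ($f\in\Gamma_1^{int}$) in the classes recorded by $S=\tub\Gamma$. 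A Jenkins--Strebel differential $\Phi$ satisfies (1) exactly when its horizontal foliation is $\cF_\Lambda$ (the spider/Jenkins--Strebel case), and it satisfies (2) exactly when the vertical foliation of $\Phi$, i.e. the horizontal foliation of $-\Phi$, is $\cF_\Gamma$ (the ribbon case). The two foliations are transverse and jointly fill $S$, so the theorem becomes the assertion that such a filling pair is realized by a unique quadratic differential.

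For existence I would use Liu's theorem as the main engine. With $\Lambda$ fixed, it produces for every complex structure $J$ on $S$ and every residue vector $a\in\R_{>0}^n$ a unique Jenkins--Strebel differential $\Phi_{J,a}$ with $\LH(\Phi_{J,a})\cong\Lambda$ and the retraction $S\to\LH(\Phi_{J,a})$ homotopic to $\tube\Lambda\to\Lambda$, so condition (1) holds for the whole family. As $(J,a)$ runs over $\mathrm{Teich}(\Sigma_{g,n})\times\R_{>0}^n$ the vertical foliation of $\Phi_{J,a}$ runs over exactly the ribbon-type measured foliations transverse to $\cF_\Lambda$ (the punctured, meromorphic analogue of the Hubbard--Masur and Gardiner theorems, for which Liu's theorem supplies the Hubbard--Masur input: for a fixed horizontal foliation and fixed residues the differential depends bijectively on $J$). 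Since $\cF_\Gamma$ is such a transverse foliation there is a choice of $(J,a)$ for which $-\Phi_{J,a}$ realizes it, giving a differential that satisfies both (1) and (2). For uniqueness, if $\Phi$ and $\Phi'$ both satisfy (1) and (2) then their horizontal foliations agree (both $\cF_\Lambda$) and their vertical foliations agree (both $\cF_\Gamma$); a quadratic differential of the kind Liu's theorem governs is determined by its pair of transverse measured foliations, so $\Phi=\Phi'$ and the complex structure is forced too.

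To make the construction explicit, and in particular to read off the residues and check the homotopy statements, I would build the flat model by hand. Put the cylinder decomposition $\{C_e\}$ and the strip decomposition $\{D_f\}$ into minimal position; their common refinement cuts $S$ into Euclidean rectangles, one rectangle $R$ for each passage of a strip $D_f$ through a cylinder $C_e$. Give $R$ the dimensions $w^\Gamma_f\times w^\Lambda_e$ built from the prescribed weights, the $w^\Gamma$-side horizontal and the $w^\Lambda$-side vertical, and glue rectangles along their sides as dictated by the identification $S=\tube\Lambda=\tub\Gamma$. The form $dz^2$ on the rectangles assembles into a quadratic differential $\Phi$; its zeros sit at the corners where the combinatorics forces a cone angle larger than $2\pi$, and around each marked point $p$ the half-infinite leg cylinder of $\Lambda$ is the local model of a pole of order $2$ with residue $-(\gamma_p/2\pi)^2$, where $\gamma_p$, the circumference of that leg cylinder, is the sum of the weights $w^\Gamma_f$ over the ends of strips of $\Gamma$ landing at $p$. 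By construction every regular horizontal leaf of $\Phi$ lies in one of the $C_e$, so $\LH(\Phi)\cong\Lambda$ with the prescribed weights and the prescribed retraction, and every regular vertical leaf lies in one of the $D_f$ and runs between two punctures, so $-\Phi$ is in the ribbon case with $\LH(-\Phi)\cong\Gamma$; applying Liu's theorem to $\Lambda$, to the complex structure of this flat surface, and to the residues just computed recovers (1) and identifies this $\Phi$ with $\Phi_{J,a}$.

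The step I expect to be the real obstacle is the bookkeeping that makes the flat model legitimate: that the two decompositions of $S$ can be made simultaneously taut so that the rectangle refinement is well defined; that the glued rectangles form a surface homeomorphic to $S$ rather than a cover or quotient; and, most delicately, that the horizontal and vertical pictures are compatible at the punctures, i.e. that the single horizontal cylinder wrapping each marked point and the bundle of vertical strips ending there fit together into a genuine pole of order exactly $2$ with a single consistent residue, so that the residues are forced and not extra parameters. Once the behaviour at the punctures is under control, the cone-angle conditions at the zeros, the spider case for $\Phi$, the ribbon case for $-\Phi$, and the two homotopy statements for the retractions all follow from the combinatorics of the identification, and the uniqueness reduces to the rigidity of meromorphic quadratic differentials under prescription of their horizontal and vertical foliations.
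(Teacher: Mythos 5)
Your third paragraph is the paper's proof: the surface is cut into Euclidean rectangles, one for each passage of a ribbon-graph strip through a spider-graph cylinder, with dimensions $w^\Gamma_f\times w^\Lambda_e$; gluing these produces the flat metric, each node of $\Gamma$ of valency $k$ becomes a cone point of angle $k\pi$ (a zero of order $k-2$), each puncture becomes a double pole with residue $-(\gamma_p/2\pi)^2$, and uniqueness is read off from the fact that any differential with properties (1) and (2) must reproduce exactly this strip-and-cylinder decomposition with these widths. The paper phrases the decomposition via the dual graph of $\Gamma$ on $S$ and the retraction $\pi:\tube\Lambda\to\Lambda$, but that is the same refinement you describe.

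The first two paragraphs are a detour the paper does not take and that you ultimately do not need. Invoking Liu's theorem together with a ``meromorphic Hubbard--Masur/Gardiner'' statement (surjectivity of the vertical foliation over all transverse filling ribbon-type foliations, and rigidity of the differential from its pair of foliations) would prove the theorem, but those statements in the setting of second-order poles are not cited in the paper, and accepting them as black boxes makes the proof less, not more, self-contained than the flat model you then build anyway. Since the flat model already delivers both existence (by construction) and uniqueness (the widths of strips and circumferences/widths of cylinders determine the flat structure), the abstract route adds dependencies without shortening anything. Your fourth paragraph correctly singles out the combinatorial checks --- that the two decompositions refine to genuine rectangles, that gluing yields a surface of the right type, and that the puncture picture is a single second-order pole with one residue --- as the real content; the paper also leaves these largely implicit, resolving them via the ribbon structure's cyclic order at nodes and the factoring of $\pi$ on dual polygons through immersed trees in $\Lambda$. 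Your remark that $S$ must be read as a topological identification (so that the complex structure is output, not input) is correct and worth stating explicitly.
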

\begin{proof}
Draw the dual graph of the ribbon graph on $S$. For each edge $a$ in the ribbon graph, let $a^\perp$ be the edge of the dual graph. Choose an orientation for $a^\perp$ 
and look at the projection $\pi: \tube \Lambda \to \Lambda$. This gives a sequence of edges of $\Lambda$ starting with a leg and ending with a leg. For each edge $e$ in the sequence
we cut out a strip $[0,w_e]\times [0,w_a]i$ equipped with the differential $dz^2$. Now we glue all these strips together as follows.

Each node of the ribbon graph corresponds to a polygon in the dual graph bounded by dual edges. 
Under $\pi: \tube \Lambda \to \Lambda$ this factors through a tree that is immersed
in $\Lambda$. The preimage of each edge $e$ of the tree is bounded by two dual edges $a^\perp,b^\perp$
and we glue 

Every dual edge $a^\perp$ with an image under $\pi$ that runs through a node of the tree, 
is mapped to two edges $e_1,e_2$ incident with that node. 
We glue $[0,w_{e_1}]\times [0,w_a]i$ and $[0,w_{e_2}]\times [0,w_b]i$ together along the common edge.
In each node of valency $k$, $2k$ strips come together forming an angle of $2k\times \frac{\pi}{2}=k\pi$.
We identify each node with a zero $z^{k-2}(dz)^2$. In this way we get a quadratic differential
on each polygon for which the dual edges are vertical leaves. If we glue all these polygons together 
we get the required $\Phi$.

From the construction it is clear that any Jenkins-Strebel differential with the properties above must have the same strip decomposition, so it is unique.
\end{proof}

\begin{remark}
General quadratic differentials appear in the theory of stability conditions as initial data to construct Bridgeland stability conditions for Fukaya categories.
This has been studied by Bridgeland an Smith for 3-dimensional Fukaya categories in \cite{bridgeland2015quadratic} and by Haiden et al. for topological
Fukaya categories of surfaces \cite{haiden2014flat}. On the other hand, we are interested in the link between Strebel quadratic differentials and King (i.e. GIT) stability \cite{king1994moduli} conditions as they
are more suited to the construction of moduli spaces. 
\end{remark}

\subsection{The B-model: representations of the Jacobi algebra}

\subsubsection{Moduli spaces of representations}
In this section we fix a consistent dimer on a torus $\qpol$ and following \cite{broomhead2012dimer,ishii2009dimer,bocklandt2016dimer}
we will look at the space of representations of its Jacobi algebra $\Jac=\Jac(\qpol)$ 
for the fixed dimension vector $\alpha:\qpol_0\to \N$ that maps every vertex to one. These are the $\kk$-algebra morphisms
$\rho: \Jac \to \Mat_n(\C)$ where $\kk=\C^{\# \qpol_0}$ is identified with subalgebra in $\Jac$ generated by the vertices and 
the subalgebra of diagonal matrices in $\Mat_n(\C)$. 

The space of these representations will be denoted by $\rep(\Jac,\alpha)$ and is in fact an affine scheme. We can present its ring of coordinates as
\[
\C[\rep(\Jac,\alpha)]=\C[x_a| a\in \qpol_1]/\<\prod_{b\in r_a^+ } x_b - \prod_{b\in r_a^- } x_b| a \in \qpol_a\>.
\]
because a representation maps each arrow $a$ to a scalar $x_a$ times the elementary matrix $E_{h(a)t(a)}$.
A representation is called a torus representation if these scalars are nonzero for every arrow. The torus representations
form an open set of $\rep(\Jac,\alpha)$ and its closure is a component $\trep(\Jac,\alpha)\subset\rep(\Jac,\alpha)$ called the scheme of all toric representations.
One can prove \cite{broomhead2012dimer} that a representation $\rho$ is toric if the set of zero arrows $\{a | \rho((a)=0 \}$ is a union of perfect matchings.

On ${\trep}(\Jac,\alpha)$ there is an action of $\GL_\alpha := \kk^*=\C^{*\# \qpol_0}$ by conjugation. The orbits of this action classify the isomorphism classes of representations
of $\Jac$, so if we want to construct a moduli space toric of representations we need to construct a GIT-quotient \cite{king1994moduli}. 

We start with a map $\theta:\qpol_0 \to \Z$,  which can also be seen as a character
$\GL_\alpha \to \C^*: g \mapsto g^\theta:= \prod_{v\in \qpol_0}g_v^{\theta_v}$. From 
this datum we construct a graded ring of semi-invariants
\[
 {\Semi}_\theta(\Jac,\alpha) := \bigoplus_{i=0}^\infty \{\phi \in \C[{\trep}(\Jac,\alpha)]: \phi(x^g) = g^{n\theta}\phi(x) \}
\]
The proj of this ring ${\Mst}_\theta(\Jac,\alpha) := \Proj\, {\Semi}_\theta(\Jac,\alpha)$
is called the \emph{moduli space of $\theta$-semistable toric representations}.
\erbij{Because ${\Mst}_\theta(\Jac,\alpha) \cong {\Mst}_{n\theta}(\Jac,\alpha)$,
it also makes sense to define the moduli space if $\theta: \qpol_0 \to \QQ$:
we set ${\Mst}_\theta := {\Mst}_{n\theta}$ where $n$ is the least common multiple of the denominators in $\theta$. } 

A representation $\rho$ is called \emph{$\theta$-semistable} if there is a nontrivial $f  \in \Semi_\theta(\Jac,\alpha)$ with $f(\rho)\ne 0$. 
One can prove that from a representation theoretic point of view this means that $\theta\cdot \alpha=\sum_{v\in \qpol_0}\alpha_v\theta_v=0$ and
that it has no proper subrepresentations with dimension vector $\beta$ 
such that $\beta\cdot \theta<0$. 
An $\alpha$-dimensional semistable representation $\rho$ is called \emph{$\theta$-stable} 
if it has no proper subrepresentations with dimension vector $\beta$ such that $\beta\cdot \theta\le 0$, and it is called \emph{$\theta$-polystable} 
if it is the direct sum of stable representations. 
A character is called generic if $\beta\cdot \theta\ne 0$ for all dimension vectors $\beta<\alpha$. For generic $\theta$ the three notions of stability 
coincide. For $\theta=0$ stable is the same as simple, polystable is the same as semisimple and every representation is semistable.  

It is well known \cite{king1994moduli} that $\Mst_\theta(\Jac,\alpha)$ is the categorical quotient of the space of all semistable representations by the 
$\GL_\alpha$-action. The closed orbits are the orbits of polystable representations and the points of $\Mst_\theta(\Jac,\alpha)$ 
correspond to the isomorphism classes of $\theta$-polystable representations in $\trep(\Jac,\alpha)$. Note that $\theta\cdot \alpha$ has to be zero otherwise the moduli space $\Mst_\theta(\Jac,\alpha)$ is empty.

The structure of this toric variety for generic $\theta$ has been studied in \cite{ishii2009dimer,mozgovoy2009crepant}, but we are interested in a slightly different approach that can be found in \cite{bocklandt2016dimer}. Instead of starting with a character we start with a weigth function on the arrows $W: \qpol_1 \to \Z$ and associate to it a
character
\[
 \theta_W: \qpol_0 \to \Z : v \mapsto \sum_{h(a)=v}W_a- \sum_{t(a)=v}W_a.
\]
Because every $W_a$ appears twice with opposite sign we have that $\theta_W\cdot \alpha=0$.
Furthermore if $\qpol$ is connected, any $\theta$ for which $\theta\cdot \alpha=0$ is equal to a $\theta_W$ for an appropriate $W$.

Fix a vertex $v \in \qpol_0$ and $3$ cyclic paths $x,y,z:v\ot v$ in $\qpol$ such that $x,y$ span the 
homology of the torus and $z$ is a cycle in $\qpol_2$. These $3$ paths correspond to $\GL_\alpha$-invariant functions and one can show that all
rational $\GL_\alpha$-invariant functions are generated by these. Similarly all rational $\theta_W$-semi-invariants are the product of
$u= \prod x_a^{W_a}$ and an invariant. By \cite{broomhead2012dimer} one can check whether an expression $ux^iy^jz^k$ is polynomial by checking whether
$\deg_\PM ux^iy^jz^k\ge 0$ for all perfect matchings. 

If we define the tropical polynomial
\[
f_W := \min_{\PM \in \mathrm{PM}(\qpol)}  (\deg_{\PM} x) X + (\deg_{\PM} y) Y + \sum_{a \in \PM}W_a.
\]
where $\PM$ runs over all perfect matchings of $\qpol$, then $ux^iy^jz^k$ is polynomial if and only if $(i,j,k) \in \PT_f$.

\begin{theorem}\cite{bocklandt2016dimer}
If $\qpol$ is consistent then the normalization of the moduli space $\Mst_{\theta_W}$ can be identified with $\VV_{f_W}$.
\end{theorem}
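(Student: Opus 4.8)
The plan is to identify, after normalization, the graded coordinate ring $\Semi_{\theta_W}(\Jac,\alpha)$ of $\Mst_{\theta_W}$ with the semigroup ring $R_{f_W}$ whose $\Proj$ is $\VV_{f_W}$. Throughout, consistency of $\qpol$ enters only through the results it is needed for: Theorem \ref{consprop} (so that $\wJac\cong\Mat_n(\C[X^{\pm1},Y^{\pm1},Z^{\pm1}])$) and Broomhead's Lemmas \ref{zeroonallpm} and \ref{existspath}.

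First I would observe that $\trep(\Jac,\alpha)$ is an affine toric variety: it is the closure, inside $\Spec\C[x_a\mid a\in\qpol_1]$, of the subtorus of $(\C^*)^{\qpol_1}$ cut out by the binomial equations $\prod_{b\in r_a^+}x_b=\prod_{b\in r_a^-}x_b$, so $\C[\trep(\Jac,\alpha)]$ is an affine semigroup ring (in general not normal) and $\GL_\alpha$ acts on it through a subtorus of $(\C^*)^{\qpol_1}$. Hence each $\GL_\alpha$-weight space is spanned by monomials in the $x_a$, and in particular the degree-$r$ piece of $\Semi_{\theta_W}(\Jac,\alpha)$, namely the space of semi-invariants of weight $r\theta_W$, is spanned by such monomials.

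Next I would use the structure of rational (semi-)invariants. Since the rational $\GL_\alpha$-invariant functions form the field $\C(x,y,z)$ and $u=\prod_a x_a^{W_a}$ is a rational semi-invariant of weight $\theta_W$, every monomial semi-invariant of weight $r\theta_W$ equals, in the function field, $u^rx^iy^jz^k$ for a unique $(i,j,k)\in\Z^3$. By Broomhead's criterion (Lemma \ref{zeroonallpm}, combined with Lemma \ref{existspath} and $\wJac\cong\Mat_n(\C[X^{\pm1},Y^{\pm1},Z^{\pm1}])$) this expression is a regular function on $\trep(\Jac,\alpha)$ if and only if $\deg_\PM(u^rx^iy^jz^k)\ge0$ for every perfect matching $\PM$. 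Since $\deg_\PM z=1$, $\deg_\PM u=\sum_{a\in\PM}W_a$, and $\deg_\PM x$, $\deg_\PM y$ are exactly the $X$- and $Y$-coefficients in the $\PM$-term of $f_W$, this reads $(\deg_\PM x)\,i+(\deg_\PM y)\,j+k+r\sum_{a\in\PM}W_a\ge0$ for all $\PM$, i.e. $(i,j,k)\in\PT_{f_W,r}$. Therefore the assignment $u^rx^iy^jz^k\mapsto X^iY^jZ^k$ in degree $r$ embeds $\Semi_{\theta_W}(\Jac,\alpha)$ as a graded subring of the Laurent ring which shares its field of fractions with $R_{f_W}=\bigoplus_{r\ge0}\C\{X^uY^vZ^w\mid(u,v,w)\in\PT_{f_W,r}\cap\Z^3\}$, the two rings differing at most because $\C[\trep(\Jac,\alpha)]$ need not be normal (non-saturation of its defining semigroup, or non-reducedness of $\rep(\Jac,\alpha)$).

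Finally, since $\PT_{f_W,r}=r\PT_{f_W}$, the ring $R_{f_W}$ is the semigroup ring of the saturated affine semigroup $\{(u,v,w,r)\in\Z^4\mid r\ge0,\ (u,v,w)\in\PT_{f_W,r}\}$, hence is integrally closed; so it is already its own normalization and, having the same fraction field as $\Semi_{\theta_W}(\Jac,\alpha)$, it is the normalization of $\Semi_{\theta_W}(\Jac,\alpha)$. Passing to $\Proj$ (which takes the normalization of a graded domain to the normalization of its $\Proj$) then identifies the normalization of $\Mst_{\theta_W}=\Proj\Semi_{\theta_W}(\Jac,\alpha)$ with $\Proj R_{f_W}=\VV_{f_W}$. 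The step I expect to be the main obstacle is the one invoking Broomhead's criterion: one must check that regularity on $\trep(\Jac,\alpha)$ translates, degree by degree and with the correct scaling by $r$, into membership in the polytopes $\PT_{f_W,r}$, and one must keep careful track of exactly which failure of normality (or reducedness) of the representation scheme is being corrected --- this is precisely why the statement concerns the \emph{normalization} of $\Mst_{\theta_W}$ and not $\Mst_{\theta_W}$ itself.
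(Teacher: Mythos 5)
Your proposal reproduces exactly the argument the paper sketches in the paragraph preceding the theorem (the theorem itself is quoted from \cite{bocklandt2016dimer}, so the paper carries no independent proof): express rational semi-invariants of weight $r\theta_W$ as $u^r x^i y^j z^k$, apply Broomhead's degree criterion, and observe that $\deg_\PM\!\big(u^r x^i y^j z^k\big)\ge 0$ for all $\PM$ is precisely $(i,j,k)\in\PT_{f_W,r}$, so that the semi-invariant ring is compared with $R_{f_W}$.

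Two points are worth tightening. First, your final step --- that $R_{f_W}$ is normal with the same fraction field as $\Semi_{\theta_W}(\Jac,\alpha)$, hence is its normalization --- is not yet a complete argument: a normal overring sharing the fraction field contains the normalization, but equality also requires $R_{f_W}$ to be integral over $\Semi_{\theta_W}(\Jac,\alpha)$. For affine semigroup rings this means showing the rational cone generated by the monomials actually occurring in $\Semi_{\theta_W}$ is all of the cone over $\PT_{f_W}$, not a proper subcone. This is exactly where Lemma~\ref{existspath} earns its keep in a different way than you used it: it supplies, for each corner perfect matching $\PM$ of the matching polygon, paths with $\deg_\PM=0$, which realize points on every facet of $\PT_{f_W}$ and hence force the two cones to coincide. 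Second, your parenthetical worry about non-reducedness of $\rep(\Jac,\alpha)$ is not the issue: $\trep(\Jac,\alpha)$ is defined as the closure of the torus of nowhere-zero representations, so it is a reduced (indeed irreducible) toric variety; the only defect the normalization is correcting is possible non-saturation of the underlying monoid.
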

\begin{remark}
Again everything makes sense for rational weight $W$ functions, because the conditions
of being (semi,poly)-stable are the same for $\theta_W$ and $\lambda\theta_W$ if $\lambda$
is any positive scalar.
\end{remark}

To describe the torus orbits in $\Mst_{\theta_W}$,
we have to look a bit closer at $f_W$. The Newton polygon of $f_W$ is the same as the matching polygon of $\qpol$. 
Perfect matchings that are on the same lattice point in the matching polygon generate
terms $(\deg_{\PM} x) X + (\deg_{\PM} y) Y + \sum_{a \in \PM}W_a$ that differ only by a constant and hence
only the one with the lowest constant $\sum_{a \in \PM}W_a$ will contribute to the function $f_W$. 

\begin{definition}
A perfect matching $\PM$ is \emph{semistable} if $(\deg_{\PM} x) X + (\deg_{\PM} y) Y + \sum_{a \in \PM}W_a$ defines a face of $\PT_f$. It is called
stable if no other perfect matching defines the same face.
\end{definition}

\begin{lemma}
A perfect matching $\PM$ is (semi)stable if and only if the representations
\[
 \rho_\PM(a)=\begin{cases}
0 & a\in \PM\\
1 & a\not\in \PM
             \end{cases}
\]
is $\theta_W$-(semi)stable.
\end{lemma}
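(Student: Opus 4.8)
The plan is to test both the semistability and the stability of $\rho_\PM$ against King's numerical criteria \cite{king1994moduli} and to match the outcomes with the combinatorics of the face of $\PT_{f_W}$ cut out by the term of $f_W$ indexed by $\PM$. Throughout, write $H_\PM$ for the affine hyperplane $(\deg_\PM x)X+(\deg_\PM y)Y+Z=-\sum_{a\in\PM}W_a$; by construction of $\PT_{f_W}$ the corresponding inequality holds on all of $\PT_{f_W}$, so $H_\PM\cap\PT_{f_W}$ is a (possibly empty) face, and $\PM$ is semistable precisely when it is nonempty, stable precisely when no $H_{\PM'}$ with $\PM'\neq\PM$ meets $\PT_{f_W}$ in the same face. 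Note first that $\rho_\PM$ is a toric representation, since its set of vanishing arrows is the single perfect matching $\PM$.

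The computational heart of the argument is a one-parameter degeneration. First I would introduce the family $\rho_t$ with $\rho_t(a)=1$ for $a\notin\PM$ and $\rho_t(a)=t$ for $a\in\PM$: this satisfies the Jacobi relations because for each arrow $a$ the cycles $r_a^+a$ and $r_a^-a$ each contain exactly one arrow of $\PM$, so $r_a^+$ and $r_a^-$ contain the same number of arrows of $\PM$ and the two sides of the relation evaluate to the same power of $t$. Thus $\rho_t$ is a torus representation for $t\neq 0$ and $\rho_0=\rho_\PM$. Any monomial (semi-)invariant $\phi=u^r x^i y^j z^k$, with $u=\prod_a x_a^{W_a}$, is a Laurent monomial $\prod_a x_a^{m_a}$ in the arrow coordinates with $\sum_{a\in\PM}m_a=\deg_\PM\phi=r\sum_{a\in\PM}W_a+i\deg_\PM x+j\deg_\PM y+k$, so evaluating on $\rho_t$ gives $\phi(\rho_t)=t^{\deg_\PM\phi}$ for $t\neq 0$. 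Letting $t\to 0$ yields
\[
\phi(\rho_\PM)\neq 0 \iff \deg_\PM\phi=0,
\]
where $\deg_\PM\phi<0$ does not occur once $\phi$ is a genuine element of $\C[\trep(\Jac,\alpha)]$ by Lemma~\ref{zeroonallpm}.

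For the semistable equivalence I would then invoke the semi-invariant criterion together with the description (implicit in the identification of the normalization of $\Mst_{\theta_W}$ with $\VV_{f_W}$) that the weight-$r\theta_W$ semi-invariants are spanned by the monomials $u^r x^i y^j z^k$ with $(i,j,k)\in\PT_{f_W,r}$, i.e. with $\deg_{\PM'}\!\big(u^r x^i y^j z^k\big)\geq 0$ for every perfect matching $\PM'$. Combining this with the displayed equivalence, $\rho_\PM$ is $\theta_W$-semistable iff for some $r\geq 1$ there is a lattice point of $\PT_{f_W,r}=r\,\PT_{f_W}$ lying on $r\,H_\PM$; since a nonempty face of the rational polytope $\PT_{f_W}$ has a rational relative-interior point, hence an integral one after scaling, such a lattice point exists for some $r$ iff $H_\PM\cap\PT_{f_W}\neq\emptyset$, i.e. iff $\PM$ is semistable.

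For the stable refinement I would switch to the subrepresentation criterion. The subrepresentations of $\rho_\PM$ are exactly the subspaces $\bigoplus_{v\in S}\C_v$ indexed by vertex subsets $S$ closed under the arrows $a\notin\PM$ (if $t(a)\in S$ then $h(a)\in S$), and such a subrepresentation has dimension vector $\mathbf{1}_S$ with $\mathbf{1}_S\cdot\theta_W=\sum_{h(a)\in S,\,t(a)\notin S}W_a-\sum_{t(a)\in S,\,h(a)\notin S}W_a$; hence, given that $\rho_\PM$ is semistable, it is $\theta_W$-stable iff $\mathbf{1}_S\cdot\theta_W>0$ for every such $S$ with $\emptyset\neq S\subsetneq\qpol_0$. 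On the polytope side, a second matching $\PM'$ cuts out the same face $F_\PM=H_\PM\cap\PT_{f_W}$ exactly when the two linear forms agree on $F_\PM$; using that $\deg_\PM c-\deg_{\PM'}c$ depends only on the homology class of a cycle $c$ and equals its intersection number with the $1$-cycle $\PM-\PM'$, this pins $F_\PM$ inside an extra affine hyperplane determined by the homology classes and $W$-weights of the alternating cycles of $\PM\triangle\PM'$. The main obstacle is to make this into a bidirectional dictionary — that a second matching $\PM'$ on $F_\PM$ produces, by flipping $\PM$ along the alternating cycles of $\PM\triangle\PM'$, a proper nonempty $S$ closed under non-$\PM$ arrows with $\mathbf{1}_S\cdot\theta_W=0$, and conversely that such an $S$ lets one build a second defining matching from $\PM$ together with the boundary cycle of $S$. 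This step is where one genuinely needs the structure theory of perfect matchings on consistent dimers (Lemmas~\ref{zeroonallpm}--\ref{existspath} and the results of \cite{broomhead2012dimer,bocklandt2016dimer}); granting it, $\PM$ is stable iff $\rho_\PM$ is $\theta_W$-stable, which together with the semistable case completes the proof.
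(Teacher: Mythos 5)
Your semistable direction is correct and is essentially the paper's argument: you reduce the question to whether there is a monomial semi-invariant $s$ with $\deg_\PM s = 0$, i.e.\ a lattice point on the face cut out by $\PM$. The degeneration $\rho_t$ is a clean way to see that $\phi(\rho_\PM)\neq 0$ exactly when $\deg_\PM\phi=0$, and your remark about rescaling by $r$ so that a nonempty rational face actually acquires an integral point (and hence a genuine semi-invariant) is a legitimate point the paper's terse proof does not spell out.

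For the stability refinement you diverge from the paper and leave a genuine gap. Having passed to the subrepresentation criterion, you reduce the whole statement to a combinatorial dictionary between (a) second perfect matchings $\PM'$ cutting out the same face $F_\PM$ and (b) proper nonempty vertex sets $S$ closed under the non-$\PM$ arrows with $\mathbf{1}_S\cdot\theta_W=0$, and you then write ``granting it'' for both directions. That dictionary \emph{is} the content of the stable half of the lemma, so the proof is incomplete exactly where it matters; the alternating-cycle flip you gesture at does not obviously yield such an $S$, because $\PM\triangle\PM'$ a priori only gives a family of closed curves, not a vertex set closed under the non-$\PM$ arrows. The paper sidesteps this by arguing on the orbit side: since $\qpol\setminus\PM$ is connected, $\rho_\PM$ is indecomposable, hence polystable if and only if stable; if $\rho_\PM$ is strictly semistable, the closed orbit in its $\GL_\alpha$-orbit closure (inside the semistable locus) has a strictly larger vanishing set, which is a union of at least two perfect matchings $\PM,\PM'$, and since semi-invariants are constant along that degeneration, $\PM'$ cuts out the same face. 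Conversely a second $\PM'$ on $F_\PM$ gives a degeneration of $\rho_\PM$ to $\rho_{\PM\cup\PM'}$, which decomposes because $\qpol\setminus(\PM\cup\PM')$ is disconnected, so $\rho_\PM$ is not stable. If you want to keep your subrepresentation framing, the cleanest way to close the gap is to replace the alternating-cycle flip by this one-parameter degeneration: its limit decomposes, and the support of a summand is precisely the destabilizing $S$ you need.
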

\begin{proof}
The representation $\rho_\PM$ is semistable if and only if there is a $\theta_W$ semi-invariant $s=ux^iy^jz^k$ that is nonzero for $\rho$. This semi-invariant must sit
on the face defined by $\PM$ because if $\deg_\PM s>0$ then $s(\rho)=0$.

If $\rho_\PM$ is semistable then it cannot be polystable because the complement of the perfect matching is connected. Therefore
there must be a polystable representation $\rho$ in the closure of the $\GL_\alpha$-orbit of $\rho_\PM$. The set of zero arrows of this representation
is the union of at least 2 perfect matchings $\PM$ and $\PM'$. Because the invariants of $\rho_\PM$ and $\rho$ are the same $\PM'$ must define
the same face of $\PT_f$.
\end{proof}

\begin{definition}
We say that $W$ is 
\begin{itemize}
 \item \emph{generic} if all semistable toric representations are stable.
 \item \emph{nondegenerate} if all semistable perfect matchings are stable.
\end{itemize}
\end{definition}

If $W$ is nondegenerate every $2d$-torus orbit corresponds precisely to one perfect matching and all representations
in the orbit are stable. This means that the nonstable locus of $\Mst_{\theta_W}$ has dimension at most $1$.
The generic condition is stronger than the nondegenerate condition because in the latter case the nonstable locus is empty.
An example of a nondegenerate $W$ that is not generic is $W=0$ because one can show that the simple locus of $\Mst_\alpha^0$ has dimension at most $1$ \cite{bocklandt2012consistency}.

\subsubsection{Matrix factorizations from representations}
Given an algebra $B$ one can construct its category of singularities. 
This is the Verdier quotient
of its derived category by the subcategory of perfect complexes, 
which are the bounded complexes of projective modules:
$$\Sing B := \frac{D^b\Mod B}{\mathtt{Perf} B}$$
If $B$ is the quotient of an algebra $A$ with finite global dimension by a central element $\ell$ 
then Buchweitz \cite{buchweitz1986maximal} showed that taking the cokernel
$\mathrm{cok}\, d_{01}$ of a matrix factorization $(P,d)\in \MF_{\Z/2\Z}(A,\ell)$ and 
viewing it as a $B$-module, induces an equivalence between the category of matrix factorizations and the category of singularities of $B$.

If we go back to the representation theory of the Jacobi algebra, we see that every representation $\rho: \Jac \to \Mat_n(\C)$ that maps
the central element $\ell$ to zero, can be considered as an element in $\Sing \Jac/\<\ell\> \cong \MF(\Jac,\ell)$. 

Suppose $\rho \in \trep(\Jac,\alpha)$ and $\rho(\ell)=0$. Because $\ell$ is a sum of cycles in $\qpol_2$ there must be at least one arrow $a$ in each cycle of $\qpol_2$ for which $\rho(a)=0$. This means that there is a perfect matching $\PM$ such
that $a \in \PM \implies \rho(a)=0$.

This perfect matching defines a grading on $\Jac$ and $\rho$ can be seen as a graded representation of $\Jac$ concentrated in degree $0$ because it factors 
through $\Jac/\Jac_{>0}$. The matrix factorization corresponding to $\rho$ will also be graded and hence by the result by Haiden, Katzarkov and Kontsevich \cite{haiden2014flat}
it will correspond to a direct sum of string and band objects. We will now determine an explicit decomposition of this. 

The idea is quite simple: we draw curves in the neighborhood of each arrow in the following way and connect the dotted curves. 
\vspace{.2cm}
\begin{center}
 \begin{tikzpicture}[scale=.75]
\draw [-latex,shorten >=5pt] (0,0)--(0,2) ;
\draw (0,0) node[circle,draw,fill=white,minimum size=10pt,inner sep=1pt] {{\tiny1}};
\draw (0,2) node[circle,draw,fill=white,minimum size=10pt,inner sep=1pt] {{\tiny1}};
\draw[-latex,dotted] (-.5,0) arc (180:0:.5);
\draw[latex-,dotted] (-.5,2) arc (180:360:.5);
\draw (0,-.5) node{$\qpol: \rho(a)=0$};
\end{tikzpicture}
\hspace{1cm}
 \begin{tikzpicture}[scale=.75]
\draw[-latex,shorten >=5pt] (0,0)--(0,2);
\draw (0,0) node[circle,draw,fill=white,minimum size=10pt,inner sep=1pt] {{\tiny1}};
\draw (0,2) node[circle,draw,fill=white,minimum size=10pt,inner sep=1pt] {{\tiny1}};
\draw[dotted] (-.5,0) arc (330:390:2);
\draw[dotted] (.5,0) arc (210:150:2);
\draw (0,-.5) node{$\qpol:\rho(a)\ne 0$};
\end{tikzpicture}
\hspace{1cm}
 \begin{tikzpicture}[scale=.75]
\draw[-latex] (0,0)--(0,2);
\draw[-latex,dotted] (-.5,0)--(.5,2);
\draw[latex-,dotted] (-.5,2)--(.5,0);
\draw (0,-.5) node{$\polq: \rho(a)=0$};
\end{tikzpicture}
\hspace{1cm}
 \begin{tikzpicture}[scale=.75]
\draw[-latex] (0,0)--(0,2);
\draw[dotted] (-.5,0) arc (330:390:2);
\draw[dotted] (.5,0) arc (210:150:2);
\draw (0,-.5) node{$\polq:\rho(a)\ne 0$};
\end{tikzpicture}
\end{center}
The resulting curves are the bands.
Note that some of these curves can be contractible in $\psurf\polq$. If this is the case
this curve will not determine a band, so we omit it.
The decorations of the bands is the product of all $\rho(a)^{\pm 1}$ 
along which the curve passes multiplied by a sign $(-1)^l$ where $2l$ is the number of faces through which the band passes. 
The exponent of $\rho(a)$ is $-1$ if the band goes in the same direction as the arrow and $+1$ if it goes opposite.
We denote this collection of decorated bands by $B_\rho$.

\begin{theorem}
If $\qpol$ is consistent and $\rho\in \rep(\Jac(\qpol),\alpha)$ is a representation that is 
zero for the perfect matching $\PM$ then the object $B_\rho \in \Tw_{\Z_2} \Gtl\polq)$ is 
isomorphic to $\rho \in \Sing \Jac/\<\ell\>$ under the standard identification
\[
 \Tw_{\Z_2} \Gtl(\polq) \cong \Tw_{\Z_2} \mf(\qpol) \subset \H\MF(\Jac,\ell) \cong 
 \Sing \Jac/\<\ell\>.
\]
\end{theorem}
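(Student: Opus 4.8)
The plan is to reduce the statement to a question about matrix factorizations and then quote the proposition of Section \ref{sectionbands} identifying the band matrix factorization $M(g,\alpha)$ with the band object $B(g,\alpha)$. Unwinding the chain of equivalences in the statement, it suffices to produce a matrix factorization $M_\rho$ of $\ell$ with two properties: (a) $M_\rho$ lies in $\Tw_{\Z_2}\mf(\qpol)$ and is carried to the collection of decorated bands $B_\rho$ by the mirror equivalence; and (b) $M_\rho$ represents $\rho$ in $\Sing\Jac/\langle\ell\rangle$ under Buchweitz's equivalence $\mathrm{cok}$. Given such an $M_\rho$, chasing the chain of identifications in the statement gives $B_\rho\cong M_\rho\cong\rho$.

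For the construction I would take $N=\bigoplus_{a\in\qpol_1}M_a$ with its block-diagonal differential $d_N$ and add the Maurer--Cartan element $\delta_\rho$ whose components are the morphisms $\widehat{ab}$ between summands, each weighted by the scalar $\rho(\cdot)^{\pm1}$ prescribed by the local curve pictures defining $B_\rho$: an arrow $a$ with $\rho(a)\neq0$ contributes its $M_a$ with the connecting morphisms running straight past it, while an arrow with $\rho(a)=0$ forces a turn, i.e.\ a cancellation between the adjacent summands. I would check the Maurer--Cartan equation in $\mf(\qpol)$ — its quadratic term is forced to vanish by the Jacobi relations $r_a^+=r_a^-$ and by $\rho(\ell)=0$, and the higher $A_\infty$-terms vanish because consecutive angle arrows occurring in $\delta_\rho$ lie in different faces, the same mechanism that makes the band objects twisted objects. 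Then $M_\rho:=(N,d_N+\delta_\rho)$ lies in $\Tw_{\Z_2}\mf(\qpol)$ since each $M_a$ is a summand of $N$, and transporting it across $M_a\mapsto a$, $\widehat{ab}\mapsto$ the corresponding angle arrow of $\polq$ turns it into a twisted object over $\Gtl(\polq)$ with underlying module $\bigoplus_a a$ and connecting differential the $\rho$-weighted sum of exactly the angle arrows selected by the $B_\rho$-recipe. The graph these arrows trace on $\psurf{\polq}$ is the disjoint union of the garland bands of $B_\rho$ — contractible components discarded on both sides, a contractible curve being a full face-cycle and hence, by Lemma \ref{subcycle}, a contractible matrix factorization — and the product of the weights along a component is its holonomy $\alpha_g$. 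Applying Lemma \ref{subcycle} and the Shortening Lemma exactly as in the proof that $M(g,1)$ corresponds to $B(g,1)$ splits $M_\rho$ as $\bigoplus_{(g,\alpha)\in B_\rho}M(g,\alpha)$, which the mirror equivalence carries to $\bigoplus_{(g,\alpha)\in B_\rho}B(g,\alpha)=B_\rho$; this is property (a).

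For property (b) I would recover $M_\rho$ a second way, making the Buchweitz identification transparent: tensor the self-dual length-$3$ projective bimodule resolution of $\Jac$ afforded by its $3$-Calabi--Yau property (Theorem \ref{consprop}) with $\rho$ to obtain a finite projective $\Jac$-resolution of $\rho$, and, since $\ell$ annihilates $\rho$, fold this complex along a nullhomotopy of multiplication by $\ell$ into a matrix factorization of $\ell$; by Buchweitz's theorem this matrix factorization represents $\rho$ in $\Sing\Jac/\langle\ell\rangle$ by construction. The resolution has vertex-indexed terms in homological degrees $0$ and $3$ and arrow-indexed terms in degrees $1$ and $2$; repeated use of the Shortening Lemma and Lemma \ref{subcycle} cancels the degree-$0$ and degree-$3$ parts against pieces of the degree-$1$ and degree-$2$ parts and leaves exactly the twisted $\bigoplus_a M_a$ of the previous paragraph, with the same $\rho$-weighted $\widehat{ab}$-differential. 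As homotopy-equivalent matrix factorizations are identified, this is the same $M_\rho$, which is therefore both $B_\rho$ under the mirror and $\rho$ under Buchweitz. (Alternatively one can skip this second construction and verify (b) by computing $\mathrm{cok}\,(M_\rho)_{01}$ directly and recognising its class in the singularity category as that of $\rho$, which is more hands-on but equivalent.) The hardest point will be matching the two presentations of $M_\rho$ — that the twisted object read off the $B_\rho$-curves is, up to homotopy, precisely what folding the bimodule resolution produces. Doing this carefully forces one to pin down simultaneously the sign conventions of $\mf(\qpol)$, the exact cancellations effected by the Shortening Lemma when several of the $\rho(a)$ vanish, and the treatment of curves that become contractible in $\psurf{\polq}$; once these are settled, the rest is formal from the results already in the paper.
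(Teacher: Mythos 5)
Your strategy is genuinely different from the paper's. The paper leans on Haiden--Katzarkov--Kontsevich's classification of indecomposables in the topological Fukaya category into strings and bands: it first rules out string summands by a finiteness-of-$\Hom$ argument, then detects exactly which $B(g,\lambda)$ occur by computing $\Hom^\bullet_{\Sing}(\bar M_a,\rho)$ for each arrow $a$ (to pin down which arrows the bands traverse) and then $\Hom^0(B(g,\kappa),\rho)$ as $\kappa$ varies (to pin down the decorations). Your plan instead bypasses the classification and builds a concrete matrix factorization $M_\rho$ from the curve pictures, then tries to match it against the Buchweitz image of $\rho$. This is a valid logical shape, but it trades a short detection argument for a construction that is substantially harder, and several of the steps you wave at don't hold up as stated.

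Concretely: (1) Your proposed $\delta_\rho$ is not pinned down --- ``weighted by $\rho(\cdot)^{\pm1}$ as prescribed by the local pictures'' does not determine which $\widehat{ab}$ actually enter or how a turning arrow translates into a Maurer--Cartan coefficient, and you start from $N=\bigoplus_a M_a$ with multiplicity one for each arrow, which does not obviously match the total snake-path multiplicities of $\bigoplus_g B(g,\alpha_g)$ even after shortening. (2) Your justification of the Maurer--Cartan equation via ``the Jacobi relations $r_a^+=r_a^-$ and $\rho(\ell)=0$'' is the wrong mechanism: the equation lives in $\Tw\,\mf(\qpol)\simeq\Tw\,\Gtl(\polq)$, and the quadratic and higher vanishing must come from the product structure of $\Gtl(\polq)$ --- precisely the argument used for $B(g,\alpha)$ in Section~\ref{sectionbands}, namely that composable angle arrows in $\delta$ necessarily lie in the same face and hence multiply to zero, and that the garland curve enters and leaves each face by different arrows so no higher product fires. (3) Your treatment of contractible components is wrong: the contractible curves that arise when $\rho$ vanishes on a single perfect matching are loops running through two neighbouring faces, not full face cycles, so Lemma~\ref{subcycle} does not directly discard them. (4) Property (b) is not established; you propose a second, independent construction of $M_\rho$ by folding the Calabi--Yau bimodule resolution and then assert the two presentations agree up to homotopy, but you identify this yourself as ``the hardest point'' and leave it entirely open. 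As written, the proposal reduces the theorem to an unproved comparison of two matrix factorizations, which is not a smaller problem than the original statement. The paper's route avoids all of this by working on the $\Sing$ side throughout and letting the test objects $\bar M_a$ and $B(g,\kappa)$ do the detecting.
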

\begin{proof}
First of all $\rho$ does not contain any string objects because
$\Hom_{\Sing \Jac/\<\ell\>}(\rho,\rho)$ is finite while the endomorphism
ring of a string object is always infinite.

To determine the band objects we will look at the hom spaces between $\rho$ and
certain test objects. If we determine the dimensions of these spaces we can single
out the unique combination of bands that has the same hom spaces.
The test object we consider are the objects $M_a$ corresponding to the arrows.

Note that for a given band object $g=(g_0,\dots,g_{2k-1})$ and an $n\times n$ Jordan matrix $\lambda$, the dimension of $\Hom^\bullet(B(g,\lambda), M_a)$ is equal to $n$ times the number of arrows $g_{i}$ that are equal to $a$. Each matching arrow contributes a morphism
of even degree if $i=0 \mod 4$ and of odd degree if $i=2 \mod 4$. This can easily be
seen in the $A$-model because $B(g,\lambda)$ represents curves that runs through
the center of these arrows. 

In $\Sing \Jac\!/\<\ell\>$ the object $M_a$ corresponds to the
periodic complex
\[
\bar M_a : P_{h(a)}\stackrel{a}{\ot} P_{t(a)}\stackrel{\ell a^{-1}}{\ot}  P_{h(a)}\stackrel{a}{\ot} P_{t(a)}\stackrel{\ell a^{-1}}{\ot} \dots
\]
Where $P_v$ stands for the projective module $v\Jac$.
Using the fact that $\Hom(P_{v},S)=\C^{\alpha_v}=\C$ we see that $\Hom_{\Sing}^\bullet (\bar M_a, \rho)$ is the homology of the complex
\[
 \C\stackrel{\rho(a)}{\to} \C \stackrel{\rho(\ell a^{-1})}{\to}  \C\stackrel{\rho(a)}{\to} \C\stackrel{\rho(\ell a^{-1})}{\to} \dots
\]
This complex only has nontrivial homology if $\rho(a)=\rho(\ell a^{-1})=0$ and in that case
$$\Ext^i_{\Jac\!/\<\ell\>}(P_{h(a)}/\<a\>,S)=\C.$$
This implies that the object $\rho$ only contains band objects that run through
arrows that are zero for $\rho$. Furthermore all decorations must be $1$-dimensional. 

To determine the precise bands, note that $B(g,\lambda)$ is a band object that occurs in the decomposition of $\rho$ or $\rho[1]$ if and only if the dimension of the space
\[
 \Hom^0(B(g,\kappa),\rho)
\]
will jump by one if $\kappa$ is equal to $\lambda$ compared to when $\kappa \ne \lambda$.

If we go to the matrix factorization $B(g,\kappa)$ we can calculate 
$\Hom^i(B(g,\kappa),\rho)$ by calculating the homology of the complex 
\[
\xymatrix@R=.4cm@C=.6cm{
\dots\ar@{<-}@/^/[ddrr]|{0}\ar@{<-}@/^/[rr]|{\rho(d_b)}&&\vtx{0}\ar@{<-}@/^/[ddrr]|{0}&&\vtx{3}\ar@{<-}@/^/[ddrr]|{0}\ar@{<-}@/_/[ll]|{\rho(d_b)}\ar@{<-}@/^/[rr]|{\rho(d_b)}&&\vtx{4}\ar@{<-}@/^/[ddrr]|{0}&&\vtx{7}\ar@{<-}@/_/[ll]|{\rho(d_b)}\ar@{<-}@/^/[rr]|{\rho(d_b)}\ar@{<-}@/^/[ddrr]|{0}&&\dots\\
&&&&&&&&&&\\
\dots&&\vtx{-\!\!\!2}\ar@{<-}@/^/[uull]|{0}\ar@{<-}@/^/[ll]|{-\rho(d_b)}\ar@{<-}@/_/[rr]|{-\rho(d_b)}&&\vtx{1}\ar@{<-}@/^/[uull]|{0}&&\vtx{2}\ar@{<-}@/^/[uull]|{0}\ar@{<-}@/^/[ll]|{-\rho(d_b)}\ar@{<-}@/_/[rr]|{-\rho(d_b)}&&\vtx{5}\ar@{<-}@/^/[uull]|{0}&&\dots\ar@{<-}@/^/[ll]|{-\rho(d_b)}\ar@{<-}@/^/[uull]|{0}
}
\]
where we changed every vertex to the vector space $\C$ because the dimension vector is 
$(1,\dots,1)$, reversed all arrows and substituted them by their values for $\rho$.

If $g=(g_1,\dots,g_{2k-1})$ is a band for which all the arrows $g_i$ evaluate to zero,
the homology of the complex reduced to that of $d_b$, which splits in an upper and a lower part. If one of the arrows in the upper row 
is zero, the $d_b$-complex becomes contractible, so no jump can take place.
If all arrows in the upper row are nonzero, we can form an element in the homology
by chosing a number $\beta_0$ in vertex $0$, $-\rho(p_2)^{-1}\rho(p_1)\beta_0$ in vertex $4$
(where $p_1$ and $p_2$ are the paths that connect vertices $1$ and $4$ with $3$). Proceeding like this
all the way around the upper row,
it is clear we get an element in the homology if and only if $(-1)^l\prod (\rho(p_i))^{\pm 1}\lambda=1$
where $2l$ is the number of faces of the band. 
Therefore dimension of the hom-space jumps if the decoration is equal to 
$(-1)^l\prod \rho(p_i)^{\mp 1}$.

The same can be said about the lower band but in that case the homology element 
that corresponds to the identity will sit in degree one instead of degree zero. From this we can conclude that 
the bands that occur are precisely those for which the upper row only consists
of nonzero arrows. 
We can draw these bands in $\qpol$ by drawing two half circle paths 
through each zero arrow
and two parallel lines along each nonzero arrow. Because the mirror dimer $\polq$ is twisted around each arrow, the two half circles become a cross.
\end{proof}
\begin{remark}
One can extend this result for representation with a dimension smaller than $\alpha$. To do this
we need to remove the half circles that run around vertices with dimension zero in the $\qpol$-quiver.
\vspace{.2cm}
\begin{center}
 \begin{tikzpicture}[scale=.75]
\draw [-latex,shorten >=5pt] (0,0)--(0,2) ;
\draw (0,0) node[circle,draw,fill=white,minimum size=10pt,inner sep=1pt] {{\tiny1}};
\draw (0,2) node[circle,draw,fill=white,minimum size=10pt,inner sep=1pt] {{\tiny0}};
\draw[-latex,dotted] (-.5,0) arc (180:0:.5);
\end{tikzpicture}
\hspace{1cm}
 \begin{tikzpicture}[scale=.75]
\draw [-latex,shorten >=5pt] (0,0)--(0,2) ;
\draw (0,0) node[circle,draw,fill=white,minimum size=10pt,inner sep=1pt] {{\tiny0}};
\draw (0,2) node[circle,draw,fill=white,minimum size=10pt,inner sep=1pt] {{\tiny1}};
\draw[latex-,dotted] (-.5,2) arc (180:360:.5);
\end{tikzpicture}
\hspace{1cm}
\begin{tikzpicture}[scale=.75]
\draw [-latex,shorten >=5pt] (0,0)--(0,2) ;
\draw (0,0) node[circle,draw,fill=white,minimum size=10pt,inner sep=1pt] {{\tiny0}};
\draw (0,2) node[circle,draw,fill=white,minimum size=10pt,inner sep=1pt] {{\tiny0}};
\end{tikzpicture}
\end{center}
\end{remark}

\subsubsection{The mirror dimer and the spider graph}

To construct a moduli space of matrix factorizations of $(\Jac,\ell)$, we can look at a moduli space of the form $\Mst_\theta(\Jac,\alpha)$ and restrict to those representations that give a nonzero matrix factorization. 

To find the matrix factorizations, we have to look at $\Mst_\theta(\Jac\!/\<\ell\>,\alpha)$ inside $\Mst_\theta(\Jac,\alpha)$.
Because $\ell$ is zero as soon as one arrow in each cycle of $\qpol_2$ is zero, these are the representation that are zero on at least one perfect matching.
However if $\rho$ is zero for only one perfect matching, 
the construction in the previous paragraph only results in contractible curves that each run in two neighboring faces. This tells us that the matrix factorization is zero.
\[
\polq: \vcenter{{\xymatrix@=.5cm{
\vtx{}\ar[rr]&&\vtx{}\ar[dd]&&\vtx{}\ar[ll]\\
&&\ar@(rd,ru)@{.>}\ar@(ld,lu)@{.>}&&\\
\vtx{}\ar[uu]&&\vtx{}\ar[rr]\ar[ll]&&\vtx{}\ar[uu]
}}}
\]
We will now restrict to the case where $\theta=\theta_W$ for a \emph{weight function $W:\qpol_1\to \Z$ that is nondegenerate}.
In this case each two-dimensional orbit of $\Mst_\theta(\Jac,\alpha)$ is the zero locus of precisely one perfect matching. The one-dimensional orbits will be the zero locus
of two perfect matchings and the zero-dimensional orbits of more than two perfect matchings. 
Therefore the representations that give nonzero matrix factorizations correspond to 
union of the zero- and one-dimensional orbits. We denote this union by
\[
 \Msg_{\theta}(\Jac,\alpha) \subset \Mst_\theta(\Jac,\alpha).
\]

\begin{lemma}\label{kbands}
Suppose $\qpol$ is consistent.
Let $\rho=\rho_{\PM_1\cup\PM_2}$ be a representation that is zero 
for two perfect matchings $\PM_1,\PM_2$ located on lattice points $n_{\PM_1},n_{\PM_2}$ 
and let 
$k$ be the elementary length of the line segment between $n_{\PM_1}$ and $n_{\PM_2}$ (i.e. the line segment contains $k+1$ lattice points).
\begin{enumerate}
 \item The representation $\rho$ splits as a direct sum of $k$ indecomposable representations 
 $\rho_1\oplus \dots \oplus \rho_k$.
 \item Each matrix factorization $M_{\rho_i}$ corresponds to the direct sum of a band and its opposite
 \item All the bands of the $M_{\rho_i}$ are (anti)parallel in $\qpol$.
 \end{enumerate}
\end{lemma}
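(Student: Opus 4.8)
The plan is to deduce all three statements from the explicit description of $\rho$ as a collection of bands supplied by the theorem above. Under the standard identification $\Tw_{\Z_2}\Gtl(\polq)\cong\Tw_{\Z_2}\mf(\qpol)\subset\H\MF(\Jac,\langle\ell\rangle)\cong\Sing\Jac/\langle\ell\rangle$, the object $\rho=\rho_{\PM_1\cup\PM_2}$ is the band collection $B_\rho$ obtained by drawing in $\qpol$ two parallel strands along every arrow $a$ with $\rho(a)\ne 0$ (equivalently $a\notin\PM_1\cup\PM_2$) and a pair of half-circles around the two endpoints of every arrow $a$ with $\rho(a)=0$ (equivalently $a\in\PM_1\cup\PM_2$), and then joining these pieces into closed curves; passing to $\polq$ turns the half-circles into crossings. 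Let $\mathcal{C}$ denote this curve system. Then $(3)$ asserts that the connected components of $\mathcal{C}$ are mutually (anti)parallel, the count in $(1)$ asserts there are exactly $k$ of them up to orientation, and $(2)$ asserts that each component, read through the equivalence, splits off together with its reverse.

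First I would analyse $\mathcal{C}$ locally. Each face of $\qpol$ carries exactly one arrow of $\PM_1$ and one of $\PM_2$, and these are either equal (one zero arrow on the face) or distinct (two zero arrows); in each of these finitely many configurations the half-circles and the parallel strands glue together inside the face in an essentially unique crossingless fashion, and no strand ever enters and leaves a face through the same arrow. Hence, drawn in $\qpol$, $\mathcal{C}$ is an embedded $1$-manifold, i.e.\ a disjoint union of simple closed curves, and none of them bounds a disc: a single perfect matching already yields only null-homotopic curves (as recorded just before the lemma), and because $\PM_1\ne\PM_2$ no local configuration closes up within one or two faces.

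Next I would pin down the homotopy type and the number of components. Orienting a component $\delta$, each time $\delta$ runs beside a nonzero arrow $a$ it contributes $\pm[a]$ to $[\delta]\in H_1(\surf\qpol)$ while the half-circles contribute nothing. Using the intermediate perfect matchings $\mathsf{Q}_0=\PM_1,\mathsf{Q}_1,\dots,\mathsf{Q}_k=\PM_2$ sitting on the $k+1$ lattice points of the segment from $n_{\PM_1}$ to $n_{\PM_2}$ (they exist by Theorem \ref{matchingconsistent}), or, more robustly, the difference of the two perfect-matching height functions lifted to the universal cover, one shows that every component of $\mathcal{C}$ is a nonseparating simple closed curve of primitive class $\pm w$ with $w:=\tfrac1k(n_{\PM_1}-n_{\PM_2})$, and that exactly $2k$ strands of $\mathcal{C}$ meet any fixed elementary closed curve transverse to $w$. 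This is $(3)$, and it forces the component count to be $k$. Finally, the $k$ mutually parallel curves $\delta_1,\dots,\delta_k$ cut $\surf\qpol$ into $k$ cylinders; the part of $\rho$ carried by a neighbourhood of $\delta_i$ is a thin subrepresentation $\rho_i$ whose support together with its nonzero arrows is connected, hence indecomposable, and $\rho=\bigoplus_i\rho_i$ by construction, which is $(1)$; applying the theorem above to $\delta_i$ traversed in both directions — the two parallel strands drawn along each nonzero arrow in the support of $\rho_i$ being precisely the band and its reverse — gives $M_{\rho_i}\cong B(\delta_i,\lambda_i)\oplus B(\overline{\delta_i},\overline{\lambda_i})$, which is $(2)$, and $(3)$ is then immediate.

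I expect the main obstacle to be the step bounding the global homology class of each component and fixing the exact component count: ruling out components of non-primitive class, or a different number of them, is precisely where consistency of $\qpol$ is used — through Theorem \ref{matchingconsistent} and the behaviour of the two height functions on the universal cover — and it is the technical heart of the argument. The bookkeeping needed to distribute the arrows of $\qpol$ correctly among the cylinders (so that $\rho=\bigoplus\rho_i$ literally holds on the nose) is routine once the curve picture is under control, but should be carried out with care.
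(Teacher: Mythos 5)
Your proof follows the same route as the paper's: the local face analysis giving embedded, mutually (anti)parallel curves because a crossing would force three zero arrows into one face; the homology count read off from $n_{\PM_1}-n_{\PM_2}$ forcing exactly $k$ parallel pairs; and the decomposition of the torus into $k$ cylinders, each carrying one $\rho_i$. Your use of intermediate perfect matchings and height functions is just a slightly more elaborate way of organizing the same intersection-number computation. The one place you genuinely deviate is the indecomposability of each $\rho_i$: you assert that the support of $\rho_i$ together with its nonzero arrows is connected --- a plausible claim, but one you do not verify --- whereas the paper argues backwards from the B-side, noting that a nontrivial decomposition $\rho_i=\rho_i'\oplus\rho_i''$ would force $M_{\rho_i}$ to split into at least four indecomposable band summands (each nonzero piece contributing a band and its reverse), contradicting the fact that $M_{\rho_i}$ has exactly two. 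The paper's version buys you precisely the connectivity step you would otherwise have to supply, so if you keep your route you owe the reader that argument. Both proofs leave the exclusion of null-homotopic components somewhat implicit.
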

\begin{proof}
Because there are at most two arrows zero in every face the bands look like
$$(a_1,f_1,a_2,f_2,a_3,\dots, a_n,f_n),$$
where $a_i \in \PM_1$ if $i$ is odd and $a_i \in \PM_2$ if $i$ is even. Also if a band
is present then also its opposite is present.
Moreover two bands must either be parallel or antiparallel in because an intersection
would result in at least three zero arrows in a face.

The vector $n_{\PM_1}-n_{\PM_2}=(\deg_{PM_1}x -\deg_{PM_2}x,\deg_{PM_1}y -\deg_{PM_2}y,0)$ measures the intersection
number of parallel bands with $x$ and $y$ so the homology class of the parallel bands is 
$$(-\deg_{PM_1}y +\deg_{PM_2}y,\deg_{PM_1}x -\deg_{PM_2}x).$$ 
If $\gcd(-\deg_{PM_1}y +\deg_{PM_2}y,\deg_{PM_1}x -\deg_{PM_2}x)=k$
this means that the homology class is that of $k$ simple curves so there must be $k$ parallel bands and
$k$ antiparallel bands, so $\rho$ decomposes as a sum of $2k$ indecomposable objects that are pairwise each other's shift.

On the torus $\surf{\qpol}$ the bands split the surface in $k$ cylinders and all arrows
crossing the bands are $0$, so $\rho$ is the direct sum of $k$ subrepresentations, one 
supported on each cylinder. Each of these representations is indecomposable as a 
representation. If it were not indecomposable $M_{\rho_i}$ would split in more than $2$ indecomposable 
matrix factorizations.

\erbij{
To show that the decorations of the bands are the same we look at a one-parameter 
family $\rho_t$ with $\lim_{t\to 0}=\rho$.
If $g$ and $h$ are parallel band then the weak paths $p,q$ that bounds 
the cycles on the left.
Let $r$ be any weak path that connects $h(p)$ with $h(q)$. The paths $rp$ and $qr$ have the same homotopy class and the same degree
for $\PM_1$ so by the consistency of the dimer they represent the same element $\Jac$. 
Therefore
$$\frac{\rho(p)}{\rho(q)} = \lim_{t\to 0}\frac{\rho_t(p)}{\rho_t(q)}=\lim_{t\to 0}\frac{\rho_t(rp)}{\rho_t(qr)}=1$$
so the decorations are the same.}
\end{proof}

Each $k$-fold edge of $\Trop(f_W)$ corresponds to a one-parameter orbit of polystable representations
that decompose into $k$ summands. This implies that there is a correspondence between 
the edges of the spider graph $\Lambda(f_W)$ and the number of bands that occur in the matrix factorizations of the one-dimensional 
torus orbit. Each of these bands corresponds to an unoriented curve in $\psurf{\polq}$. 
To make a moduli space that parametrizes these summands we can take the fibered product
\[
\Mla_{\theta_W} :=  \Msg_{\theta_W}(\Jac,\alpha) \times_{\Trop_{f_W}} \Lambda(f_W)
\]
This moduli space has $k$ 1-parameter family of each indecomposable summand and comes with
a natural projection $\Mla_{\theta_W} \to \Lambda(f_W)$. Note that in the generic 
case every polystable representation is stable and hence indecomposable, 
so $\Mla_{\theta_W} =  \Msg_{\theta_W}(\Jac,\alpha)$.

From theorem \ref{genusmirror} and lemma \ref{genustropical} we know that the $\psurf{\polq}$ 
and $\tub{\Lambda(f_W)}$ are surfaces with the same genus and same number of punctures.
Therefore it is a natural question to ask whether we can draw the dimer quiver $\polq$ on 
$\tub{\Lambda(f_W)}$ such that the bands of the matrix factorizations
correspond to simple curves that wrap around the edges of $\Lambda(f_W)$.
This is indeed possible. We will first show this for generic $W$ and
then extend the result to nondegenerate $W$.

First we need to determine how the projection of each arrow onto 
the spider graph must look. Because an arrow connects two punctures, its projection
will be a sequence of edges, starting with a leg and ending in a leg. 
Which edges are needed can be determined by the subdivision of the Newton Polygon 
$\NP(f_W)$.

For any subpath $p$ of a cycle in $\qpol_2$ 
we can mark all lattice points that contain an arrow of $p$ 
and look at the subcomplex $C_p$ consisting of all edges and triangles 
that contain only marked
lattice points. 

\vspace{.3cm}
\begin{center}
\resizebox{!}{2cm}{\begin{tikzpicture}
\draw (.5,-2) node {{$C_a$}};
\fill [black!25] (-1,0)--(0,1)--(1,1)--(0,0)--(-1,0);
\draw  [black!25] (0,-1) -- (-1,0); 
\draw  [black] (0,0) -- (-1,0); 
\draw  [black] (-1,0) -- (0,1); 
\draw  [black] (0,1) -- (1,1); 
\draw  [black!25] (1,1) -- (2,0); 
\draw  [black!25] (2,0) -- (1,-1); 
\draw  [black!25] (0,0) -- (1,-1); 
\draw  [black!25] (1,1) -- (1,-1); 
\draw  [black] (0,1) -- (0,0); 
\draw  [black!25] (1,-1) -- (0,-1); 
\draw  [black!25] (0,0) -- (0,-1); 
\draw  [black] (1,1) -- (0,0); 
\draw  [black!25] (2,0) -- (0,0); 
\draw (-1,0) node[circle,draw,fill=black,minimum size=10pt,inner sep=1pt]{};
\draw (0,1) node[circle,draw,fill=black,minimum size=10pt,inner sep=1pt]{};
\draw (1,1) node[circle,draw,fill=black,minimum size=10pt,inner sep=1pt]{};
\draw (2,0) node[circle,draw,fill=white,minimum size=10pt,inner sep=1pt]{};
\draw (1,-1) node[circle,draw,fill=white,minimum size=10pt,inner sep=1pt]{};
\draw (0,-1) node[circle,draw,fill=white,minimum size=10pt,inner sep=1pt]{};
\draw (1,0) node[circle,draw,fill=white,minimum size=10pt,inner sep=1pt]{};
\draw (0,0) node[circle,draw,fill=black,minimum size=10pt,inner sep=1pt]{};
\end{tikzpicture}}
\hspace{1cm}
\resizebox{!}{2cm}{\begin{tikzpicture}
\draw (.5,-2) node {{$C_{r_a^+}$}};
\fill [black!25] (1,0)--(2,0)--(1,-1)--(1,0);
\draw  [black!25] (0,-1) -- (-1,0); 
\draw  [black!25] (0,0) -- (-1,0); 
\draw  [black!25] (-1,0) -- (0,1); 
\draw  [black!25] (0,1) -- (1,1); 
\draw  [black!25] (1,1) -- (2,0); 
\draw  [black] (2,0) -- (1,-1); 
\draw  [black!25] (0,0) -- (1,-1); 
\draw  [black] (1,-1) -- (0,-1); 
\draw  [black!25] (1,1) -- (0,0); 
\draw  [black!25] (2,0) -- (0,0); 
\draw  [black!25] (0,1) -- (0,-1);
\draw  [black!25] (1,1) -- (1,0);
\draw  [black] (1,0) -- (1,-1);
\draw  [black] (1,0) -- (2,0);
\draw (-1,0) node[circle,draw,fill=white,minimum size=10pt,inner sep=1pt]{};
\draw (0,1) node[circle,draw,fill=white,minimum size=10pt,inner sep=1pt]{};
\draw (1,1) node[circle,draw,fill=white,minimum size=10pt,inner sep=1pt]{};
\draw (2,0) node[circle,draw,fill=black,minimum size=10pt,inner sep=1pt]{};
\draw (1,-1) node[circle,draw,fill=black,minimum size=10pt,inner sep=1pt]{};
\draw (0,-1) node[circle,draw,fill=black,minimum size=10pt,inner sep=1pt]{};
\draw (1,0) node[circle,draw,fill=black,minimum size=10pt,inner sep=1pt]{};
\draw (0,0) node[circle,draw,fill=white,minimum size=10pt,inner sep=1pt]{};
\end{tikzpicture}}
\end{center}

\erbij{\resizebox{!}{1.2cm}{\begin{tikzpicture} 
\draw [red, thick](-4/3,2/3) -- (-7/3,-1/3); 
\draw [black!25](-4/3,8/3) -- (-4/3,2/3); 
\draw [black!25](-4/3,8/3) -- (-7/3,11/3); 
\draw [black!25](-4/3,8/3) -- (-4/3,11/3); 
\draw [red, thick](-2/3,2) -- (1/3,3); 
\draw [black!25](-2/3,4/3) -- (1/3,1/3); 
\draw [red, thick](-4/3,2/3) -- (-2/3,4/3); 
\draw [black!25](-4/3,2/3) -- (-4/3,-1/3); 
\draw [black!25](-4/3,8/3) -- (-2/3,2); 
\draw [red, thick](-2/3,2) -- (-2/3,4/3); 
\end{tikzpicture}}}

\begin{lemma}\label{cont}
Let $p$ be as subpath of a cycle in $\qpol_2$. If $\qpol$ is consistent 
and $\theta=\theta_W$ generic then the subcomplex $C_p$ is contractible.
\end{lemma}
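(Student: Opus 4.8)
The plan is to dualise into $\R^2$ and apply the nerve lemma. Since $W$ is generic, $\cF(f_W)$ is a unimodular triangulation of the matching polygon $\MP(\qpol)=\NP(f_W)$; in particular every lattice point of $\NP(f_W)$ is a vertex of $\cF(f_W)$ (a unimodular triangle contains no lattice points besides its vertices) and carries a unique semistable perfect matching $\PM_n$. For a lattice point $n$ let $\sigma_n\subset\R^2$ be the closed region on which the $n$-th affine piece of $f_W$ attains the minimum; the $\sigma_n$ are the maximal cells of the polyhedral subdivision of $\R^2$ dual to $\cF(f_W)$, so each face $\sigma_n\cap\sigma_{n'}$ is convex, hence contractible. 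By the lemma identifying $\theta_W$-(semi)stable perfect matchings with $\theta_W$-(semi)stable representations, a lattice point $n$ is marked for $p$ precisely when $\PM_n$ contains an arrow of $p$; write $M_p$ for this set, so that $C_p$ is the full subcomplex of $\cF(f_W)$ on $M_p$. Then $\{\sigma_n\}_{n\in M_p}$ is a closed cover of $U_p:=\bigcup_{n\in M_p}\sigma_n$ whose multiple intersections are convex, and by duality of the two subdivisions the nerve of this cover is exactly $C_p$. Hence $C_p\simeq U_p$, and it suffices to show that $U_p$ is contractible.

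Next I organise the cells by the cycle $c=a_1\cdots a_k\in\qpol_2$ containing $p$. Every perfect matching meets $c$ in exactly one arrow, so the sectors
\[
\Sigma_m:=\bigcup_{n:\,a_m\in\PM_n}\sigma_n,\qquad m=1,\dots,k,
\]
are closed regions with pairwise disjoint interiors covering $\R^2$, and $U_p=\Sigma_i\cup\cdots\cup\Sigma_j$ is the union of the contiguous cyclic block of sectors corresponding to the arrows of $p$, with complement $\Sigma_{j+1}\cup\cdots\cup\Sigma_{i-1}$. The behaviour near infinity is governed by the corners of $\NP(f_W)$: going to infinity in a fixed generic direction, the optimal matching is a corner matching, so the decomposition of a large circle into the sets where $\PM_n$ contains a given $a_m$ is the decomposition of $\partial\NP(f_W)$ according to which arrow of $c$ lies in the boundary matchings. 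The crux is therefore the combinatorial claim, for $\qpol$ consistent:

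\emph{as one traverses $\partial\NP(f_W)=\partial\MP(\qpol)$ the arrow of $c$ occurring in the boundary matchings advances monotonically around the cyclically ordered arrows $a_1,\dots,a_k$ of $c$, and more globally each sector $\Sigma_m$ is connected and is adjacent only to $\Sigma_{m-1}$ and $\Sigma_{m+1}$.}

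I expect this to be the main obstacle. Granting it, the sectors genuinely form a fan around infinity, so any contiguous block $U_p$ is connected, has connected complement, and meets its complement in a disjoint pair of unbounded arcs; a direct deformation retraction (sweeping each sector of the block toward its neighbour in the block, ultimately onto a single unbounded leaf of the dual subdivision) then shows that $U_p$ is contractible, hence so is $C_p$. To establish the monotonicity one uses the zigzag description of $\partial\MP(\qpol)$ (Theorem~\ref{matchingconsistent}): the edges of $\MP(\qpol)$ correspond to zigzag cycles, which on a consistent dimer meet the face $c$ compatibly with the cyclic orders on the two sides of each arrow, and the characterisation of arrow content of weak paths through degrees of perfect matchings (Lemmas~\ref{zeroonallpm}--\ref{existspath}) promotes this to the required monotone sweeping of the sectors; the structural results for consistent dimers in \cite{broomhead2012dimer,bocklandt2016dimer} supply the combinatorics. (Two minor points handled along the way: that $\sigma_n\neq\emptyset$ for every lattice point $n$, which is the vertex‑completeness of the unimodular triangulation noted above, and that "marked" means "$\PM_n$ contains an arrow of $p$", which is exactly the stable‑matchings/stable‑representations dictionary in the generic case.)
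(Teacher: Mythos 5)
Your approach is genuinely different from the paper's, and it is worth noting the contrast. The paper's proof makes no appeal to nerve lemmas or to the combinatorics of dual cells at all: it quotes the Ishii--Ueda theorem that for generic $\theta$ the moduli space $\Mst_\theta(\Jac,\alpha)$ carries a tilting bundle $\bigoplus_v \cL_{w\to v}$, hence $\H^{>0}\!\bigl(\cL_{t(a)\to h(a)}\bigr)=0$, and then uses the standard toric computation (Fulton) of line-bundle cohomology in terms of reduced homology of the subfan marked by the toric divisors in the support; contractibility of $C_a$, of $C_{r_a^+}$, and of $C_p$ in general drops out immediately. All the hard combinatorics is outsourced to a theorem already in the literature.

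Your reduction via the nerve lemma is sound as far as it goes: the dual cells $\sigma_n$ and their multiple intersections are convex polyhedra (or empty), and the nerve of the subcover $\{\sigma_n\}_{n\in M_p}$ is precisely the full subcomplex $C_p$ of $\cF(f_W)$ on the marked lattice points, so $C_p\simeq U_p$. The genuine gap is exactly where you flag it: the italicised ``combinatorial claim'' that the sectors $\Sigma_m$ are connected, all unbounded, and arrange cyclically around infinity in the same cyclic order as the arrows $a_1,\dots,a_k$ around $c$. Nothing in Theorem~\ref{matchingconsistent} or Lemmas~\ref{zeroonallpm}--\ref{existspath} asserts this; those results describe the boundary of the matching polygon and existence/equality of (weak) paths, not the adjacency structure of the dual subdivision indexed by arrow content of perfect matchings. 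In fact the connectedness and cyclic-adjacency of the $\Sigma_m$ is essentially \emph{equivalent} to what the lemma asks you to prove: if some $\Sigma_m$ were disconnected, $C_{a_m}$ would fail to be connected, and if a non-contiguous sector were adjacent, some $C_p$ with $p$ a contiguous block would fail to be simply connected. So as written you have reformulated the lemma rather than derived it. You would also need to justify that every sector is unbounded (i.e.\ that each arrow of $c$ occurs in at least one boundary perfect matching), since your deformation retraction sweeps to infinity; that too is not supplied by the cited structural results and would require its own argument.
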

\begin{proof}
If $\theta$ is generic then Ishii and Ueda proved \cite{ishii2009dimer} 
that $\Mst_\theta(\Jac,\alpha)$ is derived equivalent to $\Jac$.  
They did this by constructing a tilting bundle on $\Mst_\theta(\Jac,\alpha)$.
Pick a base vertex $w$ and let $p$ 
be a path from $w \to v$. Then define
\[
 \cL_{w\to v} := \cL( \sum_{\PM} \deg_\PM(p)D_{\PM})
\]
where the sum is taken over all stable perfect matchings and $D_\PM$ is the toric divisor corresponding to the lattice point of $\PM$.
This line bundle does not depend on the choice of path between $w$ and $v$, only on the choice of the vertices.

For each vertex $w$ the sum
\[
 \oplus_{v \in \qpol_0} \cL_{w \to v}
\]
is a tilting bundle, so we have that $$\Ext^{>0}(\cL_{w\to w},\cL_{w\to v})=\H^{>0}(\cL_{w\to v})=0$$ 
for all $v,w \in \qpol_0$. 

Clearly the arrow $a$ is a path from $t(a)$ to $h(a)$ so
\[
\cL_{t(a)\to h(a)} = \cL(\sum_{\PM} \deg_\PM(a)D_{\PM}).
\]
must have vanishing higher cohomology.
From toric geometry \cite{fulton1993introduction} we know that the higher 
homology of $\cL_{t(a)\to h(a)}$ comes from the homology of 
the subfan marked by the divisors. If $C_p$ is not contractible this homology will be nonzero
and $\oplus_{v \in \qpol_0} \cL_{t(a) \to v}$ cannot be a tilting bundle.
Similarly $C_{r_a^+}$ is also contractible because the higher cohomology of
$\cL_{h(a)\to t(a)}$ vanishes.
\end{proof}

\newcommand{\Line}{\mathsf{line}}
\newcommand{\Tree}{\mathsf{tree}}

For every arrow the subcomplexes $C_a$ and $C_{r_a^+}$ together contain all 
lattice points and are both contractible, therefore we can draw a single curve through
the Newton polygon that separates both complexes. This curve $\Line_a$ can be seen as a sequence 
of edges in the dual graph, which is the spider graph.
\vspace{.3cm}
\begin{center}
\resizebox{!}{1.3cm}{\begin{tikzpicture}
\fill [black!25] (-1,0)--(0,1)--(1,1)--(0,0)--(-1,0);
\draw  [black!25] (0,-1) -- (-1,0); 
\draw  [black] (0,0) -- (-1,0); 
\draw  [black] (-1,0) -- (0,1); 
\draw  [black] (0,1) -- (1,1); 
\draw  [black!25] (1,1) -- (2,0); 
\draw  [black!25] (2,0) -- (1,-1); 
\draw  [black!25] (0,0) -- (1,-1); 
\draw  [black!25] (1,1) -- (1,-1); 
\draw  [black] (0,1) -- (0,0); 
\draw  [black!25] (1,-1) -- (0,-1); 
\draw  [black!25] (0,0) -- (0,-1); 
\draw  [black] (1,1) -- (0,0); 
\draw  [black!25] (2,0) -- (0,0); 
\draw (-1,0) node[circle,draw,fill=black,minimum size=10pt,inner sep=1pt]{};
\draw (0,1) node[circle,draw,fill=black,minimum size=10pt,inner sep=1pt]{};
\draw (1,1) node[circle,draw,fill=black,minimum size=10pt,inner sep=1pt]{};
\draw (2,0) node[circle,draw,fill=white,minimum size=10pt,inner sep=1pt]{};
\draw (1,-1) node[circle,draw,fill=white,minimum size=10pt,inner sep=1pt]{};
\draw (0,-1) node[circle,draw,fill=white,minimum size=10pt,inner sep=1pt]{};
\draw (1,0) node[circle,draw,fill=white,minimum size=10pt,inner sep=1pt]{};
\draw (0,0) node[circle,draw,fill=black,minimum size=10pt,inner sep=1pt]{};
\draw[red] (-1,-1) -- (-.33,-.33)--(.33,-.66)--(.66,-.33)--(.66,.33)--(1.33,.33)--(2,1); 
\end{tikzpicture}}
\end{center}

Consider a face $a_1\dots a_k$ in $\qpol_2$ and look at the edges in
the spider graph. Every lattice point in the Newton polygon correspond to one stable perfect matching and 
hence is contained in precisely one of the $C_{a_i}$. Each edge is either dual to an edge in one of the
$C_{a_i}$ or it is dual to an edge that connects two lattice points
that are in two different $C_{a_i}$. The latter edges form a tree $\Tree(c)$ in the spider
graph. Indeed if $\Tree(c)$ contains a cycle then there is at least one lattice point inside the cycle.
This lattice point must be contained in one of the $C_a$ and hence the full $C_a$ is contained
inside this cycle, but then $C_{r_a^+}$ cannot be contractible.

Because $\Tree(c)$ is embedded in the plane as a subgraph of the tropical curve, we
can also consider a small closed neighborhood $U_c$ of $\Tree(c)$ in the plane. Topologically, this
neigborhood is homeomorphic to a $k$-gon with the corners removed and
there is a map $\retr_c: U_c \to \Trop(f_W)$ that maps this polygon onto the 
tree $\Tree(c)$.

In a positive cycle the $C_{a_i}$ will sit next to each other in an anticlockwise way, while
for a negative cycle they will follow in a clockwise way. This is because the legs of 
$\Line_a$ correspond to the zig ray and the zag ray in $\qpol$. If $ab$ sits in a positive cycle
the zig ray of $a$ is equal to the zag ray of $b$, while it is the other way round if $ab$ sits
in a negative cycle.

Therefore the ends of the lines match up like the edges of the polygons do in $\polq$,
and we can glue the trees together to obtain a surface that is homeomorphic to
$\psurf{\polq}$. Gluing all the maps $\retr_c$ together we get a
map $\retr :\psurf{\polq} \to \Trop(f_w)$. If $\theta$ is generic we can identify
$\Trop(f_w)$ with $\Lambda(f_W)$.

This means we have identified $\psurf{\polq}$ and $\tub{\Lambda(f_W)}$ so we can translate
theorem \ref{doublegraph} to this setting.

\begin{theorem}
Let $\qpol$ be a consistent quiver and consider a generic weight function
$W:\qpol_1 \to \Z$ and a map $B:\qpol_1 \to \R_>0$.
There exists a unique Strebel differential $\Phi$ on $\psurf{\polq}$ such that
\begin{enumerate}
 \item 
the horizontal strips correspond to the arrows $a \in \polq_1$ and have width $B_a$,
 \item
the vertical cylinders correspond to the bands that occur as summands in decomposition
of the matrix factorizations of the $\theta_W$-stable one-dimensional
orbits in $\Mst_{\theta_W}(\Jac,\alpha)$. They have a width equal to the affine length of the corresponding edge in $\Trop(f_L)$.
\end{enumerate}
In other words, we have a diagram
\[
 \xymatrix{\psurf{\polq}\ar[dr]^\pi&&\Mla_{\theta_W}\ar[dl]_{\pi'}\\&\Lambda(f_W)=\LH(\Phi)&}
\]
such that $\pi^{-1}\pi'(\rho)$ is a curve that represents the bands in $M_\rho$.
\end{theorem}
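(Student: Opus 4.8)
The plan is to derive the statement from Theorem~\ref{doublegraph}, applied to the two weighted graphs naturally attached to the data. The ribbon graph is $\Gamma_{\polq}$, dual to the dimer $\polq$: one node per face in $\polq_2$, one edge $a^{\perp}$ per arrow $a\in\polq_1$, drawn on $\psurf{\polq}$ so that $\psurf{\polq}$ deformation retracts onto $\Gamma_{\polq}$ and $\rib{\Gamma_{\polq}}$ is (the closure of) $\psurf{\polq}$; weight $a^{\perp}$ by $B_a$. The spider graph is $\Lambda(f_W)$, carrying its canonical affine-length weights on the edges. First I would check that both are of type $\Sigma_{g,n}$ for the same $(g,n)$: by Lemma~\ref{genustropical} these are the numbers of interior and boundary lattice points of $\NP(f_W)=\MP(\qpol)$, and by Theorem~\ref{genusmirror} these equal the genus of $\surf{\polq}$ and the number of punctures of $\psurf{\polq}$, i.e. the genus and number of boundary circles of $\rib{\Gamma_{\polq}}$.

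Next I would invoke the construction that immediately precedes the statement to obtain the identification required by Theorem~\ref{doublegraph}. For each face $c\in\qpol_2$ the curves $\Line_a$ ($a$ an arrow of $c$) cut a closed neighbourhood $U_c$ of the tree $\Tree(c)\subset\Trop(f_W)$ out of the plane, and these pieces glue along the $\Line_a$ exactly as the faces of $\polq$ glue along their arrows --- this is where the zig/zag-ray description of the legs of $\Line_a$ and the contractibility of the complexes $C_p$ (Lemma~\ref{cont}) enter. The glued surface is homeomorphic to $\psurf{\polq}$, and the retractions $\retr_c$ assemble into a map $\retr:\psurf{\polq}\to\Trop(f_W)$; since $W$ is generic, $\Trop(f_W)$ is canonically $\Lambda(f_W)$. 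Under this identification the $\retr$-preimage of an interior point of an edge of $\Lambda(f_W)$ is a cylinder, and the preimage of a node of valency $v$ and genus $g$ is a genus-$g$ surface with $v$ punctures, so $\retr$ is exactly the tube projection $\tub{\Lambda(f_W)}\to\Lambda(f_W)$. Together with the retraction $\rib{\Gamma_{\polq}}\to\Gamma_{\polq}$ this gives the identification $S:=\tub{\Lambda(f_W)}=\rib{\Gamma_{\polq}}$ needed as input.

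Now apply Theorem~\ref{doublegraph} to the pair $(\Gamma_{\polq},\Lambda(f_W))$: it produces a quadratic differential $\Phi$ on $S$ (one of $\pm\Phi$ being Jenkins--Strebel, which is the sense of \emph{Strebel differential} intended here) whose strip foliation has the edges $a^{\perp}$ of $\Gamma_{\polq}$, i.e. the arrows $a\in\polq_1$, as strips of width $B_a$, with leaf-space projection homotopic to $\rib{\Gamma_{\polq}}\to\Gamma_{\polq}$; and whose cylinder foliation has the edges of $\Lambda(f_W)$ as cylinders of width their affine length, with leaf-space projection homotopic to $\retr$. This already yields (1) and the width part of (2); it remains to recognise the cylinders as the band curves of the matrix factorizations. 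By the amoeba/tropical dictionary together with Lemma~\ref{kbands}, each edge of $\Lambda(f_W)$ corresponds to a one-dimensional $\theta_W$-stable torus orbit in $\Mst_{\theta_W}(\Jac,\alpha)$, and for generic $W$ this orbit carries a single indecomposable summand $\rho$ whose matrix factorization $M_\rho$ is supported on one band; tracing the band-curve construction for $M_\rho$ through $\retr$ shows that this band is isotopic to a cylinder leaf of $\Phi$ wrapping the corresponding edge. Taking $\pi$ to be the cylinder-leaf-space projection $\psurf{\polq}\to\LH(\pm\Phi)\cong\Lambda(f_W)$ (homotopic to $\retr$) and $\pi'$ the tautological projection $\Mla_{\theta_W}\to\Lambda(f_W)$, the fibre $\pi^{-1}\pi'(\rho)$ is then exactly the band curve of $M_\rho$. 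Uniqueness of $\Phi$ follows from the uniqueness clause of Theorem~\ref{doublegraph}, since (1)--(2) pin down both the ribbon and the spider decompositions of $\Phi$ together with the homotopy classes of the two leaf-space projections.

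The step I expect to be the real work is this last one: matching the A-side strip/cylinder decomposition handed over by Theorem~\ref{doublegraph} with the B-side bands produced from zero-representations of $\Jac$ in the previous section. Concretely one must verify that the curve drawn around the zero arrows of a representation $\rho$ on a one-dimensional orbit is carried by $\retr$ to a loop around the edge of $\Lambda(f_W)$ indexed by $\rho$, and that, as $\rho$ ranges over the decomposition in Lemma~\ref{kbands}, these loops exhaust the cylinders of $\Phi$ with the correct multiplicities. Checking the types and the widths, by contrast, is routine bookkeeping with Theorems~\ref{matchingconsistent} and \ref{genusmirror} and Lemma~\ref{genustropical}.
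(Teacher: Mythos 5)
Your proposal follows essentially the same route as the paper: apply Theorem~\ref{doublegraph} with $\Gamma=\polq^{\perp}$ and $\Lambda=\Lambda(f_W)$, using the identification $\psurf{\polq}=\tub{\Lambda(f_W)}$ constructed via the $\Line_a$, $\Tree(c)$, and $\retr_c$ data in the preceding paragraphs. The step you flag as ``the real work'' is exactly the one the paper carries out (briefly and concretely): a point $x$ on an edge $e$ of $\Trop(f_W)$ pulls back under $\pi$ in each polygon to a segment joining two arrows $a,b$ whose lines $\Line_a,\Line_b$ contain $e$, these arrows lie one each in $\PM_1,\PM_2$, and the assembled curve is therefore the band of $\rho_{\PM_1\cup\PM_2}$.
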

\begin{proof}
We have an identification $\psurf{\polq}=\tub{\Lambda_{f_W}}$ so we can apply 
theorem \ref{doublegraph} to $\Lambda=\Lambda_{f_W}$ and $\Gamma=\polq^\perp$. 
To identify the bands with the cylinders, let $x$ be a point on 
an edge $e$ of $\Trop(f_W)$. The inverse image $\pi^{-1}(x)$ will
in each polygon consist of a line connecting 2 arrows $a,b$ 
with lines $\Line_a,\Line_b$ that contain $e$. If $e$ is dual to the edge
that connects the perfect matchings $\PM_1$ and $\PM_2$ then $a$ and $b$
will each be contained in one of these perfect matchings. Therefore the
total inverse image will be a curve that runs along the band representing
$\rho_{\PM_1\cup\PM_2}$.
\end{proof}

\begin{remark}
One can generalize this statement to the case of nondegenerate weights as well. 
The idea is to approximate the nondegenerate weight $W$ by generic ones $W'$ (using
rational weight functions). 
If we chose $W'$ close enough to $W$, the only thing that can happen is that
a $\theta_{W'}$-stable perfect are $\theta_{W}$-unstable but not the other way round. 
This means that in limit the 
cycles corresponding to the $\theta_{W}$-unstable perfect matchings 
will shrink and disappear in the tropical curve. 
There is a continuous map from $\Trop_{f_{W'}}\to \Trop_{f_{W}}$.
The image of the curve $\Line_a\subset \Trop_{f_{W'}}$ will still be a line in 
$\Trop_{f_{W}}$ 
that separates the lattice points of stable matchings containing $a$ from those not containing $a$.
The image of the tree $\Tree(c)$ will also be a tree in $\Trop_{f_{W}}$. This means
we can still apply the same procedure to map $\psurf{\polq}$ to $\Trop(f_W)$.

The only difference now is that the inverse image of a point on a $k$-fold edge
will now consist of $k$ circles. Therefore we can factorize the map
\[
 \xymatrix{\psurf{\polq}\ar[rr]\ar[rd]&&\Trop(f_W)\\&\Lambda(f_W)\ar[ru]&}
\]
and we have found and identification of $\psurf{\polq}$ with $\tub{\Lambda(f_W)}$.
\end{remark}

\section{Gluing and Localizing}\label{sectionglue}

The Fukaya category of a punctured surface can be constructed by gluing smaller Fukaya categories together. 
There are several ways to make this precise: see for instance \cite{dyckerhoff2013triangulated} and \cite{pascaleff2016topological}.
In this section we will use a method that combines naturally with the theory of stability
conditions and discuss how this can be interpreted in the $A$- and the $B$-model.

Our starting point is again a consistent dimer on a torus $\qpol$ and its mirror dimer $\polq$.
On $\qpol$ we consider a nondegenerate stability condition $\theta=\theta_W$ and
on $\psurf{\polq}$ we consider a quadratic Jenkins-Strebel differential $\Phi$ for which the horizontal
leaf space is $\LH(\Phi)=\Lambda(f_W)$ and the vertical leaves are the arrows of the quiver.

\subsection{The A-model: restriction functors}

Each point $p$ of the spider graph $\LH(\Phi)$ corresponds
to a new punctured surface with a Strebel differential on it.
This surface can be obtained by taking the inverse image of a small open
neighborhood $U_p \subset \LH(\Phi)$. 
If $p$ is an internal point of an edge this surface is a cylinder and if $p$ is a
node of the spider graph it is  a surface with a genus that is equal to 
the genus of the node and a number of holes, that is
equal to the valence of the node. 

The restriction of the quadratic differential gives a quadratic differential on this
surface. We can extend the surface and the differential by gluing infinite cylinders to the holes. These cylinders are of the form $(\frac{\R+\R_{>0}i}{\ell\Z},-dz^2)$
where the circumference $\ell$ is equal to the circumference of the corresponding tube.
In this way we get a new punctured surface with a Strebel differential: $(\dot S_p,\Phi_p)$

\begin{center}
\begin{tikzpicture}
\begin{scope}
\draw [thick](-.5,-1) -- (0,0) -- (1.5,0) -- (2,-1);
\draw [thick](-.5,1) -- (0,0) -- (1.5,0) -- (2,1);
\draw (0,0) node{$\bullet$};
\draw (1.5,0) node{$\bullet$};
\draw (2.25,0) node{$U_p$};
\draw[dotted] (2,0) arc (0:360:.5);
\end{scope}
 
\begin{scope}[xshift=5cm]
\draw[dotted] (1,0) arc(0:360:1);
\draw (-1.5,.2)--(-.8,.2) ..controls (-.1,.2) and (-.1,.2) .. (.6,.9) -- (.81,1.1);
\draw (-1.5,-.2)--(-.8,-.2) ..controls (-.1,-.2) and (-.1,-.2) .. (.6,-.9) -- (.81,-1.1);
\draw (1.2,1) ..controls (.2,0) and (.2,0) .. (1.2,-1);
\draw (-1.4,0) node{$\dots$};
\draw (1,1.05) node[rotate=45]{$\dots$};
\draw (1,-1.05) node[rotate=-45]{$\dots$};
\draw (-1,0) ellipse (0.1 and 0.2);
\draw[rotate=133] (-1,0) ellipse (0.1 and 0.19);
\draw[rotate=-133] (-1,0) ellipse (0.1 and 0.19);
\draw (0,0) node{$\dot S_p$};
\end{scope}

\end{tikzpicture}
\end{center}
If $p$ is a point of an internal edge we have that
$(S_p,\Phi_p)\cong(\C^*,-a \frac{dz^2}{z^2})$ for some $a \in \R_{>0}$. For all points in the edge $e$ this pair is the same, so we denote it by $(\dot S_e,\Phi_e)$.

If $p$ is a node, the vertical leaves of $(\dot S_p,\Phi_p)$ correspond to arrows for which the image under
$\pi:\psurf{\polq} \to \Lambda(f_W)$ passes through $p$. In fact we can see $S_p$ as the punctured surface of a new dimer
$\polq_p$. The arrows of the dimer $\polq_p$ are in one to one correspondence to the arrows passing through $p$ under $\pi$.
If $a_1\dots a_k$ is a face $\polq_2^\pm$, we construct an analogous face for $\qpol_p$ by removing the arrows that do not pass through $p$.
If none of the arrows remain we omit the face. 

Each arrow $a$ in $\polq$ corresponds to an idempotent $e_a$ in $\Gtl(\polq)$ and each nonzero path $\beta$ in $\Gtl(\polq)$ winds around one of the punctures of $\psurf{\polq}$. There is a natural algebra morphisms $\cF:\Gtl(\polq) \to \Gtl(\polq_p)$ that maps $e_a$ to the corresponding idempotent in $\Gtl(\polq_p)$ if $a$ intersects $S_p$ and zero otherwise. The image of a path $\beta$ is nonzero if it winds around a tube that is also present in $S_p$ and zero otherwise.

This algebra morphism can be extended to an $A_\infty$-morphism. Let $k>1$ and suppose that $\rho_1,\dots,\rho_k$ are nonzero angle paths in $\Gtl(\polq)$ and
\begin{itemize}
 \item[F1] $h(\rho_1)\in \polq_{p1}$ but $h(\rho_i)\not\in \polq_{p1}$ if $i>1$,
 \item[F2] $t(\rho_k)\in \polq_{p1}$ but $t(\rho_i)\not\in \polq_{p1}$ if $i<k$. 
 \item[F3] $\rho_i\rho_{i+1}=0$ for $1<i<k-1$,
\end{itemize}
We define
\[
 \cF(\rho_1,\dots,\rho_k) = (-1)^{|\rho|} \rho
\]
If the path $\rho_1\dots\rho_k$ in $\psurf{\polq}$ is homotopic to the angle path $\rho$ in $\psurf{{\polq}_p}\cong U_p \subset \psurf{\polq}$. 
all other $\cF$-products of paths are zero. 

\begin{lemma}
$\cF$ is an $A_\infty$-morphism.
\end{lemma}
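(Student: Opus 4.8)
The plan is to verify the $A_\infty$-morphism relations
\[
\sum_{i_1+\dots+i_r=n}\pm\,\mu^{\polq_p}_r\bigl(\cF_{i_1}(\rho_1,\dots),\dots,\cF_{i_r}(\dots,\rho_n)\bigr)=\sum_{0\le j\le n-t}\pm\,\cF_{n-t+1}\bigl(\rho_1,\dots,\mu^{\polq}_t(\rho_{j+1},\dots,\rho_{j+t}),\dots,\rho_n\bigr)
\]
for every $n\ge1$ and every composable tuple of homogeneous elements of $\Gtl(\polq)$. First I would dispose of the trivial cases. Since both $A_\infty$-structures have $\mu_1=0$ and are compatible with the ordinary algebra structure (so $\mu_k$ vanishes once an entry is a vertex, $k\ge3$), and since $\cF_1$ is the algebra morphism sending $e_a$ to $e_a$ or $0$ while $\cF_{\ge2}$ is declared to vanish whenever an entry is a vertex, the relation for $n\le2$ is exactly the assertion that $\cF_1$ is a morphism of ordinary algebras, and the relations with a vertex among the $\rho_i$ follow from strict unitality. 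So it suffices to treat $n\ge3$ with all $\rho_i$ nonzero angle paths.

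Next I would turn the relation into a purely combinatorial matching problem. By Lemma \ref{muhomotopy} and Corollary \ref{subzero}, the only nonzero products in $\Gtl(\polq)$ (and likewise in $\Gtl(\polq_p)$) are the ordinary product $\mu_2$ and the ``collapse'' products $\mu_k$, $k\ge3$, which send a contractible cycle of angles around a face, bordered by one extra angle, to that extra angle. Hence the right-hand side above is a signed sum over ways to apply one such collapse to a contiguous subblock of $\rho_1,\dots,\rho_n$ and then feed the result through $\cF$; the left-hand side is a signed sum over ways to cut $1,\dots,n$ into contiguous blocks, apply one $\cF$-component to each block, and then apply one product or collapse of $\Gtl(\polq_p)$. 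The goal is a sign-preserving bijection between the surviving terms on the two sides, and the quantity that controls it is the homotopy class in $\psurf{\polq}$ of the total angle path $\rho_1\cdots\rho_n$ together with the position of the arrows $h(\rho_i),t(\rho_i)$ relative to the subsurface $S_p$.

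The heart of the argument is the following dichotomy. If every arrow $h(\rho_i),t(\rho_i)$ that occurs meets $S_p$, the whole picture lies inside $U_p\cong\psurf{\polq_p}$; then conditions F1--F2 force all $\cF_{\ge2}$ contributions to vanish, the faces bordering any collapse on either side are a face of $\polq$ and its sub-face in $\polq_p$, the two collapse moves coincide, and the relation degenerates to the multiplicativity of $\cF_1$. Otherwise some $\rho_i$ has an endpoint arrow off $S_p$; on the left the only way to avoid an immediate zero is to pack each maximal run of ``off $S_p$'' indices, together with its two neighbouring ``on $S_p$'' indices, into a single block fed to one $\cF_k$ --- and this is precisely what F1, F2, F3 encode --- after which the residual collapse of $\Gtl(\polq_p)$ acts on angles lying in $U_p$. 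Matching each such $\cF_k$-block with the $\mu^{\polq}_t$-collapse on the same contiguous indices on the right, and using that homotopy in the open subsurface $U_p$ refines homotopy in $\psurf{\polq}$ --- so that ``$\rho_{j+1}\cdots\rho_{j+t}$ collapses to an angle $\rho$ in $\polq$ lying in $U_p$'' is equivalent to ``$\cF_t(\rho_{j+1},\dots,\rho_{j+t})$ is defined and equals $\pm\rho$'' --- yields the bijection.

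The main obstacle I expect is the bookkeeping in this second configuration: one must show that a $\mu^{\polq}_t$-collapse whose bordering face of $\polq$ is only partially contained in $S_p$ is always accounted for by a single application of some $\cF_k$ (with $k$ generally smaller than $t$, since the sub-face of $\polq_p$ has fewer arrows), never by a product of two genuine moves on the left, and that no stray term survives when the collapsed angle leaves $U_p$ (both sides then vanish). The remaining point, reconciling the Koszul signs of the morphism relation with the $(-1)^{n+t-k}$ factors in the $\Gtl$-products and the $(-1)^{|\rho|}$ in the definition of $\cF_k$, is routine but tedious with the conventions of \cite{bocklandt2016noncommutative}; alternatively one could sidestep it by transporting the statement through the identification of $\Gtl(\polq)$ and $\Gtl(\polq_p)$ with topological Fukaya categories, where restriction to an open subsurface is known to be an $A_\infty$-functor (Haiden et al., Pascaleff--Sibilla), the combinatorial argument above serving to make the claim self-contained.
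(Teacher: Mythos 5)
Your overall strategy is the same as the paper's: reduce to nonzero angle paths, use Lemma~\ref{muhomotopy} and Corollary~\ref{subzero} to pin down which $\mu$- and $\cF$-products can be nonzero, and then compare the terms on the two sides of the $A_\infty$-morphism identity. Your opening observation that conditions F1--F2 force at most one nonzero term on the left-hand side is exactly the paper's starting point, and the case where everything stays inside $U_p$ degenerating to multiplicativity of $\cF_1$ is also correctly identified.

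However, your claim that the proof amounts to ``a sign-preserving bijection between the surviving terms on the two sides'' is not quite right, and this is where the real work lies. The paper's case analysis shows that the right-hand side can have \emph{two} nonzero terms that cancel each other against a \emph{zero} left-hand side; this happens when one of the $\rho_i$ is an idempotent (case C1) and, more subtly, when a $\mu_2(\rho_i,\rho_{i+1})$ is nonzero but $h(\rho_{i+1})$ lies off $S_p$ (case C2.2), where the $\mu_2$-term on the right is cancelled by a higher-$\mu$ term $\cF(\dots,\mu(\rho_{i+1},\dots,\rho_j),\dots)$ involving the arrow connecting the two relevant punctures inside the polygon. You do say that ``no stray term survives when the collapsed angle leaves $U_p$ (both sides then vanish),'' which is the correct \emph{conclusion}, but you don't identify the mechanism by which this happens --- the pairwise cancellation on the right --- and without it the ``bijection'' framing is misleading. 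The uniqueness of the left-hand term (there is never ``a product of two genuine moves on the left'') is established at the outset in the paper via F1--F2 exactly as you anticipate, so that part is fine. What is missing is the explicit treatment of the right-hand side, where the paper runs through cases C1, C2.1--C2.2, C3.1--C3.4, using the geometry of the contractible polygon and where its corner arrows sit relative to $S_p$ to detect the cancelling partner or the matching left-hand term. Your final remark that one could instead transport the statement through the Fukaya-category identifications of Haiden et al.\ and Pascaleff--Sibilla is a legitimate alternative route not taken by the paper, which sacrifices self-containedness for brevity.
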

\begin{proof}
First note that by conditions F1 and F2, given a sequence of angle paths $\rho_1,\dots,\rho_k$ in $\Gtl \polq$
there is at most one expression of the form
\[
 \mu(\cF(\rho_1,\dots,\rho_{i_1}),\dots, \cF(\rho_{i_{l+1}},\dots,\rho_{k}))
\]
that is nonzero. This is the expression obtained by cutting the $\rho$-sequence
in bits such that the cuts occur at all the heads $h(\rho_i)$ that are in $\polq_{p1}$.

This means that the left hand side of the $A_\infty$-morphism identity \cite{keller1999introduction}
\se{
&\sum_{l,i_1,\dots, i_l}\pm \mu(\cF(\rho_1,\dots,\rho_{i_1}),\dots, \cF(\rho_{i_{l+1}},\dots,\rho_{k}))\\
=&
\sum_{r<s}\pm \cF(\rho_1,\dots,\rho_{r},\mu(\rho_{r+1},\dots,\rho_s),
\rho_{s+1},\dots,\rho_{k})
}
has at most one term. If that term is nonzero then either the term comes from a $\mu_2$ 
or from a higher order $\mu$. 
In the first case this term is 
$\mu_2(\cF(\dots,\rho_{i_1}),\cF(\rho_{i_1+1}),\dots))$ and 
we have one nonzero term on the right hand side:
\[
\cF(\dots,\mu_2(\rho_{i_1},\rho_{i_1+1}),\dots). 
\]
In the second case, set $\tilde \rho=\mu(\cF(\rho_1,\dots,\rho_{i_1}),\dots, \cF(\rho_{i_{l+1}},\dots,\rho_{k}))$. By lemma \ref{muhomotopy} the angle path 
$\tilde \rho$ in $\polq_p$ is either trivial or a subpath of
$\cF(\rho_1,\dots,\rho_{i_1})$ or $\cF(\rho_{i_{l+1}},\dots,\rho_{k})$.
If $\tilde \rho$ is trivial then $h(\rho_1)=t(\rho_k)=\mu(\rho_1,\dots,\rho_k)$ and
the RHS has one nonzero term $\cF(\mu(\rho_1,\dots,\rho_k))$.

If $\tilde \rho$ is a subpath of $\cF(\rho_1,\dots,\rho_{i_1})$ then $t(\tilde\rho)$ is an arrow $a$ that enters the contractible region bounded by the arrows $h(\rho_1),\dots,t(\rho_k)$. The arrow will end in a puncture and will cut one of the angles
$\rho_i$ in $2$: $\rho_i=\alpha\beta$ with $h(\beta)= a$. 
\begin{center}
\resizebox{!}{2cm}{
 \begin{tikzpicture}
\foreach \s in {1,2,3,4,5,6,7,8,9}
{
\draw (230-\s*25:1.5)node {$\rho_\s$};
}
\draw (0:1.5)--(180:2);
\draw (0,0) node[above] {$a$};
\draw (5:1.2) arc (5:55:1.2);
\draw (65:1.2) arc (65:135:1.2);
\draw (140:1.2) arc (140:205:1.2);
\draw (30:.9) node {$\cF$}; 
\draw (92:.9) node {$\cF$}; 
\draw (170:.9) node {$\cF$}; 
\draw (175:1.9) node {$\beta$}; 
\draw (185:1.9) node {$\alpha$}; 
\draw (-5:1.2) -- (210:1.2);
\draw (285:.8) node{$\mu$};
\draw (2.15,0) node{$=$};
\begin{scope}[xshift=4.5cm]
\foreach \s in {1,2,3,4,5,6,7,8,9}
{
\draw (230-\s*25:1.5)node {$\rho_\s$};
}
\draw (0:1.5)--(180:2);
\draw (0,0) node[above] {$a$};
\draw (5:1.2) arc (5:175:1.2);
\draw (92:.9) node {$\mu$}; 
\draw (175:1.9) node {$\beta$}; 
\draw (185:1.9) node {$\alpha$}; 
\draw (-5:1.2) to[out=185,in=120] (210:1.2);
\draw (285:.5) node{$\cF$};
\end{scope}
\end{tikzpicture}}
\end{center}

Therefore $\mu(\rho_i,\dots,\rho_k)=\alpha$ and $\cF(\rho_1,\dots,\rho_{i-1},\mu(\rho_i,\dots,\rho_k))$ is nonzero. It is the only nonzero term on the RHS because of lemma \ref{muhomotopy}. 
The case when $\tilde\rho$ is a subpath of $\cF(\rho_{i_{l+1}},\dots,\rho_{k})$ is similar. So we can conclude that if the LHS is nonzero, the RHS has one nonzero term, which is equal to the LHS because it is an angle path with the same homotopy.
\vspace{.3cm}

Now let's have a closer look at the right hand side. We distinguish the following cases.
\begin{itemize}
 \item[C1] If one of the $\rho_i$ is an idempotent then the right hand side has two
 terms with a $\mu_2$ that give a possibly nonzero contribution. These two terms have opposite sign and hence they cancel out. All other terms have the idempotent in a higher $\mu$ or $\cF$-term and hence by $F3$ and the definition of $\mu$ they are zero. Similarly the left hand side is zero.
 \item[C2]
 If $\rho_i\rho_{i+1}$ is nonzero and neither $\rho_i$ nor $\rho_{i+1}$ is an idempotent then
 the only terms on the right hand side that are nonzero must contain a $\mu_2(\rho_i,\rho_{i+1})$ or a higher $\mu$ that either contains $\rho_i$ or $\rho_{i+1}$ but not both (again by lemma \ref{muhomotopy}).

 We distinguish two subcases.
 \begin{itemize}
 \item[C2.1] If $h(\rho_{i+1}) \in \polq_{p1}$ then only the $\mu_2$-term on the right hand side can be nonzero because of F1 and F2. If this term is indeed nonzero, F1 and F2 also
 imply that $h(\rho_j) \not \in \polq_{p1}$ for $j\ne {i+1}$. 
 On the left hand side this means that the possibly nonzero term is of the
 form $\mu_2(\cF(\rho_1,\dots,\rho_i),\cF(\rho_{i+1},\dots,\rho_k))$ and by lemma
 \ref{muhomotopy} it is equal to the RHS.
 \item[C2.2] If $a=h(\rho_{i+1}) \not \in \polq_{p1}$ then $h(\rho_{i+1})$ must correspond to
 an arrow that can be drawn completely inside the polygon spanned by the arrows
 $h(\rho_{j})$. This arrow must connect two corners of the polygon: one corresponding
 to the puncture around which $\rho_{i+1}$ winds and another one around which $\rho_j$
 winds. If $j>{i+1}$ then term $\cF(\dots,\mu(\rho_{i+1},\dots,\rho_{j})\dots)$ 
 cancels the term with $\mu_2$ and if $j<i$ then the term 
 $\cF(\dots,\mu(\rho_{j},\dots,\rho_{i+1})\dots)$ does this. The other terms on the RHS are zero. The left hand side is zero because the split has only one term (and $\mu_1=0$).
\begin{center}
\resizebox{!}{2cm}{
\begin{tikzpicture}
\foreach \s in {1,2,3,4,5,6,7,8,9}
{
\draw (230-\s*25:1.5)node {$\rho_\s$};
}
\draw (87:1.2) arc (87:112:1.2);
\draw (97:.9) node {$\mu_2$}; 
\draw (140:.7) node {$a$}; 
\draw[dotted] (92:1.8) to[out=-88,in=-12] (168:1.8); 
\draw (-5:1.2) -- (210:1.2);
\draw (285:.8) node{$\cF$};
\draw (2.15,0) node{$-$};
\begin{scope}[xshift=4.5cm]
\foreach \s in {1,2,3,4,5,6,7,8,9}
{
\draw (230-\s*25:1.5)node {$\rho_\s$};
}
\draw (97:1.2) arc (97:160:1.2);
\draw (140:.8) node {$\mu$}; 
\draw (-5:1.2) -- (210:1.2);
\draw (285:.8) node{$\cF$};
\end{scope}
\end{tikzpicture}}
\end{center}
\end{itemize}
\item[C3] If there is a higher $\mu$-term $\mu(\rho_i,\dots,\rho_j)$ that is nonzero on the RHS then
\begin{itemize}
\item[C3.1]
If $i=1$ and $j=k$ then no other term on the RHS can be nonzero and the term on the LHS
corresponds to the same contractible disk.
\item[C3.2]
If $i>1,j<k$ then $\beta = \mu(\rho_i,\dots,\rho_j)$ must be a nontrivial angle path. Therefore it must be a subpath of either $\rho_i$ or $\rho_j$. 
If it is a subpath of $\rho_i=\beta\gamma$ then $\rho_j\rho_{j+1}\ne 0$, so we are in C2.
Similarly if $\rho_j=\alpha\beta$ we can show that $\rho_{i-1}\rho_{i}\ne 0$.
\item[C3.3]
If $i>1$ and $j=k$ we can apply the same reasoning as C3.2 but not when $\rho_j=\alpha\beta$. In that case all consecutive product $\rho_i\rho_{i+1}$ are zero. This implies that
we can group them into subsequences for which $\cF(\rho_u,\dots,\rho_v)\ne 0$ and 
because of lemma \ref{muhomotopy} applied in $\polq_p$, there is a nonzero term on the LHS.
\item[C3.4]
If $i=1$ and $j<k$ we can apply the same reasoning as C3.3.
\end{itemize}
\end{itemize}
This shows that if the RHS has a nonzero term then either there is another RHS term that cancels it or the LHS has a nonzero term.
\end{proof}

By going to the twisted completion, the $A_\infty$-morphism gives rise to an $A_\infty$-functor 
\[
\wfuk(\dot S) \to \wfuk(\dot S_p).
\]
What does this functor do geometrically? If at curves that can be drawn inside
$\pi^{-1}(U_{p})$ it is clear that the corresponding band object will be mapped to a band object with the same curve. 
From this point of view it is clear that this functor is a special case of the restriction functors constructed by Dyckerhoff in \cite{dyckerhoff2013triangulated} and Pascaleff and Sibilla in \cite{pascaleff2016topological}.

All the categories we get out of these restriction maps can be glued together to obtain the original
Fukaya category. For each edge $e$ between nodes $n_1$, $n_2$ we have a diagram
\[
  \wfuk S_{n_1} \to \wfuk S_{p} \ot \wfuk S_{n_2},
\]
which all together make a big diagram of $A_\infty$-categories. We can consider this diagram as a diagram inside
the category $\mathsf{dgcat}_{\Z_2}$, which is the category of $\Z_2$-graded dg-categories localized by the quasi-equivalences.  
In this category we can take the colimit, which is known as the homotopy colimit.

\begin{theorem}[Dyckerhoff\cite{dyckerhoff2015a1},Pascaleff-Sibilla\cite{pascaleff2016topological}]
There is an equivalence between the homotopy colimit of 
\[
\cup_{e \in \Lambda_1(f_W)}  \wfuk S_{n_1} \to \wfuk S_{p} \ot \wfuk S_{n_2},
\]
and the Fukaya category $\wfuk S$.
\end{theorem}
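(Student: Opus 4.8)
The plan is to obtain the statement directly from the codescent results of Dyckerhoff \cite{dyckerhoff2015a1} and Pascaleff--Sibilla \cite{pascaleff2016topological}: the work is not to reprove descent, but to check that the decomposition of $\psurf{\polq}$ coming from the Strebel differential $\Phi$, together with the restriction functors constructed above, provides exactly the input those theorems require.

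First I would fix the cover. The projection $\pi:\psurf{\polq}\to\LH(\Phi)=\Lambda(f_W)$ is well defined up to homotopy; for each node $n$ of $\Lambda(f_W)$ let $U_n$ be its open star and for each edge $e$ let $U_e$ be a small contractible open subset of the interior of $e$, so that $\{U_n\}_n$ covers $\Lambda(f_W)$, $U_{n_1}\cap U_{n_2}$ is homotopy equivalent to $U_e$ when $e=\overline{n_1 n_2}$ and empty otherwise, and all higher intersections vanish. Pulling back through $\pi$ and completing the ends by infinite cylinders $(\tfrac{\R+\R_{>0}i}{\ell\Z},-dz^2)$ as in the construction of $(\dot S_p,\Phi_p)$, one obtains punctured surfaces $S_n:=\pi^{-1}(U_n)$ — by the discussion above the punctured surface of the sub-dimer $\polq_n$ whose arrows are those whose $\pi$-image passes through $n$ — and $S_e:=\pi^{-1}(U_e)\cong(\C^*,-a\,dz^2/z^2)$. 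This exhibits $\psurf{\polq}$ as glued from the pieces $S_n$ along the annuli $S_e$, which is the type of cut system to which \cite{dyckerhoff2015a1,pascaleff2016topological} apply.

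Next I would assemble the diagram. Applying the $\cF$-construction of the preceding lemma to each inclusion $S_e\hookrightarrow S_n$ (the cylindrical end of $S_n$ labelled by $e$) and passing to twisted completions produces functors $\wfuk S_n\to\wfuk S_e$, which agree with the restriction functors of \cite{dyckerhoff2013triangulated,pascaleff2016topological} by the discussion following that lemma. Functoriality of the restriction construction — a path in $\psurf{\polq}$ isotopic into $S_e$ is, a fortiori, isotopic into $S_n$ and then into $S_e$ — shows that $\cF$ for $S_e\subset S_n\subset\psurf{\polq}$ is the composite of $\cF$ for $S_n\subset\psurf{\polq}$ with $\cF$ for $S_e\subset S_n$; hence the $\wfuk S_n\to\wfuk S_e$ assemble into the single diagram
\[
\wfuk S_{n_1}\to\wfuk S_e\ot\wfuk S_{n_2}\qquad (e=\overline{n_1 n_2})
\]
of the statement, compatibly with $\wfuk(\psurf{\polq})$, giving a canonical comparison functor between its homotopy colimit in $\mathsf{dgcat}_{\Z_2}$ and $\wfuk(\psurf{\polq})$. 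The descent theorem of \cite{dyckerhoff2015a1,pascaleff2016topological} then says precisely that this comparison is an equivalence for any cut system of this shape, so applying it to the one produced above finishes the argument.

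The hard part will be the identification, used in the step above and asserted in the discussion following the preceding lemma, of the explicit combinatorial functor $\cF$ with the geometric restriction functor of \cite{pascaleff2016topological} (and, relatedly, pinning down the precise form of codescent in the localized $\Z_2$-graded setting $\mathsf{dgcat}_{\Z_2}$). I would make this precise by comparing the two functors on the generating objects $M_a$ and the generating morphisms $\widehat{ab}$ of $\mf(\qpol)\cong\Gtl(\polq)$ — both send $M_a$ to $M_a$ when $a$ meets the relevant region and to the zero object otherwise — and by using Lemma \ref{muhomotopy}, which determines the higher products $\genmu_k$ in both $\Gtl(\polq)$ and $\Gtl(\polq_n)$, to see that the higher components $\cF_{>1}$ are forced, exactly as in the proof of the preceding lemma.
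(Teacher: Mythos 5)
The paper does not actually supply a proof of this theorem: it is cited directly from Dyckerhoff and Pascaleff--Sibilla, with the preceding paragraphs (construction of $(\dot S_p,\Phi_p)$, the $A_\infty$-morphism $\cF$, and the identification of the resulting functor with the restriction functors of \cite{dyckerhoff2013triangulated,pascaleff2016topological}) doing the work of putting things into the form those references require. Your proposal reconstructs exactly this deduction — cover by open stars of nodes with annular overlaps over edges, identification of $\cF$ with the geometric restriction functor, functoriality of restriction, and then codescent — so it matches the paper's intent; the only thing the paper is more terse about is the compatibility of the restriction functors $\wfuk S_{n_i}\to\wfuk S_e$ (which must be built from the analogous $\cF$ on the sub-dimers $\polq_{n_i}$ rather than directly from the global $\cF$), a point you correctly flag and handle via the isotopy argument.
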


\subsection{The B-model: matrix factorizations and sheaves}

Given a consistent dimer $\qpol$ and a nondegenerate stability condition
$\theta =\theta_W$ we can look at the representation space $\Mst_{\theta}$.
Each point in this space corresponds to a semistable representation $\rho$ and we can look at
the universal localization 
\[
 \Jac_\rho := \Jac\< a^{-1}| \rho(a)\ne 0\>.
\]
Because $\qpol$ is consistent, by theorem \ref{consprop} this algebra can be seen as a subalgebra of
$$\wJac := \Mat_n(\C[X^{\pm 1},Y^{\pm 1},Z^{\pm 1}]).$$

The representation classes $\sigma \in \Mst_{\theta}$ such that $\rho(a)\ne 0 \implies \sigma(a)\ne 0$ form 
an open subset $U_\rho \subset \Mst_{\theta}$ and following \cite{adriaenssens2003local} this space can be seen as $\Mst_0(\Jac_\rho)$.
This means that $\Mst_{\theta}$ can be covered by a sheaf of algebras, such that each affine open part is a representation space of semistable representations of a new stabilitity condition. 

In general these algebras can be quite complicated but in the case of a nondegenerate stability condition they have nice properties.
\begin{theorem}
If $\theta_W$ is a nondegenerate stability condition then all 
$\Jac_{\rho}$ are 3-Calabi-Yau algebras with center $\C[U_{\rho}]$, which implies that $\Mst_{\theta}$ can be covered 
with a sheaf of Calabi Yau algebras.
\begin{itemize}
\item If $\rho$ sits in a $2$-dimensional torus orbit then $\Jac_{\rho}$ is Morita equivalent to $$\C[\C^*\times \C^*\times \C].$$
\item If $\rho$ sits in a $1$-dimensional torus orbit corresponding to an edge with length $k$ then 
$\Jac_{\rho}$ is Morita equivalent to $\C[\C^*]\times \C[X,Y]\star \Z_k$, where $gXg^{-1} =\zeta X$ and
$gYg^{-1} =\zeta^{-1} Y$ for a $k^{th}$ root of unity $\zeta$.
\item If $\rho$ sits in a $0$-dimensional torus orbit then $\Jac_{\rho}$ is Morita equivalent to the Jacobi algebra of a consistent dimer $\qpol_\rho$
with a matching polygon equal to the subpolygon associated to $U_\rho$.
\end{itemize}
\end{theorem}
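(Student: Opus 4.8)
Since $\Mst_{\theta}$ is a union of finitely many torus orbits, it is enough to treat the three bulleted cases together with the trivial case where $\rho$ has no vanishing arrows and $\Jac_\rho=\wJac$; in each case the target algebra is $3$-Calabi--Yau with the asserted centre --- by inspection, except in the zero--dimensional case, where it follows from Theorem~\ref{consprop}. Write $Z_\rho:=\{a\in\qpol_1\mid\rho(a)=0\}$, so that $\Jac_\rho=\Jac[\,a^{-1}\mid a\notin Z_\rho\,]$ depends only on the orbit of $\rho$. Because the points of $\Mst_{\theta}$ are polystable toric representations, $Z_\rho$ is a union of perfect matchings, and, using nondegeneracy together with the description of $\Mst_{\theta_W}$ in Section~\ref{sectionmoduli}, these are exactly the semistable matchings vanishing on $\rho$: a single matching $\PM$ if the orbit is two--dimensional; two matchings $\PM_1,\PM_2$ on the ends of an edge of $\MP(\qpol)$ of elementary length $k$ if it is one--dimensional (Lemma~\ref{kbands}); and $m\ge 3$ matchings $\PM_i$ whose lattice points are exactly the lattice points of a two--dimensional subpolygon $Q_\rho\subseteq\MP(\qpol)$ if it is zero--dimensional.

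For the two--dimensional orbit I would compute directly. Grade $\Jac\subseteq\wJac\cong\Mat_n(\C[X^{\pm1},Y^{\pm1},Z^{\pm1}])$ by $\deg_\PM$, so that $Z$ has degree $1$, the inverted arrows are exactly those of degree $0$, and $\Jac_\rho$ becomes $\Z_{\ge0}$--graded. Since $\qpol\setminus\PM$ is connected, $(\Jac_\rho)_0$ contains all elementary matrices up to monomials and is Morita equivalent to $e_v(\Jac_\rho)_0e_v$; and because $\qpol\setminus\PM$ still carries loops in each homology class of the torus (a consequence of consistency), this corner equals $\C[X^{\pm1},Y^{\pm1}]$. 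A further application of Lemma~\ref{existspath} --- realising $X^iY^jZ^d$ by $\ell^d z$ for a suitable central $z$ --- shows that $(\Jac_\rho)_d=Z^d(\Jac_\rho)_0$ with no further relations, so that $\Jac_\rho$ and $\C[X^{\pm1},Y^{\pm1},Z]$, being the two subalgebras of $\wJac$ of nonnegative $Z$--degree, coincide. Hence $\Jac_\rho$ is Morita equivalent to $\C[X^{\pm1},Y^{\pm1},Z]=\C[\C^*\times\C^*\times\C]$, a smooth affine threefold; it coincides with its centre and its spectrum is $U_\rho$ by \cite{adriaenssens2003local}.

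For the one--dimensional orbit I would use Lemma~\ref{kbands}: the arrows of $Z_\rho=\PM_1\cup\PM_2$ trace $k$ parallel bands that cut the torus into $k$ annuli, and inverting every arrow off these bands identifies $\Jac_\rho$, annulus by annulus, with the Jacobi algebra of the cyclic dimer whose matching polygon is the length--$k$ segment $Q_\rho$. Tracking the two homology directions $X,Y$, which get scaled by a primitive $k$--th root of unity $\zeta$ and by $\zeta^{-1}$ under the cyclic shift from one annulus to the next, this Jacobi algebra is $\C[Z^{\pm1}]\otimes\bigl(\C[X,Y]\star\Z_k\bigr)$ with $gXg^{-1}=\zeta X$ and $gYg^{-1}=\zeta^{-1}Y$. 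Since $\Z_k$ acts with trivial determinant, the algebra is $3$-Calabi--Yau with centre $\C[Z^{\pm1}]\otimes\C[X,Y]^{\Z_k}=\C[U_\rho]$.

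The zero--dimensional case is the heart of the argument and the step I expect to be the main obstacle. Here one must produce the dimer $\qpol_\rho$ and an idempotent $e\in\Jac_\rho$ with $e\,\Jac_\rho\,e\cong\Jac(\qpol_\rho)$. The plan is to read $\qpol_\rho$ off the combinatorics near the node $p$ of $\Trop(f_W)$ dual to $Q_\rho$: its arrows are those arrows $a$ of $\qpol$ whose separating line $\Line_a$ passes through $p$ --- equivalently, those $a$ for which $\{\,n\in Q_\rho\mid a\in\PM_n\,\}$ is a nonempty proper subset of the lattice points of $Q_\rho$ --- and its faces are the faces of $\qpol$ with the other arrows deleted; that this is a consistent dimer with matching polygon $Q_\rho$ follows from the contractibility of $C_a$ and $C_{r_a^+}$ (Lemma~\ref{cont}, extended to nondegenerate $W$ by approximation by generic rational weights) and from the fact that the trees $\Tree(c)$ are the faces of the local dimer. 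The Morita equivalence would then be obtained by grading $\Jac$ by $\deg_{\PM_0}$ for a corner matching $\PM_0$ of $Q_\rho$, as in the two--dimensional case, to pin down $(\Jac_\rho)_0$ after Morita reduction, and by using Lemmas~\ref{zeroonallpm} and~\ref{existspath} to show that a path survives in $\Jac_\rho$ precisely when its homotopy class can be represented in a neighbourhood of $p$ --- which is exactly the defining relation of $\Jac(\qpol_\rho)$. The $3$-Calabi--Yau property and the identification of the centre with $\C[U_\rho]$, the coordinate ring of the affine toric variety $\VV_{f_p}\cong\Spec\,\C[\text{cone over }Q_\rho]$, then follow from Theorem~\ref{consprop} applied to $\qpol_\rho$ and from the toric picture of Section~\ref{sectiontropical}. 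Throughout this last case the delicate point is to check that localising at the non--vanishing arrows genuinely forgets everything outside a neighbourhood of $p$ with no spurious surviving relations; I expect this to need the full strength of Broomhead's lemmas and of the $C_a/C_{r_a^+}$ dichotomy.
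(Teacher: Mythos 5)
Your overall plan matches the paper's: split by the number of semistable matchings vanishing on $\rho$, reduce each local algebra by a Morita idempotent, and identify the result with a known $3$-CY algebra. In the two--dimensional case you reach the same conclusion by a $\deg_\PM$--grading argument where the paper instead groups vertices by the equivalence relation ``joined by a path on which $\rho$ is nonzero'' and passes directly to $v\Jac_\rho v\cong\C[U_\rho]$; in the one--dimensional case you and the paper arrive at the same algebra (the paper via the double of $\tilde A_{k-1}$ with a central loop, identified as preprojective $\otimes\,\C[f,f^{-1}]$, which is Morita equivalent to your $\C[Z^{\pm1}]\otimes\C[X,Y]\star\Z_k$). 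So far the routes are essentially equivalent.

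The genuine gap is in the zero--dimensional case, and it is exactly where you yourself expect trouble. You assert that consistency of $\qpol_\rho$ ``follows from the contractibility of $C_a$ and $C_{r_a^+}$ \ldots\ and from the fact that the trees $\Tree(c)$ are the faces of the local dimer,'' but contractibility alone does not give consistency, and you never say which consistency criterion you would verify. The paper closes this by invoking the \emph{well--ordered} criterion of Gulotta and Ishii--Ueda: zigzag cycles in $\qpol_\rho$ are identified with the spider--graph edges incident to $p$ (precisely because the images of $\polq_2$--cycles under $\pi$ are trees), and one then checks that the cyclic order in which these zigzag cycles meet a face of $\qpol_\rho$ agrees with the cyclic order of their homology classes (the directions of the corresponding edges). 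That is a specific combinatorial argument, not a formal consequence of Lemma~\ref{cont}. Similarly, you wave at ``no spurious surviving relations'' via Lemmas~\ref{zeroonallpm} and \ref{existspath}, whereas the paper first shows the equivalence classes of vertices together with the invertible arrows form contractible subsets of the torus (because there is no nontrivial weak path of degree zero for all of $\PM_1,\dots,\PM_k$), contracts them to a dimer on a surface, and only \emph{then} applies Lemma~\ref{existspath} to identify $w\Jac_\rho w$ with the path algebra of that quiver; the contractibility of the equivalence classes is a nontrivial step your sketch omits. Finally, the matching--polygon identification also needs an argument (positive gradings from the $\PM_i$ descend to $\qpol_\rho$ and, conversely, perfect matchings on $\qpol_\rho$ extend to $\Jac_\rho$), which you do not address. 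In short: same strategy, but the zero--dimensional case needs the well--orderedness verification and the contractibility--of--equivalence--classes step, and those are the load--bearing parts of the paper's proof.
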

\begin{proof}
We say that two vertices in $\qpol$ are equivalent if there is a path $p$ between them
with $\rho(p)\ne 0$. In $\Jac_\rho$ these paths have inverses and therefore if we chose one vertex $v_i$ for every equivalence class and take the sum $w=\sum_i v_i$
we see that $\Jac_\rho$ is Morita equivalent to $w \Jac_\rho w$. 

\begin{itemize}
 \item 
If $\rho$ is in a $2$-dimensional torus orbit then the nondegeneracy of $\theta$ implies that there is precisely one perfect matching $\PM$ on which $\rho$ is zero.
Because $\qpol \setminus \PM$ is connected all vertices are in the same equivalence class.
This implies that $\Jac_\rho$ is Morita equivalent to $v\Jac_\rho v\cong \C[U_{\rho}]$  which is Calabi-Yau-3 because $U_\rho=\C^*\times \C^*\times \C$.
\item
If $\rho$ is in a 1-dimensional torus orbit then the nondegeneracy of $\theta$ implies that there are precisely two perfect matchings $\PM_1,\PM_2$ on which $\rho$ is zero.
We can draw the curves according to $\rho_{\PM_1\cup \PM_2}$ on the surface $\surf{\qpol}$. These curves split the torus into strips and all vertices in the same strip are
equivalent, because there is a weak path that goes around along the strip. This weak path gives
rise to a central element $f$.

There are $3$ important types of generators in $w \Jac_\rho w$.
Arrows $x_i$ that connect two strips from the left to the right, arrows $y_i$ that connect two strips from the right to the left
and the central elements $f,f^{-1}$.
Two arrows connecting the same strips in the same direction are equivalent because viewed from fixed vertices they correspond
to weak paths with the same homology class.
\[
\hspace{2cm}
\vcenter{
\xymatrix@=.4cm{
f\to\ar@{.>}@/^/[rr]&&\vtx{}\ar[dd]|{x_i}&&\vtx{}\ar@{.>}@/_/[ll]\ar@{.>}@/^/[rr]&&\vtx{}\ar[dd]|{x_i}&&\vtx{}\ar@{.>}@/_/[ll]\ar@{.>}@/^/[rr]&&\dots\\
&&&&&&&&&&\\
f\to&&\vtx{}\ar@{.>}@/^/[ll]\ar@{.>}@/_/[rr]&&\vtx{}\ar[uu]|{y_i}&&\vtx{}\ar@{.>}@/^/[ll]\ar@{.>}@/_/[rr]&&\vtx{}\ar[uu]|{y_i}&&\dots \ar@{.>}@/^/[ll]
}
}
\hspace{1cm}
\vcenter{
\xymatrix@=.2cm{
\vdots\ar@/_/[d]\\
\vtx{}\ar@/_/[ddd]|{x_i}\ar@/_/[u]\ar@(lu,ld)_f\\
\\~\\
\vtx{}\ar@/_/[d]\ar@/_/[uuu]|{y_i}\ar@(lu,ld)_f\\
\vdots\ar@/_/[u]
}}
\]

The quiver is the double of the extended dynkin quiver $\tilde A_n$ with relations $x_iy_{i}=y_{i+1}x_{i+1}$,
and a loop in each vertex corresponding to $f$. 
The algebra $w \Jac_\rho w$ is the preprojective algebra tensored with $\C[f,f^{-1}]$. 
Because the first factor is CY2 \cite{bocklandt2008graded} and the 
second CY1 the whole is CY3.
\item

If $\rho$ is zero dimensional then there are at least $3$ perfect matchings which are zero for $\rho$. These perfect matchings, $\PM_1,\dots,\PM_k$ lie on different points in the matching polygon, so there is no nontrivial weak path that has zero degree for all these perfect matchings. Therefore the vertices in an equivalence class and the invertible arrows connecting them form a contractible subset on the torus.

If we contract all the invertible arrows we end up with a quiver on a surface.
By lemma \ref{existspath}, the algebra $w \Jac_\rho w$ is in fact the path algebra of this new quiver where two paths are identified if they have the same homology and $\PM_i$-degree. 
We do not need all arrows of this quiver to generate the whole algebra. If $a$ is contracted
to a contractible loop then this path will be equivalent to any cycle in $\qpol_2$ starting in the same vertex. Therefore we can remove loops and we denote the remaining quiver $\qpol_\rho$.

The mirror of $\qpol_\rho$ is the dimer $\polq_p$ where $p$
is the point in the spider graph corresponding to $\rho$. Indeed $a$ is an arrow
that passes through $p$ under the projection $\dot S(\polq) \to \Trop(f_W)$ if and only if 
$p$ is on the boundary between the perfect matchings that contain $p$ and those that do not.
If $a$ does not pass through $p$ then either $a \not \in \PM_1\cup\dots\cup\PM_k$ or
$a \in \PM_1\cap\dots\cap\PM_k$. In the former case we contracted $a$ to obtain $\qpol_\rho$ and in the latter cases we removed it because it was a contractible loop.

A dimer $\qpol$ is \emph{well-ordered} if the cyclic order in which the zigzag cycles meet any positive cycle is the same as the cyclic order of their homology classes.
This property was introduced by Gulotta in \cite{gulotta2008properly} and Ishii and Ueda proved that a dimer is well-ordered if and only if it is zigzag-consistent \cite{ishii2010note}. 

We will now show that $\qpol_\rho$ is well-ordered and hence consistent.
The image of a positive or negative cycle in $\polq_2$ under $\pi_{\polq}$ is a tree because these cycles are contractible.  Two arrows of $\polq_p$
that follow each other in the cyclic order of the cycle enter and leave the node $p$ via the same edge in the spider graph. This implies zigzag paths in $\qpol_\rho$
consist of all arrows entering/leaving the node via the same edge. 
These are the arrows of the garland corresponding to that edge.
Hence there is a one to one correspondence between bands coming from edges connected to $p$ and zigzag paths in $\qpol_\rho$. 

If we look at a face in $\qpol_{red2}^+$ we see that the zigzag paths that are incident
with that face, meet that face in the arrows that enter and leave the neighborhood of $p$
by the edge corresponding to that zigzag path. Moreover the homology class of the zigzag path in $\qpol_{red2}^+$ is the same as the direction of this edge. Therefore
the cyclic order of the zigzag paths around the cycle is the same as the cyclic
order of the homology classes of the zigzag paths.

Now we show that the matching polygon of $\qpol_\rho$ is the subpolygon associated to $U_\rho$. The perfect matchings $\PM_1,\dots,\PM_k$ of $\qpol$ give positive gradings on $\Jac_\rho$ and hence also on $w \Jac_\rho w$. Therefore they induce perfect matchings on $\qpol_\rho$.
Vice versa if $\PM$ is a perfect matching on $\qpol_\rho$, we get a positive
grading on $w \Jac_\rho w$, which we can extend to a positive grading $\Jac_\rho$ by giving
the paths we contracted degree zero. This grading gives a perfect matching supported by $\rho$.

Finally because $\qpol_\rho$ is consistent the Jacobi algebra is the path algebra of $\qpol_\rho$ where two paths are identified if they have the same homotopy class
and the same degree for at least one perfect matching. As every perfect matching on $\qpol_\rho$ gives a perfect matching on $\qpol$ we see that $w \Jac_\rho w$ is
isomorphic to $\Jac(\qpol_\rho)$
\end{itemize}
\end{proof}

\begin{remark}
The operation $\qpol\to \qpol_\rho$ can be performed for every collection of arrows,
for which the underlying subgraph is a union of trees. It is in general not true
that the resulting reduced quiver is consistent if the original dimer is. Certain special 
subsets of arrows for which this is the case have been studied by Ishii and Ueda in
\cite{ishii2009dimer}. The result above gives a different way to generate such subsets of arrows. 
\end{remark}

The embedding $\Jac \subset \Jac_\rho$ gives a functor $-\otimes \Jac_\rho:\MF(\Jac,\ell) \to \MF(\Jac_\rho,\ell)$ that maps
$(P,d)$ to $(P \otimes_{\Jac} \Jac_\rho,d)$. Because $\Jac_\rho$ is Morita-equivalent to $\Jac(\qpol_\rho)$, the category
$\MF(\Jac_\rho,\ell)$ will be equivalent to $\MF(\Jac(\qpol_\rho),\ell)$. The shortening lemma implies that
$M_a$ is mapped to zero if $\rho(a)\ne 0$. It is also easy to check that if $\rho(a)=0$ then $M_a$ is mapped to $M_a'$, 
where $a'$ is the arrow corresponding to $a$ in $\qpol_\rho$.
Furthermore if $M_a\otimes \Jac_\rho=M_{a'}$,  $M_b\otimes \Jac_\rho=M_{b'}$ and there is a nonzero morphism $M_a\to M_b$, the tensored version
will also give a nonzero morphism in $\MF(\Jac_\rho,\ell)$ and $\MF(\Jac(\qpol_\rho),\ell)$. This implies that the $\cF_0$ and $\cF_1$ part of the functor we constructed on
the A-side is the same as the tensor functor restricted to $\cD_{\Z_2}\mf(\qpol)$. Hence, they give isomorphic $A_\infty$-functors and
just like in the $A$-model we have a diagram of functors for which the homotopy colimit is equivalent to  $\cD_{\Z_2}\mf(\qpol)$.

\section{An example}\label{sectionexample}

Consider the following dimer $\qpol$ with weight function $W:\qpol_1\to\Z$ such that $W_{a}=1$ if $a\in \{1,4,6,16\}$ and $W_a=0$ otherwise. The corresponding stability condition $\theta_W$ has $7$ stable perfect matchings.
\begin{center}
\begin{tikzpicture}
\begin{scope}[scale=.4]
\begin{scope}
\draw[loosely dotted] (102pt,20pt) rectangle (402pt,320pt);
\draw [-latex,shorten >=5pt] (320pt,320pt) to node [rectangle,draw,fill=white,sloped,inner sep=1pt] {{\tiny 1}} (293pt,245pt); 
\draw [-latex,shorten >=5pt] (184pt,320pt) to node [rectangle,draw,fill=white,sloped,inner sep=1pt] {{\tiny 11}} (184pt,170pt); 
\draw [-latex,shorten >=5pt] (320pt,170pt) to node [rectangle,draw,fill=white,sloped,inner sep=1pt] {{\tiny 6}} (211pt,95pt); 
\draw [-latex,shorten >=5pt] (320pt,20pt) to node [rectangle,draw,fill=white,sloped,inner sep=1pt] {{\tiny 2}} (320pt,170pt); 
\draw [-latex,shorten >=5pt] (293pt,245pt) to node [rectangle,draw,fill=white,sloped,inner sep=1pt] {{\tiny 3}} (320pt,170pt); 
\draw [-latex,shorten >=5pt] (293pt,245pt) to node [rectangle,draw,fill=white,sloped,inner sep=1pt] {{\tiny 4}} (184pt,320pt); 
\draw [-latex,shorten >=5pt] (320pt,170pt) to node [rectangle,draw,fill=white,sloped,inner sep=1pt] {{\tiny 5}} (402pt,170pt); 
\draw [-latex,shorten >=5pt] (102pt,170pt) to node [rectangle,draw,fill=white,sloped,inner sep=1pt] {{\tiny 7}} (102pt,20pt); 
\draw [-latex,shorten >=5pt] (402pt,170pt) to node [rectangle,draw,fill=white,sloped,inner sep=1pt] {{\tiny 7}} (402pt,20pt); 
\draw [-latex,shorten >=5pt] (102pt,170pt) to node [rectangle,draw,fill=white,sloped,inner sep=1pt] {{\tiny 8}} (102pt,320pt); 
\draw [-latex,shorten >=5pt] (402pt,170pt) to node [rectangle,draw,fill=white,sloped,inner sep=1pt] {{\tiny 8}} (402pt,320pt); 
\draw [-latex,shorten >=5pt] (402pt,20pt) to node [rectangle,draw,fill=white,sloped,inner sep=1pt] {{\tiny 9}} (320pt,20pt); 
\draw [-latex,shorten >=5pt] (402pt,320pt) to node [rectangle,draw,fill=white,sloped,inner sep=1pt] {{\tiny 9}} (320pt,320pt); 
\draw [-latex,shorten >=5pt] (102pt,20pt) to node [rectangle,draw,fill=white,sloped,inner sep=1pt] {{\tiny 10}} (184pt,20pt); 
\draw [-latex,shorten >=5pt] (102pt,320pt) to node [rectangle,draw,fill=white,sloped,inner sep=1pt] {{\tiny 10}} (184pt,320pt); 
\draw [-latex,shorten >=5pt] (184pt,20pt) to node [rectangle,draw,fill=white,sloped,inner sep=1pt] {{\tiny 12}} (211pt,95pt); 
\draw [-latex,shorten >=5pt] (184pt,170pt) to node [rectangle,draw,fill=white,sloped,inner sep=1pt] {{\tiny 13}} (293pt,245pt); 
\draw [-latex,shorten >=5pt] (184pt,170pt) to node [rectangle,draw,fill=white,sloped,inner sep=1pt] {{\tiny 14}} (102pt,170pt); 
\draw [-latex,shorten >=5pt] (211pt,95pt) to node [rectangle,draw,fill=white,sloped,inner sep=1pt] {{\tiny 15}} (320pt,20pt); 
\draw [-latex,shorten >=5pt] (211pt,95pt) to node [rectangle,draw,fill=white,sloped,inner sep=1pt] {{\tiny 16}} (184pt,170pt); 
\node at (320pt,20pt) [circle,draw,fill=white,minimum size=10pt,inner sep=1pt] {\mbox{\tiny $1$}}; 
\node at (320pt,320pt) [circle,draw,fill=white,minimum size=10pt,inner sep=1pt] {\mbox{\tiny $1$}}; 
\node at (293pt,245pt) [circle,draw,fill=white,minimum size=10pt,inner sep=1pt] {\mbox{\tiny $2$}}; 
\node at (320pt,170pt) [circle,draw,fill=white,minimum size=10pt,inner sep=1pt] {\mbox{\tiny $3$}}; 
\node at (102pt,170pt) [circle,draw,fill=white,minimum size=10pt,inner sep=1pt] {\mbox{\tiny $4$}}; 
\node at (402pt,170pt) [circle,draw,fill=white,minimum size=10pt,inner sep=1pt] {\mbox{\tiny $4$}}; 
\node at (102pt,20pt) [circle,draw,fill=white,minimum size=10pt,inner sep=1pt] {\mbox{\tiny $5$}}; 
\node at (402pt,20pt) [circle,draw,fill=white,minimum size=10pt,inner sep=1pt] {\mbox{\tiny $5$}}; 
\node at (402pt,320pt) [circle,draw,fill=white,minimum size=10pt,inner sep=1pt] {\mbox{\tiny $5$}}; 
\node at (102pt,320pt) [circle,draw,fill=white,minimum size=10pt,inner sep=1pt] {\mbox{\tiny $5$}}; 
\node at (184pt,20pt) [circle,draw,fill=white,minimum size=10pt,inner sep=1pt] {\mbox{\tiny $6$}}; 
\node at (184pt,320pt) [circle,draw,fill=white,minimum size=10pt,inner sep=1pt] {\mbox{\tiny $6$}}; 
\node at (184pt,170pt) [circle,draw,fill=white,minimum size=10pt,inner sep=1pt] {\mbox{\tiny $7$}}; 
\node at (211pt,95pt) [circle,draw,fill=white,minimum size=10pt,inner sep=1pt] {\mbox{\tiny $8$}}; 
\end{scope}\end{scope}
\draw (250pt,64pt) node {
\begin{minipage}{4cm}
Stable perfect matchings:
\begin{enumerate}
\item $\{ 9, 6, 4, 14 \}$
\item $\{ 8, 2, 4, 16 \}$
\item $\{ 8, 2, 13, 12 \}$
\item $\{ 5, 15, 13, 10 \}$
\item $\{ 3, 15, 11, 7 \}$
\item $\{ 1, 6, 11, 7 \}$
\item $\{ 9, 15, 13, 14 \}$
\end{enumerate}
\end{minipage}
};
\end{tikzpicture}
\end{center}

The matching polygon is a hexagon and it is subdivided in three quadrangles. The numbers of the lattice points in the hexagons correspond to the stable perfect matchings. 
The dual spider graph has $3$ nodes, two with genus zero and one with genus one because the square has an internal lattice point.
\begin{center}
\begin{tikzpicture}
\draw (0,-1) -- (-1,0); 
\draw (0,0) -- (-1,0); 
\draw (-1,0) -- (0,1); 
\draw (0,1) -- (1,1); 
\draw (1,1) -- (2,0); 
\draw (2,0) -- (1,-1); 
\draw (0,0) -- (1,-1); 
\draw (1,-1) -- (0,-1); 
\draw (1,1) -- (0,0); 
\draw (-1,0) node[circle,draw,fill=white,minimum size=10pt,inner sep=1pt] {{\tiny1}};
\draw (0,1) node[circle,draw,fill=white,minimum size=10pt,inner sep=1pt] {{\tiny2}};
\draw (1,1) node[circle,draw,fill=white,minimum size=10pt,inner sep=1pt] {{\tiny3}};
\draw (2,0) node[circle,draw,fill=white,minimum size=10pt,inner sep=1pt] {{\tiny4}};
\draw (1,-1) node[circle,draw,fill=white,minimum size=10pt,inner sep=1pt] {{\tiny5}};
\draw (0,-1) node[circle,draw,fill=white,minimum size=10pt,inner sep=1pt] {{\tiny6}};
\draw (0,0) node[circle,draw,fill=white,minimum size=10pt,inner sep=1pt] {{\tiny7}};
\end{tikzpicture} 
\hspace{1cm}
\begin{tikzpicture}[scale=.57] 
\draw (-1,-1) -- (-2,-2); 
\draw (-1,1) -- (-1,-1); 
\draw (-1,1) -- (-2,2); 
\draw (-1,1) -- (-1,2); 
\draw (0,0) -- (1,1); 
\draw (0,0) -- (1,-1); 
\draw (-1,-1) -- (0,0); 
\draw (-1,-1) -- (-1,-2); 
\draw (-1,1) -- (0,0); 
\draw (0,0) node[circle,draw,fill=white,minimum size=10pt,inner sep=1pt] {{\tiny1}};
\end{tikzpicture}
\end{center}
Every arrow gives rise to a connected subset of the subdivision of the matching polygon 
which is supported on the lattice points of the stable perfect matchings 
that contain this arrow. The boundary of this subset corresponds 
to a line in the spider graph.
\vspace{.3cm}

\newcommand{\ZARb}{\begin{tikzpicture}\begin{scope}[scale=.33]\draw  [black!25] (0,-1) -- (-1,0); 
\draw  [black!25] (0,0) -- (-1,0); 
\draw  [black!25] (-1,0) -- (0,1); 
\draw  [black!25] (0,1) -- (1,1); 
\draw  [black!25] (1,1) -- (2,0); 
\draw  [black!25] (2,0) -- (1,-1); 
\draw  [black!25] (0,0) -- (1,-1); 
\draw  [black!25] (1,-1) -- (0,-1); 
\draw  [black!25] (1,1) -- (0,0); 
\draw (-1,0) node[circle,draw,fill=white,minimum size=3pt,inner sep=1pt]{};
\draw (0,1) node[circle,draw,fill=white,minimum size=3pt,inner sep=1pt]{};
\draw (1,1) node[circle,draw,fill=white,minimum size=3pt,inner sep=1pt]{};
\draw (2,0) node[circle,draw,fill=white,minimum size=3pt,inner sep=1pt]{};
\draw (1,-1) node[circle,draw,fill=white,minimum size=3pt,inner sep=1pt]{};
\draw (0,-1) node[circle,draw,fill=black,minimum size=3pt,inner sep=1pt]{};
\draw (0,0) node[circle,draw,fill=white,minimum size=3pt,inner sep=1pt]{};
\end{scope}\end{tikzpicture}{\begin{tikzpicture}\begin{scope}[scale=.25] 
\draw [red, very thick](-1,-1) -- (-2,-2); 
\draw [black!25](-1,1) -- (-1,-1); 
\draw [black!25](-1,1) -- (-2,2); 
\draw [black!25](-1,1) -- (-1,2); 
\draw [black!25](0,0) -- (1,1); 
\draw [black!25](0,0) -- (1,-1); 
\draw [black!25](-1,-1) -- (0,0); 
\draw [red, very thick](-1,-1) -- (-1,-2); 
\draw [black!25](-1,1) -- (0,0); 
\end{scope}\end{tikzpicture}}}
\newcommand{\ZARc}{\begin{tikzpicture}\begin{scope}[scale=.33]\draw  [black!25] (0,-1) -- (-1,0); 
\draw  [black!25] (0,0) -- (-1,0); 
\draw  [black!25] (-1,0) -- (0,1); 
\draw  [black] (0,1) -- (1,1); 
\draw  [black!25] (1,1) -- (2,0); 
\draw  [black!25] (2,0) -- (1,-1); 
\draw  [black!25] (0,0) -- (1,-1); 
\draw  [black!25] (1,-1) -- (0,-1); 
\draw  [black!25] (1,1) -- (0,0); 
\draw (-1,0) node[circle,draw,fill=white,minimum size=3pt,inner sep=1pt]{};
\draw (0,1) node[circle,draw,fill=black,minimum size=3pt,inner sep=1pt]{};
\draw (1,1) node[circle,draw,fill=black,minimum size=3pt,inner sep=1pt]{};
\draw (2,0) node[circle,draw,fill=white,minimum size=3pt,inner sep=1pt]{};
\draw (1,-1) node[circle,draw,fill=white,minimum size=3pt,inner sep=1pt]{};
\draw (0,-1) node[circle,draw,fill=white,minimum size=3pt,inner sep=1pt]{};
\draw (0,0) node[circle,draw,fill=white,minimum size=3pt,inner sep=1pt]{};
\end{scope}\end{tikzpicture}{\begin{tikzpicture}\begin{scope}[scale=.25] 
\draw [black!25](-1,-1) -- (-2,-2); 
\draw [black!25](-1,1) -- (-1,-1); 
\draw [red, very thick](-1,1) -- (-2,2); 
\draw [black!25](-1,1) -- (-1,2); 
\draw [red, very thick](0,0) -- (1,1); 
\draw [black!25](0,0) -- (1,-1); 
\draw [black!25](-1,-1) -- (0,0); 
\draw [black!25](-1,-1) -- (-1,-2); 
\draw [red, very thick](-1,1) -- (0,0); 
\end{scope}\end{tikzpicture}}}
\newcommand{\ZARd}{\begin{tikzpicture}\begin{scope}[scale=.33]\draw  [black!25] (0,-1) -- (-1,0); 
\draw  [black!25] (0,0) -- (-1,0); 
\draw  [black!25] (-1,0) -- (0,1); 
\draw  [black!25] (0,1) -- (1,1); 
\draw  [black!25] (1,1) -- (2,0); 
\draw  [black!25] (2,0) -- (1,-1); 
\draw  [black!25] (0,0) -- (1,-1); 
\draw  [black!25] (1,-1) -- (0,-1); 
\draw  [black!25] (1,1) -- (0,0); 
\draw (-1,0) node[circle,draw,fill=white,minimum size=3pt,inner sep=1pt]{};
\draw (0,1) node[circle,draw,fill=white,minimum size=3pt,inner sep=1pt]{};
\draw (1,1) node[circle,draw,fill=white,minimum size=3pt,inner sep=1pt]{};
\draw (2,0) node[circle,draw,fill=white,minimum size=3pt,inner sep=1pt]{};
\draw (1,-1) node[circle,draw,fill=black,minimum size=3pt,inner sep=1pt]{};
\draw (0,-1) node[circle,draw,fill=white,minimum size=3pt,inner sep=1pt]{};
\draw (0,0) node[circle,draw,fill=white,minimum size=3pt,inner sep=1pt]{};
\end{scope}\end{tikzpicture}{\begin{tikzpicture}\begin{scope}[scale=.25] 
\draw [black!25](-1,-1) -- (-2,-2); 
\draw [black!25](-1,1) -- (-1,-1); 
\draw [black!25](-1,1) -- (-2,2); 
\draw [black!25](-1,1) -- (-1,2); 
\draw [black!25](0,0) -- (1,1); 
\draw [red, very thick](0,0) -- (1,-1); 
\draw [red, very thick](-1,-1) -- (0,0); 
\draw [red, very thick](-1,-1) -- (-1,-2); 
\draw [black!25](-1,1) -- (0,0); 
\end{scope}\end{tikzpicture}}}
\newcommand{\ZARe}{\begin{tikzpicture}\begin{scope}[scale=.33]\draw  [black!25] (0,-1) -- (-1,0); 
\draw  [black!25] (0,0) -- (-1,0); 
\draw  [black] (-1,0) -- (0,1); 
\draw  [black!25] (0,1) -- (1,1); 
\draw  [black!25] (1,1) -- (2,0); 
\draw  [black!25] (2,0) -- (1,-1); 
\draw  [black!25] (0,0) -- (1,-1); 
\draw  [black!25] (1,-1) -- (0,-1); 
\draw  [black!25] (1,1) -- (0,0); 
\draw (-1,0) node[circle,draw,fill=black,minimum size=3pt,inner sep=1pt]{};
\draw (0,1) node[circle,draw,fill=black,minimum size=3pt,inner sep=1pt]{};
\draw (1,1) node[circle,draw,fill=white,minimum size=3pt,inner sep=1pt]{};
\draw (2,0) node[circle,draw,fill=white,minimum size=3pt,inner sep=1pt]{};
\draw (1,-1) node[circle,draw,fill=white,minimum size=3pt,inner sep=1pt]{};
\draw (0,-1) node[circle,draw,fill=white,minimum size=3pt,inner sep=1pt]{};
\draw (0,0) node[circle,draw,fill=white,minimum size=3pt,inner sep=1pt]{};
\end{scope}\end{tikzpicture}{\begin{tikzpicture}\begin{scope}[scale=.25] 
\draw [red, very thick](-1,-1) -- (-2,-2); 
\draw [red, very thick](-1,1) -- (-1,-1); 
\draw [black!25](-1,1) -- (-2,2); 
\draw [red, very thick](-1,1) -- (-1,2); 
\draw [black!25](0,0) -- (1,1); 
\draw [black!25](0,0) -- (1,-1); 
\draw [black!25](-1,-1) -- (0,0); 
\draw [black!25](-1,-1) -- (-1,-2); 
\draw [black!25](-1,1) -- (0,0); 
\end{scope}\end{tikzpicture}}}
\newcommand{\ZARf}{\begin{tikzpicture}\begin{scope}[scale=.33]\draw  [black!25] (0,-1) -- (-1,0); 
\draw  [black!25] (0,0) -- (-1,0); 
\draw  [black!25] (-1,0) -- (0,1); 
\draw  [black!25] (0,1) -- (1,1); 
\draw  [black!25] (1,1) -- (2,0); 
\draw  [black!25] (2,0) -- (1,-1); 
\draw  [black!25] (0,0) -- (1,-1); 
\draw  [black!25] (1,-1) -- (0,-1); 
\draw  [black!25] (1,1) -- (0,0); 
\draw (-1,0) node[circle,draw,fill=white,minimum size=3pt,inner sep=1pt]{};
\draw (0,1) node[circle,draw,fill=white,minimum size=3pt,inner sep=1pt]{};
\draw (1,1) node[circle,draw,fill=white,minimum size=3pt,inner sep=1pt]{};
\draw (2,0) node[circle,draw,fill=black,minimum size=3pt,inner sep=1pt]{};
\draw (1,-1) node[circle,draw,fill=white,minimum size=3pt,inner sep=1pt]{};
\draw (0,-1) node[circle,draw,fill=white,minimum size=3pt,inner sep=1pt]{};
\draw (0,0) node[circle,draw,fill=white,minimum size=3pt,inner sep=1pt]{};
\end{scope}\end{tikzpicture}{\begin{tikzpicture}\begin{scope}[scale=.25] 
\draw [black!25](-1,-1) -- (-2,-2); 
\draw [black!25](-1,1) -- (-1,-1); 
\draw [black!25](-1,1) -- (-2,2); 
\draw [black!25](-1,1) -- (-1,2); 
\draw [red, very thick](0,0) -- (1,1); 
\draw [red, very thick](0,0) -- (1,-1); 
\draw [black!25](-1,-1) -- (0,0); 
\draw [black!25](-1,-1) -- (-1,-2); 
\draw [black!25](-1,1) -- (0,0); 
\end{scope}\end{tikzpicture}}}
\newcommand{\ZARg}{\begin{tikzpicture}\begin{scope}[scale=.33]\draw  [black] (0,-1) -- (-1,0); 
\draw  [black!25] (0,0) -- (-1,0); 
\draw  [black!25] (-1,0) -- (0,1); 
\draw  [black!25] (0,1) -- (1,1); 
\draw  [black!25] (1,1) -- (2,0); 
\draw  [black!25] (2,0) -- (1,-1); 
\draw  [black!25] (0,0) -- (1,-1); 
\draw  [black!25] (1,-1) -- (0,-1); 
\draw  [black!25] (1,1) -- (0,0); 
\draw (-1,0) node[circle,draw,fill=black,minimum size=3pt,inner sep=1pt]{};
\draw (0,1) node[circle,draw,fill=white,minimum size=3pt,inner sep=1pt]{};
\draw (1,1) node[circle,draw,fill=white,minimum size=3pt,inner sep=1pt]{};
\draw (2,0) node[circle,draw,fill=white,minimum size=3pt,inner sep=1pt]{};
\draw (1,-1) node[circle,draw,fill=white,minimum size=3pt,inner sep=1pt]{};
\draw (0,-1) node[circle,draw,fill=black,minimum size=3pt,inner sep=1pt]{};
\draw (0,0) node[circle,draw,fill=white,minimum size=3pt,inner sep=1pt]{};
\end{scope}\end{tikzpicture}{\begin{tikzpicture}\begin{scope}[scale=.25] 
\draw [black!25](-1,-1) -- (-2,-2); 
\draw [red, very thick](-1,1) -- (-1,-1); 
\draw [red, very thick](-1,1) -- (-2,2); 
\draw [black!25](-1,1) -- (-1,2); 
\draw [black!25](0,0) -- (1,1); 
\draw [black!25](0,0) -- (1,-1); 
\draw [black!25](-1,-1) -- (0,0); 
\draw [red, very thick](-1,-1) -- (-1,-2); 
\draw [black!25](-1,1) -- (0,0); 
\end{scope}\end{tikzpicture}}}
\newcommand{\ZARh}{\begin{tikzpicture}\begin{scope}[scale=.33]\draw  [black!25] (0,-1) -- (-1,0); 
\draw  [black!25] (0,0) -- (-1,0); 
\draw  [black!25] (-1,0) -- (0,1); 
\draw  [black!25] (0,1) -- (1,1); 
\draw  [black!25] (1,1) -- (2,0); 
\draw  [black!25] (2,0) -- (1,-1); 
\draw  [black!25] (0,0) -- (1,-1); 
\draw  [black] (1,-1) -- (0,-1); 
\draw  [black!25] (1,1) -- (0,0); 
\draw (-1,0) node[circle,draw,fill=white,minimum size=3pt,inner sep=1pt]{};
\draw (0,1) node[circle,draw,fill=white,minimum size=3pt,inner sep=1pt]{};
\draw (1,1) node[circle,draw,fill=white,minimum size=3pt,inner sep=1pt]{};
\draw (2,0) node[circle,draw,fill=white,minimum size=3pt,inner sep=1pt]{};
\draw (1,-1) node[circle,draw,fill=black,minimum size=3pt,inner sep=1pt]{};
\draw (0,-1) node[circle,draw,fill=black,minimum size=3pt,inner sep=1pt]{};
\draw (0,0) node[circle,draw,fill=white,minimum size=3pt,inner sep=1pt]{};
\end{scope}\end{tikzpicture}{\begin{tikzpicture}\begin{scope}[scale=.25] 
\draw [red, very thick](-1,-1) -- (-2,-2); 
\draw [black!25](-1,1) -- (-1,-1); 
\draw [black!25](-1,1) -- (-2,2); 
\draw [black!25](-1,1) -- (-1,2); 
\draw [black!25](0,0) -- (1,1); 
\draw [red, very thick](0,0) -- (1,-1); 
\draw [red, very thick](-1,-1) -- (0,0); 
\draw [black!25](-1,-1) -- (-1,-2); 
\draw [black!25](-1,1) -- (0,0); 
\end{scope}\end{tikzpicture}}}
\newcommand{\ZARi}{\begin{tikzpicture}\begin{scope}[scale=.33]\draw  [black!25] (0,-1) -- (-1,0); 
\draw  [black!25] (0,0) -- (-1,0); 
\draw  [black!25] (-1,0) -- (0,1); 
\draw  [black] (0,1) -- (1,1); 
\draw  [black!25] (1,1) -- (2,0); 
\draw  [black!25] (2,0) -- (1,-1); 
\draw  [black!25] (0,0) -- (1,-1); 
\draw  [black!25] (1,-1) -- (0,-1); 
\draw  [black!25] (1,1) -- (0,0); 
\draw (-1,0) node[circle,draw,fill=white,minimum size=3pt,inner sep=1pt]{};
\draw (0,1) node[circle,draw,fill=black,minimum size=3pt,inner sep=1pt]{};
\draw (1,1) node[circle,draw,fill=black,minimum size=3pt,inner sep=1pt]{};
\draw (2,0) node[circle,draw,fill=white,minimum size=3pt,inner sep=1pt]{};
\draw (1,-1) node[circle,draw,fill=white,minimum size=3pt,inner sep=1pt]{};
\draw (0,-1) node[circle,draw,fill=white,minimum size=3pt,inner sep=1pt]{};
\draw (0,0) node[circle,draw,fill=white,minimum size=3pt,inner sep=1pt]{};
\end{scope}\end{tikzpicture}{\begin{tikzpicture}\begin{scope}[scale=.25] 
\draw [black!25](-1,-1) -- (-2,-2); 
\draw [black!25](-1,1) -- (-1,-1); 
\draw [red, very thick](-1,1) -- (-2,2); 
\draw [black!25](-1,1) -- (-1,2); 
\draw [red, very thick](0,0) -- (1,1); 
\draw [black!25](0,0) -- (1,-1); 
\draw [black!25](-1,-1) -- (0,0); 
\draw [black!25](-1,-1) -- (-1,-2); 
\draw [red, very thick](-1,1) -- (0,0); 
\end{scope}\end{tikzpicture}}}
\newcommand{\ZARj}{\begin{tikzpicture}\begin{scope}[scale=.33]\draw  [black!25] (0,-1) -- (-1,0); 
\draw  [black] (0,0) -- (-1,0); 
\draw  [black!25] (-1,0) -- (0,1); 
\draw  [black!25] (0,1) -- (1,1); 
\draw  [black!25] (1,1) -- (2,0); 
\draw  [black!25] (2,0) -- (1,-1); 
\draw  [black!25] (0,0) -- (1,-1); 
\draw  [black!25] (1,-1) -- (0,-1); 
\draw  [black!25] (1,1) -- (0,0); 
\draw (-1,0) node[circle,draw,fill=black,minimum size=3pt,inner sep=1pt]{};
\draw (0,1) node[circle,draw,fill=white,minimum size=3pt,inner sep=1pt]{};
\draw (1,1) node[circle,draw,fill=white,minimum size=3pt,inner sep=1pt]{};
\draw (2,0) node[circle,draw,fill=white,minimum size=3pt,inner sep=1pt]{};
\draw (1,-1) node[circle,draw,fill=white,minimum size=3pt,inner sep=1pt]{};
\draw (0,-1) node[circle,draw,fill=white,minimum size=3pt,inner sep=1pt]{};
\draw (0,0) node[circle,draw,fill=black,minimum size=3pt,inner sep=1pt]{};
\end{scope}\end{tikzpicture}{\begin{tikzpicture}\begin{scope}[scale=.25] 
\draw [red, very thick](-1,-1) -- (-2,-2); 
\draw [black!25](-1,1) -- (-1,-1); 
\draw [red, very thick](-1,1) -- (-2,2); 
\draw [black!25](-1,1) -- (-1,2); 
\draw [black!25](0,0) -- (1,1); 
\draw [black!25](0,0) -- (1,-1); 
\draw [red, very thick](-1,-1) -- (0,0); 
\draw [black!25](-1,-1) -- (-1,-2); 
\draw [red, very thick](-1,1) -- (0,0); 
\end{scope}\end{tikzpicture}}}
\newcommand{\ZARba}{\begin{tikzpicture}\begin{scope}[scale=.33]\draw  [black!25] (0,-1) -- (-1,0); 
\draw  [black!25] (0,0) -- (-1,0); 
\draw  [black!25] (-1,0) -- (0,1); 
\draw  [black!25] (0,1) -- (1,1); 
\draw  [black!25] (1,1) -- (2,0); 
\draw  [black!25] (2,0) -- (1,-1); 
\draw  [black!25] (0,0) -- (1,-1); 
\draw  [black!25] (1,-1) -- (0,-1); 
\draw  [black!25] (1,1) -- (0,0); 
\draw (-1,0) node[circle,draw,fill=white,minimum size=3pt,inner sep=1pt]{};
\draw (0,1) node[circle,draw,fill=white,minimum size=3pt,inner sep=1pt]{};
\draw (1,1) node[circle,draw,fill=white,minimum size=3pt,inner sep=1pt]{};
\draw (2,0) node[circle,draw,fill=black,minimum size=3pt,inner sep=1pt]{};
\draw (1,-1) node[circle,draw,fill=white,minimum size=3pt,inner sep=1pt]{};
\draw (0,-1) node[circle,draw,fill=white,minimum size=3pt,inner sep=1pt]{};
\draw (0,0) node[circle,draw,fill=white,minimum size=3pt,inner sep=1pt]{};
\end{scope}\end{tikzpicture}{\begin{tikzpicture}\begin{scope}[scale=.25] 
\draw [black!25](-1,-1) -- (-2,-2); 
\draw [black!25](-1,1) -- (-1,-1); 
\draw [black!25](-1,1) -- (-2,2); 
\draw [black!25](-1,1) -- (-1,2); 
\draw [red, very thick](0,0) -- (1,1); 
\draw [red, very thick](0,0) -- (1,-1); 
\draw [black!25](-1,-1) -- (0,0); 
\draw [black!25](-1,-1) -- (-1,-2); 
\draw [black!25](-1,1) -- (0,0); 
\end{scope}\end{tikzpicture}}}
\newcommand{\ZARbb}{\begin{tikzpicture}\begin{scope}[scale=.33]\draw  [black!25] (0,-1) -- (-1,0); 
\draw  [black!25] (0,0) -- (-1,0); 
\draw  [black!25] (-1,0) -- (0,1); 
\draw  [black!25] (0,1) -- (1,1); 
\draw  [black!25] (1,1) -- (2,0); 
\draw  [black!25] (2,0) -- (1,-1); 
\draw  [black!25] (0,0) -- (1,-1); 
\draw  [black] (1,-1) -- (0,-1); 
\draw  [black!25] (1,1) -- (0,0); 
\draw (-1,0) node[circle,draw,fill=white,minimum size=3pt,inner sep=1pt]{};
\draw (0,1) node[circle,draw,fill=white,minimum size=3pt,inner sep=1pt]{};
\draw (1,1) node[circle,draw,fill=white,minimum size=3pt,inner sep=1pt]{};
\draw (2,0) node[circle,draw,fill=white,minimum size=3pt,inner sep=1pt]{};
\draw (1,-1) node[circle,draw,fill=black,minimum size=3pt,inner sep=1pt]{};
\draw (0,-1) node[circle,draw,fill=black,minimum size=3pt,inner sep=1pt]{};
\draw (0,0) node[circle,draw,fill=white,minimum size=3pt,inner sep=1pt]{};
\end{scope}\end{tikzpicture}{\begin{tikzpicture}\begin{scope}[scale=.25] 
\draw [red, very thick](-1,-1) -- (-2,-2); 
\draw [black!25](-1,1) -- (-1,-1); 
\draw [black!25](-1,1) -- (-2,2); 
\draw [black!25](-1,1) -- (-1,2); 
\draw [black!25](0,0) -- (1,1); 
\draw [red, very thick](0,0) -- (1,-1); 
\draw [red, very thick](-1,-1) -- (0,0); 
\draw [black!25](-1,-1) -- (-1,-2); 
\draw [black!25](-1,1) -- (0,0); 
\end{scope}\end{tikzpicture}}}
\newcommand{\ZARbc}{\begin{tikzpicture}\begin{scope}[scale=.33]\draw  [black!25] (0,-1) -- (-1,0); 
\draw  [black!25] (0,0) -- (-1,0); 
\draw  [black!25] (-1,0) -- (0,1); 
\draw  [black!25] (0,1) -- (1,1); 
\draw  [black!25] (1,1) -- (2,0); 
\draw  [black!25] (2,0) -- (1,-1); 
\draw  [black!25] (0,0) -- (1,-1); 
\draw  [black!25] (1,-1) -- (0,-1); 
\draw  [black!25] (1,1) -- (0,0); 
\draw (-1,0) node[circle,draw,fill=white,minimum size=3pt,inner sep=1pt]{};
\draw (0,1) node[circle,draw,fill=white,minimum size=3pt,inner sep=1pt]{};
\draw (1,1) node[circle,draw,fill=black,minimum size=3pt,inner sep=1pt]{};
\draw (2,0) node[circle,draw,fill=white,minimum size=3pt,inner sep=1pt]{};
\draw (1,-1) node[circle,draw,fill=white,minimum size=3pt,inner sep=1pt]{};
\draw (0,-1) node[circle,draw,fill=white,minimum size=3pt,inner sep=1pt]{};
\draw (0,0) node[circle,draw,fill=white,minimum size=3pt,inner sep=1pt]{};
\end{scope}\end{tikzpicture}{\begin{tikzpicture}\begin{scope}[scale=.25] 
\draw [black!25](-1,-1) -- (-2,-2); 
\draw [black!25](-1,1) -- (-1,-1); 
\draw [black!25](-1,1) -- (-2,2); 
\draw [red, very thick](-1,1) -- (-1,2); 
\draw [red, very thick](0,0) -- (1,1); 
\draw [black!25](0,0) -- (1,-1); 
\draw [black!25](-1,-1) -- (0,0); 
\draw [black!25](-1,-1) -- (-1,-2); 
\draw [red, very thick](-1,1) -- (0,0); 
\end{scope}\end{tikzpicture}}}
\newcommand{\ZARbd}{\begin{tikzpicture}\begin{scope}[scale=.33]\draw  [black!25] (0,-1) -- (-1,0); 
\draw  [black!25] (0,0) -- (-1,0); 
\draw  [black!25] (-1,0) -- (0,1); 
\draw  [black!25] (0,1) -- (1,1); 
\draw  [black] (1,1) -- (2,0); 
\draw  [black!25] (2,0) -- (1,-1); 
\draw  [black!25] (0,0) -- (1,-1); 
\draw  [black!25] (1,-1) -- (0,-1); 
\draw  [black] (1,1) -- (0,0); 
\draw (-1,0) node[circle,draw,fill=white,minimum size=3pt,inner sep=1pt]{};
\draw (0,1) node[circle,draw,fill=white,minimum size=3pt,inner sep=1pt]{};
\draw (1,1) node[circle,draw,fill=black,minimum size=3pt,inner sep=1pt]{};
\draw (2,0) node[circle,draw,fill=black,minimum size=3pt,inner sep=1pt]{};
\draw (1,-1) node[circle,draw,fill=white,minimum size=3pt,inner sep=1pt]{};
\draw (0,-1) node[circle,draw,fill=white,minimum size=3pt,inner sep=1pt]{};
\draw (0,0) node[circle,draw,fill=black,minimum size=3pt,inner sep=1pt]{};
\end{scope}\end{tikzpicture}{\begin{tikzpicture}\begin{scope}[scale=.25] 
\draw [black!25](-1,-1) -- (-2,-2); 
\draw [red, very thick](-1,1) -- (-1,-1); 
\draw [black!25](-1,1) -- (-2,2); 
\draw [red, very thick](-1,1) -- (-1,2); 
\draw [black!25](0,0) -- (1,1); 
\draw [red, very thick](0,0) -- (1,-1); 
\draw [red, very thick](-1,-1) -- (0,0); 
\draw [black!25](-1,-1) -- (-1,-2); 
\draw [black!25](-1,1) -- (0,0); 
\end{scope}\end{tikzpicture}}}
\newcommand{\ZARbe}{\begin{tikzpicture}\begin{scope}[scale=.33]\draw  [black!25] (0,-1) -- (-1,0); 
\draw  [black] (0,0) -- (-1,0); 
\draw  [black!25] (-1,0) -- (0,1); 
\draw  [black!25] (0,1) -- (1,1); 
\draw  [black!25] (1,1) -- (2,0); 
\draw  [black!25] (2,0) -- (1,-1); 
\draw  [black!25] (0,0) -- (1,-1); 
\draw  [black!25] (1,-1) -- (0,-1); 
\draw  [black!25] (1,1) -- (0,0); 
\draw (-1,0) node[circle,draw,fill=black,minimum size=3pt,inner sep=1pt]{};
\draw (0,1) node[circle,draw,fill=white,minimum size=3pt,inner sep=1pt]{};
\draw (1,1) node[circle,draw,fill=white,minimum size=3pt,inner sep=1pt]{};
\draw (2,0) node[circle,draw,fill=white,minimum size=3pt,inner sep=1pt]{};
\draw (1,-1) node[circle,draw,fill=white,minimum size=3pt,inner sep=1pt]{};
\draw (0,-1) node[circle,draw,fill=white,minimum size=3pt,inner sep=1pt]{};
\draw (0,0) node[circle,draw,fill=black,minimum size=3pt,inner sep=1pt]{};
\end{scope}\end{tikzpicture}{\begin{tikzpicture}\begin{scope}[scale=.25] 
\draw [red, very thick](-1,-1) -- (-2,-2); 
\draw [black!25](-1,1) -- (-1,-1); 
\draw [red, very thick](-1,1) -- (-2,2); 
\draw [black!25](-1,1) -- (-1,2); 
\draw [black!25](0,0) -- (1,1); 
\draw [black!25](0,0) -- (1,-1); 
\draw [red, very thick](-1,-1) -- (0,0); 
\draw [black!25](-1,-1) -- (-1,-2); 
\draw [red, very thick](-1,1) -- (0,0); 
\end{scope}\end{tikzpicture}}}
\newcommand{\ZARbf}{\begin{tikzpicture}\begin{scope}[scale=.33]\draw  [black!25] (0,-1) -- (-1,0); 
\draw  [black!25] (0,0) -- (-1,0); 
\draw  [black!25] (-1,0) -- (0,1); 
\draw  [black!25] (0,1) -- (1,1); 
\draw  [black!25] (1,1) -- (2,0); 
\draw  [black] (2,0) -- (1,-1); 
\draw  [black] (0,0) -- (1,-1); 
\draw  [black!25] (1,-1) -- (0,-1); 
\draw  [black!25] (1,1) -- (0,0); 
\draw (-1,0) node[circle,draw,fill=white,minimum size=3pt,inner sep=1pt]{};
\draw (0,1) node[circle,draw,fill=white,minimum size=3pt,inner sep=1pt]{};
\draw (1,1) node[circle,draw,fill=white,minimum size=3pt,inner sep=1pt]{};
\draw (2,0) node[circle,draw,fill=black,minimum size=3pt,inner sep=1pt]{};
\draw (1,-1) node[circle,draw,fill=black,minimum size=3pt,inner sep=1pt]{};
\draw (0,-1) node[circle,draw,fill=white,minimum size=3pt,inner sep=1pt]{};
\draw (0,0) node[circle,draw,fill=black,minimum size=3pt,inner sep=1pt]{};
\end{scope}\end{tikzpicture}{\begin{tikzpicture}\begin{scope}[scale=.25] 
\draw [black!25](-1,-1) -- (-2,-2); 
\draw [red, very thick](-1,1) -- (-1,-1); 
\draw [black!25](-1,1) -- (-2,2); 
\draw [black!25](-1,1) -- (-1,2); 
\draw [red, very thick](0,0) -- (1,1); 
\draw [black!25](0,0) -- (1,-1); 
\draw [black!25](-1,-1) -- (0,0); 
\draw [red, very thick](-1,-1) -- (-1,-2); 
\draw [red, very thick](-1,1) -- (0,0); 
\end{scope}\end{tikzpicture}}}
\newcommand{\ZARbg}{\begin{tikzpicture}\begin{scope}[scale=.33]\draw  [black!25] (0,-1) -- (-1,0); 
\draw  [black!25] (0,0) -- (-1,0); 
\draw  [black!25] (-1,0) -- (0,1); 
\draw  [black!25] (0,1) -- (1,1); 
\draw  [black!25] (1,1) -- (2,0); 
\draw  [black!25] (2,0) -- (1,-1); 
\draw  [black!25] (0,0) -- (1,-1); 
\draw  [black!25] (1,-1) -- (0,-1); 
\draw  [black!25] (1,1) -- (0,0); 
\draw (-1,0) node[circle,draw,fill=white,minimum size=3pt,inner sep=1pt]{};
\draw (0,1) node[circle,draw,fill=black,minimum size=3pt,inner sep=1pt]{};
\draw (1,1) node[circle,draw,fill=white,minimum size=3pt,inner sep=1pt]{};
\draw (2,0) node[circle,draw,fill=white,minimum size=3pt,inner sep=1pt]{};
\draw (1,-1) node[circle,draw,fill=white,minimum size=3pt,inner sep=1pt]{};
\draw (0,-1) node[circle,draw,fill=white,minimum size=3pt,inner sep=1pt]{};
\draw (0,0) node[circle,draw,fill=white,minimum size=3pt,inner sep=1pt]{};
\end{scope}\end{tikzpicture}{\begin{tikzpicture}\begin{scope}[scale=.25] 
\draw [black!25](-1,-1) -- (-2,-2); 
\draw [black!25](-1,1) -- (-1,-1); 
\draw [red, very thick](-1,1) -- (-2,2); 
\draw [red, very thick](-1,1) -- (-1,2); 
\draw [black!25](0,0) -- (1,1); 
\draw [black!25](0,0) -- (1,-1); 
\draw [black!25](-1,-1) -- (0,0); 
\draw [black!25](-1,-1) -- (-1,-2); 
\draw [black!25](-1,1) -- (0,0); 
\end{scope}\end{tikzpicture}}}
\vspace{.2cm}
\begin{tabular}{|cc|cc|cc|ccc|}\hline1&\ZARb&2&\ZARc&3&\ZARd&4&\ZARe&\\ \hline 
5&\ZARf&6&\ZARg&7&\ZARh&8&\ZARi&\\ \hline 
9&\ZARj&10&\ZARba&11&\ZARbb&12&\ZARbc&\\ \hline 
13&\ZARbd&14&\ZARbe&15&\ZARbf&16&\ZARbg&\\ \hline 
\end{tabular}
\vspace{.3cm}

To find the dimers corresponding to the three nodes of the spider graph, 
we need to contract
the arrows of which the line does not run through that node. 
For the upper left node these are 
$\{1, 3, 5, 7, 10, 11\}$, for the lower left node $\{2, 5, 8, 10, 12, 16\}$ and for the right node
$\{1,4,12,16\}$. This gives us the following reduced dimers. The $2$-cycles that can be
removed are drawn in light grey.

\begin{center}
\begin{tikzpicture}
\begin{scope}[scale=.25]
\begin{scope}
\draw[loosely dotted] (102pt,20pt) rectangle (402pt,320pt);
\draw [-latex,shorten >=5pt] (402pt,20pt) to node [rectangle,draw,fill=white,sloped,inner sep=1pt] {{\tiny 6}} (211pt,95pt); 
\draw [latex-latex,shorten >=5pt,black!12] (402pt,20pt) to node [rectangle,draw,fill=white,sloped,inner sep=1pt] {{\tiny 2,8,9,14}} (402pt,320pt); 
\draw [latex-latex,shorten >=5pt,black!12] (102pt,20pt) to node [rectangle,draw,fill=white,sloped,inner sep=1pt] {{\tiny 2,8,9,14}} (102pt,320pt); 
\draw [latex-latex,shorten >=5pt,black!12] (402pt,20pt) to node [rectangle,draw,fill=white,sloped,inner sep=1pt] {{\tiny 4,13}} (102pt,320pt); 
\draw [-latex,shorten >=5pt] (102pt,20pt) to node [rectangle,draw,fill=white,sloped,inner sep=1pt] {{\tiny 12}} (211pt,95pt); 
\begin{scope}\clip (102pt,20pt) rectangle (402pt,320pt);
\draw [-latex,shorten >=5pt] (211pt,95pt) to node [rectangle,draw,fill=white,sloped,inner sep=1pt] {{\tiny 15}} (402pt,-280pt); 
\draw [-latex,shorten >=5pt] (211pt,395pt) to node [rectangle,draw,fill=white,sloped,inner sep=1pt] {{\tiny 15}} (402pt,20pt); 
\end{scope}
\draw [-latex,shorten >=5pt] (211pt,95pt) to node [rectangle,draw,fill=white,sloped,inner sep=1pt] {{\tiny 16}} (102pt,320pt); 

\node at (102pt,20pt) [circle,draw,fill=white,minimum size=10pt,inner sep=1pt] {\mbox{\tiny $5$}}; 
\node at (402pt,20pt) [circle,draw,fill=white,minimum size=10pt,inner sep=1pt] {\mbox{\tiny $5$}}; 
\node at (402pt,320pt) [circle,draw,fill=white,minimum size=10pt,inner sep=1pt] {\mbox{\tiny $5$}}; 
\node at (102pt,320pt) [circle,draw,fill=white,minimum size=10pt,inner sep=1pt] {\mbox{\tiny $5$}}; 
\node at (211pt,95pt) [circle,draw,fill=white,minimum size=10pt,inner sep=1pt] {\mbox{\tiny $8$}}; 
\end{scope}\end{scope}
\end{tikzpicture}
\hspace{.5cm}
\begin{tikzpicture}
\begin{scope}[scale=.25]
\begin{scope}
\draw[loosely dotted] (102pt,20pt) rectangle (402pt,320pt);
\begin{scope}\clip (102pt,20pt) rectangle (402pt,320pt);
\draw [-latex,shorten >=5pt] (402pt,620pt) to node [rectangle,draw,fill=white,sloped,inner sep=1pt] {{\tiny 1}} (293pt,245pt); 
\draw [-latex,shorten >=5pt] (402pt,320pt) to node [rectangle,draw,fill=white,sloped,inner sep=1pt] {{\tiny 1}} (293pt,-55pt); 
\end{scope}
\draw [latex-latex,shorten >=5pt,black!12] (402pt,20pt) to node [rectangle,draw,fill=white,sloped,inner sep=1pt] {{\tiny 7,9,11,14}} (402pt,320pt); 
\draw [latex-latex,shorten >=5pt,black!12] (102pt,20pt) to node [rectangle,draw,fill=white,sloped,inner sep=1pt] {{\tiny 7,9,11,14}} (102pt,320pt); 
\draw [latex-latex,shorten >=5pt,black!12] (402pt,320pt) .. controls (282pt,140pt) and (282pt,140pt) .. node [rectangle,draw,fill=white,sloped,inner sep=1pt] {{\tiny 6,15}} (102pt,20pt); 

\draw [-latex,shorten >=5pt] (293pt,245pt) to node [rectangle,draw,fill=white,sloped,inner sep=1pt] {{\tiny 3}} (402pt,320pt); 
\draw [-latex,shorten >=5pt] (293pt,245pt) to node [rectangle,draw,fill=white,sloped,inner sep=1pt] {{\tiny 4}} (102pt,320pt); 
\draw [-latex,shorten >=5pt] (102pt,20pt) to node [rectangle,draw,fill=white,sloped,inner sep=1pt] {{\tiny 13}} (293pt,245pt); 

\node at (293pt,245pt) [circle,draw,fill=white,minimum size=10pt,inner sep=1pt] {\mbox{\tiny $2$}}; 
\node at (102pt,20pt) [circle,draw,fill=white,minimum size=10pt,inner sep=1pt] {\mbox{\tiny $5$}}; 
\node at (402pt,20pt) [circle,draw,fill=white,minimum size=10pt,inner sep=1pt] {\mbox{\tiny $5$}}; 
\node at (402pt,320pt) [circle,draw,fill=white,minimum size=10pt,inner sep=1pt] {\mbox{\tiny $5$}}; 
\node at (102pt,320pt) [circle,draw,fill=white,minimum size=10pt,inner sep=1pt] {\mbox{\tiny $5$}}; 
\end{scope}\end{scope}
\end{tikzpicture}
\hspace{.5cm}
\begin{tikzpicture}
\begin{scope}[scale=.25]
\begin{scope}
\draw[loosely dotted] (102pt,20pt) rectangle (402pt,320pt);
\draw [-latex,shorten >=5pt] (320pt,320pt) to node [rectangle,draw,fill=white,sloped,inner sep=1pt] {{\tiny 3}} (320pt,170pt); 
\draw [<->,shorten >=5pt,black!12] (320pt,320pt) .. controls (250pt,254pt) and (250pt,254pt) .. node [rectangle,draw,fill=white,sloped,inner sep=1pt] {{\tiny 11,13}} (320pt,170pt); 
\draw [-latex,shorten >=5pt] (320pt,20pt) to node [rectangle,draw,fill=white,sloped,inner sep=1pt] {{\tiny 2}} (320pt,170pt); 
\draw [<->,shorten >=5pt,black!12] (320pt,20pt) .. controls (250pt,95pt) and (250pt,95pt) .. node [rectangle,draw,fill=white,sloped,inner sep=1pt] {{\tiny 12,15}} (320pt,170pt); 
\draw [-latex,shorten >=5pt] (320pt,170pt) to node [rectangle,draw,fill=white,sloped,inner sep=1pt] {{\tiny 5}} (402pt,170pt); 
\draw [-latex,shorten >=5pt] (102pt,170pt) to node [rectangle,draw,fill=white,sloped,inner sep=1pt] {{\tiny 7}} (102pt,20pt); 
\draw [-latex,shorten >=5pt] (402pt,170pt) to node [rectangle,draw,fill=white,sloped,inner sep=1pt] {{\tiny 7}} (402pt,20pt); 
\draw [-latex,shorten >=5pt] (102pt,170pt) to node [rectangle,draw,fill=white,sloped,inner sep=1pt] {{\tiny 8}} (102pt,320pt); 
\draw [-latex,shorten >=5pt] (402pt,170pt) to node [rectangle,draw,fill=white,sloped,inner sep=1pt] {{\tiny 8}} (402pt,320pt); 
\draw [-latex,shorten >=5pt] (402pt,20pt) to node [rectangle,draw,fill=white,sloped,inner sep=1pt] {{\tiny 9}} (320pt,20pt); 
\draw [-latex,shorten >=5pt] (402pt,320pt) to node [rectangle,draw,fill=white,sloped,inner sep=1pt] {{\tiny 9}} (320pt,320pt); 
\draw [-latex,shorten >=5pt] (102pt,20pt) to node [rectangle,draw,fill=white,sloped,inner sep=1pt] {{\tiny 10}} (320pt,20pt); 
\draw [-latex,shorten >=5pt] (102pt,320pt) to node [rectangle,draw,fill=white,sloped,inner sep=1pt] {{\tiny 10}} (320pt,320pt);  
\draw [-latex,shorten >=5pt] (320pt,170pt) to node [rectangle,draw,fill=white,sloped,inner sep=1pt] {{\tiny 14}} (102pt,170pt); 
\node at (320pt,20pt) [circle,draw,fill=white,minimum size=10pt,inner sep=1pt] {\mbox{\tiny $1$}}; 
\node at (320pt,320pt) [circle,draw,fill=white,minimum size=10pt,inner sep=1pt] {\mbox{\tiny $1$}}; 
\node at (320pt,170pt) [circle,draw,fill=white,minimum size=10pt,inner sep=1pt] {\mbox{\tiny $3$}}; 
\node at (102pt,170pt) [circle,draw,fill=white,minimum size=10pt,inner sep=1pt] {\mbox{\tiny $4$}}; 
\node at (402pt,170pt) [circle,draw,fill=white,minimum size=10pt,inner sep=1pt] {\mbox{\tiny $4$}}; 
\node at (102pt,20pt) [circle,draw,fill=white,minimum size=10pt,inner sep=1pt] {\mbox{\tiny $5$}}; 
\node at (402pt,20pt) [circle,draw,fill=white,minimum size=10pt,inner sep=1pt] {\mbox{\tiny $5$}}; 
\node at (402pt,320pt) [circle,draw,fill=white,minimum size=10pt,inner sep=1pt] {\mbox{\tiny $5$}}; 
\node at (102pt,320pt) [circle,draw,fill=white,minimum size=10pt,inner sep=1pt] {\mbox{\tiny $5$}}; 
\end{scope}\end{scope}
\end{tikzpicture}
\end{center}
If an arrow $a$ is part of a $2$-cycle $ab$, the path that runs in the same direction but on the other side of the arrow $b$ has the same homotopy class and $\PM_i$-degree 
as $a$ and must hence be equal to $a$. Therefore we can remove $2$-cycles from the dimer quiver without changing the Jacobi algebra. We have drawn these $2$-cycles in grey.
After removing the $2$-cycles, the first two dimer quivers can be recognized as the conifold quiver (up to a Dehn twist of the torus), while the third is the dimer for local
$\PP_2$, which is in agreement with the shapes of the polygons in the subdivision.

\bibliography{qdiff}{}
\bibliographystyle{plain}

\end{document}